\everymath\expandafter{\the\everymath\textstyle},
\title{Asymptotically rigid mapping class groups of infinite graphs}
\author{Thomas Hill, Sanghoon Kwak, Brian Udall, and Jeremy West}
\newcounter{relctr}
\newcommand{\relationlabel}[2]{%
  \refstepcounter{relctr}%
  \label{#1}%
  #2%
}
\newcommand{\refrel}[1]{%
  (\hyperref[#1]{$r_{\ref*{#1}}$})%
}
\theoremstyle{plain}
\newtheorem{THM}{Theorem}[section]
\newtheorem{PROP}[THM]{Proposition}
\newtheorem{LEM}[THM]{Lemma}
\newtheorem{COR}[THM]{Corollary}
\theoremstyle{plain} 
\newcommand{\thistheoremname}{}
\newtheorem{genericthm}[THM]{\thistheoremname}
\newtheorem*{genericthm*}{\thistheoremname}
\newenvironment{namedthm*}[1]
  {\renewcommand{\thistheoremname}{#1}%
   \begin{genericthm*}}
  {\end{genericthm*}}
\Crefname{THM}{Theorem}{Theorems}
\Crefname{PROP}{Proposition}{Propositions}
\Crefname{LEM}{Lemma}{Lemmas}
\Crefname{FACT}{Fact}{Facts}
\Crefname{COR}{Corollary}{Corollaries}
\Crefname{CONJ}{Conjecture}{Conjectures}
\Crefname{CLAIM}{Claim}{Claims}
\Crefname{CASE}{Case}{Cases}
\theoremstyle{definition}
\newtheorem{DEF}[THM]{Definition}
\newtheorem{RMK}[THM]{Remark}
\newtheorem{EX}[THM]{Example}
\newtheorem{Q}[THM]{Question}
\newtheorem{NOTN}[THM]{Notation}
\Crefname{DEF}{Definition}{Definitions}
\Crefname{RMK}{Remark}{Remarks}
\Crefname{EX}{Example}{Examples}
\Crefname{EXER}{Exercise}{Exercises}
\Crefname{Q}{Question}{Questions}
\newlist{LEMenum}{enumerate}{1} 
\setlist[LEMenum]{label=(\roman*),ref=\theLEM\,(\roman*)}
\Crefname{LEMenumi}{Lemma}{Lemmas}
\crefname{LEMenumi}{}{}
\definecolor{marine}{HTML}{2c61b5}
\definecolor{forest}{HTML}{228b22}
\newcounter{COMMENTS}
\newcommand{\newComment}[3]{
    \expandafter\newcommand\csname #1\endcsname[1]{ 
        \textbf{%
		\color{#3}(\uppercase{#2}\theCOMMENTS)%
        }
        \marginpar{\scriptsize\raggedright\textbf{%
			{\color{#3}(\uppercase{#2}\theCOMMENTS)#1: 
	}} ##1}
	    \stepcounter{COMMENTS}
    }
    \expandafter\newcommand\csname #2\endcsname[1]{{\color{#3} ##1}} 
}
\newcommand{\G}{\Gamma}
\newcommand{\R}{\mathbb{R}}
\newcommand{\Z}{\mathbb{Z}}
\newcommand{\N}{\mathbb{N}}
\newcommand{\XX}{\mathcal{X}}
\newcommand{\YY}{\mathcal{Y}}
\newcommand{\ZZ}{\mathcal{Z}}
\newcommand{\BB}{\mathcal{B}}
\newcommand{\ov}[1]{\overline{#1}}
\renewcommand{\lambda}{\uplambda}
\newcommand{\mapstoandfrom}{\raisebox{-1ex}{$\overset{\textstyle{}\mapsto{}}{{}\mapsfrom{}}$}}
\newcommand{\cdotfill}{\leavevmode\cleaders\hb@xt@.44em{\hss$\cdot$\hss}\hfill\kern\z@}
\DeclareMathOperator{\SO}{SO}
\DeclareMathOperator{\Twists}{Twists}
\DeclareMathOperator{\PMap}{PMap}
\DeclareMathOperator{\Map}{Map}
\DeclareMathOperator{\Homeo}{Homeo}
\DeclareMathOperator{\Out}{Out}
\DeclareMathOperator{\Aut}{Aut}
\DeclareMathOperator{\id}{Id}
\DeclareMathOperator{\rk}{rk}
\DeclareMathOperator{\brH}{br\textit{H}}
\DeclareMathOperator{\cork}{cork}
\DeclareMathOperator{\lk}{Lk}
\DeclareMathOperator{\lkd}{Lk^{\downarrow}}
\DeclareMathOperator{\Phid}{\Phi^{\downarrow}}
\newcommand{\<}{\langle}
  \renewcommand{\>}{\rangle}
\let\oldtocsection=\tocsection
\let\oldtocsubsection=\tocsubsection
\let\oldtocsubsubsection=\tocsubsubsection
\renewcommand{\tocsection}[2]{\hspace{0em}\oldtocsection{#1}{#2}}
\renewcommand{\tocsubsection}[2]{\hspace{1em}\oldtocsubsection{#1}{#2}}
\renewcommand{\tocsubsubsection}[2]{\hspace{2em}\oldtocsubsubsection{#1}{#2}}
\begin{document}

\begin{abstract}
    We introduce and study asymptotically rigid mapping class groups of certain infinite graphs. We determine their finiteness properties and show that these depend on the number of ends of the underlying graph. In a special case where the graph has finitely many ends, we construct an explicit presentation for the so-called \emph{pure graph Houghton group} and investigate several of its algebraic and geometric properties. Additionally, we show that the graph Houghton groups are not commensurable with other known Houghton-type groups, namely the classical, surface, braided, handlebody, and doubled handlebody Houghton groups, demonstrating that this graph-based construction defines a genuinely new class of groups.
\end{abstract}

\maketitle

\section{Introduction}

The group $\Out(F_n)$ is a classical and well-studied object in geometric group theory that is analogous in many ways to mapping class groups of surfaces.  Extending this analogy to the setting of ``big" mapping class groups, in \cite{algom-kfir2025groups} Algom-Kfir and Bestvina 
 introduced the group $\Map(\Gamma)$ for a locally finite, infinite graph $\Gamma$ as a ``big'' analog of $\Out(F_n)$. This group has been shown to exhibit many features reminiscent of mapping class groups of infinite-type surfaces (see, e.g., \cite{algom-kfir2025groups,domat2023coarse,domat2025generating,udall2024the-sphere,hill2024automorphismsspherecomplexinfinite, dickmann2024surfacesproperhomotopyequivalent}).

Like big mapping class groups of surfaces, $\Map(\Gamma)$ is uncountable.  Despite this, the \textbf{asymptotically rigid} subgroups of mapping class groups of infinite-type surfaces with either a Cantor set or finitely many ends, all accumulated by genus, form rich countable subgroups \cite{aramayona2023surface,aramayona2024asymptotic}.  
This paper aims to describe a similar class of subgroups in the context of locally finite, infinite graphs. We postpone the definitions to \Cref{sec:GraphHoughtonGroups,sec:CantorGraphs}. The group $\Map(\Gamma)$ is also closely related to the mapping class group of a 3-manifold, called the \emph{doubled handlebody} associated to $\Gamma$.  Specifically, Laudenbach’s classical results relating mapping class groups of doubled handlebodies and $\Out(F_n)$ in \cite{laudenbach1974topologie} were extended to $\Map(\Gamma)$ by Udall in \cite{udall2024the-sphere}. Thus, we also define a corresponding asymptotically rigid subgroup of the mapping class group of the doubled handlebody (see \Cref{sec:doubledHandleHoughton}). These groups are closely related to some described in \cite{aramayona2024asymptotic}.

Asymptotically rigid mapping class groups have been studied in several settings. When the underlying space has finitely many maximal ends, they exhibit similar geometric properties to classical Houghton groups, after which they are modeled. This justifies the convention of calling these asymptotically rigid mapping class groups ``Houghton'' groups.    
In particular, the classical Houghton groups \cite{houghton1978first,BROWN198745}, the braided Houghton groups  \cite{degenhardt2000endlichkeitseigeinschaften,Genevois_2022}, the surface Houghton groups \cite{aramayona2023surface}, and the handlebody Houghton groups \cite{domingozubiaga2025finitenesspropertiesasymptoticallyrigid}, are all known to be of type $F_{r-1}$ but not of type $FP_r$, where $r$ is the number of maximal ends of the space.

We show an analogous result in the context of graphs and doubled handlebodies. In keeping with the conventions mentioned above, when the graph (doubled handlebody) has finitely many ends, we call the asymptotically rigid mapping class group the \emph{graph (doubled handlebody) Houghton group}.

\begin{THM}\label{thm:finitenessGraphHoughton}
    For a graph (resp. doubled handlebody) with $r < \infty$ many ends, all accumulated by loops (resp. genus), the graph (resp. doubled handlebody) Houghton group is of type $F_{r-1}$ but not of type $FP_{r}$.  When the end space is homeomorphic to a Cantor set, the asymptotically rigid subgroup is of type $F_\infty$. 
\end{THM}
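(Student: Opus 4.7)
The plan is to run the standard Brown/Bestvina--Brady playbook that has been used for every Houghton-type group in the literature (classical, braided, surface, handlebody), adapted to the graph and doubled handlebody settings. Write $G$ for the graph Houghton group of $\Gamma$ (the doubled handlebody case is parallel, after replacing subgraphs with doubled sub-handlebodies and invoking Udall's results from \cite{udall2024the-sphere} to translate between the two mapping class groups). First I would fix an exhaustion of $\Gamma$ by ``admissible'' finite subgraphs $\Gamma_0 \subset \Gamma_1 \subset \cdots$ whose complements are $r$ disjoint end-neighborhoods (one per end), and whose frontier consists of exactly $r$ designated vertices. The poset $\mathcal{P}$ of such admissible subgraphs, ordered by inclusion, carries a natural $G$-action whose quotient is the $r$-fold product of copies of $\mathbb{N}$. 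Taking the geometric realization $|\mathcal{P}|$ (or, better, the Stein--Farley cube complex $X$ built from intervals in $\mathcal{P}$) produces a contractible complex on which $G$ acts cocompactly on skeleta, with vertex stabilizers equal to the asymptotically rigid mapping class group of a finite subgraph $\Gamma_n$ rel its frontier. Those stabilizers are commensurable with (extensions of) $\Out(F_k,\partial)$-type groups, hence of type $F_\infty$ by standard results.

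Next, I would introduce the Morse function $h\colon X \to \mathbb{R}$ given by the total rank (or number of edges) of the underlying subgraph and analyze the descending links. The key estimate, as in \cite{BROWN198745,aramayona2023surface,domingozubiaga2025finitenesspropertiesasymptoticallyrigid}, is that the descending link at a vertex corresponding to $\Gamma_n$ is homotopy equivalent to the join of $r$ copies of simplicial complexes encoding ways to ``push'' a single end-neighborhood inward. Each such factor is the order complex of a truncated chain poset, which is contractible, so the join of $r$ of them is $(r-2)$-connected but not $(r-1)$-connected (it has the homotopy type of a wedge of $(r-1)$-spheres once one quotients by the single-factor directions). Brown's criterion then gives that $G$ is of type $F_{r-1}$. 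For the failure of $FP_r$, I would exhibit an explicit nontrivial class in $H_{r-1}(G; \mathbb{Z}G)$ using the ``Houghton homomorphism'' $G \to \mathbb{Z}^{r-1}$ (counting net loop transfer at each end) and the Bieri--Eckmann style argument adapted by Brown: the kernel of this homomorphism is of type $F_{r-2}$ but not better, and one invokes the standard spectral sequence argument obstructing $FP_r$.

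For the Cantor-ended case, the same construction yields a poset $\mathcal{P}$ where admissible subgraphs now have arbitrarily many frontier vertices, so the descending link at a vertex has a join decomposition with an unbounded number of factors as $h \to \infty$. This forces the connectivity of descending links to tend to infinity, and Brown's criterion then delivers type $F_\infty$. Cocompactness on skeleta still holds because, for each fixed $k$, only finitely many orbits of $k$-cells appear below any given level, as the group acts transitively on admissible subgraphs with a fixed combinatorial type of complementary end-neighborhoods.

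The main obstacle I expect is the connectivity analysis of the descending link, specifically verifying that when one ``pushes'' an end-neighborhood inward across a vertex of $\Gamma$ the resulting local poset really does decompose as an $r$-fold join with the claimed connectivity. In the surface and handlebody cases this required a nontrivial combinatorial argument about arc/disk complexes; in the graph case one needs the analogous statement for a complex of essential separating subgraphs in a finite graph rel its $r$ boundary vertices, which I would prove by a link-argument induction on the rank, modeled on the proof of contractibility of the spine of Outer Space or the sphere complex of a doubled handlebody. A secondary subtlety is handling the doubled handlebody simultaneously: here the translation from \cite{udall2024the-sphere} is needed to ensure that the relevant ``rigid'' stabilizers are the handlebody mapping class groups of finite pieces, which are known to be $F_\infty$ by \cite{domingozubiaga2025finitenesspropertiesasymptoticallyrigid}.
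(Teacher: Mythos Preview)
Your overall architecture---Stein--Farley cube complex, Brown's criterion, Morse function given by rank, $F_\infty$ cell stabilizers---matches the paper's. The real gap is exactly where you flagged it, and your proposed fix is not correct.

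The claimed $r$-fold join decomposition of the descending link is false. In the relevant \emph{piece complex} $\mathcal{P}(\ZZ)$ (isotopy classes of rank-one sub-handlebodies meeting a fixed boundary component, adjacent when disjoint), two pieces attached at \emph{different} boundary components need not be disjoint: their interior separating spheres can intersect essentially. So the complex is not a join of $r$ per-end factors, and your sentence ``each factor is contractible \ldots\ so the join is $(r-2)$-connected but not $(r-1)$-connected'' is internally inconsistent anyway (a join of contractible spaces is contractible). The paper instead runs a chain of \emph{complete join maps} in the sense of Hatcher--Wahl: the descending link in the doubled-handlebody cube complex $\mathcal{X}$ maps as a complete join onto $\mathcal{P}(\ZZ)$, and $\mathcal{P}(\ZZ)$ is the target of a complete join from an \emph{injective tethered handle complex} $\mathcal{TH}_1(\ZZ,Q)$. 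The connectivity of $\mathcal{TH}_1(\ZZ,Q)$ is then extracted from the full tethered handle complex via a bad-simplex argument, and requires \emph{both} a rank bound $\operatorname{rk}\pi_1(\ZZ)\ge 4k+4$ and a boundary bound $|Q|\ge k+2$; only then does one obtain the weak Cohen--Macaulay property that Brown's criterion needs.

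Two further points where the paper's route differs from yours. First, the graph case is \emph{deduced} from the doubled-handlebody case, not the other way around: the retraction $M_{\Gamma_r}\to\Gamma_r$ induces a map $\mathcal{X}\to X$ that is a complete join on descending links (fibers are indexed by sphere twists), so connectivity transfers downward via the retract property of complete joins. Second, for the failure of $FP_r$ the paper does not use the flux homomorphism and a spectral sequence; it verifies hypothesis~(d) of Brown's criterion directly by showing that the complete-join fibers over each vertex of the piece complex are infinite, forcing descending links at every height to be non-contractible.
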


To prove this, we construct a cube complex on which these groups act and apply Brown's Criterion (\Cref{thm:BrownsCriterion,thm:BrownFl}), mimicking the approaches of \cite{Genevois_2022,aramayona2024asymptotic,domingozubiaga2025finitenesspropertiesasymptoticallyrigid}.  The cube complex we construct is inspired by the Stein--Farley complexes associated with Higman--Thompson groups \cite{stein1992groups,farley2005homological}. 

  Although these various Houghton-type groups share the same finiteness properties, and various other similarities, we show:

\begin{THM} \label{thm:MainThm_NonCommensurable}
    The classical, braided, surface, doubled handlebody, and graph Houghton groups are not pairwise commensurable. 
\end{THM}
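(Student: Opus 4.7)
The plan is to produce, for each of the $\binom{5}{2}=10$ pairs of Houghton-type groups, a commensurability invariant on which the two groups disagree, thereby ruling out commensurability in every case.

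First, I would use the supremum of the ranks of free abelian subgroups, which is preserved under passage to finite-index subgroups and hence is a commensurability invariant. For the classical Houghton group $H_r$, any torsion-free abelian subgroup must project injectively to the quotient $H_r/S_\infty \cong \mathbb{Z}^{r-1}$, since $S_\infty$ is locally finite and torsion; so the supremum equals $r-1$. For the braided, surface, graph, and doubled handlebody Houghton groups, one can exhibit infinitely many pairwise commuting twist-type elements supported on disjoint subregions (half-twists on disjoint disks, Dehn twists about disjoint simple closed curves, twist automorphisms about disjoint loops in the graph, and meridian twists on disjoint compressing disks, respectively), so the supremum is $\infty$. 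Thus $H_r$ is non-commensurable with the remaining four.

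Second, I would use the existence of a uniform bound on the orders of finite subgroups, which is also a commensurability invariant. The braided Houghton group has bounded torsion (any torsion element must come from a permutation of the $r$ ends of the underlying tree, since the pure braided subgroup is torsion-free), while the surface, graph, and doubled handlebody Houghton groups contain finite subgroups of arbitrarily large order: rotational symmetries of subsurfaces of large genus, permutation symmetries of disjoint copies of a rose of large rank, and rotations of collections of $1$-handles, respectively. This rules out commensurability between the braided group and any of the remaining three.

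Third, and most delicate, is distinguishing the surface, graph, and doubled handlebody Houghton groups from one another. Inside each group I would consider the pointwise stabilizer of the complement of a sufficiently large admissible finite sub-object; for the three groups in question, this stabilizer contains, with finite index, the (pure) mapping class group of a compact surface with boundary, a relative $\Out(F_n)$, or the mapping class group of a compression body, respectively, of arbitrarily large complexity. One then needs to characterize these stabilizer subgroups intrinsically inside each Houghton group (for example, as maximal subgroups that centralize a prescribed commuting system of twists, or as maximal subgroups of a prescribed finiteness type), so that any abstract isomorphism between finite-index subgroups must send such stabilizers to stabilizers of the same kind. The classical non-commensurability results for finite-type mapping class groups, $\Out(F_n)$, and handlebody mapping class groups---through the rigidity theorems of Ivanov, Bridson--Vogtmann, and the analogous handlebody results---then yield the conclusion.

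The principal obstacle is this third step: a priori, an abstract commensuration between big Houghton groups need not respect the natural filtration by admissible stabilizers, so one cannot directly reduce to the known finite-type non-commensurability results. The heart of the proof is to give an intrinsic algebraic characterization of these stabilizers strong enough to force any commensuration to preserve them; once this is in place, the rigidity of the classical finite-type groups does the rest.
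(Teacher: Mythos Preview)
Your first step is essentially the paper's argument for separating $H_r$ from the rest, though note that for the graph Houghton group there are no ``twist automorphisms'' (graphs have no Dehn twists); the paper uses Nielsen generators with disjoint supports instead.

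Your second step contains a genuine error. The pure surface Houghton group $PS_r$ is torsion-free: any torsion element would have to lie in the compactly supported mapping class group, but mapping class groups of compact surfaces with nonempty boundary are torsion-free. So ``rotational symmetries of subsurfaces of large genus'' do not exist as nontrivial mapping classes. Consequently $S_r$ has \emph{bounded} torsion, just like $\brH_r$, and your invariant does not separate $\brH_r$ from $S_r$. In fact the paper uses torsion in exactly the opposite direction: $PS_r$ and $P\brH_r$ are both torsion-free, while $B_r$ and $\BB_r$ contain infinite torsion subgroups (hence so does every finite-index subgroup), and this is what separates $\{S_r,\brH_r\}$ from $\{B_r,\BB_r\}$.

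Your third step is not a proof but a research program, and you correctly identify the obstacle: nothing forces an abstract commensuration to respect the filtration by stabilizers of admissible pieces. The paper avoids this entirely. For $S_r$ versus $\brH_r$ it simply cites \cite[Corollary 1.3]{aramayona2023surface}. For $B_r$ versus $\BB_r$ it uses a concrete structural invariant: $\BB_r$ has a normal subgroup isomorphic to $\bigoplus \Z/2$ (the sphere twists), so every finite-index subgroup contains a normal abelian $2$-torsion subgroup; by contrast, in $B_r$ one shows that for any finite-index normal $K$ and any involution $\gamma\in K$, the normal closure of $\gamma$ in $K$ contains an infinite-order element (using that the unique finite-index subgroup $\mathrm{SAut}_\infty(\G_r)$ of $\Map_c(\G_r)$ lies in $K$, together with a result of Shen on $\Aut(F_n)$). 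This is a direct commensurability invariant and requires no rigidity machinery.
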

\noindent We can also show that the handlebody Houghton group is not commensurable with all but the surface and braided Houghton groups. Whether the handlebody Houghton is commensurable to one of these is not known to the authors.  

While \Cref{thm:finitenessGraphHoughton} implies that the graph Houghton group is finitely presented when the graph has at least $3$ ends, we go further in \Cref{sec:finitePresentation} by providing an explicit presentation of the pure subgroup:

\begin{THM}
    \label{thm:PBrPresentation}
    For $r\geq 3$, the pure graph Houghton group $PB_r$ has the presentation
    $$PB_r\cong \<h_2, \ldots, h_{r}, \sigma, \tau, \eta \mid P_r\>$$
    where $P_r$ consists of (in each case, $2\leq i<j\leq n$)
    
\renewcommand{\arraystretch}{1.8}
\begin{tabularx}{\textwidth}{@{}X X X@{}}
{$r_1 : \sigma^2 = 1$} &
{$r_2 : [\sigma, \sigma^{\overline{h}_i^2}] = 1$} &
{$r_3 : (\sigma\sigma^{\overline{h}_i})^3 = 1$} \\
{$r_4 : \sigma = [h_i,h_j]$} &
{$r_5 : \sigma^{\overline{h}_i} = \sigma^{\overline{h}_j}$} &
{$r_6 : \tau^2 = 1$} \\
{$r_7 : [\tau,\sigma]^2 = 1$} &
{$r_8 : [\tau,\sigma^{\overline{h}_i}] = 1$} &
{$r_9 : \tau^{\overline{h}_i} = \tau^{\sigma}$} \\
{$r_{10} : \eta^2 = 1$} &
{$r_{11} : (\sigma\eta)^3 = 1$} &
{$r_{12} : [\eta,\sigma^{h_i}]^2 = 1$} \\
{$r_{13} : [\eta,\sigma^{h_i^2}] = 1$} &
{$r_{14} : \eta^{h_i} = \eta^{\sigma\sigma^{h_i}}$} &
{$r_{15} : [\eta, \eta^{\overline{h}_i^2}] = 1$} \\
{$r_{16} : [\eta, \tau^{h_i}] = 1$} &
{$r_{17} : ((\eta\tau)^2\tau^{\sigma})^2 = 1$} &
\\
\multicolumn{3}{@{}l@{}}{
  {$r_{18} : \sigma\eta\sigma^{\sigma^{\overline{h}_i}}\tau^{\sigma}\eta\sigma(\sigma^{\overline{h}_i}\eta\sigma^{\sigma^{\overline{h}_i}}\tau^{\sigma}\eta)^2 = 1$}
} \\
\end{tabularx}
\end{THM}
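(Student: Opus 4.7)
The plan is to apply the classical Brown presentation method to the action of $PB_r$ on the simply connected $2$-skeleton $X^{(2)}$ of the cube complex $X$ constructed in the proof of \Cref{thm:finitenessGraphHoughton}. Since $r \geq 3$, that complex is sufficiently connected for $X^{(2)}$ to be simply connected, so Brown's theorem produces a presentation whose generators come from vertex stabilizers together with edge representatives modulo a spanning tree of the orbit graph, and whose relations come from edge identifications and the boundaries of $2$-cell orbits. The goal is then to rewrite this Brown presentation into the stated form via Tietze moves.

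The first step is to realize the proposed generators as concrete asymptotically rigid mapping classes: the $h_i$ as shifts along the $i$-th end of $\G$, and $\sigma,\tau,\eta$ as explicit involutions supported on a small base subgraph, chosen so that together they generate the pointwise stabilizer of a fixed base admissible subgraph. Once this is set up, I would verify that $h_2,\ldots,h_r,\sigma,\tau,\eta$ do generate $PB_r$ by factoring any pure asymptotically rigid class as a shift into the base region followed by an element of the base stabilizer.

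The second step is to check that each of the $18$ listed relations holds in $PB_r$. The involution relations $r_1,r_6,r_{10}$ and the short braid/triangle relations $r_3,r_{11}$ reduce to a finite identity among a few involutions acting on a small subgraph; the commutator relations $r_2,r_8,r_{13},r_{15},r_{16}$ follow from disjoint-support commutativity after a sufficiently large shift conjugation; the shift-structure relations $r_4,r_5$ encode how the commutator of two shifts realizes the distinguished involution $\sigma$; and the remaining mixed relations $r_7,r_9,r_{12},r_{14},r_{17}$, together with $r_{18}$, require tracking the combined action of two or three involutions under a shift conjugation, and are best verified pictorially inside a slightly enlarged admissible subgraph.

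The final and most delicate step is to show these relations suffice. I would form the presentation $2$-complex $Y$ of $\langle h_2,\ldots,h_r,\sigma,\tau,\eta \mid P_r\rangle$, construct a $PB_r$-equivariant map from the universal cover $\widetilde{Y}$ onto $X^{(2)}$ that is bijective on fundamental domains, and use simple connectedness of $X^{(2)}$ to conclude that the presented group surjects isomorphically onto $PB_r$. The main obstacle will be the long relation $r_{18}$: its word length suggests it corresponds to an explicit (octagonal or longer) $2$-cell in $X^{(2)}$ whose boundary mixes all three involutions together with shift conjugation, and producing this $2$-cell concretely while also checking that no further orbit of $2$-cells yields an independent relation is where the bulk of the combinatorial work will lie.
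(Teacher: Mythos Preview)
Your approach is genuinely different from the paper's, but it has a structural gap. Brown's presentation method requires, as \emph{input}, presentations of the vertex stabilizers, and here those stabilizers are (finite-index subgroups of) $\Map(Z)$ for suited subgraphs $Z$, i.e.\ groups of the form $F_k^{r-1}\rtimes\Aut(F_k)$. You never say what presentation you are feeding in for these. In fact $\sigma,\tau,\eta$ do not generate the stabilizer of any single vertex of $X$: the core has rank $0$ so its stabilizer in $PB_r$ is trivial, while for larger $Z$ the stabilizer is a full $\Aut(F_k)$-type group needing many more generators (or, equivalently, shift-conjugates of $\sigma,\tau,\eta$ that do not live in a single stabilizer). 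So the passage ``$\sigma,\tau,\eta$ generate the pointwise stabilizer of a fixed base admissible subgraph'' is not correct as stated, and without it the Brown output is not in the claimed generating set. Moreover, your diagnosis of $r_{18}$ is off: the $2$-cells of $X$ are squares, so face relations are commutators of edge elements; $r_{18}$ is not a face relation at all but the Armstrong--Forrest--Vogtmann relation $(2\mathrm{g})$ internal to $\Aut(F_n)$, hence it lives entirely inside a stabilizer presentation you have not supplied.

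The paper proceeds purely algebraically and never uses $X$ for this theorem. It starts from the short exact sequence $1\to \Map_c(\G_r)\to PB_r\to \Z^{r-1}\to 1$ with $\Map_c(\G_r)=\varinjlim_n \Aut(F_{rn})$, writes down the resulting infinite presentation using the AFV presentation of each $\Aut(F_{rn})$ together with the conjugation relations $Q_r'$ expressing how shifts act on the AFV generators, and then performs an explicit Johnson-style Tietze reduction (via auxiliary relations such as $[\tau^{h_i},h_j]=1$ and $[\sigma^{h_i^k},h_j]=1$) to collapse the infinite families of commutator relations down to the finite list $P_r$. If you pursue your route, you will find that once you plug in the needed stabilizer presentations you are essentially reproducing this same reduction, only with the extra overhead of the Brown machinery on top.
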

\noindent The proof of \Cref{thm:PBrPresentation} draws on the relationship of the graph Houghton group with the classical Houghton group and $\Aut(F_n)$, along with known presentations for these groups (see \cite{lee2012geometry, armstrong2008a-presentation}).
As a consequence of the proof, the pure graph Houghton group contains $\Aut(F_n)$ for every $n$. It follows from a result of Higman \cite{higman1961subgroups} that there is a finitely presented group containing $\Aut(F_n)$ for all $n$, and the pure graph Houghton group is an concrete example of such a group. As far as we are aware, this is the first explicit construction of such a group in the literature. We thank Matt Zaremsky for pointing this out to us.  

We use this presentation to investigate several classical algebraic and geometric properties of this group in \Cref{sec:GGTprops}. 
Specifically, we consider the number of ends (\Cref{thm:1ended}), the (strong) Tits alternative (\Cref{thm:TitsAlternative}), BNSR-Invariants (\Cref{thm:BSNRInvs}), the solvability of the word problem (\Cref{cor:wordproblem}), and the Dehn function (\Cref{sec:WordProblemDF}).

\subsection*{Open questions}
In addition to our main results, several natural questions arise concerning the broader structure of graph Houghton groups. Like their surface counterparts, graph Houghton groups contain, up to conjugation, all end-periodic graph maps in the sense of \cite{meadow2024end}, recently studied in \cite{he2024relative}. Given the explicit nature of these groups, it is natural to ask whether the graph (surface) Houghton groups can be used to study end-periodic maps in some meaningful way. In the context of surfaces, one might wish to obtain an explicit presentation with which to study end-periodic surface mapping classes in an analogous way. To do this, one would need presentations for finite type surfaces that are compatible with the rigid structure, and to determine what the commutator of shifts is in terms of Dehn twists.

Beyond this dynamical connection, structural questions arise from the observation that the graph Houghton group contains $\Aut(F_n)$ for all $n$ (as implied by the proof of \Cref{thm:PBrPresentation}).
One might ask whether the Boone--Higman conjecture~\cite{boone1974algebraic} for the graph Houghton group is true, which would yield a new proof of the Boone--Higman conjecture for $\Aut(F_n)$, recently established in \cite{belk2025boonehigmanembeddingsmathrmautfnmapping}. More broadly, one may ask whether the Boone--Higman conjecture holds for all Houghton-type groups.

The graph and surface Houghton groups are dense in their corresponding mapping class groups. One could also ask if other dense subgroups which are finitely generated (or have higher finiteness properties) exist in mapping class groups of graphs and surfaces with more complicated end spaces. For example, in the two ended case is there a finitely presented dense subgroup? For the case with $r<\infty$ many maximal ends, one can also ask if the Houghton-type groups are the only dense subgroups of type $F_{r-1}$ (up to conjugation).

In \Cref{sec:WordProblemDF}, we discuss an open question related to a strategy for computing upper bounds on the Dehn function of the pure graph Houghton group. 

\subsection*{Outline of paper}
In \Cref{sec:GraphHoughtonGroups}, we introduce the infinite-type graphs and their doubled handlebodies that we study, along with their rigid structures, and the graph and doubled handlebody Houghton groups that arise from these structures. We compare these groups to four other variations of Houghton groups in \Cref{sec:HoughtonVariants}, showing that the graph and doubled handlebody Houghton groups are not commensurable to each other and are not commensurable to any of the other variants.

Next we begin the in-depth study of graph Houghton groups, producing a finite presentation when the graph has at least $3$ ends in \Cref{sec:finitePresentation}. We show that the graph and doubled handlebody Houghton groups with $r$ ends are of type $F_{r-1}$ but not of type $FP_{r}$ in \Cref{sec:BrownsCrit} and \Cref{sec:proofOfMainThm} by analyzing their action on a cube complex. We study the asymptotically rigid mapping class groups of the graph with a Cantor set of ends all accumulated by loops in \Cref{sec:CantorGraphs}, and show that these groups are of type $F_{\infty}$. For a particular rigid structure on this graph, we show that its homology groups are isomorphic to the stable homology groups of $\{\Aut(F_n)\}_{n\geq 1}$ (see \Cref{thm:StableHomology}). 

We conclude with a study of some basic geometric group theory properties of these Houghton group variants, focusing on the graph Houghton group, in \Cref{sec:GGTprops}.

\subsection*{Acknowledgments}
 We thank Javier Aramayona, Mladen Bestvina, Noel Brady, Luke Dechow, George Domat, Jes\'us Hern\'andez-Hern\'andez, Chris Leininger, Rachel Skipper, and Jing Tao for helpful conversations. 
We also thank Anthony Genevois, Priyam Patel, and Matt Zaremsky for insightful comments on the earlier draft of this paper. 
 We thank the organizers of the 2025 Wasatch Topology Conference, where the final steps of the project were completed. 

The first author acknowledges support from NSF grants RTG DMS-1840190 and NSF DMS–2046889.  The second acknowledges partial support from the New Faculty Startup Fund (700-20250069) at Seoul National University and KIAS Individual Grant (HP098501) via the June E Huh Center for Mathematical Challenges at Korea Institute for Advanced Study. The third author acknowledges support from NSF
grant DMS-1745670. The fourth author acknowledges support from NSF grant DMS-2304920.

\tableofcontents

\section{Graph and doubled handlebody Houghton groups}\label{sec:GraphHoughtonGroups}

\subsection{Mapping class group of locally finite, infinite graphs} 

Let $\Gamma$ be a connected, locally finite, infinite graph. The \emph{end space} of $\Gamma$ is defined as
$E(\Gamma) := \varprojlim_i \,  \pi_0 (\Gamma \setminus K_i),$
where $K_1 \subset K_2 \subset \dots \Gamma$ form a compact exhaustion of $\Gamma$ and the limit is taken over the inverse system $\{\pi_0(\Gamma\setminus K_i) \twoheadrightarrow \pi_0(\Gamma\setminus K_{i -1})\}$. We take the set $E(\G)$ to be topologized by this inverse limit. It is well known that the natural topology on $\G \cup E(\G)$ is compact. An end is \emph{accumulated by loops} if every neighborhood of it in $\G$ has infinite rank. The closed set $E_\ell(\Gamma) \subset E(\Gamma)$ denotes the subset of all ends accumulated by loops.  

A proper map $f: \Gamma \to \Gamma$ is a \emph{proper homotopy
equivalence} if there exists a proper map $g: \Gamma \to \Gamma$ such that $f\circ g$ and $g\circ f$ are
properly homotopic to the identity.  In \cite{ayala1990proper}, locally finite, infinite graphs are classified up to proper homotopy equivalence by the homeomorphism type of the pair $(E(\Gamma), E_\ell(\Gamma))$, and their rank (possibly infinite). The proof of this classification uses the fact that every locally finite infinite graph has a \emph{standard model}. A slightly more generalized notion is the \textbf{standard form} of a graph, introduced in \cite{domat2023coarse}, which is a tree with loops attached to some of the vertices.

The \emph{mapping class group} of a locally finite, infinite graph $\Gamma$ is defined to be the set of proper homotopy equivalences of $\Gamma$, up to proper homotopy (see \cite{algom-kfir2025groups}); that is,  
\begin{equation*}
        \Map(\Gamma) = \{\text{proper homotopy equivalences } \Gamma \to \Gamma\}/\text{proper homotopy}.
\end{equation*}
The subgroup fixing the set of ends of $\Gamma$ pointwise is called the \emph{pure mapping class group}, and is denoted $\PMap(\Gamma)$.  The subgroup of proper homotopy classes with a \emph{compactly supported} representative is denoted $\Map_c(\Gamma)$. 
Note $\Map_c(\G) \le \PMap(\G)$.

    \subsection{Graph Houghton groups}
    Let $\G_r$ be a locally finite, infinite graph with $r$ ends, all of which are accumulated by loops. We may assume it is in standard form, so that each loop is incident to a vertex, and further assume the center vertex $x_0$ has no loop attached. For each $h>0$, let $Y=Y^h$ be a finite graph of rank $h$, obtained from a straight line segment with $h+2$ vertices, among which the middle $h$ vertices are incident to single-edged loops. Denote the two valence 1 vertices by $\partial_+Y$ and $\partial_-Y$.

    For $g \ge 0$ and $h >0$, consider a decomposition of $\G_r$ into subgraphs: $\G_r = C \cup \bigcup_{j \in J} Y_j$,
    where the \emph{core} $C$ is a connected subgraph of rank $g$ containing $x_0$ with $r$ valence 1 vertices, $J$ is some countable set, and each $Y_j$ is called a \emph{piece}, which is a subgraph of $\G$ isomorphic to $Y$; each piece is equipped with a choice of \emph{marking}, i.e., a proper homotopy equivalence $\phi_i: Y_i \to Y^h$ preserving boundaries $\partial_{\pm}Y_i \mapsto \partial_{\pm}Y^h$, along with a choice of homotopy inverse, denoted $\phi_i^{-1}$ (see \Cref{fig:rigidstructure}).

    \begin{figure}[ht!]
        \centering
        \includegraphics[width=.6\textwidth]{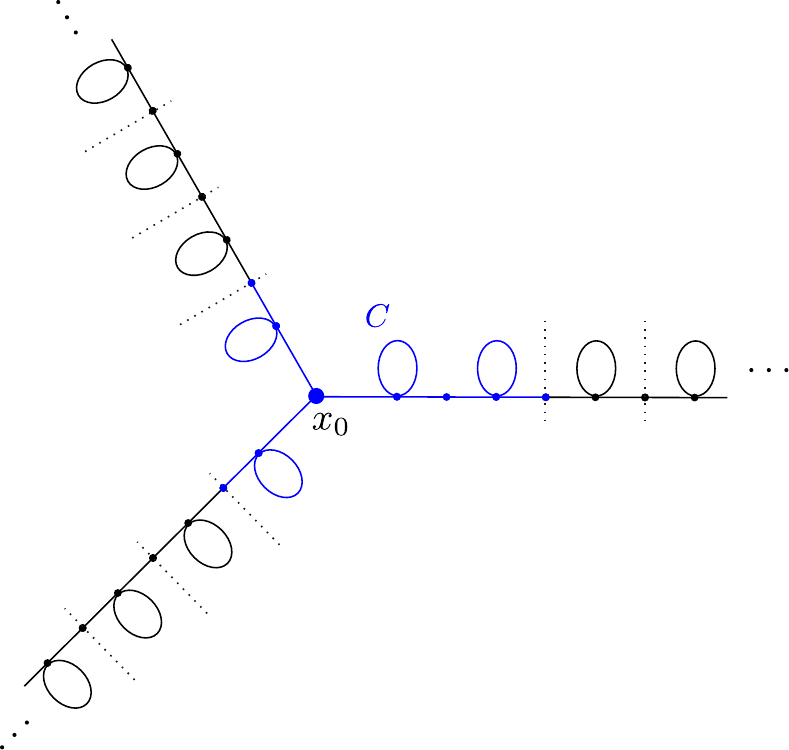}
        \caption{The graph $\G_3$ with a $(4,1)$-rigid structure. Note the core $C$ is required to contain the center point $x_0$.}
        \label{fig:rigidstructure}
    \end{figure}
    
    Such a decomposition, along with a choice of markings, is called a \emph{$(g,h)$-rigid structure} on $\G_r$ if they satisfy the following conditions:
    \begin{itemize}
        \item All subgraphs in the decomposition have disjoint interiors,
        \item Each piece $Y_j$ is either disjoint from $C$, or meets $C$ at one point 
        $\partial_-Y_j \in \partial C$, where $\partial C$ is the frontier of $C$.
    \end{itemize}
    A \emph{suited subgraph} $Z \subset \G_r$ is a connected union of the core and a (possibly empty) finite set of pieces.

    Given a $(g,h)$-rigid structure on $\G_r$, an \emph{asymptotically rigid mapping class} is the proper homotopy class of a proper homotopy equivalence $f \colon \G_r \to \G_r$, such that for some suited subgraph $Z$, $f|_Z$ is a proper homotopy equivalence onto its image, $f(Z)$ is a suited subgraph, and for any piece $Y_j \subset \ov{\G_r \setminus
          Z}$, there is a piece $Y_i$ in $\ov{\G_r \setminus f(Z)}$ so that
        $f(Y_j) = Y_i$ and $f|_{Y_j} = \phi_i^{-1}\phi_j$. Call such a suited
        subgraph $Z$ a \emph{defining graph} for $f$. We call the group of all such maps the \emph{graph Houghton group} (with respect to the triple $(g, h, r)$), which we denote $B(g,h,r)$. For each $r$, $B(g,h,r)$ depends on the rigid structure on $\G_r$, but as we will see in \Cref{rmk:conjugateHoughtonGroups}, all possible choices are conjugate in $\Map(\G_r)$.
        \begin{RMK}
            Note that in our definition of asymptotic rigidity, we have specified that the image of a piece outside a defining graph must be a piece outside the image of the defining graph. This is automatically true for homeomorphisms, but not for proper homotopy equivalences. This is important in \Cref{sec:BrownsCrit}. For example, let $Z$ be the core union a single piece, and let $f$ map the loop in $Z$ over itself and the next loop in its ray. Then $Z$ is not a defining graph for $f$, but the core union both loops is.
        \end{RMK}
      
\begin{EX}\label{ex:shiftMaps}

An important class of elements in $\Map(\Gamma_r)$ are the \emph{loop shifts}.  These are the graph analog of the handle shifts in the context of surfaces introduced in \cite{Patel_2018}.  Heuristically, they can be thought of as a translation of loops between distinct ends accumulated by loops as in \Cref{fig:b2}.  See \cite[Definition 3.11]{domat2023coarse} for a precise definition.  Labeling the ends of $\G_r$, $e_1, \dots, e_{r}$, we denote by $h_i$, the loop shift from $e_1$ to $e_i$, where $i = 2, \dots, r$ (see \Cref{fig:example,fig:br}).

Given a $(g,h)$-rigid structure on $r$, compositions of $h$-th powers of loop shifts $(h_i^h)^k = h_i^{kh}$, for $k \in \mathbb{Z}$, are examples of asymptotically rigid maps.
\begin{figure}[ht!]
        \centering
        \begin{overpic}[width=\textwidth]{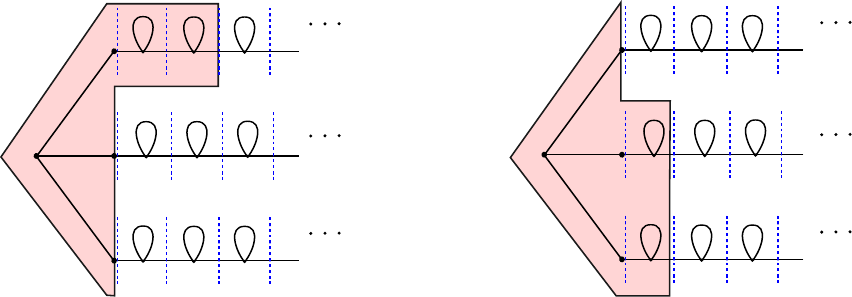}
        \put(45,15){$\xrightarrow{f = h_2 h_3}$}
        \put(7,0){\color{red}$Z$}
        \put(62,0){\color{red}$f(Z)$}
        \put(37,28){$e_1$}
        \put(37,15){$e_2$}
        \put(37,4){$e_3$}
        \put(97,28){$e_1$}
        \put(97,15){$e_2$}
        \put(97,4){$e_3$}
        \end{overpic}
        \caption{The suited subgraph $Z$ on the left is a defining graph for the product of loop shifts $f = h_2 h_3$.}
        \label{fig:example}
    \end{figure}
\end{EX}
       
        \begin{RMK}\label{rmk:conjugateHoughtonGroups} 
        For a fixed pair $(g,h)$, different $(g,h)$-rigid structures on $\G_r$ define conjugate graph Houghton groups in $\Map(\G_r)$.  Indeed, suppose that two $(g,h)$-rigid structures $\G_r = C \cup \bigcup_{i,j \in \N\times \{1,\dots,r\}} Y_{i,j}$ with $\phi_{i,j} : Y_{i,j} \to Y^h$, and $\G_r = C' \cup \bigcup_{i,j \in \N\times \{1,\dots,r\}} Y'_{i,j}$ with $\phi'_{i,j} : Y'_{i,j} \to Y^h$ define $B(g,h,r)$ and $B'(g,h,r)$ respectively. Here, $Y_{i,j}$ is the $i$th piece out from the core in the $j$th end. We find a $\psi \in \Map(\G_r)$ whose inner automorphism restricts to an isomorphism $B(g,h,r) \cong B'(g,h,r)$.

        Since the two cores $C$ and $C'$ have the same rank $g$, let $\psi|_{C}$ be a proper homotopy equivalence that sends $C$ to $C'$, and such that $\psi|_{C}(\partial_- Y_{1,j})
        = \partial_- Y'_{1,j}$.  
        On the piece $Y_{i,j}$, define $\psi|_{Y_{i,j}} = (\phi_{i,j}')^{-1} \circ \phi_{i,j}$.  This way, $\psi$ is compatible with the markings, i.e., the following diagram commutes.
\[\begin{tikzcd}[ampersand replacement=\&,cramped]
	\& {Y^h} \\
	{Y_{i,j}} \&\& {Y'_{i,j}}
	\arrow["{\phi_{i,j}}", from=2-1, to=1-2]
	\arrow["{\psi|_{Y_{i,j}}}"', from=2-1, to=2-3]
	\arrow["{\phi_{i,j}'}"', from=2-3, to=1-2]
\end{tikzcd}\]

        Now let $f \in B(g,h,r)$ and take a defining graph $Z$ for $f$.  On $Z$, let $f' = \psi \circ f \circ \psi^{-1}$, and outside of $Z$ assume that $f'$ is rigid.  Clearly $f' \in B'(g,h,r)$, as $\psi(Z)$ and $f'(\psi(Z)) = \psi(f(Z))$ are suited subgraphs in the second rigid structure, so $\psi(Z)$ is a defining graph for $f'$. By the construction of $\psi$ we see $f = \psi^{-1} \circ f' \circ \psi$ on all of $\Gamma_r$.  
 
        \end{RMK}

        Thus, the following definition gives a well-defined subgroup of $\Map(\G_r)$ up to conjugation.   
        
\begin{DEF}
        Let $g \ge 0, h>0$ and $r \ge 1$. 
        Denote by $B(g,h,r)$ the \emph{graph Houghton group} of some $(g,h)$-rigid structure on $\G_r$. Let the \emph{pure graph Houghton group}, $PB(g,h,r)$, be the kernel of the action of $B(g,h,r)$ on the (finite) end space of $\G_r$.
    \end{DEF}

\subsection{Mapping class group of doubled handlebodies}\label{sec:MCGofDoubHB}

Fix a locally finite graph $\G$. We associate to it a $3$-manifold called the \textit{doubled handlebody}, denoted $M_{\G}$. This manifold is defined as the union of two copies of a regular neighborhood of $\G$, thought of as being embedded in $\R^3$, glued along their boundaries by the identity map.  Recall that the mapping class group of $M_\G$, denoted $\Map(M_\G)$, is the group of orientation-preserving homeomorphisms of $M_\G$ up to isotopy. 
\par 
We think of $\G$ as sitting in $M_{\G}$, and we let $i:\G \to M_{\G}$ denote the inclusion map.
There is a natural retraction $\rho:M_{\G}\to \G$ defined as follows. 
Consider a closed regular neighborhood $B$ of $\G$ in $\R^3$. 
There is a deformation retraction of $B$ onto $\G$. We define $\rho$ on $B$ as the terminal map of this deformation retraction. We can write $M_\G = B \cup B'$ for some $B' \cong B$ where $B' \cap B$ is a surface. On $B'$, define $\rho$ by first reflecting this set onto $B$ via a map which is the identity on the surface $B \cap B'$, followed by the retraction onto $\G$ as it is defined on $B$ (see \Cref{fig:DHB}). 
\begin{figure}[ht!] 
    \centering
    \begin{overpic}[width=.8\textwidth]{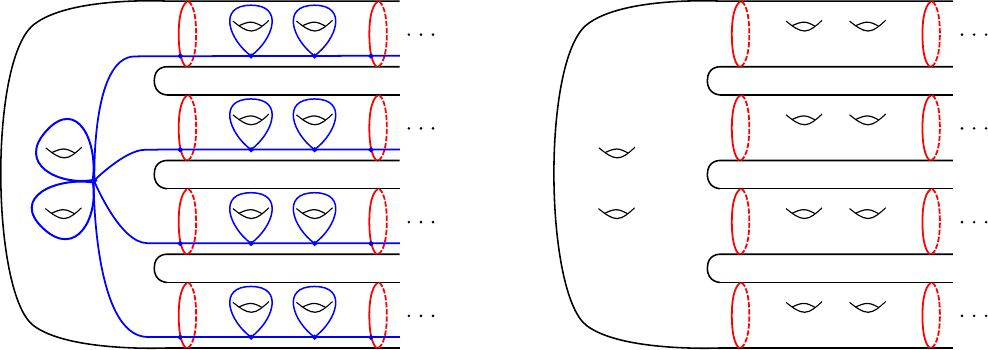}
        \put(-2,31){$B$}
        \put(53.5,31){$B'$}
        \put(5,8){\color{blue}{$\G_4$}}
    \end{overpic}
    \caption{A $(2,2)$-rigid structure on $M_{\G_4} = B \cup B'$. }
    \label{fig:DHB}
\end{figure}
\par
To state the next theorem, we must define the subgroup of \textit{sphere twists}. Choose a generator $\ell:[0,1] \to \text{SO}(3)$ of $\pi_1(\text{SO}(3), id)\cong \Z/2$. Given a smoothly embedded essential non-peripheral sphere $S$ of $M_{\G}$, fix a regular neighborhood $N \cong S\times [0,1]$ of $S$. The \emph{sphere twist} $T_S$ is defined as
\begin{equation*}
    T_S(p) = \begin{cases}
        (\ell(t)\cdot x, t) & \text{if } p=(x,t)\in N \\ 
        id & \text{if } p\notin N 
    \end{cases}
\end{equation*}
where $\ell(t)\cdot x$ denotes the action of $\ell(t)\in \SO(3)$ on $x\in S\cong \mathbb{S}^2 \subset \R^3$. We denote by $\Twists(M_{\G})$ the closure of the subgroup of $\Map(M_{\G})$ generated by compositions of (isotopy classes of) sphere twists on finite collections of disjoint isotopy classes of spheres.  
\begin{THM}[{\cite[Theorem 1.1]{udall2024the-sphere}}]\label{thm:doubledHandlebodytoGraph}
    The map $\rho$ induces a surjective homomorphism 
    $$ \Psi:\Map(M_{\G})\to \Map(\G),$$
    where $\Psi(f)= \rho \circ f \circ i$. The kernel of $\Psi$ is $\Twists(M_{\G})$, and the corresponding short exact sequence splits.
\end{THM}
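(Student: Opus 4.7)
My plan is to follow Laudenbach's classical strategy for finite-rank handlebodies and transfer it to the infinite-rank doubled handlebody via a compact exhaustion argument combined with the sphere complex of $M_{\G}$.

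First, I would verify that $\Psi$ is a well-defined group homomorphism. The deformation retract $\rho$ provides a proper homotopy $i \circ \rho \simeq \id_{M_{\G}}$, which directly gives
$\Psi(fg) = \rho \circ f \circ g \circ i \simeq \rho \circ f \circ (i \circ \rho) \circ g \circ i = \Psi(f) \circ \Psi(g),$
and also shows that $\Psi(f^{-1})$ is a proper homotopy inverse of $\Psi(f)$, so $\Psi(f) \in \Map(\G)$. Invariance under isotopy of $f$ is immediate, since an ambient isotopy descends to a proper homotopy through $\rho$.

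Second, to prove surjectivity I would exhibit a splitting $s \colon \Map(\G) \to \Map(M_{\G})$. Working with a standard-form representative of $\G$ embedded in $\R^3$, I would lift each element of a convenient generating family of $\Map(\G)$---loop inversions, loop permutations, and loop shifts in the sense of \Cref{ex:shiftMaps}---to explicit homeomorphisms of $M_{\G}$ built from rigid motions of the thickening. The crucial check is that these lifts satisfy the relations of $\Map(\G)$ modulo $\Twists(M_{\G})$, so that they descend to an honest homomorphic section. In the finite-rank case this is the content of Laudenbach's theorem; in the infinite setting I would either argue directly from a generating set or use a compact exhaustion of $\G$ together with the naturality of Laudenbach's finite-rank splitting to piece together coherent lifts across a nested sequence of sub-handlebodies.

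Third, I would compute $\ker \Psi = \Twists(M_{\G})$ via the sphere complex. Fix a maximal sphere system $\mathcal{S} \subset M_{\G}$ dual to the edges of a standard-form structure on $\G$, so that the components of $M_{\G} \setminus \mathcal{S}$ are punctured $3$-balls indexed by the vertices of $\G$. Given $f \in \ker \Psi$, the induced map on $\G$ is properly homotopic to the identity, and a normal-position argument (Hatcher's normal form for sphere systems, applied inside an exhausting sequence of finite sub-handlebodies) lets one isotope $f$ so that it preserves each sphere of $\mathcal{S}$ and each complementary region setwise. On each finite sub-handlebody, classical Laudenbach realizes the restriction as a finite product of sphere twists, and arranging these products consistently exhibits $f$ as a limit of sphere-twist products, hence as an element of the closure $\Twists(M_{\G})$. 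The main obstacle lies precisely in this last step: coherent choices of normal position, Laudenbach data on each complementary region, and convergent sphere-twist approximations must be organized to converge in the topology on $\Map(M_{\G})$ used to define $\Twists(M_{\G})$. A secondary difficulty is promoting the set-theoretic finite-rank splitting to a genuine homomorphic section in the infinite setting without an explicit presentation of $\Map(\G)$ to guide the verification of relations.
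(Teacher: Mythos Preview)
This theorem is not proved in the paper at all: it is quoted verbatim as \cite[Theorem 1.1]{udall2024the-sphere} and used as a black box. There is therefore no proof in the paper to compare your proposal against. The result is Udall's extension of Laudenbach's classical theorem to the locally finite setting, and the paper simply invokes it.

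That said, your sketch is broadly in the spirit of how such a result is established. The well-definedness and homomorphism arguments in your first step are standard and correct. For surjectivity and the splitting, your plan to lift generators of $\Map(\G)$ to explicit homeomorphisms of $M_\G$ is the natural approach, though you correctly flag that verifying the relations without a presentation is the genuine work. For the kernel, your outline via sphere systems in normal position and a compact exhaustion is essentially the right strategy, and you have accurately identified the main technical obstacle: organizing the finite-rank Laudenbach data coherently so that the resulting sphere-twist products converge in the topology defining $\Twists(M_\G)$. If you want to see the details carried out, you should consult Udall's paper directly rather than this one.
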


\subsection{Doubled handlebody Houghton groups} \label{sec:doubledHandleHoughton}
One can define the doubled handlebody Houghton group $\BB(g,h,r)$ as a subgroup
of $\Map(M_{\G_r})$ in the same way as $B(g,h,r) \le \Map(\G_r)$ is defined. To
do this, in the definition for $B(g,h,r)$, we replace subgraphs of $\G_r$ with
submanifolds of $M_{\G_r}$, and make the natural modification on the definition
of the rigid structure. Namely, we define the \emph{core} to be a connected
submanifold which is a doubled handlebody of rank
$g$ (i.e., its fundamental group has rank $g$) with $r$ boundary components. The \emph{pieces} are rank $h$ doubled
handlebodies with $2$ boundary components. The \emph{$(g,h)$-rigid structure}
on $M_{\G_r}$ is defined in the same way, replacing subgraphs with submanifolds,
and also assuming that pieces are either disjoint or meet in at precisely one of
their boundary components (see \Cref{fig:DHB}).

Then the \emph{doubled handlebody Houghton group}
$\BB(g,h,r)$ is the subgroup of $\Map(M_{\G_r})$ consisting of mapping classes with a representative homeomorphism $f:M_{\G_r} \to M_{\G_r}$ such that for some suited submanifold $\ZZ$, $f(\ZZ)$ is also a suited submanifold, and so that any piece $\YY$ outside $\ZZ$ is mapped to a piece respecting the gluing maps from the reference piece to each piece in $M_{\G_r}$. Say such a suited submanifold $\ZZ$ is a \emph{defining submanifold} for $f$.

In \cite{aramayona2024asymptotic}, the groups they consider (which are analogous to those just defined) are groups of homeomorphisms up to \textit{proper isotopy}, instead of just isotopy. By proper isotopy, we mean an isotopy that respects the rigid structure. The only difference this extra assumption makes is that the mapping class group of suited submanifolds \emph{injects} into the group being considered. In our case, for each suited submanifold, there is a finite kernel generated by the sphere twists on the boundary components of the suited submanifold. This kernel will not be important for us to consider.

\begin{RMK}
    Henceforth, plain-text $Y, Z$ etc.\ shall denote pieces and suited subgraphs in the graph, and calligraphic $\YY,\ZZ$ shall be used for the doubled handlebody.
\end{RMK}

\par 
We fix a rigid structure on $M_{\G_r}$ so that it is compatible with the retraction $\rho:M_{\G_r} \to \G_r$ and the rigid structure on $\G_r$ in the following sense. The retraction $\rho$ sends the core $\mathcal{C}$ in $M_{\G_r}$ to the core $C$ in $\G_r$, and $\rho^{-1}(C)=\mathcal{C}$. Further, $\rho$ induces a bijection between the pieces of the rigid structures, and if $\mathcal{Y}$ is a piece in $M_{\G_r}$ and $Y=\rho(\mathcal{Y})$, then $\rho^{-1}(Y)=\mathcal{Y}$. Also, if $f\in \BB(g,h,r)$ is rigid outside of $\ZZ$ and $\YY_j=\rho^{-1}(Y_j)\subset \overline{M_{\G_r}\setminus \ZZ}$ is a piece so that $f(\YY_j)=\YY_i=\rho^{-1}(Y_i)$, then $\Psi(f)|_{Y_j}=\phi_i^{-1}\phi_j$. The fact that we can choose these rigid structures so that $\Psi(f)|_{Y_j}=\phi_i^{-1}\phi_j$ follows from the surjectivity from \Cref{thm:doubledHandlebodytoGraph} applied to $Y^h$.
\begin{LEM}\label{lem:psirestriction}
    The homomorphism $\Psi$ from \Cref{thm:doubledHandlebodytoGraph} restricts to a surjective homomorphism 
    $$\Psi_B: \BB(g,h,r)\to B(g,h,r)$$
    whose kernel consists of finite products of sphere twists.
\end{LEM}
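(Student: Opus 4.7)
The plan is to establish three claims in turn: that $\Psi$ sends $\BB(g,h,r)$ into $B(g,h,r)$ so that $\Psi_B$ is well-defined; that this restriction is surjective; and that its kernel consists of finite products of sphere twists. All three rest on the explicit compatibility of the two rigid structures under $\rho$ recorded in the paragraph preceding the statement. For well-definedness, I would take $f \in \BB(g,h,r)$ with defining submanifold $\ZZ$ and verify that $Z := \rho(\ZZ)$ is a defining graph for $\Psi(f)$: the bijection between pieces of $M_{\G_r}$ and pieces of $\G_r$ induced by $\rho$, together with the identity $\Psi(f)|_{Y_j} = \phi_i^{-1}\phi_j$ on exterior pieces (when $f(\YY_j) = \YY_i$), makes this verification immediate.

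For surjectivity, given $f_\G \in B(g,h,r)$ with defining graph $Z$, I would set $\ZZ = \rho^{-1}(Z)$ and $\ZZ' = \rho^{-1}(f_\G(Z))$, both finite-type doubled handlebodies, and apply \Cref{thm:doubledHandlebodytoGraph} in its finite-type incarnation (Laudenbach's classical result) to lift $f_\G|_Z$ to a homeomorphism $\tilde g \colon \ZZ \to \ZZ'$. I would then extend $\tilde g$ across $M_{\G_r}$ piece by piece, setting $\tilde g|_{\YY_j} \colon \YY_j \to \YY_i$ (for $Y_i = f_\G(Y_j)$) to be the canonical homeomorphism determined by the gluing identifications. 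The compatibility of markings at $\partial \ZZ$ ensures that these extensions glue to a global homeomorphism representing an element of $\BB(g,h,r)$ that projects to $f_\G$.

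For the kernel, take $f \in \ker(\Psi_B)$ with rigid representative having defining submanifold $\ZZ$. Since $\Psi(f) = \id$ in $\Map(\G_r)$ and $\Psi(f) \in B(g,h,r)$ is a rigid map, I first need to argue that this rigid map must in fact be the identity rigid map. The point is that any nontrivial permutation of exterior pieces effected by a rigid element — in particular, any nonempty composition of loop shifts (cf.\ \Cref{ex:shiftMaps}) — is detectably nontrivial in $\Map(\G_r)$, so no such rigid map can represent the identity. Consequently the chosen representative of $f$ fixes every exterior piece $\YY_j$ setwise, and strict asymptotic rigidity forces $f|_{\YY_j}$ to be the identity homeomorphism. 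Hence $f$ is supported in $\ZZ$, and applying \Cref{thm:doubledHandlebodytoGraph} to the finite-type doubled handlebody $\ZZ$, the kernel of the corresponding projection is the finite subgroup generated by sphere twists on the essential spheres of $\ZZ$. Therefore $f$ is a finite product of sphere twists.

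The main obstacle I expect is the sub-argument in the kernel step that no nontrivial rigid map in $B(g,h,r)$ represents the identity class of $\Map(\G_r)$. Loop shifts themselves are known to be nontrivial, but one must extend this to arbitrary rigid piece-permutations; invariants such as the induced action on ends or on the peripheral loop-ray combinatorial structure should suffice. Once that is granted, the remainder is a routine translation between the two rigid structures via $\rho$, together with the finite-type analogue of \Cref{thm:doubledHandlebodytoGraph}.
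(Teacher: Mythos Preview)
Your treatment of well-definedness and surjectivity is essentially the paper's argument: use the $\rho$-compatibility of the two rigid structures to push defining submanifolds down and to lift defining graphs up via the finite-type case of \Cref{thm:doubledHandlebodytoGraph}.

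For the kernel you have taken a longer route than necessary, and the ``obstacle'' you flag is an artifact of that route. The paper simply observes that
\[
\ker\Psi_B \;=\; \ker\Psi \,\cap\, \BB(g,h,r) \;=\; \Twists(M_{\G_r}) \,\cap\, \BB(g,h,r),
\]
using \Cref{thm:doubledHandlebodytoGraph} directly. Then the containment is checked both ways: every finite product of sphere twists is compactly supported and hence asymptotically rigid; conversely, an element of $\Twists(M_{\G_r})$ that is not a finite product fails to be eventually the identity outside any compact submanifold, and therefore lies outside $\BB(g,h,r)$. No analysis of general rigid piece-permutations in $B(g,h,r)$ is needed.

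Your approach instead starts from an arbitrary $f\in\ker\Psi_B$, forgets that $f$ already lies in $\Twists(M_{\G_r})$, and tries to argue from scratch that the rigid permutation of exterior pieces must be trivial. That claim is true (it follows, for instance, from the flux computation in \Cref{prop:PBfluxSES}: an element of $B(g,h,r)$ representing the identity has zero flux into every end and fixes ends, forcing the exterior piece-permutation to be trivial), but it is more than you need. Once you remember that $f\in\Twists(M_{\G_r})$, you know $f$ has a representative that fixes each piece setwise, so any asymptotically rigid representative of $f$ must have trivial exterior permutation automatically, and the finite-type kernel computation finishes the argument. The paper's phrasing ``not eventually the identity outside of some compact submanifold'' is terse but is relying on exactly this: for sphere-twist products, asymptotic rigidity and being eventually the identity coincide.
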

\begin{proof}
    The statement about the kernel is immediate, as $\BB(g,h,r)$ contains all finite products of sphere twists, and it contains no infinite products as such elements are not eventually the identity outside of some compact submanifold of $M_{\G_r}$.
    \par 
    To see that $\Psi(\BB(g,h,r))\subset B(g,h,r)$, note that if $f\in \BB(g,h,r)$ has a defining submanifold $\ZZ$, then by construction we have that  $\Psi(f)|_{\rho(\ZZ)}$ is a proper homotopy equivalence and $\Psi(f)(\rho(\ZZ))=\rho(f(\ZZ))$ is also a suited subgraph of $\G_r$. Further, the compatibility of the rigid structures ensures that $\Psi(f)$ has the required properties outside of $\rho(\ZZ)$. 
    \par 
    Now we show $\Psi_B$ is surjective. Suppose $g\in B(g,h,r)$. We can give an explicit element $g' \in \BB(g,h,r)$ so that $\Psi(g')=g$ as follows: Let $Z$ be a defining graph for $g$. By the surjectivity of the map arising from  \Cref{thm:doubledHandlebodytoGraph} for finite type graphs, we can define $g'$ on $\rho^{-1}(Z)$ so that $\Psi(g')|_Z=g$. On any piece $\YY_j=\rho^{-1}(Y_j)\subset \overline{M_{\G_r}\setminus \rho^{-1}(Z)}$ so that $g(Y_j)=Y_i$, define $g'$ to be such that $g'|_{\YY_j}= (\phi_i')^{-1}\phi_j'$, where the maps $\phi_i'$ and $\phi_j'$ are those that appear in the rigid structure for $\BB(g,h,r)$.  Then by construction $g'\in \BB(g,h,r)$, and $\Psi(g')=g$. 
\end{proof}

\begin{RMK}\label{rmk:SphereTwistsGenerators}
    The set of generators for the sphere twists can be chosen to occur about a fixed choice of disjoint spheres, where for each piece $\mathcal{Y}$ in the rigid structure of $\BB(g, h, r)$, there are $h$ generators appearing.
\end{RMK}

\begin{NOTN} \label{rmk:notations}
For the rest of the paper, we fix the notation for inverses, commutators, and conjugations as follows: Let $G$ be a group and $a,b \in G$.
\begin{itemize}
    \item The inverse $\bar{a}:= a^{-1}$;
    \item the commutator $[a,b]:= \bar{a}\bar{b}ab$; 
    \item the conjugation $a^b:= ba\bar{b}$, and towers of exponents are always read top to bottom, e.g., $a^{b^c} = a^{(b^c)}$. 
\end{itemize}
 In addition, group composition is considered functionally, and group actions are always on the left.
\end{NOTN}
\section{Relation to variants of the Houghton group}
        \label{sec:HoughtonVariants}

        Graph Houghton groups are closely related to several other families of groups that generalize the classical Houghton groups, $H_r$, by changing the underlying topological space. These include the braided Houghton group $\brH_r$, the surface Houghton group $S_r$, the handlebody Houghton group $\mathcal{H}_r$, and the doubled handlebody Houghton group $\mathcal{B}_r$. In this section, we recall these groups and describe their relationships with the graph Houghton group (see \Cref{fig:HoughtonVariants}). While closely related, none of these variants of Houghton groups are commensurable with the graph Houghton group, nor with one another (except possibly $\mathcal{H}_r$ with $S_r$ or $\brH_r$) as we shall see in \Cref{thm:MainThm_NonCommensurable}.

\begin{figure}[ht!]
    \centering
\[\begin{tikzcd}[ampersand replacement=\&]
	{S_r} \& {\mathcal{H}_r} \& {\mathcal{B}_r} \\
	{\brH_r} \& {B_r} \\
	{H_r}
	\arrow["{(1)}"', hook', from=1-2, to=1-1]
	\arrow["{(2)}", from=1-2, to=1-3]
	\arrow["{(4)}"', from=1-2, to=2-2]
	\arrow["{(5)}", shift left=3, two heads, from=1-3, to=2-2]
	\arrow["{(3)}", hook, from=2-1, to=1-1]
	\arrow["{(6)}", hook, from=2-1, to=2-2]
	\arrow["{(7)}"', two heads, from=2-1, to=3-1]
	\arrow[hook', from=2-2, to=1-3]
	\arrow["{(8)}"', hook, from=3-1, to=2-2]
\end{tikzcd}\]
    \caption{(1) is obtained by restricting the action of a mapping class to its surface boundary (see \cite[Section 1.5]{domingozubiaga2025finitenesspropertiesasymptoticallyrigid} for details); (2) is induced by taking the double of the map; (3) follows from \cite[Section 5]{torgerson2024bnsrinvariantssurfacehoughtongroups}; (4) is induced by retraction; (5) is the map in \Cref{lem:psirestriction} and the splitting in \Cref{thm:doubledHandlebodytoGraph}; (6) is obtained from \cite[Theorem 1.7]{dickmann2024surfacesproperhomotopyequivalent} as explained in \Cref{ssec:BraidedHoughton}; (7) is the surjection induced by forgetting the braiding; (8) is the inclusion coming from identifying the permutations of points with the permutations of loops. 
    We do have that (5) $\circ$ (2) = (4), but no other triangles in the diagram commute.  
    }
    \label{fig:HoughtonVariants}
\end{figure}

    \subsection{Classical Houghton groups}\label{ssec:ClassicalHoughton} Houghton groups were originally introduced in \cite{houghton1978first}.  
    For a formal definition, see \cite[Definition 2.1]{lee2012geometry}. They can be roughly defined as follows: For a positive integer $r$, consider in the plane $r$ distinct ordered rays of isolated points going to infinity. The Houghton group $H_r$ is the group of bijections on the union of these rays of points which are eventually translations on each ray. 

    Houghton's groups are well studied. 
    Their finiteness properties mirror \Cref{thm:finitenessGraphHoughton}, namely Brown \cite{BROWN198745} proved $H_r$ is of type $F_{r -1}$ and not of type $FP_{r}$.  Explicit presentations and bounds on the Dehn function are proven and discussed in \cite{lee2012geometry}. 

    For $r\geq 3$, the subgroup of $B_r$ generated by $h_2, \ldots, h_{r}$ (the maps discussed in \Cref{ex:shiftMaps}) is isomorphic to $H_r$. Indeed, the shift maps preserve the loops and their orientations, and there is a natural bijection between the loops and the points making up the rays that $H_r$ acts on. The induced action of the $h_i$'s on the points coming from this bijection agrees with the generating set of $H_r$ appearing in \cite{lee2012geometry}. A similar discussion holds for $r=2$ using $h_2$ and $\sigma$, a transposition of two neighboring points. For $r=1$, one can use the compactly supported symmetric group on $\N$ acting on the loops of $\Gamma_1$.

    \subsection{Braided Houghton groups}\label{ssec:BraidedHoughton} The \textit{braided Houghton group} $\brH_r$ was introduced by Degenhardt in his thesis \cite{degenhardt2000endlichkeitseigeinschaften}. In \cite[Theorem 5.24]{Genevois_2022}, Genovois--Lonjou--Urech determine the finiteness properties and prove that $\brH_r$ is of type $F_{r-1}$ but not of type $FP_{r}$. In \cite{funar2007braided}, Funar showed that this group is isomorphic to the asymptotically rigid mapping class group of the planar surface $\Sigma_r$, defined as follows.

    Let the tree $T_r$ be the wedge of $r$ rays at a common point. Viewing $T_r$ as a graph embedded in the plane, we obtain a surface $\Sigma_r$ (with noncompact boundary) by taking a regular neighborhood of $T_r$ and introducing a puncture for each edge of $T_r$. A rigid structure on $\Sigma_r$ is a decomposition into polygons via a collection of pairwise non-intersecting arcs with endpoints on the boundary of $\Sigma_r$, such that each arc contains a puncture and crosses $T_r$ once transversely (see \Cref{fig:BraidedHoughton}).

    \begin{figure}[ht!]
    \centering
    \begin{overpic}[width=.68\textwidth]{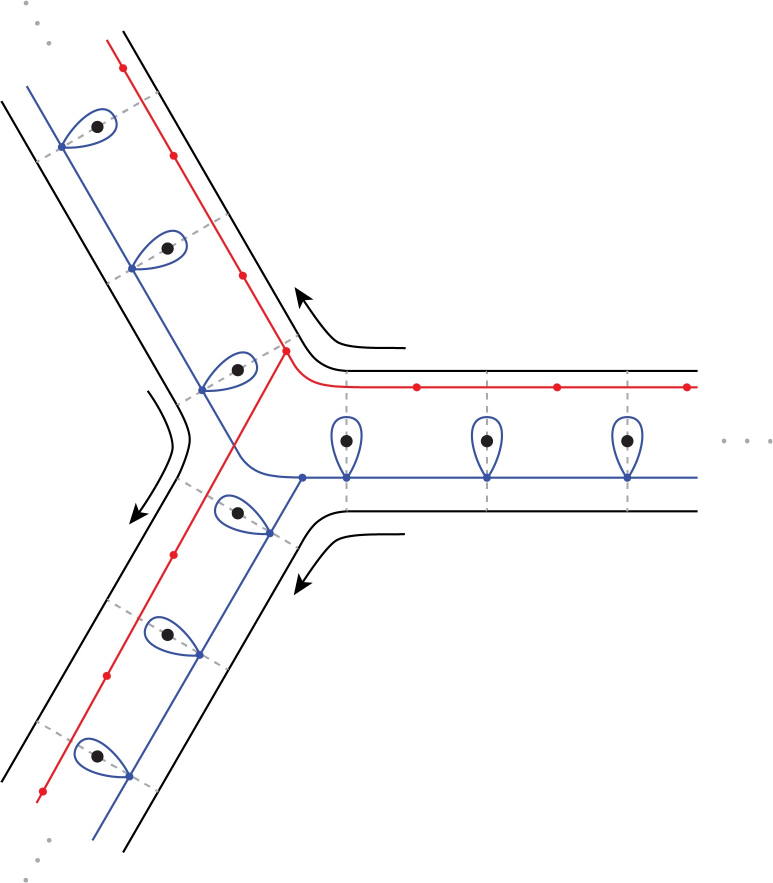}
        \put(80,55){\textcolor{red}{$T_3$}}
        \put(80,45){\color[HTML]{6b80bc}{$\Gamma_3$}}
        \put(75,60){$\Sigma_3$}
        \put(40,65){$h_2$}
        \put(40,35){$h_3$}
        \put(31,48){$x_0$}
        \put(15,50){$g$}
        \put(23,37){\color[HTML]{6b80bc}{$a_1^3$}}
        \put(15,23){{\color[HTML]{6b80bc}{$a_2^3$}}}
        \put(8,9){\color[HTML]{6b80bc}{$a_3^3$}}
    \end{overpic}
    \caption{The surface $\Sigma_3$ is shown with the core tree $T_3$ in red and the homotopy equivalent graph $\Gamma_3$ in blue. The bold dots represent the punctures of $\Sigma_3$, and gray dotted lines illustrate the rigid structure on $\Sigma_3$. The puncture shifts $h_2$, $h_3$, and $g$ are also indicated. Among these, only $h_2$ induces a loop shift, but $h_3$ does not since it acts by conjugation as $(h_3)_*(a_1^3) = a_1^3 a_2^3 \overline{a_1^3}$, where $a_i^j \in \pi_1(\Gamma_3, x_0)$ is based at the center point $x_0$.}
    \label{fig:BraidedHoughton}
\end{figure}

    A homeomorphism of $\Sigma_r$ is called \emph{asymptotically rigid} if it maps all but finitely many polygons of the rigid structure to other polygons. The \emph{asymptotically rigid mapping class group} is the subgroup of $\Map(\Sigma_r)$ consisting of all mapping classes with an asymptotically rigid representative.  The \emph{braided Houghton group} $\brH_r$ is the subgroup of the asymptotically rigid mapping class group that stabilizes the maximal ends of $\Sigma_r$.  It has index $r$ in the asymptotically rigid mapping class group.   
    Note that the rigid structure on $\Sigma_r$ can be defined in other equivalent ways (see \cite[Remark 2.6]{Genevois_2022}).  We opt for this definition to most obviously illustrate the connection to the graph Houghton group $B_r$.

    By retracting the surface $\Sigma_r$ onto the core graph $\Gamma_r$ shown in \Cref{fig:BraidedHoughton}, we see that Dickmann--Hoganson--Kwak \cite[Theorem 1.7]{dickmann2024surfacesproperhomotopyequivalent} implies that $\brH_r$ embeds into $\Map(\Gamma_r)$. Moreover, the image of this embedding lies in $B_r$, since the rigid structures on the surface and the graph (though defined differently) are compatible in the following sense: every suited subgraph of $\Gamma_r$ is contained in the retract of some sufficiently large suited subsurface of $\Sigma_r$, and conversely, the retract of any suited subsurface of $\Sigma_r$ is contained in a sufficiently large suited subgraph of $\Gamma_r$. 

    While the (proper) deformation retraction of $\Sigma_r$ onto $\Gamma_r$ gives rise to an embedding $\brH_r \hookrightarrow B_r$, puncture shifts in $\Sigma_r$ do not always correspond directly to loop shifts in $\Gamma_r$. Some puncture shifts do induce the standard loop shifts—for example, the element $h_2$ in \Cref{fig:BraidedHoughton}. However, other shifts behave differently. Shifts between ends located on opposite sides of the core tree, or between ends on the same side as the central node, induce loop shifts post-composed with partial conjugations (as seen with $h_3$ and $g$ in \Cref{fig:BraidedHoughton}). Although these puncture shifts still send each loop to a conjugate of the expected one, the conjugators themselves vary depending on the loop being shifted. Since these conjugations are not given by inner automorphisms of the whole group, they yield distinct elements in the outer automorphism group. This discrepancy is reflected in the fact that commutators of generating shifts in $\brH_r$ are infinite order, but in $B_r$ commutators of the same form are of order $2$ (see \Cref{ex:loopshift_commutator}).

\subsection{Surface Houghton groups}

The surface Houghton group, $S_r$, was first introduced in \cite{aramayona2023surface}. Its definition closely parallels that of the graph Houghton group. These groups share the same finiteness properties as the variants discussed in this section (see \cite[Theorem 1.1]{aramayona2023surface}). Surface Houghton groups are classified up to isomorphism, commensurability, and quasi-isometry in \cite{aramayona2024isomorphisms}, and their BNSR invariants are computed in \cite{torgerson2024bnsrinvariantssurfacehoughtongroups}.   Furthermore, the surface and braided Houghton groups are closely related, as discussed in \cite[Section 5]{torgerson2024bnsrinvariantssurfacehoughtongroups}. 

\subsection{Handlebody Houghton groups} 
Asymptotically rigid handlebody groups are defined and studied by Domingo-Zubiaga in \cite{domingozubiaga2025finitenesspropertiesasymptoticallyrigid}.  Restricting elements of this subgroup of the handlebody group to the boundary surface induces an embedding into the asymptotically rigid surface mapping class group.  This relationship is not sufficient to deduce the finiteness properties of handlebody groups, as explained in \cite[Section 1.5]{domingozubiaga2025finitenesspropertiesasymptoticallyrigid}. 
However, the finiteness properties of asymptotically rigid handlebody groups mirror those of the groups discussed in this section.  In the case when the handlebody has finitely many ends, the asymptotically rigid handlebody group $\mathcal{H}_r$ might, by analogy, be called the ``handlebody Houghton group''. By \cite[Theorem II]{domingozubiaga2025finitenesspropertiesasymptoticallyrigid}, $\mathcal{H}_r$, is of type $F_{r-1}$ but not $ FP_r$.

\subsection{Non-commensurability}
\label{ssec:noncommensurability}

While the various different Houghton groups share similar properties, they are not commensurable with one another.

\begin{namedthm*}{\Cref{thm:MainThm_NonCommensurable}}
    For any $r_1, \ldots, r_5\geq 1$,
    the groups $H_{r_1}, S_{r_2}, \brH_{r_3}$, $B_{r_4}$, and $\BB_{r_5}$ are pairwise not commensurable to each other.
\end{namedthm*}

\begin{proof}
    As each group is of type $F_{r_i-1}$ but not of type $FP_{r_i}$ for $i=1, \ldots, 5$ 
    and finiteness properties are a commensurability invariant \cite{alonso1994finiteness}, it suffices to assume $r=r_1=\cdots=r_5$.

     Given a free abelian subgroup $\mathbb{Z}^k < H_r$, we must have that $k \le \lfloor\frac{r}{2}\rfloor$. The bound arises because shift maps between disjoint pairs of ends commute, and there are at most $\lfloor\frac{r}{2} \rfloor$ such independent pairs. On the other hand, the other four groups contain free abelian subgroups of arbitrary rank. In $S_r$, such groups arise via looking at families of disjoint curves and Dehn twists on them. For $\brH_r$, one can consider families of disjoint pairs of punctures and half twists which permute the points in each pair. In $B_r$ and $\BB_r$ one can use Nielsen generators with disjoint supports, for example. As the maximal rank of abelian subgroups of a group is a commensurability invariant, it follows that $H_r$ is not commensurable to any of the other four groups.

     On the other hand, $B_r$ and $\BB_r$ are not commensurable to $S_r$ and $\brH_r$. To see this, it suffices to first restrict to their pure subgroups $PB_r, P\BB_r, PS_r$, and $P\brH_r$. Then the latter two groups are torsion free. Indeed, for an element $\phi\neq \mathrm{id}$ in either $P\brH_r$ ($PS_r$), either it lies in the kernel of the abelianization map  $P\brH_r \to \Z^{r-1}$ \cite[Theorem 1]{funar2007braided} ($PS_r \to \Z^{r-1}$ \cite[Theorem 1.2]{aramayona2023surface}), or it does not. Note that in the case of $P\brH_2$, this map is not the abelianization, but still exists, and its kernel is the compactly supported mapping class group.
     If $\phi$ does not lie in the kernel, then its image in $\Z^{r-1}$ is nontrivial, and thus $\phi$ cannot be torsion. On the other hand, if $\phi$ lies in the kernel, then as the compactly supported surface and braid groups are both torsion free, $\phi$ still is not torsion. But as $B_r$ and $\BB_r$ both contain infinite torsion subgroups, so do any finite index subgroups. The claim follows.

     To see that $B_r$ and $\BB_r$ are not commensurable, note that by \Cref{lem:psirestriction}, $\BB_r$ contains a normal subgroup isomorphic to a countable direct sum of copies of $\Z/2$. In particular, every finite index subgroup of $\BB_r$ contains a normal $2$-torsion subgroup. On the other hand, suppose $K$ is a finite index normal subgroup of $B_r$, and fix a element $\gamma\in K$ of order $2$. We will show that the normal closure of $\gamma$ in $K$ contains an infinite order element.

    First note that $\gamma \in \Map_c(\G_r)$, as if not it would have nontrivial image in the map appearing in  \Cref{prop:PBfluxSES} and thus would not be torsion.  Also, by the proof of \cite[Theorem D] {domat2025generating}, $\Map_c(\G_r)$ contains a unique finite index subgroup, denoted $\mathrm{SAut}_{\infty}(\G_r)$. This subgroup can be thought of as the kernel of the determinant map arising from the action of $\Map_c(\G_r)$ acting on $H_1(\G_r)$, the first homology of $\G_r$. It follows that $\mathrm{SAut}_{\infty}(\G_r)<K$ as $\mathrm{SAut}_{\infty}(\G_r)$ has no finite index subgroups. 

    There is an $n$ so that $\Aut(F_n)$ embeds as a subgroup of $\Map_c(\G_r)$ and $\tau \in \Aut(F_n)$ (see the proof of \Cref{lem:compactsuppPresentation}, for example). By ~\cite{shen2025}, there is an element $\delta\in \Aut(F_n)$ which has infinite order of the form
    \begin{equation*}
        \delta=\Pi_{i=1}^{\ell} g_i\gamma g_i^{-1}
    \end{equation*}
     for some $g_i\in \Aut(F_n)$, $i=1, \ldots, \ell$. Up to increasing $n$, we can choose a $\kappa \in \mathrm{SAut}_{\infty}(\G_r)\cap \Aut(F_n)$ commuting with $\delta$, $\gamma$, and each $g_i$. In particular, either $g_i\in \mathrm{SAut}(\G_r)$ or $\kappa g_i\in \mathrm{SAut}(\G_r)$. For example, choose $\kappa$ to be a loop flip whose support is disjoint from the supports and images of each of the maps. Then
     \begin{equation*}
         \delta=\kappa \delta \kappa^{-1}= \Pi_{i=1}^{\ell} f_i \gamma f_i^{-1}
     \end{equation*}
     where $f_i=g_i$ or $f_i=\kappa g_i$, with the choice made so that $f_i \in \mathrm{SAut}(\G_r)$, and in particular $f_i\in K$. Thus, we have exhibited the desired infinite order element in the normal closure of $\gamma$ in $K$, showing that $B_r$ and $\BB_r$ are not commensurable.

     Finally, by \cite[Corollary 1.3]{aramayona2023surface}, $S_r$ and $\brH_r$ are not commensurable, finishing the proof.
\end{proof}

Note that arguments analogous to those in \Cref{thm:MainThm_NonCommensurable} show that $\mathcal{H}_r$ is not commensurable to $B_r$ and $\BB_r$, as $\mathcal{H}_r$ is torsion free. Also, $\mathcal{H}_r$ is not commensurable to $H_r$ as the former has free abelian subgroups of infinite rank. However, the above techniques do not suffice to distinguish the commensurability class of $\mathcal{H}_r$ with that of $S_r$ or $\brH_r$.

Although these groups are not commensurable, the parallels between them seem to suggest a possible underlying relationship. A natural next question is:

\begin{Q}
    For any $r\geq 1$,
    are any of the groups $S_{r}, \brH_{r}$, $B_{r}$, $\mathcal{H}_{r}$, or $\BB_{r}$ quasi-isometric to each other?  Are any of the embeddings discussed in \Cref{fig:HoughtonVariants} quasi-isometric embeddings?
\end{Q}

The group $H_r$ is not included in the first part of the question as it is amenable, while the others are not. For the second part of the question, we do have an answer in the case of the graph and doubled handlebody Houghton groups. The short exact sequence corresponding to the map (5) in \Cref{fig:HoughtonVariants} splits by \Cref{lem:psirestriction}. In particular, this implies that the graph Houghton group is a direct factor, and therefore a retract, of the doubled handlebody Houghton group. It follows that the section $B_r \hookrightarrow \mathcal{B}_r$ is a quasi-isometric embedding. 

\section{Finite generation and presentation of $PB_r$}\label{sec:finitePresentation}
In this section and throughout the rest of the paper, we adopt the following simplified notation used for the graph Houghton group of the $(0,1)$-rigid structure on $\Gamma_r$: 
\begin{align*}
    B_r &:= B(0, 1, r), \\
    PB_r &:= PB(0, 1, r).
\end{align*}
Recall $PB(0,1,r)$ is the subgroup of $B(0,1,r)$  fixing the ends of $\G_r$ pointwise.
Note that since $\G_r$ has finitely many ends, $PB_r$ has finite index $r!$ in $B_r$.
We begin this section by giving a finite generating set of $B_r$ when $r \ge 2$, then we explicitly construct a presentation of $PB_r$ for $r \ge 3$.

\subsection{Finite generation of $B_r$, $r \ge 2$}

We follow the ideas presented in \cite[Section 2.3]{lee2012geometry}.
We warm up with the following observation:
\begin{PROP} \label{prop:B1NotFinGen}
    $B_1$ is not finitely generated.
\end{PROP}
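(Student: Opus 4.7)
The plan is to exhibit $B_1$ as a strictly ascending union of finitely generated subgroups $G_1 \subsetneq G_2 \subsetneq \cdots$, so that any finite generating set $S$ of $B_1$ would be forced to lie in some $G_k$, giving the contradiction $B_1 = \langle S \rangle \leq G_k \subsetneq G_{k+1} \leq B_1$. The layers $G_k$ will be identified with $\Aut(F_k)$, and this mirrors the classical argument that the finitary symmetric group on $\N$ is not finitely generated.

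The first step, and the main technical point, is to show that every element of $B_1$ has a representative that is the identity outside some $Z_k := \{x_0\} \cup Y_1 \cup \cdots \cup Y_k$. Since $\G_1$ has a single end and its pieces are attached in a ray, every connected suited subgraph containing $x_0$ has the form $Z_k$. Given $f \in B_1$ with defining graph $Z = Z_k$, the restriction $f|_{Z_k}$ is a proper (hence ordinary, since $Z_k$ is finite) homotopy equivalence onto the suited subgraph $f(Z_k) = Z_m$. Rank preservation under homotopy equivalence forces $k = m$, hence $f(Z_k) = Z_k$. Outside $Z_k$, the rigid action sends $Y_{k+i} \mapsto Y_{\sigma(k+i)}$ for some bijection $\sigma$ of $\{k+1, k+2, \ldots\}$. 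Continuity at each shared vertex $\partial_+ Y_{k+i} = \partial_- Y_{k+i+1}$ forces $\sigma(k+i+1) = \sigma(k+i) + 1$, so $\sigma$ is a translation of $\{k+1, k+2, \ldots\}$, and bijectivity then forces $\sigma$ to be the identity. Hence the rigid action outside $Z_k$ is trivial, and $f$ admits a representative equal to the identity outside $Z_k$.

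Let $G_k \leq B_1$ denote the subgroup of classes with such a representative. By the previous step, $B_1 = \bigcup_{k \geq 1} G_k$. Self-homotopy equivalences of $Z_k$ that fix $\partial_+ Y_k$, taken up to homotopy rel this point, are classified by $\Aut(\pi_1(Z_k, \partial_+ Y_k)) \cong \Aut(F_k)$, so $G_k \cong \Aut(F_k)$, and the inclusion $G_k \hookrightarrow G_{k+1}$ corresponds to the standard stabilization $\Aut(F_k) \hookrightarrow \Aut(F_{k+1})$ by fixing the last generator. This inclusion is strict: for instance, the Nielsen automorphism sending $x_{k+1} \mapsto x_{k+1} x_1$ and fixing the remaining generators lies in $G_{k+1} \setminus G_k$, since it is nontrivial on the loop in $Y_{k+1}$.

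To conclude, if $B_1$ were generated by a finite set $S$, then each $s \in S$ has a defining graph contained in some $Z_{k_s}$, so $S \subset G_K$ for $K := \max_{s \in S} k_s$; then $B_1 = \langle S \rangle \leq G_K \subsetneq G_{K+1} \leq B_1$, a contradiction. The principal obstacle is the first step, whose justification relies crucially on both the one-endedness of $\G_1$ (ruling out genuine shifts across ends) and the continuity constraint along shared boundaries of adjacent pieces; once the compact-support claim is established, the ascending union structure yields the result at once.
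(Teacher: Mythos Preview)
Your argument is correct and follows essentially the same approach as the paper's proof: both show that every element of $B_1$ is compactly supported (via the rank-preservation of defining subgraphs) and then conclude non-finite-generation from the resulting strictly ascending union of supports. Your version is simply more explicit---you spell out the continuity argument forcing the rigid permutation of pieces to be the identity, and you identify the layers $G_k$ with $\Aut(F_k)$ and exhibit a concrete element in $G_{k+1}\setminus G_k$---whereas the paper compresses all of this into a two-line remark that rank preservation forces compact support and that supports behave subadditively under composition.
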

\begin{proof}
    Since a proper homotopy equivalence for a rigid mapping class preserves the rank of a defining subgraph, it follows that every element of $B_1$ is compactly supported. Note for $f,g \in B_1$, the support of $fg$ is contained in the union of the supports of $f$ and $g$. Hence, by considering elements of $B_1$ with larger and larger supports, it follows that $B_1$ cannot be generated by finitely many elements.
\end{proof}

\begin{DEF}[Flux maps] \label{def:fluxmap}
Consider the $(r-1)$-fold \emph{flux map} $\Phi$ associated with the following splitting short exact sequence given in \cite[Theorem B]{domat2025generating}:
    \[
        1 \longrightarrow \ov{\Map_c(\G_r)} \longrightarrow \PMap(\G_r) \overset{\Phi}{\longrightarrow} \Z^{r-1} \longrightarrow 1
    .\]
    For complete details, see \cite[Section 7]{domat2023coarse} and \cite[Section 3.2]{domat2025generating}.  Heuristically, these maps measure the rate of loops flowing from end $e_1$ to end $e_j$, for $j = 2, \ldots,r$, under a pure mapping class.  For example, under the flux map  $\Phi: \PMap(\G_2) \to \Z$, the loop shift $h$ shown in \Cref{fig:b2} has $\Phi(h) = 1$.  In fact $\ker \Phi = \ov{\Map_c(\G_r)}$, the closure of the compactly supported mapping class group of $\G_r$.

\end{DEF}

\begin{figure}[ht!]
    \centering
    \includegraphics[width=.7\textwidth]{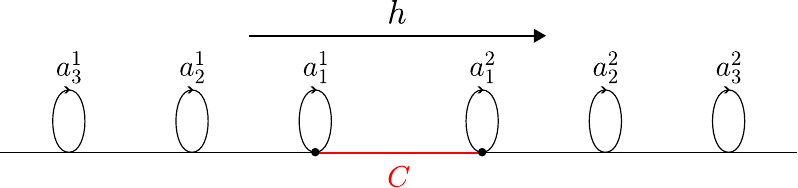}
    \caption{The graph $\G_2$ and its core $C$. The loops are labeled $a^i_{j}$, where $i$  indicates which component of $\G_2 \setminus C$ they are in. The shift $h \in B_2$ translates the loops to the right by one click as shown.}
    \label{fig:b2}
\end{figure}

Of particular relevance to us is that the flux map restricts to the pure graph Houghton group in a natural way.  
Since maps in $\ov{\Map_c(\G_n)}$ are pure, and each map of flux 0 in $PB_n$ is compactly supported, we have $\ov{\Map_c(\G_n)} \cap B_n = \Map_c(\G_n) \cap PB_n=\Map_c(\G_n)$ for $n \ge 1$.  

In the following proposition we need to further analyze the flux maps. To do this, fix a free group $B$. Let $A$ be a free factor of $B$, and write $B=A \ast C$ for another free factor $C$ of $B$. Then we first define the \emph{corank} of $A$ in $B$, denoted $\cork(B, A)$, to be the rank of $C$. Then we have the following.

\begin{PROP}\label{prop:PBfluxSES}
The flux map $\Phi$ restricts to $PB(g,h,r) \subset \PMap(\Gamma_r)$ with the following short exact sequence
\[
    1 \longrightarrow \Map_c(\G_r) \longrightarrow PB(g,h,r) \overset{\Phi\mid_{\textrm PB}}{\longrightarrow} (h\Z)^{r-1} \longrightarrow 1.
    \] 
    
\end{PROP}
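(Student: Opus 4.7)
The plan is to verify the short exact sequence by separately computing the image and the kernel of the restriction of $\Phi$ to $PB(g,h,r)$, using the fact from \Cref{def:fluxmap} that $\ker \Phi = \ov{\Map_c(\G_r)}$ inside $\PMap(\G_r)$. For the image containment $\Phi(PB(g,h,r)) \subseteq (h\Z)^{r-1}$, I would take an arbitrary $f \in PB(g,h,r)$ and choose a defining graph $Z$ for $f$. Outside $Z$, the map $f$ permutes the pieces of the rigid structure, and since $f$ fixes the ends of $\G_r$ pointwise, the infinite sequence of pieces in each $e_j$-tail (for $j = 2, \ldots, r$) is shifted rigidly by some integer $d_j$. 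Because each piece has rank exactly $h$, the $j$-th coordinate of $\Phi(f)$ equals $h d_j$, so $\Phi(f) \in (h\Z)^{r-1}$. For surjectivity, the elements $h_i^h$ from \Cref{ex:shiftMaps} lie in $PB(g,h,r)$ (they translate loops by exactly one piece width, and they fix the ends), and their fluxes $h e_{i-1}$ (for $i = 2, \ldots, r$) together generate $(h\Z)^{r-1}$.

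For the kernel, restricting gives $\ker \Phi|_{PB(g,h,r)} = \ov{\Map_c(\G_r)} \cap PB(g,h,r)$, and I would argue this equals $\Map_c(\G_r)$. The inclusion $\supseteq$ is immediate, since any compactly supported mapping class is pure and is asymptotically rigid relative to any defining graph containing its support. Conversely, if $f \in PB(g,h,r)$ satisfies $\Phi(f) = 0$, then by the calculation above every tail-shift $d_j$ vanishes, so $f$ acts trivially on all pieces outside a sufficiently large defining graph and therefore lies in $\Map_c(\G_r)$.

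I expect the main obstacle to be making the translation-on-tails bookkeeping precise. Concretely, one needs to unpack the definition of the flux map from \cite[Section 7]{domat2023coarse} and \cite[Section 3.2]{domat2025generating}, track sign conventions, and verify that for an asymptotically rigid pure mapping class, the $j$-th coordinate of $\Phi(f)$ really equals $h$ times the piece-shift $d_j$ in the $e_j$-tail, modulo the identification of $\Z^r$ with $\Z^{r-1}$ (the underlying relation being $\sum_j d_j = 0$, forced by purity together with asymptotic rigidity). Once this identification is in place, the three steps above combine immediately, and the factor of $h$ in the image appears exactly because each piece of the rigid structure carries $h$ loops.
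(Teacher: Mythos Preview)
Your proposal is correct and follows essentially the same approach as the paper: both arguments compute the kernel by showing that zero flux forces trivial tail-shifts (hence compact support), and both compute the image by observing that each piece carries rank $h$ together with using the $h$-th powers of loop shifts for surjectivity. The paper handles your ``main obstacle'' by working directly with the corank formula \eqref{eq:cork}, choosing the exhausting subgraphs $\G_n^{(i)} \subset \G_m^{(i)}$ to be suited so that both corank terms are visibly multiples of $h$, which is exactly the precise version of your tail-shift bookkeeping.
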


\begin{proof}
    Recall that the flux map is defined as follows: Identify the ends of $\G_r$ as $e_1,\ldots,e_r$. Then denote by $P_2,\ldots,P_{r}$ the 2-partitions of $E=\{e_1,\ldots,e_{r}\}$, given by $P_i:= (E \setminus \{e_{i}\}) \sqcup \{e_{i}\}$ for $i \in \{2\ldots,r\}$. For each $i \in \{2,\ldots, r\}$, we can exhaust $\G_r$ by a bi-infinite, increasing sequence of subgraphs $\{\G^{(i)}_k\}_{k=-\infty}^\infty$, each of which has end space $E \setminus \{e_{i}\}$, inducing the partition $P_i$ of $E$. This bi-infinite sequence of subgraphs induces a flux map $\Phi_i : \PMap(\G_r) \to \Z$, defined as for $[f] \in \PMap(\G_r)$,
    \begin{align}\label{eq:cork}
        \Phi_i([f]) := \cork(\pi_1(\G_m^{(i)}),\pi_1(\G_n^{(i)})) - \cork(\pi_1(\G_m^{(i)}), f_*\pi_1(\G_n^{(i)})), \tag{$\star$}
    \end{align}
    where $m>n$ is such that $f_*\pi_1(\G_n^{(i)})$ is a free factor of $\pi_1(\G_m^{(i)})$. Despite the definition, as noted in \cite[Lemma 7.10]{domat2023coarse}, $\Phi_i$ does not depend on the choice of $(m,n)$ or the sequence $\{\G_k^{(i)}\}$, as long as the sequence gives the same partition $E_i$. Recall for $j=2,\ldots,r$, the loop shift maps $h_{j}$ defined in \Cref{ex:shiftMaps}. Then we have $\Phi_i(h_j) = \delta_{ij}$, i.e., 1 if $i=j$ and $0$ otherwise.
    To define $\Phi,$ simply set $\Phi:\PMap(\G_r) \to \Z^{r-1}$ as
    \[
        \Phi([f]):= (\Phi_2([f]), \Phi_3([f]), \ldots, \Phi_{r}([f])).
    \]
    Note that $\Phi$ is surjective because of the subgroup $\langle h_2,\ldots,h_{r}\rangle$.
    
    Now, by restricting $\Phi$ to $PB(g,h,r) \subset \PMap(\G_r)$, we obtain
    \[
    1 \longrightarrow \Map_c(\G_r) \longrightarrow PB(g,h,r) \longrightarrow (h\Z)^{r-1} \longrightarrow 1.
    \]
    Indeed, for $i=2,\ldots,{r}$, a pure rigid proper homotopy equivalence of zero $\Phi_i$-flux fixes pieces near $e_{1}$ and $e_{i}$. This implies the kernel of the restricted $(r-1)$-fold flux map $\Phi|_{PB(g,h,r)}$ consists of compactly supported maps. On the other hand, for a pure $(g,h)$-rigid proper homotopy equivalence $f$ with its defining graph $Z$, and $i=2,\ldots,r$, there exist \emph{suited} $\G_m^{(i)}$ and $\G_n^{(i)}$ such that $\G_m^{(i)} \supset \G_n^{(i)} \cup f(\G_n^{(i)})$ and $\G_n^{(i)} \supset Z$. Then $f(\G_n^{(i)})$ is also $(g,h)$-suited, implying both of the corank terms in \eqref{eq:cork} are multiples of $h$. This implies the image of $\Phi|_{PB(g,h,r)}$ lies in $(h\Z)^{r-1}$. As the $h$-th powers of loop shifts $h_2^{h},\ldots,h_r^{h}$ are $(g,h)$-rigid, it follows that $\Phi|_{PB(g,h,r)}$ surjects onto $(h\Z)^{r-1}$. 
\end{proof}

Now consider $B_2$. Recall we denote by $C$ the core of $\G_2$, which is just an edge in this case.
Label the loops of $\G_2$ by $\{a^1_{i}\}_{i=1}^\infty \cup \{a^2_{j}\}_{j=1}^\infty$ so that each component of $\G_2 \setminus C$ contains the entirety of either $\{a^1_{i}\}_{i=1}^\infty$ or $\{a^2_{j}\}_{j=1}^\infty$. 
Let $h\in B_2$ be the loop shift as in \Cref{fig:b2}, $\tau\in B_2$ be the map that flips (i.e.\ changes the orientation of) the loop $a^1_{1}$, and $\sigma \in B_2$ be the loop swap between $a^1_{1}$ and $a^1_{2}$. Then let $\eta$ be the proper homotopy equivalence that induces an automorphism on $\pi_1(\G_2)$ such that $a^1_1 \mapsto \ov{a^1_2} a^1_1$, $a^1_2 \mapsto a^1_2$ and identity otherwise.
Finally, let $\rho$ be an element that swaps the two ends of $\G_2$. We claim:

\begin{PROP}\label{prop:B2finitegen}
\begin{align*} 
    PB_2 &= \<h, \tau, \sigma, \eta\>,\\
    B_2 &= \<h, \tau, \sigma, \eta, \rho\>.
\end{align*}
\end{PROP}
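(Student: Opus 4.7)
The plan is to use the flux short exact sequence from \Cref{prop:PBfluxSES}, which in the $(g,h,r)=(0,1,2)$ case reads $1 \to \Map_c(\Gamma_2) \to PB_2 \to \mathbb{Z} \to 1$, where the surjection is $\Phi$. Since $\Phi(h)=1$, the cyclic subgroup $\langle h \rangle$ surjects onto $\mathbb{Z}$, so every element of $PB_2$ can be written as $h^k f$ for some $k \in \mathbb{Z}$ and $f \in \Map_c(\Gamma_2)$. It therefore suffices to prove $\Map_c(\Gamma_2) \subseteq \langle h, \tau, \sigma, \eta \rangle$.

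To do this, I would identify $\Map_c(\Gamma_2)$ with the direct limit $\varinjlim_n \Aut(F_n)$ taken over a cofinal sequence of suited subgraphs $Z_n \subset \Gamma_2$ of rank $n$: any compactly supported proper homotopy class is supported on some $Z_n$ and, after fixing $\partial Z_n$, restricts to an automorphism of $\pi_1(Z_n, x_0) \cong F_n$. A classical Nielsen generating set for $\Aut(F_n)$ consists of inversions of a single generator, transpositions of adjacent generators, and Nielsen moves of the form $a_i \mapsto a_{i+1} a_i$. By construction, $\tau, \sigma, \eta$ realize these three types of moves on the adjacent pair $(a^1_1, a^1_2)$, and conjugating by $h^k$ translates the support along the graph, yielding analogous moves on any consecutive pair of loops---whether on the $e_1$ side, the $e_2$ side, or straddling the core---by choosing $k$ of suitable sign and magnitude. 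This produces a full Nielsen generating set for every $\Aut(F_n)$ inside $\langle h, \tau, \sigma, \eta \rangle$.

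The main obstacle is careful bookkeeping: verifying that the conjugates $\tau^{h^k}, \sigma^{h^k}, \eta^{h^k}$ really act as the expected Nielsen moves on the shifted pair of loops, and that loop orientations and the basepoint-conjugation data stay consistent as $k$ carries the support across the core from one end of $\Gamma_2$ to the other. Once this is verified, the subgroup $\langle \tau^{h^k}, \sigma^{h^k}, \eta^{h^k} : k \in \mathbb{Z} \rangle$ contains the Nielsen generators of every $\Aut(F_n)$ in the direct limit, giving $\Map_c(\Gamma_2) \subseteq \langle h, \tau, \sigma, \eta \rangle$ and completing the first equality. For the second equality, the $B_2$-action on the two-element end set of $\Gamma_2$ gives a surjection $B_2 \to \mathbb{Z}/2$ with kernel $PB_2$, and $\rho$ maps to the nontrivial element by definition, so $B_2 = \langle PB_2, \rho \rangle = \langle h, \tau, \sigma, \eta, \rho \rangle$.
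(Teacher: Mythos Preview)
Your approach is essentially the same as the paper's: both use the flux map to reduce to $\Map_c(\Gamma_2)$ and then show the latter is generated by $h$-conjugates of $\tau,\sigma,\eta$. One point the paper handles more carefully is your identification of compactly supported mapping classes on $Z_n$ with $\Aut(F_n)$: in fact $\Map(K,\partial K)\cong F_k\rtimes\Aut(F_k)$ when $|\partial K|=2$, so the map ``restrict and take the induced automorphism of $\pi_1$'' is not injective on a fixed $Z_n$. The paper absorbs the extra $F_k$ factor by enlarging $K$ to $K'$ and embedding $F_k\rtimes\Aut(F_k)\le\Aut(F_{k+2})$; your direct-limit formulation is ultimately equivalent, but the justification you give for the identification $\Map_c(\Gamma_2)\cong\varinjlim_n\Aut(F_n)$ needs this extra step.
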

\begin{proof}
    Since $B_2 \setminus PB_2$ is the coset $\rho PB_2$, it suffices to show the statement for $PB_2$. Let $g \in PB_2$ and say $\Phi(g) = m \in \Z$. Since $\Phi$ is a homomorphism, $f:=h^{-m}g \in \ov{\Map_c(\G_2)} \cap PB_2 \subset \Map_c(\G_2) \cap PB_2$. Suppose $f$ is supported on a subgraph $K$ of $\G_2$. By enlarging $K$, we may assume it is connected with $|\partial K|=2$. Writing $k := \rk(K)$, then $\Map(K, \partial K) = F_k \rtimes \Aut(F_k)$. We first show that every element in $\Aut(F_k)$ is generated by $h,\tau,\sigma,\eta$. The Armstrong--Forrest--Vogtmann \cite{armstrong2008a-presentation} generating set says that the $k$ flips on each generator of $F_k$, the $k-1$ transpositions of (adjacent) generators of $F_k$, and the map $\eta$ suffice. In fact, every flip in $\Aut(F_k)$ can be represented by a proper homotopy equivalence of the form $h^{-t}\tau h^t$ for some $t \in \Z$. Similarly, every (adjacent) transposition in $\Aut(F_k)$ is of the form $h^{-t}\sigma h^t$ for some $t \in \Z$. It follows that $\Aut(F_k) \le \<h,\tau,\sigma,\eta\>$.

    The $F_k$ part, when restricted to $K$, consists of (proper homotopy
    classes of) maps supported
    on edges incident to $\partial K$.
    Enlarge $K$ to $K'$, to include those edges
      incident to $\partial K$.
    Note that these maps have support and image in
      the core of $K'$, so they can be seen as
      elements of $\Aut(F_{k+2})$.  The argument in the previous paragraph does not
      depend on $k$, so it follows that every element in $\Aut(F_{k+2})$ is
      generated by $h, \tau, \sigma, \eta$. In particular, we have
      \[
      F_k
      \rtimes \Aut(F_k) \le \Aut(F_{k+2}) \le \<h, \tau, \sigma,
      \eta\>,\]
      concluding the proof.
\end{proof}

Before considering the finite generation of $B_r$ for $r \ge 3$, we remark an interesting relation between two different loop shifts, also witnessed from the usual Houghton group. To describe it, the following theorem is useful to recover the proper homotopy equivalence from the automorphism on $\pi_1(\G_r)$.

\begin{THM}[{\cite[Theorem 3.1]{algom-kfir2025groups}}]\label{thm:AB_Alexander}
    Suppose $X$ is a graph with all ends accumulated by loops and let $f: X \to X$ be a proper map so that $f_*=[\id] \in \Out(\pi_1(X))$. Then $f$ is properly homotopic to the identity on $X$. 
\end{THM}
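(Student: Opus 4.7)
The plan is to extend the classical Whitehead--Nielsen fact that proper homotopy classes of self-maps of a finite graph are classified by $\Out(\pi_1)$ to the infinite setting via a compact-exhaustion argument. First I would pick a basepoint $x_0 \in X$ and perform a preliminary proper homotopy sliding $f(x_0)$ back to $x_0$ along a chosen path, reducing to the case $f(x_0)=x_0$; the hypothesis $f_* = [\id] \in \Out(\pi_1(X,x_0))$ then says $f_*$ is inner on the based fundamental group, and a further proper homotopy wrapping $x_0$ around the conjugating loop makes $f_* = \id \in \Aut(\pi_1(X,x_0))$ on the nose.

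Next, I would choose an exhaustion $x_0 \in K_1 \subset K_2 \subset \cdots$ of $X$ by finite connected subgraphs with each $\pi_1(K_n)$ a free factor of $\pi_1(X)$, and, using properness of $f$ plus reindexing, arrange $f(K_n) \subset K_{n+1}$. The hypothesis that all ends are accumulated by loops enters here to guarantee that such an exhaustion exists in which each annular region $K_{n+1} \setminus K_n$ carries loops in every component of the complement. I would then inductively construct proper homotopies $H_n$ from $f$ to maps $f_n$ equal to the identity on $K_n$, with $H_n$ supported in $\overline{X \setminus K_{n-1}}$. At stage $n$, the restriction $f_{n-1}|_{K_n}$ is the identity on $K_{n-1}$ and acts trivially on $\pi_1(K_n)$, since $\pi_1(K_n)$ is a free factor of $\pi_1(X)$ and the global action is trivial, so the classical finite-graph theorem supplies a homotopy on $K_n$, which I extend by the identity outside a small neighborhood.

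The infinite concatenation $H_\infty = H_1 \ast H_2 \ast \cdots$ is then a proper homotopy from $f$ to $\id$, since $\mathrm{supp}(H_n)$ lies outside $K_{n-1}$ and the $K_n$ exhaust $X$, so preimages of compact sets stabilize. The main obstacle---and where the accumulated-by-loops hypothesis truly earns its keep---is in the inductive step: the homotopy coming from finite-graph Nielsen must be arranged to fix $K_{n-1}$ pointwise throughout, not merely at its endpoints, which requires absorbing a conjugating inner automorphism appearing near $\partial K_{n-1}$. The loops in $K_n \setminus K_{n-1}$ supplied by the hypothesis are what make this absorption possible; without them one could be forced to push a loop across an end, which properness forbids.
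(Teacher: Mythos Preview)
The paper does not prove this statement; it is cited from \cite[Theorem~3.1]{algom-kfir2025groups} and invoked only as a black box (to identify the commutator of two loop shifts with a loop swap). There is thus no proof in the present paper to compare your proposal against.

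Your exhaustion strategy is the natural one and is in the right spirit. One phrase needs adjusting: at the inductive step you produce a homotopy on $K_n$ and say you ``extend by the identity outside a small neighborhood,'' but $f_{n-1}$ is not the identity outside $K_n$, so the extension of the homotopy must be by the constant homotopy at $f_{n-1}$ there, glued to the finite-graph homotopy on $K_n$ via a collar of $\partial K_n$ (equivalently, invoke the homotopy extension property for the pair $(X,K_n)$ and then check that the resulting homotopy moves points only in a compact region, hence is proper). Your reading of where the accumulated-by-loops hypothesis enters is on target: it guarantees that every component of $\overline{K_n \setminus K_{n-1}}$ has positive rank, so the relative conjugation obstruction at each stage can be absorbed locally rather than being pushed across an end.
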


\begin{EX}[Commutator is a loop swap]\label{ex:loopshift_commutator}
Following \Cref{fig:loopshift_commutator},  take the two loop shifts $h_2$ and $h_3$ on $\G_3$ and label the loops $\alpha$ and $\beta$. Consider the commutator $[h_2,h_3]=\bar{h}_2\bar{h}_3h_2h_3$.  It induces on $\pi_1(\G_3)$ the transposition automorphism of two basis elements corresponding to $\alpha$ and $\beta$. Applying \Cref{thm:AB_Alexander}, we obtain
\[
    [h_2,h_3] \simeq \sigma,
\]
where $\sigma$ is the loop swap map between $\alpha=a^{1}_2$ and $\beta=a^{1}_1$.

\begin{figure}[ht!]
    \centering
    \begin{overpic}[width=.95\textwidth]{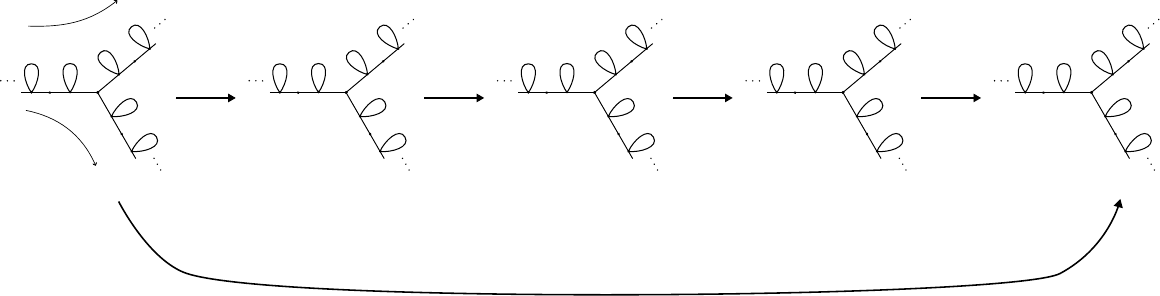}
    \put(-2.5,17.5){\scriptsize{$e_1$}}
    \put(14.5,24){\scriptsize{$e_2$}}
    \put(14,9){\scriptsize{$e_3$}}
    
    \put(5,24.5){\tiny{$h_2$}}
    \put(3.5,13){\tiny{$h_3$}}

    \put(16,18){\small{$h_3$}}
    \put(37.5,18){\small{$h_2$}}
    \put(59,18.4){\small{$\bar{h}_3$}}
    \put(80,18.4){\small{$\bar{h}_2$}}
    \put(50,1.5){$\sigma$}
    
    \put(2,20.5){\scriptsize{$\alpha$}}
    \put(5,20.5){\scriptsize{$\beta$}}
    
    \put(27,20.5){\scriptsize{$\alpha$}}
    \put(33.5,16){\scriptsize{$\beta$}}

    \put(51,21.5){\scriptsize{$\alpha$}}
    \put(55,16)
    {\scriptsize{$\beta$}}

    \put(72.5,21.5){\scriptsize{$\alpha$}}
    \put(69.5,20.5){\scriptsize{$\beta$}}

     \put(87.5,20.5){\scriptsize{$\beta$}}
    \put(91,20.5){\scriptsize{$\alpha$}}

    \end{overpic}
    \caption{Label the ends of $\Gamma_3$ as $e_1$, $e_2$, and $e_3$.  For $i=2,3$, let $h_i$ be the loop shift out of $e_1$ and into $e_i$.  The figure above illustrates how the commutator $[h_2, h_3]$ induces the loop swap $\sigma$.}
    \label{fig:loopshift_commutator}
\end{figure}
\end{EX}

Now we return to establishing the finite generation of $B_r$ for $r \ge 3$. This is similar to \Cref{prop:B2finitegen}, but we can omit $\sigma$ from the generating set thanks to \Cref{ex:loopshift_commutator}.
Let $e_1,\ldots,e_r$ be the ends of $\G_r$.
For each $i =2,\ldots,r$, let $h_i$ be the shift translating the loops from $e_{1}$ to $e_i$ as in \Cref{fig:br}. (This convention is consistent with $h \in B_2$ (see \Cref{fig:b2}).) The loop shifts $\{h_2,\ldots,h_{r}\}$ form a dual basis for the $r-1$ flux maps generating the $\Z^{r-1}$ quotient of $\PMap(\G_r)$. Let $C$ be the star graph, a tree graph isomorphic to the bipartite complete graph $K_{r,1}$. 
Denote the edges of $C$ by $C_1,\ldots,C_r$, which are incident to loops labeled by $a^1_{1},\ldots,a^r_{1}$ respectively. Label the rest of the loops $a^i_{j}$ for $1 \le i \le r$ and $j\ge 2$ accordingly.
Finally, for $i \neq j$, let $\rho_{ij}$ be a proper homotopy equivalence that swaps $e_i$ and $e_j$.

\begin{figure}[ht!]
    \centering
    \includegraphics[width=0.7\linewidth]{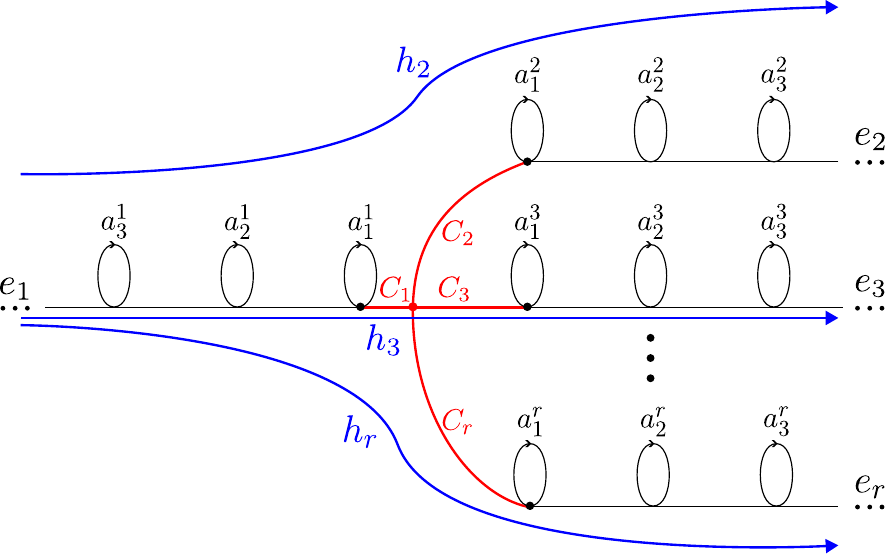}
    \caption{Let $e_1, \ldots, e_{r}$ be the ends of $\G_r$ and for $i=2,\ldots, r$, let $h_i$ denote the loop shift from $e_1$ to $e_i$. The star graph $C$ is in red, and the edge labeled by $C_i$ is the one contained in the complementary component of the middle point of the star graph with the end $e_i$. }
    \label{fig:br}
\end{figure}

\begin{PROP}\label{thm:Bnfinitegen}
For $r\geq 3$,
    \begin{align*}
    PB_r &= \<h_2,\ldots,h_{r}, \tau, \eta\>,\\
    B_r &= \<h_2,\ldots,h_{r}, \tau, \eta, \rho_{12},\ldots,\rho_{r-1,r}\>.
\end{align*}
\end{PROP}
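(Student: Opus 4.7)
The plan is to mirror the argument of \Cref{prop:B2finitegen}, with the critical new ingredient being \Cref{ex:loopshift_commutator}, which expresses $\sigma$ as $[h_2,h_3]$ and thus eliminates the need to include $\sigma$ in the generating set when $r\geq 3$.

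First I would reduce from $B_r$ to $PB_r$. The quotient $B_r/PB_r$ is the symmetric group on $\{e_1,\ldots,e_r\}$, and the images of $\rho_{12},\ldots,\rho_{r-1,r}$ give the adjacent transpositions, which generate this symmetric group. So the second equality follows from the first once we add these $r-1$ coset representatives.

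For the first equality, I would use \Cref{prop:PBfluxSES} specialized to $(g,h)=(0,1)$, giving the short exact sequence
\[
    1 \longrightarrow \Map_c(\G_r) \longrightarrow PB_r \overset{\Phi}{\longrightarrow} \Z^{r-1} \longrightarrow 1.
\]
Since $\Phi(h_i)$ is the $(i-1)$-st standard basis vector, the subgroup $\langle h_2,\ldots,h_r\rangle$ surjects onto $\Z^{r-1}$. Hence any $g\in PB_r$ can be written as $g = f\cdot h_2^{m_2}\cdots h_r^{m_r}$ for some $f\in\Map_c(\G_r)$, and it suffices to show $\Map_c(\G_r)\subset \langle h_2,\ldots,h_r,\tau,\eta\rangle$.

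For this last containment, I would repeat the argument from the proof of \Cref{prop:B2finitegen}: an arbitrary $f\in\Map_c(\G_r)$ is supported on a finite connected subgraph $K$ containing the core $C$, which (after enlarging) we may assume has $|\partial K|=r$, with one boundary vertex on each ray heading to $e_i$. Then $\Map(K,\partial K)\cong F_k\rtimes \Aut(F_k)$ for $k=\rk K$. By the Armstrong--Forrest--Vogtmann generating set \cite{armstrong2008a-presentation}, $\Aut(F_k)$ is generated by flips on the basis elements, adjacent transpositions of basis elements, and one Nielsen-type element analogous to $\eta$. Each flip is a conjugate of $\tau$ by a word in the $h_i$'s transporting it to the desired loop, each adjacent transposition is a conjugate of $\sigma=[h_2,h_3]$ by such a word, and the Nielsen generator is $\eta$ (or a conjugate). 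The $F_k$-factor of $\Map(K,\partial K)$ is handled exactly as in the $B_2$ proof: enlarge $K$ to include the edges just past $\partial K$, putting these boundary maps into $\Aut(F_{k+r})$, to which the same argument applies.

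The main obstacle I anticipate is bookkeeping in Step~4: one must fix a consistent linear ordering of a free basis of $F_k$ across the $r$ different rays emanating from the star-shaped core, and then verify that appropriate conjugates of $\tau$, $\sigma$, and $\eta$ by products of loop shifts really do realize \emph{every} flip, every adjacent transposition, and the Nielsen generator with respect to that ordering. A careful choice of ordering (for instance, traversing the rays one at a time in a fixed cyclic order, with the position within a ray recorded by the exponent of $h_i$) should make the conjugation pattern transparent, but getting the transpositions between loops on \emph{different} rays (at the junction of the core) requires using conjugates of $\sigma$ by shifts that cross the central vertex, which is where \Cref{ex:loopshift_commutator} is genuinely essential.
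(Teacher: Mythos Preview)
Your proposal is correct and follows essentially the same route as the paper's proof: reduce to $PB_r$ via the end-permutation quotient, use the flux short exact sequence to reduce to $\Map_c(\G_r)$, then invoke the AFV generating set on a compact subgraph $K$ and conjugate $\tau,\sigma,\eta$ by shifts, finally eliminating $\sigma$ via \Cref{ex:loopshift_commutator}. One small correction: when $|\partial K|=r$ you have $\Map(K,\partial K)\cong F_k^{\,r-1}\rtimes\Aut(F_k)$, not $F_k\rtimes\Aut(F_k)$ (there are $r-1$ free boundary-point-pushing factors, not one), but since you already enlarge $K$ to $K'$ and embed everything into $\Aut(F_{k+r})$ this does not affect the argument.
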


\begin{proof}
    Note that $\rho_{12},\ldots,\rho_{r-1,r}$ generate $\Homeo(E(\G_r)) \cong S_r$.
    Hence, the reduction to generating $\Map(K)$ for a compact connected
    subgraph $K$ with $|\partial K|=r$ of $\G_r$ with the generating set
    $\{h_2,\ldots,h_{r}, \tau, \sigma, \eta\}$ is the same as in
    \Cref{prop:B2finitegen}.
    Note that $\Map(K) \cong F_k^{r-1} \rtimes \Aut(F_k)$, and $F_k^{r-1}$ consists of maps
    supported on the edges incident to $\partial K$. Again, following the proof of
    \Cref{prop:B2finitegen}, by conjugating $\tau,\sigma$ by shifts, we can
    obtain all the transpositions and loop flips in $K$. Similarly, we can embed $F_{k}^{r-1}$ into $\Aut(F_{k+r})$ by
    considering $\Map(K')$ where $K' \supset K$ contains the edges incident to $\partial K$.
    Hence, so far we have $F_k^{r-1} \rtimes \Aut(F_k) \le \<h_2,\ldots,h_{r}, \sigma,
    \tau, \eta\>$.

  Finally, by \Cref{ex:loopshift_commutator}, the transposition $\sigma$ can be
  expressed as a commutator of two loop shifts. This allows us to remove
  $\sigma$ from the generating set, showing $F_k^{r-1} \rtimes \Aut(F_k) \le \<
  h_2,\ldots,h_{r}, \tau, \eta\>$. Since the generating set does not
  depend on $k$, this shows that $\Map_c(\G_r) \le \<h_2,\ldots,h_{r},
  \tau, \eta\>$, so we obtain the desired generating sets for $PB_r$ and $B_r$.
\end{proof}

\subsection{Finite presentation of $PB_r, r \ge 3$}\label{ssec:outline}
The remainder of this section is devoted to constructing an explicit presentation for $PB_r$, $r \ge 3$.  
Recall by \Cref{prop:PBfluxSES} we have
\begin{align}
\label{eqn:SES_pure}
        1 \longrightarrow \Map_c(\G_r) \longrightarrow  &PB_r \longrightarrow \Z^{r-1} \longrightarrow 1,
\end{align}
Building on the finite generating set for $ PB_r $ established in \Cref{prop:B2finitegen,thm:Bnfinitegen}, we now derive a presentation by considering an extended (infinite) generating set for $ \Map_c(\G_r) $ modeled on the generating sets introduced by Armstrong--Forrest--Vogtmann~\cite{armstrong2008a-presentation}. These consist of the element $\eta $, \emph{all} loop flips, and \emph{all} loop swaps of \emph{adjacent} loops. For brevity, we refer to this generating set of \( \Map_c(\G_r) \) as the \emph{AFV-generators}.

The following is a summary of the relations we need to get the presentation of $PB_r$ in the subsequent subsections:
\begin{enumerate}[label=$\<$\Roman*$\>$]
     \item Relations of AFV-generators in $\Map_c(\G_r)$ --- \Cref{sssec:AFV}.
     \item Relations of (lifts of) $\Z^{r-1}$. --- The only relations here are the commutators, see \Cref{ex:loopshift_commutator}.
     \item Relations of conjugations of AFV-generators by shifts --- \Cref{sssec:conjugations_by_shifts}.
 \end{enumerate}
 Combining these we get a presentation for $PB_r$ in \Cref{sssec:presentation}, and in \Cref{thm:PBrPresentation} we reduce it to a finite presentation.
 
\subsubsection{A presentation of $\Map_c(\Gamma_r)$}\label{sssec:AFV}
Suppose $n \ge 4$. In \Cref{lem:compactsuppPresentation}, we will produce a presentation of $\Map_c(\Gamma_r)$ as a direct limit of subgroups $\{\Aut(F_{rn})\}_{n=4}^{\infty}$. Here, $\Aut(F_{rn})$ denotes the subgroup of automorphisms of $\pi_1(\G_r)$ which only involve the first $n$ loops in each ray.

To find a generating set for $\Aut(F_{rn})$, we use a variation of the Armstrong--Forrest--Vogtmann generating set \cite{armstrong2008a-presentation}. Recall in $\G_r$, $a_j^i$ is the $j$th loop in the $i$th ray, where indices run $1\leq i \leq r$, $1\leq j \leq n$. In the following definitions of maps, the loops not shown are sent to themselves.
\begin{enumerate}[label=(\roman*)]

    \item $\tau_j^i: a_j^i \mapsto \overline{a_j^i}$
    \qquad (i.e., flips $a_j^i$ to $\ov{a_j^i}$.)
    
    \item $\sigma_{j\ell}^{ik} : a_j^i \mapstoandfrom a_\ell^k$
    \qquad (i.e., swaps $a_j^i$ and $a_{\ell}^k$)  which can be expressed as a product of the \emph{transpositions} of adjacent loops (see \Cref{fig:Transpositions}): 
 \begin{itemize}
       \item $s_j^i := \sigma_{j,j+1}^{i,i}$, \qquad for $1 \le i \le r$, $1 \le j\le n$,  
       \item $s_0^i := \sigma_{11}^{1i}$, \qquad for $2 \le i \le r$. 
   \end{itemize} 

    \item $\eta:\begin{cases} 
      a_1^1\mapsto \overline{a_2^1} \; a_1^1 \\
      a_2^1\mapsto \overline{a_2^1}
   \end{cases}
   $
\end{enumerate}

 \begin{figure}[ht!]
    \centering
    \begin{overpic}[width=0.6\linewidth]{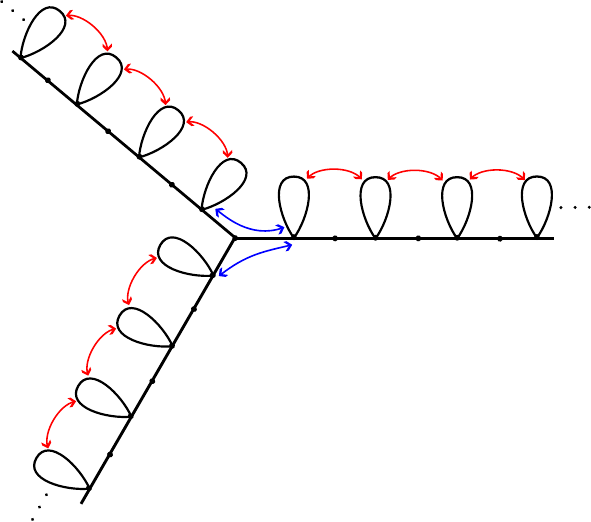}
        \put(0,2){$e_3$}
        \put(103,52){$e_1$}
        \put(-5,90){$e_2$}

        \put(41,51){\color[HTML]{0000FF}{$s_0^2$}}
        \put(42,41){\color[HTML]{0000FF}{$s_0^3$}}

        \put(8,88){$a_4^2$}
        \put(20,79.5){$a_3^2$}
        \put(30,71){$a_2^2$}
        \put(41,61){$a_1^2$}

        \put(11,80){\color[HTML]{FF0000}{$s_3^2$}}
        \put(21,71){\color[HTML]{FF0000}{$s_2^2$}}
        \put(32,62){\color[HTML]{FF0000}{$s_1^2$}}

        \put(47.5,60){$a_1^1$}
        \put(61,60){$a_2^1$}
        \put(74.5,60){$a_3^1$}
        \put(88,60){$a_4^1$}

        \put(54.5,55){\color[HTML]{FF0000}{$s_1^1$}}
        \put(68,55){\color[HTML]{FF0000}{$s_2^1$}}
        \put(82,55){\color[HTML]{FF0000}{$s_3^1$}}

        \put(23,48){$a_1^3$}
        \put(16,36){$a_2^3$}
        \put(8.5,23){$a_3^3$}
        \put(1,10){$a_4^3$}

        \put(23,39){\color[HTML]{FF0000}{$s_1^3$}}
        \put(16,27){\color[HTML]{FF0000}{$s_2^3$}}
        \put(9,14.5){\color[HTML]{FF0000}{$s_3^3$}} 
        
    \end{overpic}
    \caption{The $(nr + r - 1)$-transpositions $s^i_j$'s which, with $\tau^1_1$, generate the subgroup of $\Aut(F_{rn})$ that inverts and permutes generators. These transpositions simplify the expressions used to define the group relations.}
    \label{fig:Transpositions}
\end{figure}

Denote by $I^r_{n}=\{(i,j)\mid 1 \le i \le r,\ 0 \le j \le n-1\} \setminus \{(1,0)\}$. Namely, if $(i,j) \in I^r_{n}$, then the notation $s^i_j$ makes sense as an element of $\Aut(F_{rn})$. 

\begin{THM}
[{\cite[Theorem 1]{armstrong2008a-presentation}}] \label{thm:AFVpresentation} Let $n\ge 4$. The set 
\[
A_{r,n}:= \{\tau, \eta\} \cup \{s^i_j\}_{(i,j) \in I^r_{n}}
\]
together with the following relations present $\Aut(F_{rn})$: for each $(i,j) \in I:= I^r_n$,
    \begin{enumerate}
        \item those generating the subgroup of $\Aut(F_{rn})$ that inverts and permutes the generators, namely $\tau=\tau_1^1$ and the $(nr + r - 1)$-transpositions $s^i_j$'s: 
        \begin{enumerate}
        \item $(s^i_j)^2=1$

        \item $\begin{cases}[s_j^i, s_\ell^k] = 1, & \text{if $(k,\ell) \in I$, and $|j - \ell| \ge 2$},\\
        [s^i_j, s^k_{\ell}] = 1 & \text{if $(k,\ell) \in I$, $i \neq k$, $|j - \ell| \le 1$, and $j,\ell \ge 1$.}
        
        \end{cases}$

        \item $\begin{cases}(s_j^i s_\ell^i)^3 = 1 &\text{if $(i,\ell) \in I$, and $|j - \ell| = 1$,} \\
        (s_0^i s_1^1)^3 = 1 & \text{if $(i,0) \in I$,}\\
        (s_0^i s_0^j)^3 = 1 & \text{if $(i,0),(j,0) \in I$ and $i\neq j$.}
        \end{cases}$  

        \item $\tau^2 = 1$.

        \item $[\tau,s_j^i]^2 = 1$ if $(i,j) \in \{(1,1), (i,0) \mid 2 \le i \le r\}$.

        \item $[\tau, s_j^i] = 1$ if $(i,j) \not\in \{(1,1), (i,0) \mid 2 \le i \le r\}$. 
        
    \end{enumerate}
        \item those involving $\eta$: 
        \begin{enumerate} \setlength{\itemsep}{3pt}
    \item $\eta^2=1$
    \item $(\eta s^1_1)^3=1$, 
    \item
    $\begin{cases}
    [\eta, \tau^{s^1_{j}\cdots s^1_{1}}]=1 & \text{if $j \ge 2$} \\
    [\eta, \tau^{s^i_{j}\cdots s^i_{0}}]=1 & \text{if $i \ge 2$}
    \end{cases}$ 
    \item $\begin{cases}
        [\eta, s^i_j]=1 & \text{if $j \neq 0$ and $(i,j)\neq (1,1), (1,2)$}\\
        [\eta, (s_2^1)^{s_0^i s_1^1}] =1 & \text{for $2 \le i \le r$}
        \end{cases}
        $
   \item $((\eta\tau)^2\tau^{s^1_1})^2=1$
   \item $(\eta (s_1^1)^{s_2^1}\tau^{s^1_1}\eta s^1_1)^4=1$
   \item $s^1_1\eta (s^1_1)^{s^1_2}\tau^{s^1_1}\eta s^1_1(s_2^1  \eta (s_1^1)^{s_2^1} \tau^{s_1^1} \eta)^2 = 1$
   \item $((s_1^1)^{{s^1_2}^{s^1_3}}s^1_2\eta)^4=1$
\end{enumerate}
    \end{enumerate}
\end{THM}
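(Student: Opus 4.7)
The theorem is attributed to Armstrong--Forrest--Vogtmann, so the plan is to reduce it to their original presentation of $\Aut(F_N)$ from \cite{armstrong2008a-presentation} with $N = rn$, by matching generators and relations through a relabeling. Their presentation uses a linearly indexed free group with generators consisting of one flip $\tau$, the $N-1$ adjacent transpositions $(i,i+1)$, and a Whitehead-type automorphism $\eta$. Our statement differs only in the choice of indexing: we label the free generators by pairs $(i,j) \in I^r_n \cup \{(1,0)\}$ corresponding to the loops in the star-shaped graph $\G_r$, and replace the linear chain of transpositions by the star-shaped set $\{s^i_j\}$, where each ray has its own chain $s^i_1, s^i_2, \ldots$ attached to the central vertex via $s^i_0$ for $i \ge 2$ (and via $s^1_1$ on the distinguished ray).

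First, I would verify that $\{s^i_j\}_{(i,j)\in I^r_n}$ gives a Coxeter presentation of $S_{rn}$ whose Coxeter diagram is the tree obtained by replacing the AFV path by a star of rays. The relations (a)(i)--(iii) are precisely the Coxeter relations for this diagram: involutions, commutation between far-apart or cross-ray transpositions, and braid relations for adjacent ones. Adding $\tau$ along with relations (a)(iv)--(vi) extends this to a presentation of the signed permutation subgroup $(\Z/2)^{rn} \rtimes S_{rn} \le \Aut(F_{rn})$, since $\tau$ braids with the transpositions moving $a^1_1$ and commutes with all others. The Tietze transformations from the AFV linear Coxeter presentation to ours are straightforward, using the fact that any two tree-shaped Coxeter generating sets of $S_N$ are related by a sequence of elementary moves.

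The substantive work lies in handling $\eta$. Here I would apply Tietze transformations to translate each AFV $\eta$-relation under the change of Coxeter basis. For instance, the original relation of the form $[\eta, \tau^{s_{j}\cdots s_{1}}] = 1$ becomes $[\eta, \tau^{s^1_j \cdots s^1_1}] = 1$ for rays through $e_1$, and $[\eta, \tau^{s^i_j \cdots s^i_0}] = 1$ for rays through $e_i$ with $i \ge 2$, giving relation (g)(iii). The commutation relations (g)(iv) and the length-four identities (g)(v)--(viii) admit analogous translations. The main obstacle is careful bookkeeping: matching each AFV relation to its star-indexed counterpart and checking that no redundant or missing relations appear. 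Since $\eta$ involves only $a^1_1$ and $a^1_2$, each individual check reduces to a direct Nielsen-move computation in $F_{rn}$, making the argument conceptually straightforward but notation-heavy; the hypothesis $n \ge 4$ is needed so that relations like (g)(viii), which involve generators three steps out along the distinguished ray, make sense.
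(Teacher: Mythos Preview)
Your approach is essentially the same as the paper's: both reduce to the original Armstrong--Forrest--Vogtmann presentation via relabeling and Tietze moves. The paper treats your linear-to-star Coxeter change and the replacement of the flips $\tau^i_j$ by conjugates of the single $\tau$ as routine relabeling and does not dwell on it. The one substantive point the paper singles out---which you fold into general ``bookkeeping''---is that relation (2d) is a genuine \emph{reduction} of the corresponding AFV relation: AFV requires $[\eta,\sigma]=1$ for \emph{every} loop swap $\sigma$ whose support is disjoint from $a^1_1\cup a^1_2$, whereas (2d) lists only the adjacent swaps $s^i_j$ plus the extra family $(s_2^1)^{s_0^i s_1^1}$. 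The paper justifies this by observing that any such $\sigma$ can be conjugated, using only transpositions that already commute with $\eta$ via (2d), to one of $s^1_3$, $s^2_1$, or $(s^1_2)^{s_0^i s_1^1}=\sigma^{i1}_{13}$ for some $i\ge 2$. This is the one step that is not pure translation, so you should make it explicit rather than leaving it under ``checking that no redundant or missing relations appear.''
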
 

\begin{proof}
    This presentation is very similar to that appearing in \cite[Theorem 1]{armstrong2008a-presentation}. Indeed, the only difference is relabeling, the replacement of $\tau^i_j$'s with the conjugations of $\tau$ throughout (2c),(2e),(2f), and (2g), as well as our relation (2d) compared to their relation (4). Namely, they explicitly include relations saying that \emph{all} loop swaps which should commute with $\eta$ do indeed commute, while we give the same relations but only for loop swaps of adjacent loops, along with a small collection of extra such relations. In fact, any loop swap whose support is disjoint from $a^1_1 \cup a^1_2$ (the support of $\eta$) can be conjugated by those $s^i_j$ commuting with $\eta$ to one of the following: $s^1_3, s^2_1,$ or $(s^1_2)^{s_0^i s_1^1}=\sigma_{13}^{i1}$ for $i\geq 2$, where the first two commute with $\eta$ by the first relations in (2d), and the third type commute with $\eta$ by the second type of relations in (2d).
\end{proof}

   \begin{RMK}\label{rmk:Relator2f}
        A small improvement upon this presentation can be made: the relator labeled (2f) is a consequence of (2g) and the fact that $\tau^{s_1^1}$ and $(s_1^1)^{(s_1^2)}$ commute. More precisely, observe that (2g) can be rewritten as $$\eta (s_1^1)^{s_2^1}\tau^{s_1^1}\eta s_1^1 \eta (s_1^1)^{s_2^1}\tau^{s_1^1}\eta s_1^1 = s_2^1\eta\tau^{s_1^1}(s_1^1)^{s_2^1}\eta s_2^1.$$ We can then simplify (2f) as follows: \begin{align*}
            (\eta(s_1^1)^{s_2^1}\tau^{s_1^1}\eta s_1^1)^4 &= (\eta(s_1^1)^{s_2^1}\tau^{s_1^1}\eta s_1^1\eta(s_1^1)^{s_2^1}\tau^{s_1^1}\eta s_1^1)^2\\
            &= (s_2^1\eta(\tau_1^1)^{s_1^1}(s_1^1)^{s_2^1}\eta s_2^1)^2
        \end{align*}
        After writing the final expression out, one can see that the only obstruction to all these involutions canceling each other out is swapping a pair of $\tau^{s_1^1}$ and $(s_1^1)^{s_2^1}$, which is a consequence of the relations (1f) and (1c).
   \end{RMK}
\begin{RMK}
We remark that our $r$ and $n$ in \Cref{thm:AFVpresentation} are flipped from the notation in \cite{lee2012geometry}, where the author uses $n$ for the number of rays, and $r$ for the number of points (which correspond to loops in $\G_r$) from the center.    
\end{RMK}

Denote by $Q_{r,n}$ the relations in \Cref{thm:AFVpresentation}. Define $A_r = \cup_{n=4}^{\infty} A_{r,n}$ and $Q_r=\cup_{n=4}^{\infty} Q_{r,n}$.

\begin{LEM}\label{lem:compactsuppPresentation}
    $\Map_c(\G_r)\cong \<A_r \mid Q_r\>$.
\end{LEM}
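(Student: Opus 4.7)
The plan is to exhibit $\Map_c(\G_r)$ as a directed union of the subgroups $\Aut(F_{rn})$ for $n\geq 4$, and then invoke the standard fact that a directed union of groups with nested generating sets and nested relations is presented by the union of those generators and relations.

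First I would establish the set-theoretic equality $\Map_c(\G_r)=\bigcup_{n=4}^{\infty}\Aut(F_{rn})$. Given $f\in\Map_c(\G_r)$, pick a compactly supported representative and enlarge its support to a suited subgraph $K_n$ consisting of the core together with the first $n$ loops on each ray, making $n$ large enough (at least $4$) that both the support and image lie in the interior of $K_n$. A priori, $\Map(K_n,\partial K_n)$ splits as $F_{rn}^{\,r-1}\rtimes \Aut(F_{rn})$, where the $F_{rn}^{\,r-1}$ factor consists of classes supported on edges adjacent to $\partial K_n$ (these are the ``spur slides'' allowed by relative mapping classes). However, passing from $K_n$ to $K_{n+1}$ absorbs each such spur slide into an honest automorphism of $\pi_1(K_{n+1})=F_{r(n+1)}$, exactly as in the closing paragraph of the proof of \Cref{prop:B2finitegen}. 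Hence every element of $\Map_c(\G_r)$ is realized in $\Aut(F_{r(n+1)})$ for some $n$, and conversely each $\Aut(F_{rn})$ embeds into $\Map_c(\G_r)$ by extending by the identity on the rest of $\G_r$.

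Second, I would check compatibility of the AFV data under the inclusion $\Aut(F_{rn})\hookrightarrow\Aut(F_{r(n+1)})$. By construction, $A_{r,n}\subset A_{r,n+1}$: the generator $\eta$ and the flip $\tau=\tau^1_1$ are present in every $A_{r,n}$, and the transpositions $s^i_j$ indexed by $I^r_n$ are a subset of those indexed by $I^r_{n+1}$. Similarly, each relation in $Q_{r,n}$ only involves generators whose indices lie in $I^r_n$, so it lies verbatim in $Q_{r,n+1}$. Thus $A_{r,n}\subset A_{r,n+1}$ and $Q_{r,n}\subset Q_{r,n+1}$, and the homomorphism from $\langle A_{r,n}\mid Q_{r,n}\rangle$ to $\langle A_{r,n+1}\mid Q_{r,n+1}\rangle$ induced by inclusion of generators is precisely the subgroup inclusion $\Aut(F_{rn})\hookrightarrow\Aut(F_{r(n+1)})$ coming from the topological picture above. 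In other words, $(\{A_{r,n}\},\{Q_{r,n}\})$ is a direct system of presentations whose direct limit on the group side is $\bigcup_n\Aut(F_{rn})=\Map_c(\G_r)$.

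Third, I would apply the general fact that the direct limit of a system of groups $G_n=\langle A_n\mid Q_n\rangle$ with $A_n\subset A_{n+1}$ and $Q_n\subset Q_{n+1}$ is presented by $\bigcup_n A_n$ modulo $\bigcup_n Q_n$. Concretely, the surjection $\langle A_r\mid Q_r\rangle\twoheadrightarrow \Map_c(\G_r)$ sending each generator to the mapping class of the same name is well-defined because every relation in $Q_r$ lies in some $Q_{r,n}$ and hence already holds in $\Aut(F_{rn})\leq \Map_c(\G_r)$. For injectivity, any word $w$ in $A_r$ that maps to the identity uses only finitely many letters, so $w$ lies in $\langle A_{r,n}\mid Q_{r,n}\rangle=\Aut(F_{rn})$ for some $n$; by \Cref{thm:AFVpresentation}, $w$ is trivial already using relations from $Q_{r,n}\subset Q_r$. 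This completes the proof.

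The main obstacle I expect is the first step: verifying that passing from $K_n$ to $K_{n+1}$ genuinely kills the $F_{rn}^{\,r-1}$ factor of spur slides, so that every compactly supported mapping class is eventually realized by an element of $\Aut(F_{rn})$ for some $n$, rather than only by an element of the larger relative mapping class group. Once this point is made precisely, the rest is a formal direct-limit argument.
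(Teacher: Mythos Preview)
Your proposal is correct and follows essentially the same approach as the paper: both exhibit $\Map_c(\G_r)$ as the direct limit of the sequence $\{\Aut(F_{rn})\}_{n\ge 4}$ via a cofinality/sandwiching argument (the paper phrases it as $\Map(\Delta_{r,n-1}) \le \Aut(F_{rn}) \le \Map(\Delta_{r,n})$, while you phrase it as the $F_{rn}^{\,r-1}$ factor of spur slides in $\Map(K_n,\partial K_n)$ being absorbed into $\Aut(F_{r(n+1)})$), and then take the union of the AFV presentations. The obstacle you flag is exactly the content of the sandwiching step, and your resolution via the closing paragraph of \Cref{prop:B2finitegen} is the same as the paper's.
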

\begin{proof} 
    Endow $\G_r$ with the path metric with all edge lengths $1$.
    Let $\Delta_{r,n}$ be the finite subgraph of $\G_r$ lying in the closed $(n+\frac12)$-ball centered at the middle vertex of the core of $\G_r$. Then $\G_r$ is exhausted by $\{\Delta_{r,n}\}_{n=4}^\infty$, so it follows that \[
    \Map_c(\G_r) \cong \varinjlim_{n} \Map_c(\Delta_{r,n})=\varinjlim_{n} \Map(\Delta_{r,n}).
    \]
     
    However, we have the following diagram: 
    \[
    \begin{tikzcd}
        \Aut(F_{rn}) \rar[hook] \dar{\cong}
        &\Map_c(\G_r) \\
        \<A_{r,n} \mid Q_{r,n}\> \rar[hook] & \<A_r \mid Q_r\>,
    \end{tikzcd}
    \]
    where two hooked arrows are inclusions, and the left vertical arrow is an isomorphism by \Cref{thm:AFVpresentation}.

    On the other hand, note that $\Aut(F_{rn}) \le \Map(\Delta_{r,n})$ by identifying $\Aut(F_{rn})$ as the mapping class group of the union $(\Delta_{r,n})_c^*$ of an edge and the core graph of $\Delta_{r,n}$ (thinking of the valence $1$ vertex as a marked point). With this identification, we can conversely see that $\Aut(F_{rn}) \ge \Map(\Delta_{r,n-1})$ as $\Delta_{r,n-1} \subset (\Delta_{r,n})_c^*$. Therefore, the two nested sequences of subgroups $\{\Map(\Delta_{r,n})\}_{n \ge 4}$ and $\{\Aut(F_{rn})\}_{n \ge 4}$ yield the same direct limits.
    
    Taking the direct limits on the left column of the commutative diagram, the inclusions become isomorphisms for both cases. This induces an isomorphism $\Map_c(\G_r) \to \<A_r \mid Q_r\>$, concluding the proof.
\end{proof}

\subsubsection{Conjugations by shifts}\label{sssec:conjugations_by_shifts}
According to the beginning of \Cref{ssec:outline}, to complete the presentation of $PB_r$ it only remains to find the conjugation relations of generators of $\Map_c(\G_r)$ with shifts.

Recall we use the conjugation notation $g^h := hg\ov{h}$, where the functions in the word are applied from the right to left. Since $h_i(a^1_1) = a^i_1$ and $h_i(a^1_2) = a^1_1$, $h_i$ coincides with the 3-cycle $s_1^1 s_0^i = (a^i_1\ a^1_1\ a^1_2)$ on $a^1_1 \cup a^1_2$. Similarly, $\ov{h}_i$ coincides with $s_2^1s_1^1$ on $a^1_1 \cup a^1_2$. Hence we have the following relations for conjugations of $\eta,\tau_1^1$, and $s_1^1$ by shifts. For $2 \le i \le r$,

\begin{align}
    \label{eqn:Qr'}
    \tag{$Q_r'$}
    \begin{cases}
      \eta^{\ov{h}_i} = \eta^{s^1_1 s^1_2} \\ 
      \eta^{h_i} = \eta^{s_1^1 s_0^i}
    \end{cases}
    \quad
    \begin{cases}
    (\tau_1^1)^{\ov{h}_i} =(\tau_1^1)^{s_1^1}
    \\
    (\tau_1^1)^{h_i} = (\tau_1^1)^{s_0^i}
    \end{cases}
    \quad
    \begin{cases}
    (s_1^1)^{\ov{h}_i^n} = s^1_{n+1} & \text{for $n \ge 0$}
    \\
    (s_1^1)^{h_i^n} = s^i_{n-1} & \text{for $n \ge 1$.}
    \end{cases}
\end{align}

Denote by $Q_r'$ the above set of conjugation relations of AFV-generators by shifts.
\subsubsection{Presentation} \label{sssec:presentation}
 We define a group $G_r$ by the following presentation

\[
G_r=\<h_2, \ldots, h_{r}, A_r \mid Q_r,\ Q_r',\ \{s_1^1[h_i, h_j]\}_{2 \leq i < j \leq r}\>
\]
\begin{LEM}\label{lem:HrSES}
    We have the following short exact sequence
    $$1 \to \< A_r\> \to G_r \to \Z^{r-1} \to 1$$
\end{LEM}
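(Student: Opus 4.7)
The plan is to define a homomorphism $\pi: G_r \to \mathbb{Z}^{r-1}$ by $h_i \mapsto e_{i-1}$ for $i = 2, \ldots, r$ and $a \mapsto 0$ for every $a \in A_r$, then identify its kernel as $\langle A_r\rangle$. First I would verify that $\pi$ is well-defined by checking that it vanishes on every relation. The relations in $Q_r$ are words purely in $A_r$, so they map to $0$. Each relation in $Q_r'$ has the form $g^{h_i^{\pm n}} = w$ with $g \in \{\eta, \tau_1^1, s_1^1\}$ and $w \in \langle A_r\rangle$: the left side maps to $n\,e_{i-1} + 0 - n\,e_{i-1} = 0$ and the right side to $0$. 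Each commutator relation $s_1^1[h_i,h_j]$ maps to $0 + e_{i-1} + e_{j-1} - e_{i-1} - e_{j-1} = 0$. Surjectivity of $\pi$ is immediate from $\pi(h_i) = e_{i-1}$, and the inclusion $\langle A_r\rangle \subseteq \ker\pi$ is automatic.

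For the reverse inclusion, I would show that $\langle A_r\rangle$ is normal in $G_r$, so that $\pi$ factors through an isomorphism $G_r/\langle A_r\rangle \to \mathbb{Z}^{r-1}$. It suffices to check that $h_k^{\pm 1} a h_k^{\mp 1} \in \langle A_r\rangle$ for every $a \in A_r$ and every $k \in \{2,\ldots,r\}$. For $a \in \{\eta, \tau_1^1, s_1^1\}$, this is given verbatim by the relations in $Q_r'$. For a general transposition $a = s^i_j$, first use $Q_r'$ to express $a$ as a conjugate $(s_1^1)^{h_\ell^{\pm m}}$ for a suitable $\ell, m$; then the conjugate $a^{h_k^{\pm 1}}$ becomes $(s_1^1)^{h_k^{\pm 1} h_\ell^{\pm m}}$, and repeated application of the commutator identity $h_i h_j = h_j h_i s_1^1$ converts the shift word into a pure power of a single $h_\ell$ (or $h_k$) multiplied by elements of $\langle A_r\rangle$. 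Reapplying $Q_r'$ to the resulting pure conjugate of $s_1^1$ puts it back in $A_r$.

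Once normality is established, I would compute $G_r / \langle A_r\rangle$ by setting every $a \in A_r$ equal to $1$ (a Tietze transformation). The relations in $Q_r$ become trivial; each relation in $Q_r'$ reduces to $1 = 1$ (since both sides become trivial); and each commutator relation $s_1^1[h_i,h_j] = 1$ reduces to $[h_i, h_j] = 1$. Therefore
\[
G_r/\langle A_r\rangle \;\cong\; \langle h_2,\ldots,h_r \mid [h_i,h_j]=1,\ 2 \le i < j \le r\rangle \;\cong\; \mathbb{Z}^{r-1},
\]
and the induced map coincides with $\pi$, yielding the desired short exact sequence.

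The main technical obstacle is the general transposition case in the normality argument: the commutator corrections produced when rearranging shift words accumulate, and one must verify they always stay in $\langle A_r\rangle$. I would handle this by inducting on the exponent sum of the shift word attached to the transposition, using the base case from $Q_r'$ and observing that each application of $h_i h_j = h_j h_i s_1^1$ introduces only the element $s_1^1 \in A_r$ as a correction, thereby preserving membership in $\langle A_r\rangle$ at every stage of the reduction.
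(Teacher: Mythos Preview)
Your proposal is correct and follows essentially the same approach as the paper: establish that $\langle A_r\rangle$ is normal in $G_r$ and then read off the quotient presentation. The paper's proof is considerably terser---it simply asserts that the $Q_r'$ relations ``allow one to write the conjugates of elements of $A_r$ by powers of $h_i$ as elements in $A_r$'' and then computes the quotient exactly as you do. Your addition of the explicit homomorphism $\pi$ is a helpful clarification that makes the identification $\ker\pi=\langle A_r\rangle$ transparent, but it is not a different strategy.

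One comment on the technical obstacle you flag. Your inductive scheme for handling $(s^i_j)^{h_k^{\pm1}}$ with $k\neq i$ is on the right track, but the way you phrase it risks circularity: moving $h_k$ past $h_i$ produces an $s_1^1$, and then moving that $s_1^1$ past the remaining $h_i$'s is exactly an instance of the normality you are proving. The clean way out is to observe that the $Q_r'$ relations already give $s_1^1 h_i = h_i\, s^1_2$, $s^1_2 h_i = h_i\, s^1_3$, etc., and $h_i s_1^1 = s^i_0 h_i$, $h_i s^i_0 = s^i_1 h_i$, etc.\ --- so all the correction terms can be pushed to one side using \emph{only} conjugation by powers of the \emph{same} $h_i$, which $Q_r'$ handles directly. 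Carrying this through yields an explicit expression such as $(s^i_j)^{\bar h_k}=s^i_0 s^i_1\cdots s^i_j\, s^i_{j-1}\, s^i_j\cdots s^i_0\in\langle A_r\rangle$, with no appeal to the inductive hypothesis for a different pair of indices. The paper suppresses this computation entirely, so your level of detail already exceeds theirs.
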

\begin{proof}
    Note that the $Q_r'$ relations imply that $\<A_r\>$ is normal, as they allow one to write the conjugates of elements of $A_r$ by powers of $h_i$ as elements in $A_r$. 
    As the relations in $Q_r$ involve only elements in $A_r$, these relations are trivial in $G_r/A_r$. Similar reasoning shows that the $Q_r'$ relations are also trivial in $G_r/A_r$. The relation $s_1^1[h_i, h_j]$ implies that the images of $h_i$ and $h_j$ in $G_r/A_r$ commute. There are no other relations, so it follows that $G_r/A_r\cong \Z^{r-1}$.
\end{proof}
Of course, one can use the relations $\{s_1^1[h_i, h_j]\}_{2 \leq i < j \leq r}$ to remove the $s_1^1$ generator, but this relation will persist in our final presentation (see \refrel{rel:r4}) as it simplifies the expression of many relations.

\begin{LEM}\label{lem:HrPBr}
    For $r \ge 2, G_r \cong PB_r$.
\end{LEM}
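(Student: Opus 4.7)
The plan is to construct a homomorphism $\phi\colon G_r\to PB_r$ by sending each generator of $G_r$ to the asymptotically rigid mapping class of the same name, and then to prove $\phi$ is an isomorphism by applying the Five Lemma to the short exact sequences of \Cref{lem:HrSES} and \Cref{prop:PBfluxSES}.

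The first step is to check that $\phi$ is well defined: every defining relation of $G_r$ must hold in $PB_r$. The relations in $Q_r$ hold inside $\Map_c(\G_r)\subset PB_r$ by \Cref{lem:compactsuppPresentation}. Each relation $s_1^1 = [h_i,h_j]$ is a direct generalization of \Cref{ex:loopshift_commutator}: the commutator of two distinct loop shifts out of $e_1$ induces on $\pi_1(\G_r)$ exactly the transposition of the generators corresponding to $a_1^1$ and $a_2^1$, and \Cref{thm:AB_Alexander} identifies this commutator with $s_1^1$. For the conjugation relations in $Q_r'$ the key observation is that if a compactly supported map $a$, together with its $a$-image, is contained in a set on which two proper homotopy equivalences $f$ and $g$ agree, then $faf^{-1}=gag^{-1}$ in $\Map(\G_r)$. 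On $a_1^1\cup a_2^1$, the shift $h_i$ acts as the three-cycle $s_1^1 s_0^i$ and $\bar h_i$ acts as $s_1^1 s_2^1$, so the relations for $\eta$ and $\tau_1^1$ follow immediately. The relations $(s_1^1)^{h_i^n}=s_{n-1}^i$ and $(s_1^1)^{\bar h_i^n}=s_{n+1}^1$ then follow by a short induction on $n$.

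With $\phi$ in hand, consider the diagram
\[
\begin{tikzcd}[ampersand replacement=\&]
1\arrow[r] \& \langle A_r\rangle \arrow[r]\arrow[d] \& G_r \arrow[r]\arrow[d,"\phi"] \& \Z^{r-1}\arrow[r]\arrow[d] \& 1 \\
1\arrow[r] \& \Map_c(\G_r)\arrow[r] \& PB_r \arrow[r,"\Phi|_{PB}"] \& \Z^{r-1}\arrow[r] \& 1.
\end{tikzcd}
\]
Commutativity of the right square is immediate, and the induced map on $\Z^{r-1}$ sends $[h_i]$ to $[h_i]$, which is the $(i-1)$st standard basis vector under both identifications (the bottom via the flux map), and is therefore the identity. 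For the left vertical map, observe that since $Q_r$ is a subset of the relations defining $G_r$, there is a natural homomorphism $\mu\colon\Map_c(\G_r)\cong\langle A_r\mid Q_r\rangle\to G_r$ whose image is $\langle A_r\rangle$. The composite $\phi\circ\mu$ is the tautological inclusion $\Map_c(\G_r)\hookrightarrow PB_r$, which is injective; this forces $\mu$ to be injective, hence an isomorphism onto $\langle A_r\rangle$. Consequently $\phi|_{\langle A_r\rangle}$, which equals the composite of $\mu^{-1}$ with the inclusion $\Map_c(\G_r)\hookrightarrow PB_r$, is an isomorphism onto $\Map_c(\G_r)$. The Five Lemma then yields that $\phi$ is an isomorphism.

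The main technical burden of the argument is the verification of the $Q_r'$ relations: each is routine given the loop-shift formulas for $h_i$, but there are several cases, and one must carefully chase supports and images to ensure that the local-agreement observation applies uniformly in $i$ and $n$. Once the $Q_r'$ relations are established, the remainder of the proof is essentially formal diagram chasing.
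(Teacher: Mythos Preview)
Your proof is correct and follows essentially the same approach as the paper: define a homomorphism $G_r\to PB_r$ on generators, set up the commuting diagram of short exact sequences from \Cref{lem:HrSES} and \Cref{prop:PBfluxSES}, and invoke the Five Lemma. The paper is terser---it does not spell out the verification of the $Q_r'$ relations (these were already justified in the paragraph introducing $Q_r'$) and it cites \Cref{lem:compactsuppPresentation} directly for the left vertical isomorphism rather than routing through your auxiliary map $\mu$---but the content is the same.
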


\begin{proof}
    Define a map $\Theta:G_r \to PB_r$ by sending $h_i$'s to $h_i$'s, $\tau$ to $\tau$, and $\eta$ to $\eta$, and then extending it to be a homomorphism. By Equation~(\ref{eqn:SES_pure}) and \Cref{lem:HrSES}, we have two rows of short exact sequences
    \[
    \begin{tikzcd}
        1 \rar  & \<A_r\> \dar \rar & G_r \dar{\Theta} \rar & \Z^{r-1} \dar[equal] \rar & 1 \\
        1 \rar &  \Map_c(\G_r) \rar & PB_r \rar & \Z^{r-1} \rar & 1,
    \end{tikzcd}
    \]
    where the map $\langle A_r \rangle \to \Map_{c}(\G_r)$ is an isomorphism by \Cref{lem:compactsuppPresentation}. Hence, by the Five Lemma, we have an isomorphism $\Theta: G_r \to PB_r$.
\end{proof}

\begin{LEM} \label{lem:relationhelper} Let $G$ be a group and recall the notations
\[
    a^b := ba\ov{b}, \qquad [a,b]:= \ov{a}\ov{b}ab.
\]
Then we have the following for $a,b,c \in G$:
\begin{LEMenum}
    \item $[a,b]^c = [a^c,b^c]$, \label{helper:conjugating_commutator}
    \item $(a^b)^c = a^{cb}$, \label{helper:double_super_script}
    \item $[a,b]=1$ $\Longleftrightarrow$ $a^{b} = a$ $\Longleftrightarrow$ $a^{\ov{b}} = a$,\label{helper:commutator_to_conjugation}
    \item if $[b,c]=1$, then $[a,b]=[ca,b]$ and $[b,a]=[b, ca]$. \label{helper:commuting_prefix}
\end{LEMenum}

\begin{proof}
    All follow easily from the definitions. Part (ii) is worth noting;
    \[
        (a^b)^c = c(a^b)\ov{c} = cba\ov{b}\ov{c} = a^{cb},
    \]
    as the order is different from what might be expected. To see (iv),
    note $[b,c]=1$ implies $\ov{b}\ov{c}=\ov{c}\ov{b}$, so
    \[
        [ca,b] = \ov{a}\,\ov{c}\ov{b}cab = \ov{a}\ov{b}\ov{c}cab = [a,b].
    \]
    The second equality in (iv) follows similarly.
\end{proof}

\end{LEM}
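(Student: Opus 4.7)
The plan is to verify each of the four identities by direct expansion using only the definitions $a^b = ba\ov{b}$ and $[a,b] = \ov{a}\ov{b}ab$. There is no real subtlety or obstacle here; the point is simply to record the basic algebra in the conventions fixed in \Cref{rmk:notations} (left-action, conjugation from the left, and towers of exponents evaluated top-down) so that the many conjugation relations appearing later, especially in \eqref{eqn:Qr'} and \Cref{thm:PBrPresentation}, can be manipulated without reverting to raw products of generators each time.

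For (i), expand both sides: $[a,b]^c = c\ov{a}\ov{b}ab\ov{c}$, while $[a^c,b^c] = \ov{(cac^{-1})}\ov{(cbc^{-1})}(cac^{-1})(cbc^{-1})$, and after cancellation of the adjacent $\ov{c}c$ pairs both sides collapse to the same word. For (ii), one writes $(a^b)^c = c(ba\ov{b})\ov{c} = (cb)a\ov{(cb)} = a^{cb}$; here the only thing to flag is that with the convention $a^b = ba\ov{b}$ the exponents compose as $cb$ rather than $bc$, which is the opposite of the convention often seen elsewhere and will be used repeatedly below.

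For (iii), $[a,b] = 1$ means $\ov{a}\ov{b}ab = 1$, i.e.\ $ab = ba$, which after multiplying by $\ov{b}$ on the right is exactly $a^b = a$; the further equivalence with $a^{\ov b} = a$ comes from conjugating that equation by $\ov{b}$ and using (ii). For (iv), use that $[b,c]=1$ is equivalent (by (iii) applied to $\ov{c}$) to $\ov{c}\ov{b} = \ov{b}\ov{c}$; then
\[
[ca,b] = \ov{a}\,\ov{c}\,\ov{b}(ca)b = \ov{a}\,\ov{b}\,\ov{c}\,cab = \ov{a}\,\ov{b}ab = [a,b],
\]
and the second equality follows by the symmetric computation (or by taking inverses of the first, since $[b,a] = [a,b]^{-1}$).

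In short, the lemma is proved by writing each side out, cancelling adjacent $x\ov{x}$ pairs, and in (iv) invoking $bc = cb$ to commute an interior pair past each other. The only thing that warrants comment in the write-up is the order convention in (ii), which will be silently used throughout the verification of the relations $Q_r$ and $Q_r'$ in the subsequent sections.
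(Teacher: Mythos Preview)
Your proof is correct and essentially identical to the paper's: both argue by direct expansion from the definitions, both single out the order convention in (ii) as the only point worth flagging, and your computation for (iv) matches the paper's line-for-line. The paper is slightly terser on (i) and (iii), merely saying they ``follow easily from the definitions,'' but there is no substantive difference.
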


\begin{namedthm*}{\Cref{thm:PBrPresentation}}
    For $r\geq 3$, the pure graph Houghton group $PB_r$ has the presentation
    $$PB_r\cong \<h_2, \ldots, h_{r}, \sigma, \tau, \eta \mid P_r\>$$
    where $P_r$ consists of (in each case, $2\leq i<j\leq n$)

\renewcommand{\arraystretch}{1.8}
\begin{tabularx}{\textwidth}{@{}X X X@{}}
{$r_1 : \sigma^2 = 1$}\relationlabel{rel:r1} &
{$r_2 : [\sigma, \sigma^{\overline{h}_i^2}] = 1$}\relationlabel{rel:r2} &
{$r_3 : (\sigma\sigma^{\overline{h}_i})^3 = 1$}\relationlabel{rel:r3} \\
{$r_4 : \sigma = [h_i,h_j]$}\relationlabel{rel:r4} &
{$r_5 : \sigma^{\overline{h}_i} = \sigma^{\overline{h}_j}$}\relationlabel{rel:r5} &
{$r_6 : \tau^2 = 1$}\relationlabel{rel:r6} \\
{$r_7 : [\tau,\sigma]^2 = 1$}\relationlabel{rel:r7} &
{$r_8 : [\tau,\sigma^{\overline{h}_i}] = 1$}\relationlabel{rel:r8} &
{$r_9 : \tau^{\overline{h}_i} = \tau^{\sigma}$}\relationlabel{rel:r9} \\
{$r_{10} : \eta^2 = 1$}\relationlabel{rel:r10} &
{$r_{11} : (\sigma\eta)^3 = 1$}\relationlabel{rel:r11} &
{$r_{12} : [\eta,\sigma^{h_i}]^2 = 1$}\relationlabel{rel:r12} \\
{$r_{13} : [\eta,\sigma^{h_i^2}] = 1$}\relationlabel{rel:r13} &
{$r_{14} : \eta^{h_i} = \eta^{\sigma\sigma^{h_i}}$}\relationlabel{rel:r14} &
{$r_{15} : [\eta, \eta^{\overline{h}_i^2}] = 1$}\relationlabel{rel:r15} \\
{$r_{16} : [\eta, \tau^{h_i}] = 1$}\relationlabel{rel:r16} &
{$r_{17} : ((\eta\tau)^2\tau^{\sigma})^2 = 1$}\relationlabel{rel:r17} &
\\
\multicolumn{3}{@{}l@{}}{
  {$r_{18} : \sigma\eta\sigma^{\sigma^{\overline{h}_i}}\tau^{\sigma}\eta\sigma(\sigma^{\overline{h}_i}\eta\sigma^{\sigma^{\overline{h}_i}}\tau^{\sigma}\eta)^2 = 1$}\relationlabel{rel:r18}
} \\
\end{tabularx}
\end{namedthm*}

\begin{proof} By \Cref{lem:HrPBr}, we have the presentation
\[
PB_r=\<h_2, \ldots, h_{r}, \tau,\ \eta,\ \{s^i_j\}_{(i,j) \in I^r_\infty} \mid Q_r,\ Q_r',\ \{\overline{s_1^1}[h_i, h_j]\}_{2 \leq i < j \leq r}\>,
\]
where $I^r_\infty := \bigcup_{n=4}^\infty I^r_n = \{(i,j) \in \Z \times \Z \mid 1 \le i \le r,\ j \ge 1\}.$
    First we replace the $s^i_j$'s with $\sigma$ conjugated by shifts according to \eqref{eqn:Qr'}. To reiterate, for $n \ge 0$, and every $i = 2,\ldots, r$, we have $
    s^{1}_{n+1} = \sigma^{\ov{h}_i^n}$ and $s^i_n = \sigma^{h_i^{n+1}}.$ In particular, $s_1^1 = \sigma$. Hence we recover our generating set $\{h_2,\ldots,h_r,\tau,\eta, \sigma\}$ (as in \Cref{thm:Bnfinitegen}) and it remains to reduce the relations. We replace $s^i_j$'s in the relations in $Q_r$ and $Q_r'$ by conjugates of $\sigma$ by powers of shifts, and denote them by $R_{1\bullet},R_{2\bullet}$ and $R'_{\bullet}$ respectively: For each $i,j \in \{2,\ldots,r\}$ with $i\neq j$, 
    \begin{enumerate}[label=($R_{1\alph*}$):]
        \item $\sigma^2=1$ {}\cdotfill{} $\equiv$\refrel{rel:r1}
        \item $\begin{cases}
        [\sigma^{h_i^m}, \sigma^{h_i^n}]=1 & \text{for } |m -n| \ne 1 \\
        [\sigma^{h_i^m}, \sigma^{h_j^n}]=1 & \text{for $m \ge 2 \text{ or } n\ge 2$}
        \\
        [\sigma^{h_i^m}, \sigma^{h_j^n}]=1 & \text{for  $m\leq 1, n\leq 1$, $|m-n| \ne 1$}
        \end{cases}$ \cdotfill{} $\mathbf{\Leftarrow} \text{\refrel{rel:r1}$-$\refrel{rel:r5}}$
        \item $\begin{cases}
            (\sigma \sigma^{h_i})^3 = 1 & \text{\tiny(The first two of (1c) collapse to this)}\\
            (\sigma^{h_i} \sigma^{h_j})^3 = 1 & \text{for $i \neq j$}
        \end{cases}$ \cdotfill{} $\mathbf{\Leftarrow}\text{\refrel{rel:r1}, \refrel{rel:r3}$-$\refrel{rel:r5}}$ 
        \item $\tau^2 =1$ {}\cdotfill{} $\mathbf{\equiv}$\refrel{rel:r6}
        \item $\begin{cases}
        [\tau, \sigma]^2=1, \\
        [\tau, \sigma^{h_i}]^2=1
            \end{cases}
            $ {}\cdotfill{} $\mathbf{\equiv}$ \refrel{rel:r7}
        \item $[\tau,  \sigma^{h_i^n}]=1$ \qquad \text{for $n \neq 0, 1$} {}\cdotfill{} $\mathbf{\Leftarrow}$ \refrel{rel:r1}, \refrel{rel:r4}$-$\refrel{rel:r6}, \refrel{rel:r9} 
    \end{enumerate}
    \begin{enumerate}[label=($R_{2\alph*}$):]
        \item $\eta^2=1$ {}\cdotfill{} $\mathbf{\equiv}$ \refrel{rel:r10}
        \item $(\eta \sigma)^3=1$ {}\cdotfill{} $\mathbf{\equiv }$ \refrel{rel:r11}
        \item $\begin{cases}
            [\eta, \tau^{\sigma^{\ov{h}_i^n}\cdots \sigma^{\ov{h}_i}\sigma}]=1 & \text{for $n \ge 1$}\\
            [\eta, \tau^{\sigma^{h_i^n}\cdots \sigma^{h_i}}]=1 & \text{for $n \ge 1$}
        \end{cases}$ {}\cdotfill{} $\mathbf{\Leftarrow} \text{\refrel{rel:r1}, \refrel{rel:r3}$-$\refrel{rel:r6}, \refrel{rel:r9}, \refrel{rel:r16}}$
        \item $\begin{cases}
            [\eta, \sigma^{h_i^n}] = 1 & \text{for $|n| \ge 2$}\\
            [\eta,  (\sigma^{\ov{h}_i})^{\sigma^{h_j} \sigma}] = 1 
        \end{cases}$ {}\cdotfill{} $\mathbf{\Leftarrow }\text{\refrel{rel:r3}$-$\refrel{rel:r5}, \refrel{rel:r13}, \refrel{rel:r14}}$
        \item $\left((\eta\tau)^2\tau^\sigma\right)^2 = 1$ {}\cdotfill{} $\mathbf{\equiv }$ \refrel{rel:r17}
        \item $(\eta \sigma^{\sigma^{\ov{h}_i}}\tau^\sigma \eta \sigma)^4=1$ {}\cdotfill{} (see \Cref{rmk:Relator2f})
        \item $\sigma \eta \sigma^{\sigma^{\ov{h}_i}}\tau^\sigma \eta \sigma (\sigma^{\ov{h}_i}\eta \sigma^{\sigma^{\ov{h}_i}} \tau^\sigma \eta)^2=1$ {}\cdotfill{} $\mathbf{\equiv}$ \refrel{rel:r18}
        \item $(\sigma^{{(\sigma^{\ov{h}_i})}^{(\sigma^{\ov{h}_i^2})}}\sigma^{\ov{h}_i}\eta)^4=1$ {}\cdotfill{} $\mathbf{\equiv}$ \refrel{rel:r15}
    \end{enumerate}
    \begin{enumerate}[label=($R'_{\alph*}$):]
    \item $\begin{cases}
        \eta^{\ov{h}_i} = \eta^{\sigma \sigma^{\ov{h}_i}}\\
        \eta^{h_i} = \eta^{\sigma \sigma^{h_i}}
    \end{cases}$ {}\cdotfill{} $\mathbf{\equiv }$ \refrel{rel:r14}
    \item $\begin{cases}
        \tau^{\ov{h}_i} = \tau^{\sigma}\\
        \tau^{h_i} = \tau^{\sigma^{h_i}}
    \end{cases}$ {}\cdotfill{} $\mathbf{\equiv}$ \refrel{rel:r9}
    \item $\sigma^{\ov{h}_i^n} = \sigma^{\ov{h}_j^n}$ \qquad \text{for $n \ge 0$}. \cdotfill{} $\mathbf{\Leftarrow} \text{\refrel{rel:r1}$-$\refrel{rel:r5}}$
    \item[$(R'')$:]$\sigma[h_i,h_j] = 1$ for $i \neq j$. {}\cdotfill{} $\mathbf{\equiv }$ \refrel{rel:r4}
    \end{enumerate}

The two finite sets of relations in $R_{1c}$ stem from \refrel{rel:r1}, \refrel{rel:r3}, \refrel{rel:r4}, and \refrel{rel:r5}. Indeed, the first one is equivalent to \refrel{rel:r3}, for by conjugating by $\ov{h}_i\sigma$,
    \[
        (\sigma \sigma^{h_i})^3 = 1 \quad\Longleftrightarrow\quad \left(\sigma^{\ov{h}_i\sigma} (\sigma^{h_i})^{\ov{h}_i\sigma}\right)^3=1
        \underset{\ref{helper:double_super_script},\text{\refrel{rel:r1}}}{\Longleftrightarrow} (\sigma \sigma^{\ov{h}_i})^3 = 1 :\text{\refrel{rel:r3}}.
    \]
    
    We use this first relation, denoted by $(R_{1c}-1)$, to prove
    the second relation in $R_{1c}$, which is equivalent to $(\sigma^{h_i})^{\sigma^{h_j}} = (\sigma^{h_j})^{\sigma^{h_i}}$. Note that
    \[
    (\sigma^{h_i})^{\sigma^{h_j}} = \sigma^{h_j\sigma \ov{h_j}h_i} \underset{\text{\refrel{rel:r4}}}{=} \sigma^{h_j\sigma h_i \sigma \ov{h}_j} \underset{\text{\refrel{rel:r5}}}{=} \sigma^{h_j\sigma \sigma^{h_i}} \underset{(R_{1c}-1)}{=} \sigma^{h_jh_i}.
    \]
    
    The two relations in $(R_{1e})$ are equivalent since 
    \begin{align*}
        1 = [\tau, \sigma^{h_i}]^2 \underset{\ref{helper:conjugating_commutator}}{=}[\tau^{\ov{h}_i}, \sigma]^2 \underset{\text{\refrel{rel:r9}}}{=} [\tau^{\sigma}, \sigma]^2 \underset{\ref{helper:conjugating_commutator},\text{\refrel{rel:r1}}}{\Longleftrightarrow} [\tau, \sigma]^2 = 1 : \text{\refrel{rel:r7}}.
    \end{align*}
    
    We see that in $(R_a')$, the two sets of relations can be obtained from one another by conjugating by $\sigma \ov{h}_i \sigma$ as follows:
    \[
        \text{\refrel{rel:r14}}: \eta^{h_i} = \eta^{\sigma \sigma^{h_i}} \Longleftrightarrow (\eta^{h_i})^{\sigma \ov{h}_i \sigma} = (\eta^{\sigma \sigma^{h_i}})^{\sigma \ov{h}_i \sigma} \underset{\ref{helper:double_super_script}, \text{\refrel{rel:r1}}}{\Longleftrightarrow} \eta^{\sigma \sigma^{\ov{h}_i}} = \eta^{\ov{h}_i}.
    \]
    The two sets of relations in $(R_b')$ are similarly equivalent by conjugating by $\sigma \ov{h}_i$:
    \[
        \tau^{h_i} = \tau^{\sigma^{h_i}} \quad\Longleftrightarrow\quad (\tau^{h_i})^{\sigma \ov{h}_i} = (\tau^{\sigma^{h_i}})^{\sigma \ov{h}_i} \underset{\ref{helper:double_super_script}, \text{\refrel{rel:r1}}}{\Longleftrightarrow} \tau^{\sigma} = \tau^{\ov{h}_i}: \text{\refrel{rel:r9}}.
    \]
    
    We can reduce $R_{1b}$ and $R'_c$ to finite sets of relations
    \[
        \text{\refrel{rel:r2}}: [\sigma, \sigma^{\ov{h}_i^2}] = 1, \qquad \text{\refrel{rel:r5}}:\sigma^{\ov{h}_i} = \sigma^{\ov{h}_j}
    \]
    {\'a} la Johnson~\cite[Section 4]{johnson1999embedding}. This only uses the relations \refrel{rel:r1}, \refrel{rel:r2}, \refrel{rel:r3}, \refrel{rel:r4}, and \refrel{rel:r5}  with $\sigma$ and shifts; hence does not depend on the other infinite relations $R_{1f},R_{2c}$ and $R_{2d}$, which are left to be shown.
    
    Now we show that $(R_{2h})$ is equivalent to \refrel{rel:r15}. Recall from \Cref{sssec:AFV}, the notation for the loop swap $\sigma_{j\ell}^{ik} \colon a_j^i \mapstoandfrom a_{\ell}^{k}$.  For simplicity, let $\sigma_{j \ell} := \sigma_{j \ell}^{11}$.  We begin by noting that $(R_{2h}) : (\eta^{\sigma_{14} \sigma_{23}} \eta)^2 = 1$.  We claim: 
    \begin{equation*}
        \eta^{\sigma_{14} \sigma_{23}} = \eta^{\ov{h}_i^2 \sigma}  
    \end{equation*}
    One can verify that $\sigma_{14} \sigma_{23} = \sigma_{24} \sigma \sigma_{24} \sigma^{\ov{h}_i^2}$, hence  
    \begin{align*}
        \eta^{\sigma_{14} \sigma_{23}} &\underset{\hphantom{\cref{helper:double_super_script}}}{=} \eta^{\sigma_{24} \sigma \sigma_{23} \sigma^{\ov{h}_i^2}} 
        \underset{(R_{2d})}= \eta^{\sigma_{24} \sigma \sigma_{23}}\\
        &\mathrel{\underset{\phantom{3!}\mbox{}(R_a')\mbox{}\!\phantom{3!}}{=}} \eta^{\ov{h}_i \sigma_{13} h_i \ov{h}_i} && [\text{ since } \sigma_{24} = (\sigma_{13})^{\ov{h}_i}]\\
        &\underset{\hphantom{\cref{helper:double_super_script}}}{=} \eta^{\ov{h}_i \sigma^{\sigma^{\ov{h_i}}}} && [\text{ since } \sigma_{13} = \sigma^{\sigma_{23}} \text{ and } \sigma_{23} = \sigma^{\ov{h}_i}]\\
        &\underset{\cref{helper:double_super_script}}=\left(\eta^{\sigma \sigma^{\ov{h}_i}}\right)^{\ov{h}_i^2 \sigma h_i} \\
        &\underset{\phantom{3!}\mbox{}(R_a')\mbox{}\!\phantom{3!}}{=} (\eta^{\ov{h}_i})^{\ov{h}_i^2 \sigma h_i}
        \underset{\cref{helper:double_super_script}}{=} \eta^{\ov{h}_i^2 \sigma}.
    \end{align*}
    Using this, we see that $(R_{2h})$ is equivalent to \refrel{rel:r15} as:  
    \[
        (\eta^{\ov{h}_i^2\sigma} \eta)^2 \underset{(R_{2d})}{=} \ov{h}_i^2 \sigma \eta \eta^{h_i^2} \eta \sigma h_i^2 \eta \quad
        \underset{\text{cyclic perm.}}{\Longleftrightarrow} \quad \sigma \eta^{h_i^2} \sigma \eta \eta^{h_i^2} \eta 
        \underset{(R_{2d})}{=} [\eta^{h_i^2}, \eta]. 
    \]
   
    Before proceeding, we give a list of auxiliary relations which will be useful to reduce the relations to those in $P_r$.

    \begin{LEM}\label{lem:auxRelations1}     
        The relations in $P_r$ imply the following set of relations, for any $i\neq j$:
        \begin{align}           
            \tag{$\hat{q}_1$} & [\tau^{h_i}, h_j]=1, \label{rel:q1h} \\
            \tag{$\hat{r}_9^k$} & \tau^{\ov{h}_i^{k}} = \tau^{\sigma^{\ov{h}_i^{k-1}}\cdots \sigma^{\ov{h}_i}\sigma} =  \tau^{\ov{h}_j^{k}} \qquad \text{for all $k \ge 1$}, \label{rel:r9kh} \\
            \tag{$\hat{q}_2^k$} & [\sigma^{h_i^k},h_j ]=1\qquad \text{for all $k \ge 2$}.\label{rel:q2kh}
        \end{align}
    \end{LEM}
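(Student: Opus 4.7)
The plan is to derive the three auxiliary relations sequentially; the main tools will be \refrel{rel:r4} (which gives $h_j h_i = h_i h_j \sigma$), \refrel{rel:r5}, \refrel{rel:r9}, \refrel{rel:r2}, and the commutator manipulations collected in \Cref{lem:relationhelper}. I will handle $\hat{q}_1$ in a single calculation, and prove $\hat{r}_9^k$ and $\hat{q}_2^k$ by induction on $k$.

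For $\hat{q}_1$, I would first observe that \refrel{rel:r9} applied with $j$ in place of $i$ rearranges to $\tau^{h_j\sigma}=\tau$, i.e.\ that $h_j\sigma$ centralizes $\tau$. Combining this with $h_jh_i = h_ih_j\sigma$ (from \refrel{rel:r4} and \refrel{rel:r1}),
\[
(\tau^{h_i})^{h_j}\ \underset{\ref{helper:double_super_script}}{=}\ \tau^{h_jh_i}\ =\ \tau^{h_ih_j\sigma}\ =\ h_i\,(\tau^{h_j\sigma})\,\ov{h}_i\ =\ h_i\tau\ov{h}_i\ =\ \tau^{h_i},
\]
and \Cref{helper:commutator_to_conjugation} then yields $\hat{q}_1$.

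For $\hat{r}_9^k$, set $W_k^i:=\sigma^{\ov{h}_i^{k-1}}\sigma^{\ov{h}_i^{k-2}}\cdots\sigma^{\ov{h}_i}\sigma$ and induct on $k$, with base case $k=1$ given by \refrel{rel:r9}. For the first equality in the inductive step, I would conjugate the hypothesis $\tau^{\ov{h}_i^{k-1}}=\tau^{W_{k-1}^i}$ by $\ov{h}_i$ and slide $\ov{h}_i$ through the factors of $W_{k-1}^i$ via the elementary rewrite $\ov{h}_i\sigma^{\ov{h}_i^\ell}=\sigma^{\ov{h}_i^{\ell+1}}\ov{h}_i$ to obtain $\ov{h}_iW_{k-1}^i=W_k^i\sigma\ov{h}_i$; then the identity $\tau^{\sigma\ov{h}_i}=(\tau^{\ov{h}_i})^\sigma=(\tau^\sigma)^\sigma=\tau$ (from \refrel{rel:r9} and \refrel{rel:r1}) absorbs the trailing $\sigma\ov{h}_i$. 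For the second equality $\tau^{\ov{h}_i^k}=\tau^{\ov{h}_j^k}$, I would combine the first equality with its $j$-analog and reduce to establishing $\sigma^{\ov{h}_i^\ell}=\sigma^{\ov{h}_j^\ell}$ for every $\ell\geq 1$, which forces $W_k^i=W_k^j$ as group elements. The base case $\ell=1$ is \refrel{rel:r5}, and the inductive step uses the rearrangement $\ov{h}_j\ov{h}_i=\sigma\ov{h}_i\ov{h}_j$ (from \refrel{rel:r4}) together with \refrel{rel:r2} to absorb the extra $\sigma$-conjugate that appears when an $\ov{h}_i$ is swapped past an $\ov{h}_j$.

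Finally, for $\hat{q}_2^k$, \Cref{helper:commutator_to_conjugation} reduces $[\sigma^{h_i^k},h_j]=1$ to $\sigma^{h_jh_i^k}=\sigma^{h_i^k}$. Iterating \refrel{rel:r4} together with the identity $\sigma h_i^m = h_i^m\sigma^{\ov{h}_i^m}$ yields $h_jh_i^k = h_i^k h_j V_k$ for $V_k:=\sigma\,\sigma^{\ov{h}_i}\cdots\sigma^{\ov{h}_i^{k-1}}$, so the claim reduces to showing that $h_jV_k$ centralizes $\sigma$ whenever $k\geq 2$. I would prove this by induction on $k$ using \refrel{rel:r2} and its extension $[\sigma,\sigma^{\ov{h}_i^\ell}]=1$ for all $\ell\geq 2$, which can be obtained in parallel with the index-independence argument above. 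The hard part will be precisely those two ``lifting'' steps---extending \refrel{rel:r5} and \refrel{rel:r2} from their single base cases to all higher $\ell$---each requiring one to track the $\sigma$-conjugates produced by the failure of $h_i$ and $h_j$ to commute and to use \refrel{rel:r2} to show these conjugates centralize $\sigma$.
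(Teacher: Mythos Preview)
Your argument for $\hat{q}_1$ is correct and is essentially the paper's, just unwound differently. For $\hat{r}_9^k$, your derivation of the first equality is fine, but the paper handles the second equality more slickly: it runs a \emph{single} coupled induction on the three-way equality, conjugating the hypothesis $\tau^{W_k^i}=\tau^{\ov{h}_j^k}$ by $\sigma^{\ov{h}_i^k}$ and using the other half of the hypothesis ($\tau^{\ov{h}_j^k}=\tau^{\ov{h}_i^k}$) to collapse the right side to $\tau^{\ov{h}_i^k\sigma}=\tau^{\ov{h}_i^{k+1}}$ via \refrel{rel:r9}. This bypasses your auxiliary claim $\sigma^{\ov{h}_i^\ell}=\sigma^{\ov{h}_j^\ell}$ entirely.

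For $\hat{q}_2^k$ there is a small but real gap: your inductive step (peeling off $\sigma^{\ov{h}_i^k}$ from $V_{k+1}$ using the extended \refrel{rel:r2}) is fine, but the base case $k=2$ cannot be reached with \refrel{rel:r2} and \refrel{rel:r5} alone, since $\sigma$ and $\sigma^{\ov{h}_i}$ braid rather than commute. You need the braid relation \refrel{rel:r3} here. The paper makes this explicit, rewriting $(\sigma^{\ov{h}_j}\sigma)^3=1$ as $[h_j\sigma\sigma^{\ov{h}_j},\sigma]=1$, swapping $\sigma^{\ov{h}_j}$ for $\sigma^{\ov{h}_i}$ via \refrel{rel:r5}, and then recognizing $h_j\sigma\sigma^{\ov{h}_i}=h_j^{\ov{h}_i^2}$ via \refrel{rel:r4}. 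More broadly, the two ``lifting'' steps you flag as the hard part are precisely the Johnson-type reductions of $(R_{1b})$ and $(R_c')$, which the paper carries out \emph{before} this lemma using \refrel{rel:r1}--\refrel{rel:r5}; once those are in hand, the paper simply cites $R_{1b}$ in the inductive step of $\hat{q}_2^k$ rather than manipulating $V_k$ by hand. Either route works, but you should add \refrel{rel:r3} to your toolkit.
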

    \begin{proof}\renewcommand{\qedsymbol}{$\triangle$}
       By \text{\refrel{rel:r6}},
         \[\tau^2=1 \quad\underset{\text{\refrel{rel:r9}}}\Longleftrightarrow \quad 1=\tau\sigma \tau^{\ov{h}_j}\sigma=[\tau,    h_i\sigma]\underset{\text{\refrel{rel:r4}}}=[\tau, h_j^{\ov{h}_i}]\quad\underset{\ref{helper:conjugating_commutator}}\Longleftrightarrow \quad [\tau^{h_i}, h_j]=1\]
        
         For \eqref{rel:r9kh}, we use $\text{\refrel{rel:r9}}$ as a base case. Assume the result is true for a given $k\geq 1$. It follows that
         \[
         \tau^{\sigma^{\ov{h}_i^{k-1}}\cdots \sigma^{\ov{h}_i}\sigma} =\tau^{\ov{h}_j^{k}}\quad\underset{\ref{helper:double_super_script}}\Longleftrightarrow \quad\tau^{\sigma^{\ov{h}_i^{k}}\cdots \sigma^{\ov{h}_i}\sigma} = \tau^{\sigma^{\ov{h}_i^k}\ov{h}_j^{k}}\underset{\text{IH}}{=}\tau^{\sigma^{\ov{h}_i^k}\ov{h}_i^{k}}=\tau^{\ov{h}_i^k\sigma}\underset{\text{\refrel{rel:r9}}}{=}\tau^{\ov{h}_i^{k+1}},
    \]
    where IH stands for the inductive hypothesis.

    To prove \eqref{rel:q2kh}, we first show the base case $k=2$:

    \begin{align*}
    \text{\refrel{rel:r3}}:
    (\sigma^{\ov{h}_j}\sigma)^3=1\quad&\underset{\phantom{\ref{helper:conjugating_commutator}}}{\Longleftrightarrow} \quad  
    1= [h_j\sigma \sigma^{\ov{h}_j}, \sigma] \underset{\text{\refrel{rel:r5}}}{=}
    [h_j\sigma \sigma^{\ov{h}_i}, \sigma] \underset{\text{\refrel{rel:r4}}}{=} [h_j^{\ov{h}_i^2}, \sigma]\\
    &\underset{\ref{helper:conjugating_commutator}}\Longleftrightarrow \quad 
    [h_j, \sigma^{h_i^2}]=1,
    \end{align*}
    Here, we applied $\text{\refrel{rel:r4}}$ to plug in $h_j\sigma=h_j^{\ov{h}_i}$ twice. Now suppose the relation holds for a given $k\geq 2$. Then by $R_{1b}$, 
    \[
    1=[\sigma, \sigma^{h_i^k}]\underset{\substack{\text{IH}\&\\\ref{helper:commuting_prefix}}}{=}[h_j\sigma, \sigma^{h_i^k}]\underset{\text{\refrel{rel:r4}}}=[h_j^{\ov{h}_i}, \sigma^{h_i^k}] \quad\underset{\ref{helper:conjugating_commutator}}\Longleftrightarrow \quad [h_j, \sigma^{h_i^{k+1}}]=1. \qedhere
    \] 
    \end{proof}

    We now reduce 
    $R_{1f}: [\tau,  \sigma^{h_i^n}]=1$ for $n \neq 0, 1$.
    For $n\geq 2$,
    \[
    \text{\refrel{rel:r8}}: 1= [\tau, \sigma^{\ov{h}_j}] \underset{\ref{helper:conjugating_commutator}}{=} [\tau^{h_j}, \sigma] \underset{\substack{\eqref{rel:q1h}\&\\\cref{helper:conjugating_commutator}\\\ref{helper:commutator_to_conjugation}}}{=} [\tau^{h_j}, \sigma^{h_i^n}]\quad\underset{\substack{\eqref{rel:q2kh}\&\\\ref{helper:conjugating_commutator}}}\Longleftrightarrow \quad [\tau, \sigma^{h_i^n}]=1.
    \]
    For $n\leq -1$, we have
    \[
    \text{\refrel{rel:r8}}: 1= [\tau, \sigma^{\ov{h}_j}] \underset{\ref{helper:conjugating_commutator}}{=} [\tau^{h_j}, \sigma] \underset{\substack{\eqref{rel:q1h}\&\\\cref{helper:conjugating_commutator}\\\ref{helper:commutator_to_conjugation}}}{=}  [\tau^{h_j}, \sigma^{h_i^{n+1}}]\quad\underset{\substack{\eqref{rel:r9kh}\&\\\ref{helper:conjugating_commutator}}} \Longleftrightarrow \quad[\tau, \sigma^{h_i^n}]=1.
    \]
    showing that $R_{1f}$ follows from $P_r$.

    We now give one more auxiliary lemma. Note that \eqref{rel:-r9kh} has a slightly different form depending on the sign of $k$.

    \begin{LEM}\label{lem:auxRelations2}
        The relations of $P_r$ imply the following set of relations, for any $i\neq j$:
        \begin{align}  
        \tag{$\hat{q}_1^k$}
            & [\tau^{h_i^k}, h_j]=1 \qquad \text{for all $k \ge 1$} \label{rel:q1kh}\\
        \tag{$\hat{r}_9^k$} & \tau^{\ov{h}_i^k} = \tau^{\sigma^{\ov{h}_i^k}\cdots \sigma^{\ov{h}_i}} \qquad \text{for all $k \le -1$} \label{rel:-r9kh} \\
        \tag{$\hat{q}_3$} & [\eta^{h_i^2}, h_j]=1 \label{rel:q3h}
        \end{align}
    \end{LEM}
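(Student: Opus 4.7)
My plan is to establish the three relations in order. The first two will fall to induction arguments that extend the corresponding $k=1$ statements from \Cref{lem:auxRelations1}, while \eqref{rel:q3h} will require a two-step reduction culminating in a product-of-commutators computation. The auxiliary facts $R_{1f}$, $(R_a')$, $(R_b')$, together with $\refrel{rel:r1}$, $\refrel{rel:r4}$, and $\refrel{rel:r5}$, will do most of the work.

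For \eqref{rel:q1kh}, I would induct on $k \geq 1$ with base case given by \eqref{rel:q1h}. The key computational fact is $h_j^{\ov{h}_i} = h_j \sigma$, which follows from $\refrel{rel:r4}$ (giving $h_j h_i = h_i h_j \sigma$) and $\refrel{rel:r1}$. Writing $\tau^{h_i^{k+1}} = (\tau^{h_i^k})^{h_i}$ and applying \cref{helper:conjugating_commutator} converts the target commutator into $[\tau^{h_i^k}, h_j \sigma]^{h_i}$. Expanding via the standard identity $[x, yz] = [x, z][x, y]^z$ and invoking the inductive hypothesis reduces the task to verifying $[\tau^{h_i^k}, \sigma] = 1$. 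One more application of \cref{helper:conjugating_commutator} rewrites this as $[\tau, \sigma^{\ov{h}_i^k}]^{h_i^k}$, which is trivial by $R_{1f}$ (already derived in this proof) applied to the exponent $n = -k \leq -1$.

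For \eqref{rel:-r9kh}, I substitute $\ell = -k \geq 1$, so the claim becomes $\tau^{h_i^\ell} = \tau^{\sigma^{h_i^\ell} \cdots \sigma^{h_i}}$, and I induct on $\ell$. The base case $\ell = 1$ is the second form of $(R_b')$, namely $\tau^{h_i} = \tau^{\sigma^{h_i}}$. For the step, $\tau^{h_i^{\ell+1}} = (\tau^{h_i^\ell})^{h_i}$ combined with the inductive hypothesis and \cref{helper:double_super_script} yields $\tau^{h_i \sigma^{h_i^\ell} \cdots \sigma^{h_i}}$. The purely algebraic identity $h_i \sigma^{h_i^m} = \sigma^{h_i^{m+1}} h_i$ (immediate from the definition $\sigma^{h_i^m} = h_i^m \sigma \ov{h}_i^m$) then lets me push the $h_i$ factor rightward across each $\sigma$-conjugate in turn, producing $\tau^{\sigma^{h_i^{\ell+1}} \cdots \sigma^{h_i^2} h_i}$. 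A final application of the base case on the trailing $h_i$ converts it to $\sigma^{h_i}$, completing the induction.

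The most intricate relation is \eqref{rel:q3h}. The plan is a two-step reduction. First, \cref{helper:conjugating_commutator} rewrites $[\eta^{h_i^2}, h_j] = 1$ as $[\eta, h_j^{\ov{h}_i^2}] = 1$, and iterating $h_j^{\ov{h}_i} = h_j \sigma$ gives $h_j^{\ov{h}_i^2} = h_j \sigma \sigma^{\ov{h}_i}$. Invoking $(R_a')$, which states $\eta^{\sigma \sigma^{\ov{h}_i}} = \eta^{\ov{h}_i}$, further reduces the goal to $[\eta, h_j \ov{h}_i] = 1$. To prove this, I would use $(R_a')$ twice, in its $\ov{h}_i$-form and its $h_j$-form, to exhibit two elements commuting with $\eta$, namely $\sigma^{\ov{h}_i} \sigma \ov{h}_i$ and $\sigma^{h_j} \sigma h_j$. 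Their product is then also a witness, and the plan is to simplify
\[
(\sigma^{h_j} \sigma h_j)(\sigma^{\ov{h}_i} \sigma \ov{h}_i) = h_j \ov{h}_i
\]
using the crucial identity $h_j \sigma^{\ov{h}_i} = \sigma h_j$, which is obtained by applying $\refrel{rel:r5}$ to replace $\sigma^{\ov{h}_i}$ with $\sigma^{\ov{h}_j}$ and then canceling the $h_j$'s, followed by two uses of $\sigma^2 = 1$. The main obstacle is precisely this bookkeeping: the reduction is sensitive to the noncommutativity of $h_i$ and $h_j$, and without $\refrel{rel:r5}$ the product does not collapse. Once that simplification is verified, $[\eta, h_j \ov{h}_i] = 1$ follows immediately, and with it \eqref{rel:q3h}.
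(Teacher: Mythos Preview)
Your proof is correct. The inductions for $(\hat q_1^k)$ and $(\hat r_9^k)$ match the paper's arguments essentially step for step; the only cosmetic difference is that where you expand $[x,yz]$ as a product of commutators, the paper invokes \cref{helper:commuting_prefix} directly. (One small slip: with the paper's convention $[a,b]=\bar a\bar bab$ the correct identity is $[x,yz]=[x,z]\,[x,y]^{\bar z}$, not $[x,y]^z$, but since you show both factors vanish this is immaterial.)

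For $(\hat q_3)$ you take a genuinely different, though equally short, route. The paper first records the auxiliary fact $(\hat q_4):\ \eta^{\ov h_i}=\eta^{\ov h_j}$ (from $r_5$ and $r_{14}$) and then computes directly
\[
\eta^{h_i^2}\ \overset{r_{14}}{=}\ \eta^{h_i\sigma\sigma^{h_i}}\ \overset{(\hat q_4)}{=}\ (\eta^{\ov h_j})^{h_i\sigma h_i\sigma}\ \overset{r_4}{=}\ \eta^{\ov h_j h_i^2}\ =\ (\eta^{h_i^2})^{\ov h_j},
\]
using $h_i\sigma\ov h_j=\ov h_j h_i$ twice. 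Your approach instead reduces to $[\eta,h_j\ov h_i]=1$ and realizes $h_j\ov h_i$ as the product $(\sigma^{h_j}\sigma h_j)(\sigma^{\ov h_i}\sigma\ov h_i)$ of two $(R_a')$-derived elements commuting with $\eta$, with the key simplification $h_j\sigma^{\ov h_i}=\sigma h_j$ coming from $r_5$. Both proofs rest on exactly the same relations $r_1,r_4,r_5,r_{14}$; the paper's version is a single chain of equalities, while yours makes the ``product of witnesses'' structure more explicit.
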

    \begin{proof} \renewcommand{\qedsymbol}{$\triangle$}
         The base case $k=1$ of \eqref{rel:q1kh} is precisely $\hat{q}_1$, hence the name. Now suppose the result is true for a given $k\geq 1$. Then by $R_{1f}$ and \Cref{helper:conjugating_commutator}, 

        \[1=[\tau^{h_i^k}, \sigma]\underset{\substack{\text{IH}\&\\\ref{helper:commuting_prefix}}}{=} [\tau^{h_i^k}, h_j\sigma]\underset{\text{\refrel{rel:r4}}}=[\tau^{h_i^k}, h_j^{\ov{h}_i}]\quad\underset{\ref{helper:conjugating_commutator}}\Longleftrightarrow \quad [\tau^{h_i^{k+1}}, h_j]=1,\]
        finishing the induction.

        Similarly, it is straightforward to see that \refrel{rel:r9} is equivalent to the base case $k=1$ of \eqref{rel:-r9kh}, hence the name. Now assume that \eqref{rel:-r9kh} is true for some $k \ge 1$. Then
        \[
         \tau^{\sigma^{h_i^{n}}\cdots \sigma^{h_i}} \underset{\text{IH}}{=}(\tau^{h_i^{n-1}})^{\sigma^{h_i^n}} \underset{\ref{helper:double_super_script}}{=} \tau^{h_i^n \sigma \ov{h}_i} \underset{\text{\refrel{rel:r9},\refrel{rel:r1}}}{=}\tau^{h_i^n},
    \]
    concluding the induction.

    For \eqref{rel:q3h}, note that by the first relation of $(R_a')$, we have
    \begin{align}
    \tag{$\hat{q}_4$}
    \eta^{\ov{h}_i}=\eta^{\sigma \sigma^{\ov{h}_i}} \underset{\text{\refrel{rel:r5}}}{=}  \eta^{\sigma \sigma^{\ov{h}_j}}  \underset{\text{\refrel{rel:r14}}}{=}  \eta^{\ov{h}_j}.
    \label{rel:q4h}
    \end{align}

    We also have,
    \[
    \eta^{h_i^2} \underset{\substack{\text{\refrel{rel:r14}}\&\\\ref{helper:double_super_script}}}{=} 
    \eta^{h_i \sigma \sigma^{h_i}} \underset{\substack{\eqref{rel:q4h}\&\\\ref{helper:double_super_script}}}{=} (\eta^{\ov{h}_j})^{h_i\sigma h_i \sigma} \underset{\substack{\text{\refrel{rel:r4}}\&\\\ref{helper:double_super_script}}}{=} \eta^{\ov{h}_j h_i^2} \underset{\ref{helper:double_super_script}}{=} (\eta^{h_i^2})^{\ov{h}_j},
    \]
    where in the second to the last equality we used \refrel{rel:r4} to plug in $h_i \sigma \ov{h}_j = \ov{h}_j h_i$ twice. By \Cref{helper:commutator_to_conjugation}, we obtain \eqref{rel:q3h}: $[\eta^{h_i^2}, h_j]=1$ as desired.
    \end{proof}

    We now proceed to reduce $R_{2c}$. We first show that the first and second relations in $R_{2c}$ reduce to $[\eta, \tau^{h_i^n}]=1$ for $n \le -2$ and $n \ge 1$ respectively. Indeed,

    \[
    \begin{cases}
            [\eta, \tau^{\sigma^{\ov{h}_i^n}\cdots \sigma^{\ov{h}_i}\sigma}]=1 & \text{for $n \ge 1$}\\
            [\eta, \tau^{\sigma^{h_i^n}\cdots \sigma^{h_i}}]=1 & \text{for $n \ge 1$}
    \end{cases}
    \quad\overunderset{\raisebox{.3em}{\scriptsize\eqref{rel:r9kh}}}{\eqref{rel:-r9kh}}{\Longleftrightarrow}\quad
    \begin{cases}
            [\eta, \tau^{\ov{h}_i^{n+1}}]=1 & \text{for $n \ge 1$}\\
            [\eta, \tau^{h_i^n}]=1 & \text{for $n \ge 1$}
    \end{cases},
    \]
    which is equivalent to $[\eta, \tau^{h_i^n}]=1$ for $n \neq 0, -1$. Thus $R_{2c}$ can be rewritten as 
    $R_{2c}: [\eta, \tau^{h_i^n}]=1$ for $n \neq 0, -1$. Using \refrel{rel:r16} $:[\eta, \tau^{h_i}]=1$ as the base case, applying
    \eqref{rel:q1h}, \Cref{helper:conjugating_commutator,helper:commutator_to_conjugation} we get $[\eta^{h_j^2}, \tau^{h_i}]=1$. Then for $n\geq 1$, 
    \[[\eta^{h_j^2}, \tau^{h_i}]=1\quad\underset{\substack{\eqref{rel:q3h}\&\\\ref{helper:conjugating_commutator}\\\ref{helper:commutator_to_conjugation}}}\Longleftrightarrow  \quad[\eta^{h_j^2}, \tau^{h_i^n}]=1\quad\underset{\substack{\eqref{rel:q1kh}\&\\\ref{helper:conjugating_commutator}\\\ref{helper:commutator_to_conjugation}}}\Longleftrightarrow  \quad [\eta, \tau^{h_i^n}]=1.\]
    
    For $n\leq -2$, we have

    \[[\eta^{h_j^2}, \tau^{h_i}]=1\quad\underset{\substack{\eqref{rel:q3h}\&\\\ref{helper:conjugating_commutator}\\\ref{helper:commutator_to_conjugation}}}\Longleftrightarrow  \quad [\eta^{h_j^2}, \tau^{h_i^{n+2}}]=1\quad\underset{\substack{\hat{r}_9^k\&\\\ref{helper:conjugating_commutator}}}\Longleftrightarrow  \quad[\eta, \tau^{h_i^n}]=1,\]
    showing that $R_{2c}$ is a consequence of $P_r$.

    Finally, we reduce the relations in $R_{2d}$. We begin with the first infinite set of relations of $R_{2d}$; $(R_{2d}-1): [\eta, \sigma^{h_i^n}]=1$ for $|n| \ge 2$. We use \refrel{rel:r13} as the base case, and note that it along with \eqref{rel:q2kh} and \Cref{helper:conjugating_commutator,helper:commutator_to_conjugation} imply that $[\eta^{h_j^2}, \sigma^{h_i^2}]=1$. Then for $n\geq 2$,
    \[[\eta^{h_j^2}, \sigma^{h_i^2}]=1 \quad\underset{\substack{\eqref{rel:q3h}\&\\\ref{helper:conjugating_commutator}\\\ref{helper:commutator_to_conjugation}}}\Longleftrightarrow  \quad[\eta^{h_j^2}, \sigma^{h_i^n}]=1\quad\underset{\substack{\eqref{rel:q2kh}\&\\\ref{helper:conjugating_commutator}\\\ref{helper:commutator_to_conjugation}}}\Longleftrightarrow  \quad[\eta, \sigma^{h_i^n}]=1.\]
    On the other hand, for $n\leq -2$, 
    \[[\eta^{h_j^2}, \sigma^{h_i^2}]=1 \quad\underset{\substack{\eqref{rel:q3h}\,\&\\\ref{helper:conjugating_commutator}\\\ref{helper:commutator_to_conjugation}}}\Longleftrightarrow  \quad[\eta^{h_j^2}, \sigma^{h_i^{n+2}}]=1\quad\underset{\substack{\text{\refrel{rel:r5}}\&\\\ref{helper:conjugating_commutator}}}\Longleftrightarrow  \quad[\eta, \sigma^{h_i^n}]=1,\]
    showing that the infinite set of relations in $(R_{2d}-1)$ is a consequence of $P_r$.

    For the second relation of $R_{2d}$, $[\eta, (\sigma^{\ov{h}_i})^{\sigma^{h_j}\sigma}]=1$, note that $(R_{2d}-1)$ (with $n=-2$) and \Cref{helper:commutator_to_conjugation} imply

    \[1=[\eta^{h_j}, \sigma^{\ov{h}_j}]\underset{\text{\refrel{rel:r14},\refrel{rel:r5}}}{=}[\eta^{\sigma \sigma^{h_j}}, \sigma^{\ov{h}_i}]\quad\underset{\ref{helper:conjugating_commutator}}\Longleftrightarrow \quad [\eta, (\sigma^{\ov{h}_i})^{\sigma^{h_j}\sigma}]=1.\] 
     This concludes the proof of \Cref{thm:PBrPresentation}.
    \end{proof}

\section{Brown's criterion and the Stein--Farley cube complex} 
\label{sec:BrownsCrit}

\subsection{Brown's criterion and complete join maps}
In order to prove \Cref{thm:finitenessGraphHoughton}, we follow a strategy similar to that found in the work of Aramayona--Bux--Flechsig--Petrosyan--Wu~\cite{aramayona2024asymptotic} and Aramayona--Bux--Kim--Leininger \cite{aramayona2023surface}. In particular, we use the following version of Brown's criterion:

\begin{THM}[{\cite[Theorem 3.1]{aramayona2023surface}}]\label{thm:BrownsCriterion}
    Let $G$ be a group acting by cell-wise isometries on a contractible piecewise Euclidean CW-complex $\mathbb{X}$. Suppose $\mathbb{X}$ is equipped with a discrete $G$-invariant Morse function $h:\mathbb{X}\to \R$, and let $\mathbb{X}^{\leq s}$ denote the largest subcomplex of $\mathbb{X}$ fully contained in $h^{-1}(-\infty, s]$. Suppose the following points hold.
    \begin{enumerate}[label=(\alph*)]
        \item The quotient $\mathbb{X}^{\leq s}/G$ is finite for all critical values $s$.
        \item Every cell stabilizer is of type $F_{\infty}$.
        \item There exists $d\geq 1$ so that for all sufficiently large critical values $s$ and every vertex $v\in \mathbb{X}$ with $h(v)\geq s$, the descending link of $v$ in $\mathbb{X}$ is $d$-spherical, i.e., $(d-1)$-connected and of dimension $d$.
        \item For every critcal value $s$, there is a vertex $v$ with noncontractible descending link with $h(v)\geq s$.
    \end{enumerate}
    Then $G$ is of type $F_d$ but not of type $FP_{d+1}$.
\end{THM}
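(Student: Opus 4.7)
The plan is to reduce this to the classical formulation of Brown's criterion (as in Brown's ``Finiteness properties of groups'' paper) by analyzing the filtration $\{\mathbb{X}^{\leq s}\}$ of $\mathbb{X}$ via discrete Morse theory. By hypotheses (a) and (b) and the contractibility of $\mathbb{X}$, the filtration is a $G$-cocompact filtration of a contractible complex with cell stabilizers of type $F_\infty$. The classical Brown criterion then says $G$ is of type $F_n$ (resp.\ $FP_n$) if and only if the filtration is \emph{essentially $(n-1)$-connected} (resp.\ essentially $(n-1)$-acyclic), meaning that for every $s$ there exists $t \geq s$ such that $\mathbb{X}^{\leq s} \hookrightarrow \mathbb{X}^{\leq t}$ is null on $\pi_i$ (resp.\ $\tilde H_i$) for all $i \leq n-1$. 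So the task reduces to translating (c) and (d) into the essential $(d-1)$-connectedness and the failure of essential $d$-acyclicity of this filtration, respectively.

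For the Morse-theoretic step, I would show that if $c$ is a critical value, then $\mathbb{X}^{\leq c}$ is $G$-equivariantly built from $\mathbb{X}^{<c}$ by coning off the descending links of the critical vertices at height $c$. Since $h$ is $G$-invariant and discrete with cell-wise isometric action, one handles all such vertices simultaneously by cocompactness: pick one representative per $G$-orbit, then the attachment is a pushout of $\coprod_v \mathrm{Lk}^{\downarrow}(v) \hookrightarrow \mathbb{X}^{<c}$ glued to $\coprod_v C\,\mathrm{Lk}^{\downarrow}(v)$. Standard cofiber sequence arguments then yield the cellular principle: if each $\mathrm{Lk}^{\downarrow}(v)$ is $(k-1)$-connected, the inclusion $\mathbb{X}^{<c} \hookrightarrow \mathbb{X}^{\leq c}$ is $k$-connected; and if at least one $\mathrm{Lk}^{\downarrow}(v)$ is non-contractible of dimension $d$ and $(d-1)$-connected, then by Hurewicz it has non-trivial $H_d$, and the pair $(\mathbb{X}^{\leq c}, \mathbb{X}^{<c})$ picks up non-trivial relative $H_d$.

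With this in hand, the two halves of the conclusion follow. For type $F_d$: by (c), above some height $s_0$ every descending link is $(d-1)$-connected, so every further inclusion $\mathbb{X}^{\leq s} \hookrightarrow \mathbb{X}^{\leq t}$ with $s_0 \leq s \leq t$ is $d$-connected; combined with $\pi_i(\mathbb{X}) = 0$, one concludes that $\pi_i(\mathbb{X}^{\leq s})$ maps to zero in $\pi_i(\mathbb{X}^{\leq t})$ for $t$ large and $i < d$, giving essential $(d-1)$-connectedness. For the failure of $FP_{d+1}$: by (d), there is a cofinal sequence of critical values at which a non-contractible $d$-spherical descending link appears. The Morse step shows that each such attachment adds a non-zero class to $H_d(\mathbb{X}^{\leq s})$ that persists in $\mathbb{X}^{\leq t}$ for all larger $t$ where no higher-dimensional cones are attached to kill it, preventing essential $d$-acyclicity of the filtration.

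The main obstacle I anticipate is the Morse-theoretic passage itself: making rigorous the ``cone-on-descending-link'' attaching principle when the critical level set is not compact but only $G$-cocompact, and when the Morse function is merely discrete rather than real-valued in a controlled way. One needs (a) to ensure that at each critical height only finitely many $G$-orbits of critical vertices exist, and one needs the hypothesis that $h$ is a discrete Morse function in the sense of Bestvina--Brady to guarantee that the ascending/descending decomposition of each cell is well-defined and gives the pushout structure above. Once this is set up carefully, showing persistence of the obstruction classes in (d) requires verifying that further attachments do not kill the newly created $H_d$ classes, which typically follows from the fact that only $d$-dimensional cones are being glued on and each cone is attached along its own independent descending link.
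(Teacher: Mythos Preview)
The paper does not prove this theorem: it is quoted from \cite[Theorem~3.1]{aramayona2023surface} and used as a black box, so there is no proof here to compare against. Your overall plan---reduce to Brown's original criterion by using the Bestvina--Brady Morse lemma to describe $\mathbb{X}^{\leq c}$ as $\mathbb{X}^{<c}$ with cones on descending links attached---is the standard route and is essentially what the cited reference does. Your argument for type $F_d$ is fine.

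Your argument for the failure of $FP_{d+1}$, however, has the homology bookkeeping reversed. Attaching the cone on a $d$-spherical descending link $L$ does \emph{not} add classes to $H_d$: since $L$ is $(d-1)$-connected, $H_d(\mathbb{X}^{\leq c},\mathbb{X}^{<c})\cong\bigoplus_v \tilde H_{d-1}(\lkd(v))=0$, so the map $H_d(\mathbb{X}^{<c})\to H_d(\mathbb{X}^{\leq c})$ is \emph{surjective}; meanwhile $H_{d+1}(\mathbb{X}^{\leq c},\mathbb{X}^{<c})\cong\bigoplus_v \tilde H_d(\lkd(v))$ is nonzero whenever (d) supplies a noncontractible link, so these attachments \emph{kill} $H_d$-classes rather than create them (and the cones are $(d{+}1)$-dimensional, not $d$-dimensional as you write). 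The correct argument runs in the opposite direction. The same long exact sequence shows $H_{d+1}(\mathbb{X}^{<c})\to H_{d+1}(\mathbb{X}^{\leq c})$ is injective for every $c\geq s_0$; passing to the direct limit $H_{d+1}(\mathbb{X})=0$ forces $H_{d+1}(\mathbb{X}^{\leq t})=0$ for all $t\geq s_0$. Now if $\{H_d(\mathbb{X}^{\leq t})\}$ were essentially trivial, surjectivity would give $H_d(\mathbb{X}^{\leq t})=0$ for some (hence all larger) $t$; but then at the next level $c>t$ furnished by (d) the sequence yields a surjection $0=H_{d+1}(\mathbb{X}^{\leq c})\twoheadrightarrow H_{d+1}(\mathbb{X}^{\leq c},\mathbb{X}^{<c})\neq 0$, a contradiction. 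So the issue is not ``persistence of newly created classes'' but rather that $H_d(\mathbb{X}^{\leq t})$ can never vanish.
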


Recall that a group $G$ is type $F_d$ if there exists a classifying space for $G$ with finite $d$-skeleton. A group is type $F_\infty$ if it is type $F_d$ for all $d > 0$.  
A group $G$ is type $FP_d$ if there exists a projective resolution of the trivial $\Z G$-module $\Z$ so that the first $d$ terms are finitely generated $\Z G$-modules.

Part (c) above is the part of the criterion which takes the most work. Proving it for the relevant complexes will be the bulk of this section.
\par 

We now establish the tools we will need to utilize  \Cref{thm:BrownsCriterion}. All of the following was introduced by Hatcher--Wahl in \cite{hatcher2010stabilization}.

\begin{DEF}[Complete join complex]
    Let $A$ and $B$ be simplicial complexes, and $\pi:A \to B$ a simplicial map. We say that $A$ is a \textit{complete join complex} over $B$ (with respect to $\pi$) if the following two properties hold.
    \begin{enumerate}
        \item $\pi$ is surjective, and injective on simplices.
        \item For each simplex $\sigma=\<v_0, \ldots, v_d\>$ in $B$, its preimage can be written as a join of fibers over the vertices:
        $$\pi^{-1}(\sigma)=\pi^{-1}(v_0)*\cdots \ast \pi^{-1}(v_d).$$
    \end{enumerate}
    In this case, $\pi$ is called a \emph{complete join}. 
\end{DEF}

For notational simplicity, we say every space is $(-2)$-connected, and every nonempty space is $(-1)$-connected.
\begin{DEF}[Weakly Cohen--Macaulay]
    A simplicial complex is \textit{weakly Cohen--Macaulay} of dimension $k+1$ if it is $k$-connected and if the link of every simplex $\sigma$ is $(k-\text{dim}(\sigma)-1)$-connected.
\end{DEF}

The reason why we are interested in this condition on the link is to be able to `lift' the connectivity properties of the image complex to the domain.
\begin{PROP}[{\cite[Proposition 3.5]{hatcher2010stabilization}}]\label{prop:CohenMacaulayLift}
    If $\pi:A\to B$ is a complete join and $B$ is weakly Cohen--Macaulay of dimension $k+1$, then so is $A$.
\end{PROP}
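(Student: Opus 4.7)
The plan is to verify the two requirements in the definition of weakly Cohen--Macaulay for $A$: first that $A$ itself is $k$-connected, and second that for every simplex $\sigma$ of $A$, $\lk_A(\sigma)$ is $(k-\dim\sigma-1)$-connected. The second item will be reduced to the first by passing to links, so the technical heart is the connectivity transfer.

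For the connectivity, the key lemma I would isolate is: if $\pi \colon A \to B$ is a complete join and $B$ is $\ell$-connected, then $A$ is $\ell$-connected. To prove it, I would take a map $f \colon S^m \to A$ with $m \le \ell$, compose with $\pi$ to get $\pi f \colon S^m \to B$, and use $\ell$-connectivity of $B$ to extend to $g \colon D^{m+1} \to B$. After applying a relative simplicial approximation to $g$ (keeping the already-triangulated boundary fixed), $g$ becomes a simplicial map on some triangulation of $D^{m+1}$. The lift to $A$ is then combinatorial: for each interior vertex $u$ of the triangulation, choose $\tilde g(u)$ to be any element of the nonempty fiber $\pi^{-1}(g(u))$; if $g$ identifies several vertices, make that choice consistently so that $\tilde g$ descends to a well-defined simplicial map. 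The complete join hypothesis is exactly what guarantees this extends: on a simplex $\Delta$ mapping onto $\tau = \langle v_0,\dots,v_d\rangle$, the chosen lifts of the vertices of $\Delta$ lie in the distinct fibers $\pi^{-1}(v_i)$, which by the join decomposition $\pi^{-1}(\tau) = \pi^{-1}(v_0)\ast\cdots\ast\pi^{-1}(v_d)$ automatically span a simplex of $A$.

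For the link condition, I would first verify that for any simplex $\sigma$ of $A$ with image $\tau = \pi(\sigma)$ (where $\dim\sigma = \dim\tau$ by the injectivity-on-simplices hypothesis), the restriction
\[
\pi_\sigma \colon \lk_A(\sigma) \longrightarrow \lk_B(\tau)
\]
is itself a complete join. Surjectivity and simplex-injectivity are inherited directly from $\pi$, and the join decomposition of preimages transfers since for disjoint simplices $\tau,\tau'$ in $B$ one has $\pi^{-1}(\tau \cup \tau') = \pi^{-1}(\tau) \ast \pi^{-1}(\tau')$, so that fixing the factor corresponding to $\sigma$ yields exactly the required join decomposition on the other side. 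Then, since $B$ is weakly Cohen--Macaulay of dimension $k+1$, the complex $\lk_B(\tau)$ is weakly Cohen--Macaulay of dimension $k-\dim\tau$; in particular it is $(k-\dim\tau-1)$-connected. Applying the connectivity lemma above to the complete join $\pi_\sigma$ with $\ell = k-\dim\tau-1$ gives that $\lk_A(\sigma)$ is $(k-\dim\sigma-1)$-connected. The overall $k$-connectivity of $A$ is the special case $\sigma = \emptyset$ of this same argument, so both requirements follow from the single connectivity lemma.

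The main obstacle is the combinatorial lifting step in the connectivity lemma, specifically ensuring that the lift is a genuine simplicial map even when the simplicial approximation of $g$ collapses simplices. The cleanest route is to arrange, via a nondegenerate (dimension-preserving) simplicial approximation in the sense of Zeeman, that each top-dimensional simplex of the triangulation of $D^{m+1}$ maps to a simplex of $B$ of the same dimension; then the extension on each such simplex is an extension of a map from $S^m$ into the join $\pi^{-1}(v_0)\ast\cdots\ast\pi^{-1}(v_m)$, which is $(m-1)$-connected (nonempty fibers), and this is sufficient for the extension to exist. Once this technical point is handled the rest is formal bookkeeping.
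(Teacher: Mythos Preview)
The paper does not prove this; it is cited from Hatcher--Wahl. Your outline has the same overall shape as theirs: restrict $\pi$ to links to reduce everything to a single connectivity statement, then prove that statement by lifting a filling disk from $B$ to $A$.

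There is, however, a real gap. Your isolated ``key lemma'' --- that for a complete join $\pi\colon A\to B$, if $B$ is $\ell$-connected then so is $A$ --- is false as stated. Take $B$ to be a single vertex and the fiber a two-point set: then $B$ is $\ell$-connected for every $\ell$, yet $A$ consists of two isolated points and is not even connected. The correct hypothesis is that $B$ be weakly Cohen--Macaulay of dimension $\ell+1$, and this is precisely what governs the collapsing problem you flag at the end. Your proposed remedy via a ``nondegenerate simplicial approximation in the sense of Zeeman'' cannot rescue the lemma as you have stated it: Zeeman's theorem does not in general produce dimension-preserving maps, and in the one-point example no nondegenerate simplicial $g\colon D^{m+1}\to B$ exists at all, so the lifting step has nowhere to go.

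The repair is to carry the full wCM hypothesis into the connectivity argument rather than only $\ell$-connectedness of $B$. Under wCM of dimension $\ell+1$, every simplex of $B$ of dimension at most $\ell$ has nonempty link, so $B$ genuinely contains simplices of every dimension up to $\ell+1$; this is what gives room to push a degenerate $g$ off its collapsed simplices (after subdivision, sending the new vertices into the relevant links), or equivalently to run the skeleton-by-skeleton extension with the correct connectivity bound on each $\pi^{-1}(g(\sigma))$. Once the lemma is restated and proved with the wCM hypothesis in place, your link reduction (which is correct) finishes the job, and the argument agrees with Hatcher--Wahl's.
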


On the other hand, the connectivity descends as well. Recall $\pi:A \to B$ is a \textit{retract} if there is a section $\iota:B \to A$ such that $\pi \circ \iota = \id_{B}$. Note this implies that $\pi$ induces surjection on homotopy groups; $\pi_*: \pi_{\bullet}(A) \to \pi_{\bullet}(B)$.
\begin{PROP}[{\cite[Remark A.15]{aramayona2024asymptotic}}]\label{prop:completeJoinRetract}
    Suppose $\pi:A\to B$ is a complete join. Then $B$ is a retract of $A$. In fact, $B$ inherits all properties from $A$ that can be expressed by the vanishing of group-valued functors or cofunctors. In particular, if $A$ is $k$-connected, then so is $B$. 
\end{PROP}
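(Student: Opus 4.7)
The plan is to produce a simplicial section $\iota : B \to A$ of $\pi$, from which both the retract statement and the preservation of vanishing functors follow by formal nonsense.

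First, I would define $\iota$ on vertices: for each vertex $v \in B$, the fiber $\pi^{-1}(v)$ is nonempty (since $\pi$ is surjective by hypothesis (1) of complete join), so pick any $\iota(v) \in \pi^{-1}(v)$ arbitrarily. Next, I would check that this extends to a simplicial map. Given a simplex $\sigma = \langle v_0,\ldots, v_d\rangle$ in $B$, the complete join property (2) says
\[
\pi^{-1}(\sigma) = \pi^{-1}(v_0) * \pi^{-1}(v_1) * \cdots * \pi^{-1}(v_d),
\]
so in particular $\langle \iota(v_0),\ldots,\iota(v_d)\rangle$ is a simplex of $A$. Hence the vertex assignment $v \mapsto \iota(v)$ extends to a simplicial map $\iota : B \to A$. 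To confirm $\pi \circ \iota = \mathrm{id}_B$, note that $\pi(\iota(\sigma)) = \langle v_0,\ldots,v_d\rangle = \sigma$ on vertices, and since $\pi$ is injective on simplices, the composite is the identity on each simplex, hence on all of $B$. Thus $B$ is a retract of $A$.

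For the second assertion, let $F$ be any covariant group-valued functor on the relevant category (spaces up to homotopy, say). The section identity $\pi \circ \iota = \mathrm{id}_B$ yields $F(\pi) \circ F(\iota) = \mathrm{id}_{F(B)}$, so $F(\iota) : F(B) \to F(A)$ is injective and $F(\pi)$ is surjective. In particular, if $F(A) = 0$ then $F(B) = 0$. The same reasoning with arrows reversed handles cofunctors. Applying this with $F = \pi_i$ for $i \leq k$, the hypothesis that $A$ is $k$-connected gives $\pi_i(A) = 0$, whence $\pi_i(B) = 0$ and $B$ is $k$-connected. (Nonemptiness of $B$, needed for the case $k = -1$, follows from surjectivity of $\pi$ together with nonemptiness of $A$.)

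The only nontrivial input is the use of the complete join property to guarantee that any vertex-level lift extends simplicially, and this is immediate from the join decomposition of preimages. There is no real obstacle here; the argument is essentially formal once the section has been built. This mirrors the sketch in \cite[Remark A.15]{aramayona2024asymptotic}, and the $k$-connectedness conclusion complements \Cref{prop:CohenMacaulayLift} by showing connectivity properties transfer in both directions along a complete join.
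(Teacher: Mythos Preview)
Your argument is correct and is the standard one: the paper itself does not supply a proof but simply cites \cite[Remark~A.15]{aramayona2024asymptotic}, and your construction of the section $\iota$ via arbitrary vertex lifts (with the complete join property guaranteeing simplicial extension) is exactly the argument sketched there. Nothing more is needed.
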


We remark here that we are skipping the discussion of \textit{bad simplices}, which are used in \cite{aramayona2023surface}. This is because our proof of \Cref{thm:injTetheredHandleConnectivity} exactly mirrors their proof of \cite[Theorem 5.7]{aramayona2023surface}, which is the only place bad simplices are used.

\subsection{Finiteness of $\Map(M_{n,s})$}
To apply \Cref{thm:BrownsCriterion}, we will need to show that cell stabilizers are type $F_{\infty}$. This will follow from the next two results. Here we denote by $\Gamma_{n,s}$ a rank $n$ graph with $s$ rays attached, and $M_{n,s}$ its associated doubled handlebody (sometimes the notation $M_{n,s}$ will be used to denote the rank $n$ doubled handlebody with $s$ open balls removed as well).
\begin{LEM}[{\cite[Corollary 2.7]{aramayona2024asymptotic}}]\label{lem:DHB_Finfinity}
    The group $\Map(M_{n,s})$ has type $F_{\infty}$.
\end{LEM}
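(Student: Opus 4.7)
My plan is to build $\Map(M_{n,s})$ as an extension whose kernel and quotient are both known to have type $F_\infty$, and then invoke the standard fact that type $F_\infty$ is preserved under such extensions.

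The first step is to set up a Laudenbach-type short exact sequence adapted to the finite-type doubled handlebody with boundary. By \Cref{thm:doubledHandlebodytoGraph} applied to the finite graph $\G_{n,s}$ (or its compact submanifold analogue), there is a split short exact sequence
\[
1 \longrightarrow \Twists(M_{n,s}) \longrightarrow \Map(M_{n,s}) \longrightarrow \Map(\G_{n,s}) \longrightarrow 1.
\]
Because $M_{n,s}$ is of finite type, there are only finitely many isotopy classes of essential, non-peripheral spheres supporting independent sphere twists (together with the $s$ boundary spheres, if present), so $\Twists(M_{n,s})$ is a finitely generated abelian $2$-torsion group, hence finite. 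In particular it is of type $F_\infty$.

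The second step is to identify the quotient $\Map(\G_{n,s})$ with a group already known to be of type $F_\infty$. Since $\G_{n,s}$ has finitely many rays and finite rank, $\Map(\G_{n,s})$ is commensurable with (or fits into a standard finite-kernel, finite-cokernel diagram with) a pure/relative version of $\Aut(F_n)$ or $\Out(F_n)$ obtained by restricting to automorphisms that fix the conjugacy classes of the $s$ rays. These relative automorphism groups act properly and cocompactly on appropriate deformation-retract subcomplexes of Culler--Vogtmann Outer space or Hatcher--Vogtmann Auter space; these spines are contractible and finite-dimensional with finite cell stabilizers, so the relative groups admit cocompact classifying spaces for proper actions with finitely many cell orbits, and are therefore virtually of type $F_\infty$. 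Since type $F_\infty$ is a commensurability invariant, $\Map(\G_{n,s})$ is of type $F_\infty$.

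The third step is the standard closure property: if $K$ and $Q$ are groups of type $F_\infty$, then any extension $1 \to K \to E \to Q \to 1$ is of type $F_\infty$. Applying this to our sequence gives that $\Map(M_{n,s})$ is of type $F_\infty$.

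The main obstacle I expect is step two: pinning down the precise relative automorphism group that $\Map(\G_{n,s})$ maps onto (or is an extension by a finite kernel of), and citing the correct deformation-retract theorem for its cocompact action on a contractible spine. For the bare statement one can short-circuit this by simply quoting \cite[Corollary 2.7]{aramayona2024asymptotic}, but the outline above is the direct route: finite twist kernel via Laudenbach, plus Culler--Vogtmann/Hatcher--Vogtmann for the quotient, combined by the extension closure property for $F_\infty$.
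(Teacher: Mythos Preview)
The paper does not prove this lemma at all; it is stated purely as a citation of \cite[Corollary 2.7]{aramayona2024asymptotic}, with no argument given. So there is no ``paper's own proof'' to compare against here.

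Your outline is a reasonable independent route, but note that it runs \emph{opposite} to the paper's logical flow: immediately after this lemma, the paper uses \Cref{thm:TwoMCGfiniteKernel} to deduce that $\Map(\G_{n,s})$ is of type $F_\infty$ \emph{from} the fact that $\Map(M_{n,s})$ is. You are proposing to go the other way, first establishing $F_\infty$ for $\Map(\G_{n,s})$ via a relative Auter/Outer space spine, and then lifting through the finite twist kernel. That is legitimate and avoids circularity, but you should be aware that you are not reproducing the paper's argument, you are replacing the cited input with a different one.

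The genuine soft spot is your step two, as you yourself flag. The group $\Map(\G_{n,s})$ is not literally a relative $\Aut(F_n)$; with $s$ rays there is an extra $F_n^{s-1}$ factor coming from moving the basepoints of the rays (compare the discussion around $\Map(K,\partial K)\cong F_k^{r-1}\rtimes\Aut(F_k)$ in \Cref{sec:finitePresentation}). You would need either to cite a cocompact spine for this semidirect product directly, or to argue via the extension $1\to F_n^{s-1}\to \Map(\G_{n,s})\to \Aut(F_n)\to 1$ and use that $F_n^{s-1}$ and $\Aut(F_n)$ are both $F_\infty$. That second route is clean and avoids hunting for the exact relative-spine reference; once you make it explicit, your argument is complete.
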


\begin{THM}[{\cite[Proof of Proposition 1]{hatcher2004homology}}, {\cite[Proposition 3.3]{udall2024the-sphere}}]
\label{thm:TwoMCGfiniteKernel}
There is a homomorphism $\Map(M_{n,s})\to \Map(\G_{n,s})$ with a finite kernel.
\end{THM}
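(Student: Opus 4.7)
The plan is to imitate the construction and analysis that established \Cref{thm:doubledHandlebodytoGraph}, but now applied to the finite-type pair $(M_{n,s}, \G_{n,s})$. Namely, fix a deformation retraction $\rho \colon M_{n,s} \to \G_{n,s}$ of the compact doubled handlebody onto its graph spine, together with the inclusion $i \colon \G_{n,s} \hookrightarrow M_{n,s}$, and define
\[
    \Psi_{n,s} \colon \Map(M_{n,s}) \to \Map(\G_{n,s}), \qquad [f] \mapsto [\rho \circ f \circ i].
\]
That this is a well-defined homomorphism is verbatim the argument preceding \Cref{thm:doubledHandlebodytoGraph}, since all the relevant identifications (properness of $\rho$, the compatibility of $\rho \circ f \circ i$ with proper homotopy) go through without change for compact manifolds and finite-type graphs.

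Next I would identify $\ker \Psi_{n,s}$ with the sphere twist subgroup $\Twists(M_{n,s})$. The inclusion $\Twists(M_{n,s}) \subseteq \ker \Psi_{n,s}$ is immediate because a sphere twist is supported in a regular neighborhood of an embedded sphere, which $\rho$ collapses, so it induces the identity on $\G_{n,s}$. For the reverse inclusion, I would invoke the classical argument of Laudenbach \cite{laudenbach1974topologie}, whose finite-type version is precisely what Hatcher--Wahl use in \cite[Proof of Proposition 1]{hatcher2004homology} and which Udall extends to the infinite-type setting in \cite[Proposition 3.3]{udall2024the-sphere}; restricting Udall's proof to a compact exhaustion stage gives exactly the statement we want.

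Finally, to conclude that $\ker \Psi_{n,s}$ is \emph{finite}, I would observe that $M_{n,s}$ is compact (hence finite-type), so a maximal system of pairwise disjoint, non-isotopic, essential, non-peripheral spheres has only finitely many components. Since twists about disjoint spheres commute and each sphere twist has order $2$, the subgroup $\Twists(M_{n,s})$ is a quotient of a finitely generated abelian $2$-group, hence finite. The main point requiring care is not the finiteness step, which is a standard consequence of Kneser--Haken finiteness, but rather verifying that the kernel identification is genuinely the finite-type specialization of Udall's theorem, i.e., that no infinite products of twists arise; this is automatic here because compact support is free when $M_{n,s}$ itself is compact.
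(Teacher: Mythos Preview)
The paper does not supply its own proof of this statement; it is quoted directly from \cite{hatcher2004homology} and \cite{udall2024the-sphere}. Your sketch is essentially the content of those references and is correct in outline.

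Two small caveats. First, \cite{hatcher2004homology} is Hatcher--Vogtmann, not Hatcher--Wahl. Second, as defined immediately before the statement, $\G_{n,s}$ is a rank-$n$ graph with $s$ \emph{rays} attached, so neither $\G_{n,s}$ nor its associated doubled handlebody is literally compact; the paper notes parenthetically that $M_{n,s}$ also denotes the compact model with $s$ open balls removed, and the two interpretations have canonically isomorphic mapping class groups. Either way your finiteness step survives: every embedded sphere can be isotoped into a fixed compact core, so Kneser--Haken finiteness still bounds a generating set for $\Twists(M_{n,s})$ by finitely many commuting involutions (that such a maximal system actually generates all sphere twists is part of Laudenbach's analysis, which you are already invoking).
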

In particular, $\Map(\Gamma_{n,s})$ is also of type $F_{\infty}$ as it is the quotient of $\Map(M_{n,s})$ with kernel a finite group.

\subsection{Two contractible Stein--Farley cube complexes}
Recall that we write $B_r$ to denote the group $B(0,1,r)$. Consider ordered pairs $(Z, f)$ where $Z\subset \Gamma_r$ is a suited subgraph and $f\in B_r$. Two such pairs $(Z_1, f_1)$ and $(Z_2, f_2)$ are \textit{equivalent} if $f_2$ has a proper homotopy inverse $g_2$ such that $g_2 \circ f_1(Z_1)=Z_2$ and $g_2\circ f_1$ is rigid in the complement of $Z_1$. We denote the equivalence class of $(Z,f)$ by $[Z,f]$, and the set of all equivalence classes by $\mathcal{S}$. Then $B_r$ acts on $\mathcal{S}$ on the left, letting
$$g\cdot [Z, f] = [Z, g\circ f].$$

We define the \emph{complexity} $h(x)$ of $x=[Z, f]$ to the rank of $Z$. To see that this doesn't depend on the representative, suppose $[Z, f]=[Z', g]$. Then by definition, $g^{-1}\circ f(Z)=Z'$ and $g^{-1}\circ f$ is rigid outside of $Z$. This means that $g^{-1}\circ f$ sends pieces in the complement of $Z$ to pieces in the complement of $Z'$. As $g^{-1}\circ f$ induces a $\pi_1(\G_r)$ automorphism, we see that $g^{-1}\circ f|_Z$ induces an automorphism between $\pi_1(Z)$ and $\pi_1(Z')$. In particular their ranks are equal.

Given $x_1, x_2 \in \mathcal{S}$, we say that $x_1\preceq x_2$ if there are representatives $(Z_i,f_i)$ of $x_i$, for $i=1,2$, so that $f_1=f_2$, $Z_1\subset Z_2$, and $\overline{Z_2\setminus Z_1}$ is a (possibly empty) disjoint union of pieces. We construct a \emph{Stein--Farley cube complex} $X$ via this relation with $0$-skeleton $\mathcal{S}$. Given $x_1 \preceq x_2$ with $d=h(x_2)-h(x_1)$, the set $\{x \ | \ x_1 \preceq x \preceq x_2\}$ comprises the vertices of a $d$-cube. Then $X$ is $r$-dimensional (see \Cref{fig:CubeCplx}).
\begin{figure}[ht!]
    \centering
    \includegraphics[width=.85\textwidth]{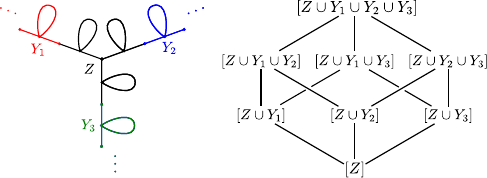}
    \caption{The Stein--Farely cube complex $X$ associated to $B_3$ is 3-dimensional, and a
      3-cube of it is illustrated above. For simplicity, we take $f = \mathrm{Id}$ and suppress the notation to $[Z] = [Z, \mathrm{Id}]$, etc.}
    \label{fig:CubeCplx}
\end{figure}

Clearly $B_r$ preserves the cubical structure, and $h$ extends to a $B_r$-invariant complexity function
$$h:X\to \R_+.$$
Let $X^{\leq k}$ denote the subcomplex of $X$ spanned by vertices whose complexity is less than or equal to $k$.
\par 
We can define a similar complex $\mathcal{X}$ which is associated to
$\BB_r:=\BB(0,1,r)$, defined in the doubled handlebody $M_{\G_r}$ of $\Gamma_r$,
which we will denote by $M_r$. Consider ordered pairs $\{(\ZZ, f)\}$ where $\ZZ$ is a suited submanifold and $f\in \BB_r$. Two such pairs $(\ZZ_1, f_1)$ and $(\ZZ_2, f_2)$ are \textit{equivalent} if $f_2^{-1} \circ f_1(\ZZ_1)=\ZZ_2$ and $f_2^{-1}\circ f_1$ is rigid in the complement of $\ZZ_1$. Equivalence classes and the $\BB_r$-action can all be defined in the same way as for $X$. Complexity can be defined in the same way as for graphs, using the rank of the fundamental group of the suited submanifold.

Recall the surjective map $\Psi_B: \BB_r\to B_r$ from \Cref{lem:psirestriction}.

\begin{LEM}\label{lem:CCSurj}
    There is a complexity invariant surjection $\Phi: \mathcal{X} \to X$, where $\Phi([\ZZ, f])=[\rho(\ZZ), \Psi_B(f)]$ which is equivariant with respect to $\Psi_B$, i.e.\ $\Phi(g(x))=\Psi_B(g)(\Phi(x))$ for $g\in \BB_r$ and $x\in\mathcal{X}$. The pre-images of any cube are in one-to-one correspondence with the pre-images of its bottom vertex $[Z,f]$, 
    which are in one-to-one correspondence with the subgroup generated by sphere twists supported in $\overline{M_r \setminus \ZZ}$, where $\ZZ = \rho^{-1}(Z)$. 
\end{LEM}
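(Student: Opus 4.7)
The plan is to verify $\Phi$ is well defined and cubical, check the three easier claims (complexity invariance, $\Psi_B$-equivariance, surjectivity), and then analyze the fibers in two steps: first identifying lifts of a cube with lifts of its bottom vertex, then counting those.

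For well-definedness, if $(\ZZ_1,f_1)\sim(\ZZ_2,f_2)$ in $\mathcal{X}$, set $g := f_2^{-1}\circ f_1$, so $g(\ZZ_1)=\ZZ_2$ and $g$ is rigid outside $\ZZ_1$. The compatibility between $\rho$ and the two rigid structures fixed in the paragraph before \Cref{lem:psirestriction} implies that $\rho$ sends suited submanifolds to suited subgraphs and respects the piece decomposition with its markings. Hence $\Psi_B(g) = \Psi_B(f_2)^{-1}\Psi_B(f_1)$ sends $\rho(\ZZ_1)$ to $\rho(\ZZ_2)$ and is rigid outside $\rho(\ZZ_1)$, so the two representatives give the same class in $X$. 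The same observation shows that $\Phi$ sends cubes to cubes. Complexity is preserved since $\rho|_{\ZZ}\colon \ZZ \to \rho(\ZZ)$ is a homotopy equivalence; equivariance follows from $\Phi(g\cdot[\ZZ,f]) = [\rho(\ZZ),\Psi_B(g)\Psi_B(f)] = \Psi_B(g)\cdot\Phi([\ZZ,f])$; and surjectivity holds by lifting $Z$ to $\rho^{-1}(Z) = \ZZ$ and using the splitting of $\Psi_B$ from \Cref{lem:psirestriction} to lift $f$.

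For the fiber analysis, within any cube $C$ in $X$ with bottom vertex $[Z,f]$ the definition of $\preceq$ lets us take a common map representative $f$ at every vertex. Any lift of $C$ to $\mathcal{X}$ therefore consists of vertices of the form $[\rho^{-1}(Z''),\tilde f]$ as $Z''$ ranges over the vertices of $C$, for a single fixed lift $\tilde f$ of $f$; hence the lift of $C$ is uniquely determined by the lift $[\ZZ,\tilde f]$ of its bottom vertex. To count these lifts, fix one lift $\tilde f_0$. By \Cref{lem:psirestriction}, every other lift of $f$ equals $\tau\tilde f_0$ for a unique $\tau\in\Twists(M_r) = \ker\Psi_B$, and $[\ZZ,\tau_1\tilde f_0] = [\ZZ,\tau_2\tilde f_0]$ if and only if $\tau_2^{-1}\tau_1$ stabilizes $\ZZ$ and is rigid outside. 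Using the generating set from \Cref{rmk:SphereTwistsGenerators}, $\tau$ factors uniquely as a commuting product $\tau_{\mathrm{in}}\tau_{\mathrm{out}}$ with $\tau_{\mathrm{in}}$ supported on the chosen spheres inside $\ZZ$ and $\tau_{\mathrm{out}}$ on those in $\overline{M_r\setminus\ZZ}$. The factor $\tau_{\mathrm{in}}$ is the identity outside $\ZZ$ and so trivially satisfies the equivalence, while a nontrivial $\tau_{\mathrm{out}}$ acts as a nontrivial mapping class on some piece outside $\ZZ$ and fails to respect the marking on that piece. Thus $[\ZZ,\tau\tilde f_0]$ depends only on $\tau_{\mathrm{out}}$, yielding the asserted bijection with $\langle T_S : S\subset\overline{M_r\setminus\ZZ}\rangle$.

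The hard step will be the final claim that a nontrivial sphere twist supported outside $\ZZ$ fails to be rigid in the $\BB_r$-sense: this amounts to checking that each generating sphere twist acts as a nontrivial mapping class on the piece it is supported in, so that composing with the piece's marking $\phi$ gives a marking not isotopic to $\phi$. Everything else reduces to carefully tracking definitions using the compatibility of $\rho$ with the rigid structures, together with the structure of $\Twists(M_r)$ provided by \Cref{lem:psirestriction} and \Cref{rmk:SphereTwistsGenerators}.
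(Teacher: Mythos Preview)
Your well-definedness, equivariance, and surjectivity arguments match the paper's, and your cube-to-bottom-vertex reduction is the same idea (the paper just phrases it via uniqueness of the minimum-height vertex in a cube). There is, however, a genuine error in the vertex-fiber count.

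You assert that $[\ZZ,\tau_1\tilde f_0]=[\ZZ,\tau_2\tilde f_0]$ if and only if $\tau_2^{-1}\tau_1$ stabilizes $\ZZ$ and is rigid outside. But the equivalence relation compares via $(\tau_2\tilde f_0)^{-1}(\tau_1\tilde f_0)=\tilde f_0^{-1}\tau_2^{-1}\tau_1\tilde f_0$, so the correct condition is that this \emph{conjugate} is rigid outside $\ZZ$. Since conjugation by $\tilde f_0$ permutes the generating sphere twists according to how $\tilde f_0$ permutes pieces, and $\tilde f_0$ need not preserve $\ZZ$ (think of $f$ a loop shift), the subgroup $\tilde f_0\,\Twists_{\mathrm{in}}\,\tilde f_0^{-1}$ is generally \emph{not} equal to $\Twists_{\mathrm{in}}$. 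Consequently your claim that $[\ZZ,\tau\tilde f_0]$ depends only on $\tau_{\mathrm{out}}$ is false as stated: a twist about a core sphere inside $\ZZ$ can be moved by $\tilde f_0^{-1}$ to a piece outside $\ZZ$, where it fails rigidity. The cardinality of the fiber still comes out right (the quotient by a conjugate subgroup has the same size), but your explicit bijection does not work.

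The paper sidesteps this by first using the equivariance you already proved to reduce to $f=\id$; then $\tilde f_0=\id$ and the conjugation disappears, so the decomposition $\tau=\tau_{\mathrm{in}}\tau_{\mathrm{out}}$ behaves exactly as you want. A second, smaller gap: you never justify that every preimage of $[Z,f]$ can be represented as $[\rho^{-1}(Z),\tilde f]$ for some lift $\tilde f$ of $f$; the paper handles this (in the $f=\id$ case) by an explicit change of representative. Both issues are cured simultaneously by inserting the reduction to $f=\id$ at the start of your fiber analysis.
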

\begin{proof}
    We first show that $\Phi$ is well defined. Suppose $(\ZZ_1, f_1)$ is equivalent to $(\ZZ_2, f_2)$. Then $f_2^{-1}f_1(\ZZ_1)=\ZZ_2$, so $\rho(f_2^{-1}f_1(\ZZ_1))=\rho(\ZZ_2)$. It follows from the definition of $\Psi_B$ that $\Psi_B(f_2^{-1}f_1)\rho(\ZZ_1)=\rho(\ZZ_2)$ and $\Psi_B(f_2^{-1}f_1)$ is rigid outside of $\rho(\ZZ_1)$. This is because $\Psi_B$ is defined using $\rho$, and $\rho$ is chosen to be compatible with the rigid structures of $\BB_r$ and $B_r$. In particular, $(\rho(\ZZ_1), \Psi_B(f_1))$ is equivalent to $(\rho(\ZZ_2), \Psi_B(f_2))$.
    \par 
    By construction, $\Phi$ is surjective since $\Psi_B$ is surjective, and $\rho$ induces a bijection from the set of suited submanifolds to the set of suited subgraphs. For a suited submanifold $\ZZ$, the ranks of $\ZZ$ and $\rho(\ZZ)$ are the same and using the natural identifications of $\pi_1(M_r)$ and $\pi_1(\G_r)$ using $\rho$ and $i$, the actions of $f\in \BB_r$ and $\Psi_B(f)\in B_r$ on the fundamental group are the same. Thus, $\Phi$ is complexity invariant. Equivariance follows from the fact that $\Psi_B$ is a homomorphism. 
    \par 
    Let $C$ be a cube with bottom vertex $[Z, f]$. Suppose $\Phi(C')=C$. Then because $\Phi$ is height preserving and there is a unique vertex in $C'$ of minimal height, it follows that the set of cubes which map to $C$ are in bijection with the set of vertices mapping to $[Z, f]$. 
    \par 
    To understand the preimage of a vertex $[Z,f]$, we may assume by equivariance that $f=\id$. Given some $\Phi([\ZZ', g'])=[Z,\id]$, we can choose a representative $[\ZZ',g'] = [\ZZ,g]$, with $\rho(\ZZ) = Z$, $g$ compactly supported, and $g(\ZZ) = \ZZ$. Modifying $g$ by a mapping class supported in $\ZZ$, we can assume that $g|_{\ZZ} = \id|_{\ZZ}$. Thus, as $\id^{-1}\circ\Psi_B(g)$ is rigid outside $Z$, we have that $\Psi_B(g) = \id$. By \Cref{thm:doubledHandlebodytoGraph}, $g$ is a product of sphere twists. Finally, we observe that sphere twists supported in $\ZZ$ do not change $[\ZZ,g]$, while those supported outside $\ZZ$ do.
\end{proof}

\begin{RMK}
    Recall from \Cref{rmk:SphereTwistsGenerators} that the sphere twists are
    generated by those which occur about the core spheres of the pieces.
    This justifies that for a suited submanifold $\ZZ$ and an adjacent piece
    $\YY$, we need only consider sphere twists supported in $\ZZ$, in $\YY$, or outside $\ZZ\cup \YY$, \end{RMK}

Recall the descending link $\lkd(x)$ of a vertex $x$ is the part of its link
coming from the cells that contain $x$ as their unique top vertex. As $\Phi$~
preserves adjacency and height, it descends to a map $\Phid$ of descending links.

\begin{LEM}\label{lem:descendingLinksCompleteJoin}
    Let $x\in \mathcal{X}$ be a vertex. Then $\Phid:\lkd_{\mathcal{X}}(x) \to \lkd_{X}(\Phi(x))$ is a complete join.
\end{LEM}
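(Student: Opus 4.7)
My plan is to describe the vertices and simplices of both descending links explicitly, and then verify the two defining conditions of a complete join. By the $\BB_r$-equivariance of $\Phi$ in \Cref{lem:CCSurj}, I normalize so that $x = [\mathcal{Z}, \id]$ and $\Phi(x) = [Z, \id]$ where $Z = \rho(\mathcal{Z})$. Vertices of $\lkd_X(\Phi(x))$ correspond to outermost pieces $Y_i$ of $Z$, and I expect their preimages in $\lkd_{\mathcal{X}}(x)$ to be exactly the vertices $[\mathcal{Z} \setminus \mathcal{Y}_i, t_i]$ with $t_i$ ranging over sphere twists supported in the piece $\mathcal{Y}_i$. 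Although \Cref{lem:CCSurj} says the preimages of the vertex $[Z \setminus Y_i, \id]$ in $\mathcal{X}$ are parameterized by all sphere twists supported outside $\mathcal{Z} \setminus \mathcal{Y}_i$, the requirement $y' \prec x$ prunes this set: twists supported outside $\mathcal{Z}$ are not rigid on the complement of $\mathcal{Z}$ and cannot give representatives sitting below $x$, while twists supported in other pieces $\mathcal{Y}_j \subset \mathcal{Z}\setminus\mathcal{Y}_i$ act as the identity on $\mathcal{Z} \setminus \mathcal{Y}_i$ and produce classes equivalent to the one with $t_i = \id$.

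Given this description, I would verify that $\Phid$ is surjective and injective on simplices. Surjectivity is immediate by lifting each vertex of a given simplex with the trivial sphere twist. Injectivity on simplices amounts to dimension preservation: a simplex of $\lkd_{\mathcal{X}}(x)$ records a collection of distinct outermost pieces $\mathcal{Y}_{i_j}$ together with a choice of sphere twist in each, and $\rho$ carries these to the distinct pieces $Y_{i_j}$.

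For the join condition, given a simplex $\sigma = \langle v_1, \dots, v_k \rangle \subset \lkd_X(\Phi(x))$ removing pieces $Y_{i_1}, \dots, Y_{i_k}$ and any choice of sphere twists $t_j \in \Twists(\mathcal{Y}_{i_j})$, the element $t = t_1 \cdots t_k$ is a product of sphere twists with pairwise disjoint support. I would then check that the cube with bottom $[\mathcal{Z} \setminus \bigcup_j \mathcal{Y}_{i_j}, t]$ and top $x = [\mathcal{Z}, t] = [\mathcal{Z}, \id]$ has upper codimension-one vertices $[\mathcal{Z} \setminus \mathcal{Y}_{i_j}, t_j]$, using that $\prod_{\ell\neq j} t_\ell$ is supported inside $\mathcal{Z} \setminus \mathcal{Y}_{i_j}$ and is the identity outside, so $(\mathcal{Z} \setminus \mathcal{Y}_{i_j}, t) \sim (\mathcal{Z} \setminus \mathcal{Y}_{i_j}, t_j)$. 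Hence any independent choice of one vertex from each $(\Phid)^{-1}(v_j)$ spans a simplex, giving $(\Phid)^{-1}(\sigma) = (\Phid)^{-1}(v_1) * \cdots * (\Phid)^{-1}(v_k)$; conversely, a $(k-1)$-simplex in the preimage determines such a tuple via its vertices.

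The hardest step will be the careful bookkeeping of representatives: verifying that the list above really exhausts the preimage of a vertex in $\lkd_{\mathcal{X}}(x)$ (in particular, that distinct non-trivial sphere twists in $\mathcal{Y}_i$ fail the rigidity condition on the complement of $\mathcal{Z} \setminus \mathcal{Y}_i$ and hence produce distinct equivalence classes), and that the cube construction produces exactly the prescribed upper vertices under the equivalence relation defining $\mathcal{X}$. These are direct checks against the definition of $\sim$ and the meaning of ``rigid outside $\mathcal{Z}_1$'', but each requires some attention.
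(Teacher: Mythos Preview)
Your parametrization of the descending links is incorrect, and this undermines the whole argument. You claim that vertices of $\lkd_X([Z,\id])$ correspond to outermost pieces $Y_i$ of $Z$, but there are far more vertices than that. For a concrete example, let $Y_1$ be an outermost piece of $Z$ and let $\tau$ be the loop flip on the loop in $Y_1$. Then $[Z\setminus Y_1,\tau]\prec[Z,\tau]=[Z,\id]$, so $[Z\setminus Y_1,\tau]$ lies in $\lkd_X([Z,\id])$; but $[Z\setminus Y_1,\tau]\neq[Z\setminus Y_1,\id]$ because $\tau$ is not rigid on $Y_1$ (it is not the marking map $\phi_1^{-1}\phi_1=\id$). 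More broadly, your description would make $\lkd_X(\Phi(x))$ a single $(r-1)$-simplex, hence contractible --- but this is precisely what \emph{must} fail for part (d) of Brown's criterion, and indeed \Cref{prop:descendingPieceComplex} together with \Cref{lem:infiniteFibers} forces $\lkd_{\mathcal{X}}(x)$ (and hence $\lkd_X(\Phi(x))$, since the $\Phid$-fibers are finite) to have infinitely many vertices.

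The paper's proof avoids this pitfall by never attempting a global parametrization with $f=\id$. Instead, for each vertex $v\in\lkd_X(\Phi(x))$ it chooses \emph{some} representative $[Z_v,f_v]$ of the bottom vertex (with $f_v$ typically nontrivial), writes $\Phi(x)=[Z_v\cup Y_v,f_v]$, and argues that the $\Phid$-fiber over $v$ is the set of classes $[\rho^{-1}(Z_v),\,h_v\cdot t]$ as $t$ ranges over sphere twists supported in $\rho^{-1}(Y_v)$, where $h_v$ is a fixed lift of $f_v$. Your reasoning about why only twists in $\mathcal{Y}$ survive is essentially right, but it must be carried out relative to such a representative for each $v$; the attempt to pin everything to the identity loses most of the descending link. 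A minor additional point: your remark about ``twists supported in other pieces $\mathcal{Y}_j\subset\mathcal{Z}\setminus\mathcal{Y}_i$'' is beside the point, since by \Cref{lem:CCSurj} the preimages are parameterized by twists supported \emph{outside} $\mathcal{Z}\setminus\mathcal{Y}_i$, not inside.
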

\begin{proof}
    Let $v\in\lkd_{X}(\Phi(x))$ be a vertex. Recall that a vertex of the descending link of $\Phi(x)$ corresponds to an edge whose top vertex is $\Phi(x)$. Given any representative $[Z,f]$ for the bottom vertex of this edge in $X$, we can represent $\Phi(x)$ as $[Z\cup Y,f]$, for $Y$ a piece adjacent to $Z$ in $\G_r$. 
    
    Then the fiber over $v$ is a copy of the subgroup generated by sphere twists supported in $\mathcal{Y} = \rho^{-1}(Y)$. To see this, it suffices by \Cref{lem:CCSurj} to look at sphere twists supported outside $\ZZ=\rho^{-1}(Z)$. But note that a sphere twist supported outside $\ZZ\cup \mathcal{Y}$ would change $x$, and composing with such a sphere twist would not yield an element of $\lkd_{\mathcal{X}}(x)$. Therefore, only sphere twists supported in $\mathcal{Y}$ can preserve the $\Phid$ fiber of $v$.
    
    Given any simplex $\sigma$ of $\lkd_{X}(\Phi(x))$, choose a representative $[Z,f]$ for the bottom vertex of the corresponding cube in $X$. Then $\Phi(x)$ is represented by $[Z\cup Y_1 \cup \cdots \cup Y_d, f]$, where the $Y_i$ are pieces adjacent to $Z$ in distinct ends. Denote the vertices of $\sigma$ by $v_i$, where $v_i$ has a representative $[Z\cup Y_1 \cup \cdots \cup \widehat{Y}_i\cup \cdots \cup Y_d, f]$, where the hat indicates exclusion. Writing $\ZZ=\rho^{-1}(Z)$ and $\mathcal{Y}_i = \rho^{-1}(Y)$, we have that $x=[\ZZ\cup \mathcal{Y}_1 \cup \cdots \cup \mathcal{Y}_d, h]$ for some $h\in\BB_r$ such that $\Psi_B(h)=f$. Modifying $h$ by a sphere twist supported in $\mathcal{Y}_j$ only changes the preimage under $\Phid$ of $v_j$, leaving the $\Phid$-preimages of all other $v_i$ alone. As $\Phid$ is injective on simplices, and is easily seen to be surjective, we see that it is a complete join.  
\end{proof}
\begin{THM}\label{thm:CubeComplexContractible} 
    The Stein--Farley cube complexes $X$ and $\mathcal{X}$ are contractible, and the following holds.
    \begin{itemize}
        \item Let $C$ be a cube in either $X$ or $\mathcal{X}$ with bottom
          vertex $x=[Z,f]$. Then the $B_r$-stabilizer (or the $\BB_r$-stabilizer respectively) of $C$ is isomorphic to a finite index subgroup of the mapping class group of $Z$, (thinking of the valence $1$ vertices in the case that $Z$ is a graph as ends). In particular, every cube stabilizer is of type $F_{\infty}$. 
        
        \item For every $k\geq 1$, the quotient spaces $X^{\leq k}/B_r$ and $\mathcal{X}^{\leq k}/\BB_r$ are compact. 
    \end{itemize}
\end{THM}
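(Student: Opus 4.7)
The plan is to tackle each of the three assertions in turn, using the Morse-theoretic structure carried by $h$ together with the compatibility map $\Phi\colon \mathcal{X}\to X$ from \Cref{lem:CCSurj}.

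\emph{Contractibility.} I will first prove $X$ is contractible, then deduce the same for $\mathcal{X}$. The key structural input is that the poset $(\mathcal{S},\preceq)$ is upward directed: given two classes $[Z_1,f_1], [Z_2,f_2]$, I would enlarge each $Z_i$ to a $Z_i'$ containing a defining subgraph of $f_i$ (a $\preceq$-upward move), pass to the equivalent representatives $(f_i(Z_i'),\id)$, and take $W := f_1(Z_1')\cup f_2(Z_2')$, which is itself a suited subgraph dominating both. Since the interval $[[C,\id],[W,\id]]$ is, by construction, a single cube of dimension $h(W)-g$, every compact subcomplex of $X$ fits inside such a cube. Cubes being contractible, any $S^n\to X$ is null-homotopic, so $X$ is weakly contractible and, being a CW complex, contractible. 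The same argument applies verbatim to $\mathcal{X}$ with suited submanifolds replacing subgraphs; the sphere-twist ambiguity in the fibers of $\Phi$ does not affect upward directedness.

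\emph{Cube stabilizers.} Fix a cube $C\subset X$ with bottom vertex $x=[Z,f]$ and top $[Z',f]$, where $Z'=Z\cup Y_1\cup\cdots\cup Y_d$. Conjugating by $f$, I would assume $f=\id$. Since $[Z,\id]$ and $[Z',\id]$ are the unique extremal-height vertices of $C$, any $g\in\mathrm{Stab}(C)$ must satisfy $g(Z)=Z$, $g(Z')=Z'$, and be rigid outside $Z$. The crucial observation is that the induced rigid permutation of the external pieces (those in $\overline{\G_r\setminus Z'}$) has \emph{zero shift in each ray}: a nonzero shift cannot extend to a bijection of the countable chain of pieces along a single ray. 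Therefore the outer action of $g$ is determined entirely by an end-permutation of $\partial Z'$, and restriction to $Z$ gives an injective homomorphism $\mathrm{Stab}(C)\hookrightarrow \Map(Z)$ whose image is the finite-index subgroup preserving the set $\{\partial_-Y_1,\ldots,\partial_-Y_d\}\subset\partial Z$. By \Cref{thm:TwoMCGfiniteKernel} combined with \Cref{lem:DHB_Finfinity}, $\Map(Z)$ has type $F_\infty$, and this passes to finite-index subgroups, so $\mathrm{Stab}(C)$ is of type $F_\infty$. For $\mathcal{X}$, \Cref{lem:CCSurj} shows that the $\BB_r$-stabilizer of any cube in $\Phi^{-1}(C)$ sits in a short exact sequence with kernel a finite subgroup of sphere twists (supported in the bottom-vertex submanifold, cf.\ \Cref{rmk:SphereTwistsGenerators}) over its image in $\mathrm{Stab}(C)$, again yielding type $F_\infty$.

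\emph{Compactness of quotients.} The $B_r$-action preserves $h$, and two vertices lie in the same orbit iff their underlying suited subgraphs share a combinatorial type---recorded by the tuple $(n_1,\ldots,n_r)$ counting pieces per end, up to the end-permutation induced by $B_r$. There are only finitely many such types of rank at most $k$, so $X^{\le k}/B_r$ has finitely many vertex orbits; higher cells, indexed by pairs $(Z_1,Z_2)$ of such types with $Z_1\preceq Z_2$ and $h(Z_2)\le k$, are also finitely many. The same argument, together with the surjectivity of $\Psi_B$ from \Cref{lem:psirestriction}, handles $\mathcal{X}^{\le k}/\BB_r$.

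The main obstacle will be the cube-stabilizer computation: one must carefully rule out any nontrivial ``shift'' contribution from the outer rigid permutation, ensuring that $\mathrm{Stab}(C)$ genuinely embeds into $\Map(Z)$ as a finite-index subgroup rather than sitting as a larger extension. The bijectivity argument on the ray structure of external pieces is the delicate input that makes this work; for $\mathcal{X}$, one must additionally track how the sphere-twist kernel of $\Psi_B$ intersects $\mathrm{Stab}(\mathcal{C})$.
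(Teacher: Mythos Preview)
Your arguments for cube stabilizers and for cocompactness of the sublevel sets are sound and match the standard approach (which the paper simply cites from \cite{aramayona2024asymptotic}). The contractibility argument, however, contains a genuine gap.

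The relation $\preceq$ is \emph{not} transitive: $x_1\preceq x_2$ requires $\overline{Z_2\setminus Z_1}$ to be a \emph{disjoint} union of pieces, so at most one piece per end can be added in a single $\preceq$-step. In particular, if $W$ has more than one piece in any end, then $[C,\id]\not\preceq [W,\id]$, and the set $\{x:[C,\id]\preceq x\preceq [W,\id]\}$ is \emph{not} a cube of dimension $h(W)$. This is exactly why $X$ is $r$-dimensional. Your directedness claim therefore fails for $\preceq$: take $Z_1=C\cup Y_1^1$ and $Z_2=C\cup Y_1^1\cup Y_2^1\cup Y_3^1$; no $W$ satisfies both $[Z_1,\id]\preceq[W,\id]$ and $[Z_2,\id]\preceq[W,\id]$, because $\overline{W\setminus Z_1}$ would have to contain the non-disjoint pieces $Y_2^1,Y_3^1$.

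The fix is to work with the \emph{full} inclusion order $\le$ on suited subgraphs (which \emph{is} directed) and to identify, for each upper bound $[W,\id]$, the subcomplex of $X$ spanned by vertices $\le[W,\id]$. Writing $W$ as the core together with $n_i$ pieces in end $i$, this subcomplex is canonically isomorphic to the standard cubulation of the box $\prod_{i=1}^r[0,n_i]$, which is contractible. Since any compact subcomplex of $X$ lies in such a box, contractibility follows. This is the content of \cite[Proposition~5.7]{aramayona2024asymptotic} that the paper invokes.
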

\begin{proof}
    The contractibility follows from the arguments of Proposition 5.7 of \cite{aramayona2024asymptotic}. 
    \par 
    The first and second bullet points follow from the arguments in Lemma 6.3 and Lemma 6.2 of \cite {aramayona2024asymptotic}, respectively.
\end{proof}

    \par

\subsection{Piece complex and tethered handle complex} Following \cite{aramayona2024asymptotic} and \cite{aramayona2023surface}, we can determine the connectivity of descending links using the \emph{piece complex}, since each $\lkd(x)$ is a complete join over it. We now recall the definition.

    \begin{DEF}[Piece complex]
        Let $\ZZ$ be a compact doubled handlebody with finitely many open balls removed, and let $Q$ be a collection of boundary spheres of $\ZZ$. The \textit{piece complex} $\mathcal{P}(\ZZ,Q)$ is the simplicial complex whose vertices are isotopy classes of \textit{pieces}, i.e.\ submanifolds of $\ZZ$ homeomorphic to $M_{1,2}$, with one boundary component in $Q$, and with simplices given by pairwise disjointness. If $Q=\partial \ZZ$, we write $\mathcal{P}(\ZZ,Q)=\mathcal{P}(\ZZ)$.
    \end{DEF}
    
    The complex $\mathcal{P}(\ZZ,Q)$ can be identified with a subcomplex of the sphere complex of $\ZZ$, as pieces are in one-to-one correspondence with their non-peripheral separating boundary components, and the boundary components of two pieces intersect if and only if the pieces themselves intersect. Of course, here the word piece is being used in a slightly different way compared to the definition of a piece in a rigid structure, but the two notions are closely related (see \Cref{fig:PieceComplex}). 
    To keep figures simple, we will illustrate doubled handlebodies by drawing only one handlebody, with the other understood to be a symmetric copy.
    
    \begin{figure}[ht!]
    \centering
    \begin{overpic}[width=.8\textwidth]{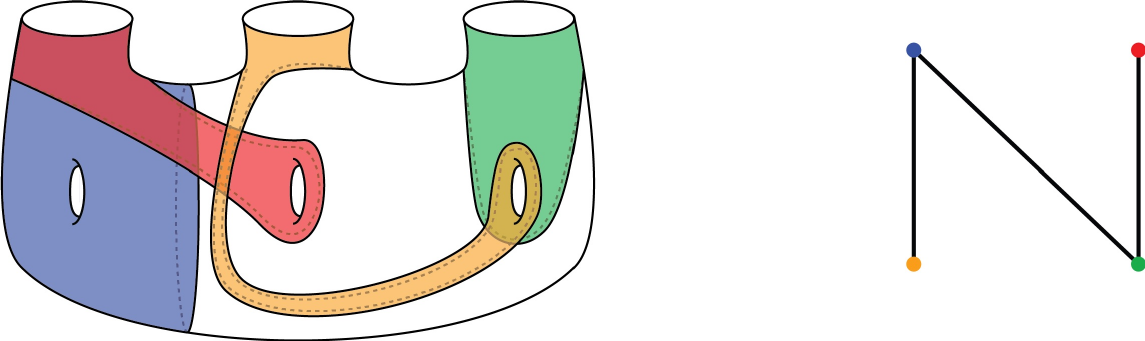}
    \put(-4,10){$T_1$}
    \put(29,13){$T_2$}
    \put(31,25){$T_3$}
    \put(52,25){$T_4$}

    \put(75,25){$T_1$}
    \put(101,25){$T_2$}
    \put(75,5){$T_3$}
    \put(101,5){$T_4$}
    \end{overpic}
    \caption{For the doubled handlebody $\ZZ = M_{3,3}$, we illustrate a part of the piece complex involving the spheres $T_1, T_2, T_3,$ and $T_4$ pictured above.  Here $Q = \partial \ZZ$.}
    \label{fig:PieceComplex}
\end{figure}

Note that every vertex of $\XX$ can be sent by an element of $\BB_r$ to one of the form $[\ZZ, \id]$, so it suffices to analyze the descending links of vertices of this form. We will implicitly use this fact for the rest of the paper.

We define for $x=[\ZZ, \id]\in \XX$ the map 
    $$\Pi: \lkd(x)\to \mathcal{P}(\ZZ)$$
    as follows, following the definition of the map used in \cite[Proposition 6.6]{aramayona2024asymptotic}. Fix a $p$-simplex $z$ in $\lkd(x)$. This simplex is determined by a cube, which is itself determined by its top $[\ZZ', g]$ and bottom $[\ZZ'',g]$ vertices. Here, the manifold $\ZZ'$ is obtained from $\ZZ''$ by adding pairwise disjoint pieces $\mathcal{Y}_0, \ldots, \mathcal{Y}_p$ so that $g$ maps $\ZZ'$ to $\ZZ$ and $\ZZ'$ is a support for $g$. We define
    $$\Pi(z)=\{g(\mathcal{Y}_0), \ldots, g(\mathcal{Y}_p)\}.$$ 
    
    \par 
    The proofs of the following results follow arguments to those referenced in the statements.  
    \begin{PROP}[{\cite[Proposition 6.6]{aramayona2024asymptotic}}]\label{prop:descendingPieceComplex}
        The map $\Pi$ is well defined and is a complete join.
    \end{PROP}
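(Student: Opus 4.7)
The plan is to verify, in sequence, well-definedness, surjectivity, injectivity on simplices, and finally the join-of-fibers structure, exactly as in the general framework of Hatcher--Wahl used throughout this section.

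First I would check well-definedness. Suppose $(\mathcal{Z}''_1, g_1)$ and $(\mathcal{Z}''_2, g_2)$ represent the same bottom vertex of the cube defining a $p$-simplex $z$, with corresponding top representatives $(\mathcal{Z}'_1, g_1)$ and $(\mathcal{Z}'_2, g_2)$ for $x$. Then $g_2^{-1} g_1$ is rigid outside $\mathcal{Z}''_1$ and sends each piece $\mathcal{Y}_i$ in the first presentation to the corresponding piece $\mathcal{Y}'_i$ in the second. Hence $g_2(\mathcal{Y}'_i) = g_2\bigl(g_2^{-1} g_1(\mathcal{Y}_i)\bigr) = g_1(\mathcal{Y}_i)$, so the unordered collection $\Pi(z)$ is independent of the choice of representative. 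The same computation shows that $\Pi$ is $\BB_r$-equivariant in the sense needed later.

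Next I would handle surjectivity and injectivity on simplices together. Given a piece $P \subset \mathcal{Z}$ with one boundary on $\partial \mathcal{Z}$, pick a rigid-structure piece $\mathcal{Y}_0$ adjacent to $\mathcal{Z}$ in $M_r$ and build $g \in \BB_r$ whose restriction to $\mathcal{Z} \cup \mathcal{Y}_0$ is a homeomorphism onto $\mathcal{Z}$ sending $\mathcal{Y}_0$ onto $P$, extended rigidly outside $\mathcal{Z} \cup \mathcal{Y}_0$; such a $g$ exists because $\mathcal{Z} \cup \mathcal{Y}_0$ and $\mathcal{Z}$ are abstractly homeomorphic doubled handlebodies with the same boundary pattern outside of the two pieces being identified. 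Then $[\mathcal{Z}, g] \preceq [\mathcal{Z}\cup\mathcal{Y}_0, g] = x$, and $\Pi$ of this vertex is $\{P\}$. For higher-dimensional simplices one carries out this construction simultaneously on disjoint pieces $P_0, \ldots, P_p$, which is possible precisely because they are pairwise disjoint, giving both surjectivity and the fact that distinct simplices of $\lkd(x)$ map to distinct simplices of $\mathcal{P}(\mathcal{Z})$: a simplex $z$ is determined by its cube, and a cube with top vertex $x$ is determined (up to the chosen equivalence) by the bottom suited submanifold, which $\Pi(z)$ recovers as $\mathcal{Z}$ minus the listed pieces.

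The main work is the complete join property. Fix a simplex $\sigma = \langle v_0, \ldots, v_p\rangle$ of $\mathcal{P}(\mathcal{Z})$, represented by pairwise disjoint pieces $P_0, \ldots, P_p \subset \mathcal{Z}$. By the construction above, there is a cube $C$ with top $x$ and bottom $[\mathcal{Z} \setminus \bigcup P_i, g]$ for some $g$ with $g(\mathcal{Y}_i) = P_i$, so $\Pi$ of the top-dimensional simplex of $C$ equals $\sigma$. For each $i$ we have identified, via \Cref{lem:CCSurj} and \Cref{lem:descendingLinksCompleteJoin}, the fiber $\Pi^{-1}(v_i)$ inside $\lkd(x)$ with the subgroup generated by sphere twists supported in $P_i$. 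Because the $P_i$ are pairwise disjoint, these sphere twist subgroups lie in commuting pieces and can be freely combined: any choice of one vertex $w_i \in \Pi^{-1}(v_i)$ for each $i$ assembles into a single element of $\BB_r$ (agreeing with $g$ up to sphere twists in $\bigcup P_i$) and hence into a single cube of $\XX$, whose corresponding $p$-simplex of $\lkd(x)$ has $w_i$ as its vertices and maps to $\sigma$. This identifies $\Pi^{-1}(\sigma) = \Pi^{-1}(v_0) \ast \cdots \ast \Pi^{-1}(v_p)$.

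I expect the subtle step to be the last one: arguing that disjoint sphere twist choices across different $P_i$ assemble compatibly into a well-defined vertex of $\lkd(x)$ (rather than into several inequivalent cubes). The key leverage is that sphere twists supported in disjoint regions commute and act independently on the equivalence classes defining $\XX$-vertices, which is exactly the content pulled out of \Cref{lem:CCSurj} and \Cref{lem:descendingLinksCompleteJoin}, so the argument reduces to carefully repackaging those lemmas in the piece-complex setting.
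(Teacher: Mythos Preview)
Your argument has a genuine gap in the complete-join step: you have conflated two different maps. The map $\Pi:\lkd_{\XX}(x)\to\mathcal{P}(\ZZ)$ under consideration here is \emph{not} the map $\Phid:\lkd_{\XX}(x)\to\lkd_X(\Phi(x))$ treated in \Cref{lem:CCSurj} and \Cref{lem:descendingLinksCompleteJoin}. Those lemmas identify the fibers of $\Phid$ with finite sphere-twist subgroups, but they say nothing about the fibers of $\Pi$. In fact \Cref{lem:infiniteFibers}, stated immediately after this proposition, records that every $\Pi$-fiber is \emph{infinite}, so the description ``$\Pi^{-1}(v_i)$ is the subgroup of sphere twists supported in $P_i$'' cannot be correct, and the ``commuting sphere twists assemble'' argument does not prove the join decomposition for $\Pi$.

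What actually varies in a $\Pi$-fiber over a piece $P\subset\ZZ$ is the equivalence class of the map $g$ (equivalently, the mapping class of $g$ restricted to the complement $\overline{\ZZ\setminus P}$), not a sphere twist. To obtain the join property one must show: given $w_i\in\Pi^{-1}(P_i)$ represented by $(\ZZ'_i,g_i)$ with $g_i(\YY_i)=P_i$, there is a single $g\in\BB_r$ with support $\ZZ'$ (a suited submanifold with $g(\ZZ')=\ZZ$) and disjoint adjacent pieces $\YY_0,\ldots,\YY_p$ so that, for each $i$, $g(\YY_i)=P_i$ and the codimension-one face of the resulting cube obtained by reinserting all $\YY_j$ with $j\neq i$ is \emph{equivalent} to the edge representing $w_i$. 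This is a statement about the mapping class group of $\ZZ$ (one localizes each $g_i$ to a neighborhood of $P_i$ and uses disjointness of the $P_i$ to glue), and is exactly the content of \cite[Proposition~6.6]{aramayona2024asymptotic} that the paper defers to. Your well-definedness and surjectivity paragraphs are fine, but the heart of the argument needs to be redone without reference to $\Phid$ or sphere twists.
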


\begin{LEM}[{\cite[Lemma 4.5]{aramayona2023surface}}]\label{lem:infiniteFibers}
    For every vertex $v\in \mathcal{P}(\ZZ)$, its $\Pi$-fiber in $\lkd(x)$ is infinite.
\end{LEM}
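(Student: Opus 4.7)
The plan is to exhibit an infinite cyclic orbit in $\Pi^{-1}(v)$ under a suitable subgroup of $\mathrm{Stab}_{\BB_r}(x) \cap \mathrm{Stab}_{\BB_r}(v)$. Fix $v = [\mathcal{Y}]$ for some piece $\mathcal{Y} \subset \ZZ$. By \Cref{prop:descendingPieceComplex}, $\Pi$ is a complete join and hence surjective, so I can pick a vertex $w_0 = [\ZZ'', g_0] \in \Pi^{-1}(v)$ where $\ZZ' := \ZZ'' \cup \mathcal{Y}_0$ is a suited submanifold, $g_0(\ZZ') = \ZZ$, $g_0$ is rigid outside $\ZZ'$, and $g_0(\mathcal{Y}_0)$ is isotopic to $\mathcal{Y}$ in $\ZZ$.

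Next I aim to produce an infinite-order element $\phi \in \BB_r$ stabilizing both $x$ and $v$ such that no nontrivial power stabilizes $w_0$. The construction: take $\phi$ to be a Nielsen-style handle slide in $\Map(\ZZ)$, extended by the identity outside $\ZZ$. Being the identity outside $\ZZ$ makes $\phi$ trivially rigid there, so $\phi \in \mathrm{Stab}(x)$. Choosing the slide to preserve the conjugacy class of the free factor of $\pi_1(\ZZ)$ associated to $\mathcal{Y}$ ensures $\phi(\mathcal{Y})$ is isotopic to $\mathcal{Y}$ in $\ZZ$, so $\phi \in \mathrm{Stab}(v)$.

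Setting $w_n := \phi^n \cdot w_0 = [\ZZ'', \phi^n g_0]$, equivariance of $\Pi$ gives $w_n \in \Pi^{-1}(v)$. I then need to verify distinctness: $w_n = w_m$ iff $g_0^{-1} \phi^{n-m} g_0$ lies in the rigid stabilizer of $[\ZZ'', \id]$, which requires setwise preservation of $\ZZ''$ together with marking-compatible rigidity on all pieces outside $\ZZ''$, notably on $\mathcal{Y}_0 = g_0^{-1}(\mathcal{Y})$. Since $\phi$ moves $\mathcal{Y}$ via ambient isotopy rather than fixing it pointwise, the conjugated map acts nontrivially on $\mathcal{Y}_0$ for every nonzero power, violating the rigidity condition.

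The main obstacle will be arranging for $\langle \phi \rangle$ to act on $w_0$ with an infinite orbit, not merely that $\phi$ itself has infinite order in $\BB_r$. Because $\Map(M_{1,2}, \partial)$ is finite, a twist intrinsic to a single piece cannot supply the infinite-order action; the unboundedness must come from $\phi$ genuinely displacing $\mathcal{Y}_0$ through $\ZZ'$ within its isotopy class. The delicate construction is therefore to realize the handle slide so that its ambient action on $\mathcal{Y}_0$ (via conjugation by $g_0$) has infinite order on the quotient by $\mathrm{Stab}^{\mathrm{rigid}}([\ZZ'', \id])$, which I plan to establish by exploiting that the slide realizes an infinite-order outer automorphism of $\pi_1(\ZZ)$ whose reduction to the coset space encoding fiber elements remains nontrivial on all powers.
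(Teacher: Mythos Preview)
Your approach has the right overall shape---produce an explicit infinite family in the fiber---but it rests on a false claim that sends you down an unnecessarily complicated and ultimately incomplete path.

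The claim that $\Map(M_{1,2},\partial)$ is finite is incorrect. Recall from the proof of \Cref{prop:B2finitegen} that for a rank-$k$ graph $K$ with $|\partial K|=2$ one has $\Map(K,\partial K)\cong F_k\rtimes\Aut(F_k)$; for $k=1$ this is $\Z\rtimes\Z/2$, the infinite dihedral group. Via \Cref{thm:TwoMCGfiniteKernel} the same holds for $\Map(M_{1,2},\partial)$ up to a finite kernel of sphere twists. Concretely, the $\Z$ factor is generated by the slide of one boundary sphere of $\mathcal{Y}_0$ around its core handle; extended by the identity to $M_r$, this is a partial conjugation on $\pi_1(M_r)$ and hence has infinite order.

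Once you know this, the argument the paper imports from \cite[Lemma~4.5]{aramayona2023surface} is direct and avoids all the subtleties you raise. Fix $w_0=[\ZZ'',g_0]\in\Pi^{-1}(v)$ as you do. For each $\psi\in\Map(\mathcal{Y}_0,\partial\mathcal{Y}_0)$ extend by the identity to $\hat\psi\in\BB_r$ and set $w_\psi=[\ZZ'',g_0\hat\psi]$. One checks immediately that $[\ZZ',g_0\hat\psi]=x$ and $g_0\hat\psi(\mathcal{Y}_0)\sim\mathcal{Y}$, so $w_\psi\in\Pi^{-1}(v)$. Moreover $w_{\psi_1}=w_{\psi_2}$ forces $\widehat{\psi_2^{-1}\psi_1}$ to be rigid outside $\ZZ''$, hence isotopic to the identity on $\mathcal{Y}_0$ and therefore globally; but $\widehat{\psi_2^{-1}\psi_1}$ acts nontrivially on $\pi_1(M_r)$ whenever $\psi_1\neq\psi_2$ lies in the $\Z$ factor. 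Thus the fiber is infinite.

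Your alternative---a handle slide $\phi\in\Map(\ZZ)$ preserving only the isotopy class of $\mathcal{Y}$---could in principle be made to work, but you do not carry it out: the last paragraph is a plan rather than a proof, and verifying that $g_0^{-1}\phi^n g_0$ fails rigidity on $\mathcal{Y}_0$ for every $n\neq 0$ is strictly harder than the direct argument above. The detour exists only because of the mistaken finiteness claim.
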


Our main goal then is to prove the following theorem.
\begin{THM}\label{thm:piececomplexkconnected}
    The piece complex $\mathcal{P}(\ZZ,Q)$ of a compact doubled handlebody with finitely many open balls removed is $k$-connected, provided that $\text{rk}(\pi_1(\ZZ))\geq 4k+4$ and $|Q|\geq k+2$.
\end{THM}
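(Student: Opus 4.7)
The plan is to deduce this connectivity statement from \Cref{thm:injTetheredHandleConnectivity} (the injective tethered handle complex connectivity) by introducing a tethered auxiliary complex and using the complete join machinery of \Cref{prop:completeJoinRetract}, mirroring the strategy used in \cite[Section 5]{aramayona2023surface}. More precisely, the outline is to (i) define a tethered version $\mathcal{TP}(\ZZ,Q)$ of the piece complex, (ii) show that the forgetful map $\mathcal{TP}(\ZZ,Q) \to \mathcal{P}(\ZZ,Q)$ is a complete join, (iii) use \Cref{thm:injTetheredHandleConnectivity} to show that $\mathcal{TP}(\ZZ,Q)$ is $k$-connected, and (iv) conclude that $\mathcal{P}(\ZZ,Q)$ is $k$-connected via \Cref{prop:completeJoinRetract}.

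For step (i), fix a basepoint $* \in T_0$ on some chosen boundary sphere $T_0 \in Q$. The vertices of $\mathcal{TP}(\ZZ,Q)$ will be isotopy classes of pairs $(P, \alpha)$, where $P$ is a piece and $\alpha \subset \overline{\ZZ \setminus P}$ is a properly embedded arc from $*$ to the non-peripheral boundary sphere of $P$, with the isotopy fixing $*$. A simplex is a finite collection of such pairs whose underlying pieces are pairwise disjoint and whose tethers can be realized pairwise disjoint (away from $*$). For step (ii), the map $\pi:(P,\alpha) \mapsto P$ is clearly surjective (any piece admits a tether to $*$ since the complement is connected thanks to the rank assumption) and injective on simplices. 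To establish the complete join property, I would verify that for any simplex $\{P_0,\dots,P_d\}$ of $\mathcal{P}(\ZZ,Q)$ and any independent choice of tethers $\alpha_i$ for each $P_i$, the $\alpha_i$ can be simultaneously realized pairwise disjoint; this is a routine arc-surgery argument in the doubled handlebody, using that the $P_i$ are separating and that cutting along their non-peripheral boundary spheres still leaves enough room to push tethers off each other.

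The core step is (iii). Here I would exploit the natural correspondence between a tethered piece $(P, \alpha)$ and a tethered handle: roughly, replace $P$ by its (unique) non-peripheral boundary sphere $\Sigma_P$, together with the same tether $\alpha$, obtaining a tethered sphere / handle in the sense of \Cref{thm:injTetheredHandleConnectivity}. This should define a simplicial isomorphism (or at least a complete join) between $\mathcal{TP}(\ZZ, Q)$ and the relevant injective tethered handle complex of $\ZZ$ with respect to $Q$, transferring the $k$-connectivity of the latter, which holds under precisely the bounds $\mathrm{rk}(\pi_1(\ZZ)) \geq 4k+4$ and $|Q| \geq k+2$. Finally, step (iv) is immediate from \Cref{prop:completeJoinRetract}.

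The main obstacle is step (iii): making the correspondence with the tethered handle complex precise enough that disjointness of tethered pieces corresponds to disjointness of tethered handles in both directions, while keeping track of the rank and boundary budget. The subtlety is that a piece $P$ carries a bit more structure than its separating sphere $\Sigma_P$ (it specifies the ``handle side'' of $\Sigma_P$), so I would need to check that this choice does not interfere with the disjointness combinatorics and that the bound $\mathrm{rk}(\pi_1(\ZZ)) \geq 4k+4$ is compatible with whatever analogous hypothesis appears in \Cref{thm:injTetheredHandleConnectivity}. Once these bookkeeping issues are dispatched, the proof reduces to a clean application of the complete join / retract framework already set up in \Cref{sec:BrownsCrit}.
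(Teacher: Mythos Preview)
Your overall framework---pass to a tethered auxiliary complex, show the forgetful map is a complete join, and invoke \Cref{prop:completeJoinRetract}---is exactly the right idea and is what the paper does. However, the paper's implementation is both simpler and avoids the difficulties you flag in step~(iii).

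The paper does not introduce your intermediate complex $\mathcal{TP}(\ZZ,Q)$ at all. Instead, it observes that there is already a direct simplicial map
\[
\pi:\mathcal{TH}_1(\ZZ,Q)\longrightarrow \mathcal{P}(\ZZ,Q),
\]
defined by sending an injectively tethered handle to the piece obtained as a regular neighborhood of the handle, its tether, and the boundary sphere in $Q$ to which the tether is attached. This $\pi$ is a complete join (\Cref{lem:tetheredHandletoPiece}), so the $k$-connectivity of $\mathcal{TH}_1(\ZZ,Q)$ from \Cref{thm:injTetheredHandleConnectivity} transfers immediately to $\mathcal{P}(\ZZ,Q)$ via \Cref{prop:completeJoinRetract}. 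No further correspondence or bookkeeping is needed.

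Your detour through $\mathcal{TP}(\ZZ,Q)$ creates genuine problems. First, your forgetful map in step~(ii) is not surjective: any piece $P$ whose $Q$-boundary is the distinguished sphere $T_0$ admits no tether in $\overline{\ZZ\setminus P}$ starting at $*\in T_0$, so such $P$ is not in the image. Second, in step~(iii) your proposed correspondence $(P,\alpha)\mapsto(\Sigma_P,\alpha)$ does not land in $\mathcal{TH}_1$: all of your tethers go to the single point $*$ on $T_0$, whereas vertices of a simplex in $\mathcal{TH}_1$ must be tethered to \emph{pairwise distinct} boundary components. Moreover, $\Sigma_P$ is a sphere, not a handle, so the target is not even the tethered handle complex as defined. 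These issues are not fatal to the strategy, but they disappear entirely once you map $\mathcal{TH}_1\to\mathcal{P}$ directly as the paper does.
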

To prove \Cref{thm:piececomplexkconnected}, we investigate two more closely related complexes.
\par 
A \textit{tethered handle} of $\ZZ$ is a pair of a \textit{handle} $T$, which is a submanifold of $\ZZ$ homeomorphic to $M_{1,1}$, and a \textit{tether}, which is an arc joining $\partial T$ to a component of $\partial \ZZ$. 

\begin{DEF}
    Let $\ZZ$ be a compact doubled handlebody with finitely many balls removed, and let $Q$ be a collection of boundary spheres of $\ZZ$. The \textit{tethered handle complex} $\mathcal{TH}(\ZZ,Q)$ is the simplicial complex whose $d$-simplices are sets of isotopy classes of $d+1$ pairwise disjoint tethered handles, each tethered to an element of $Q$.
\end{DEF}

\begin{figure}[ht!]
    \centering
    \begin{overpic}[width=.8\textwidth]{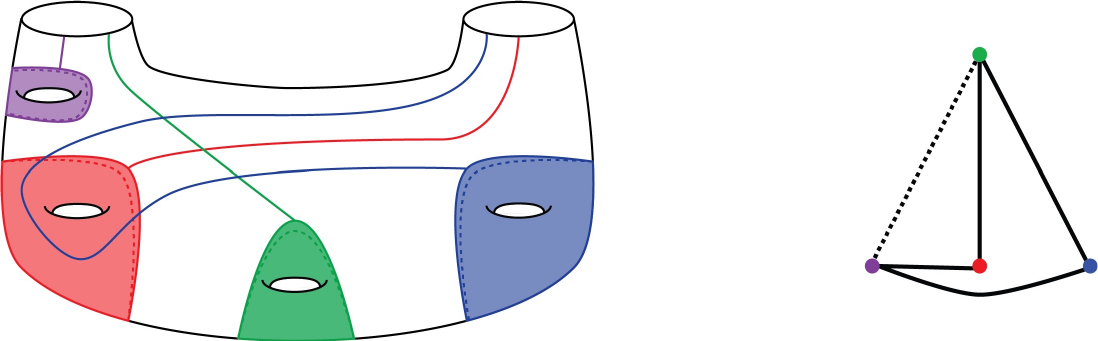}
        \put(-4,23){$T_1$}
        \put(-2,5){$T_2$}
        \put(32,2){$T_3$}
        \put(53,5){$T_4$}

        \put(75,7){$T_1$}
        \put(91,7){$T_2$}
        \put(87,28){$T_3$}
        \put(101,7){$T_4$}
    \end{overpic}
    \caption{For the doubled handlebody $\ZZ = M_{4,2}$, we illustrate a part of the tethered handle complex, $\mathcal{TH}(\ZZ, Q)$, involving the tethered handles $T_1, T_2, T_3$ and $T_4$ pictured above.  Here $Q = \partial \ZZ$. While the tether of $T_3$ appears to intersect the tethers of $T_2$ and $T_4$, the tethers are realized disjointly in the doubled handlebody. The dotted edge does not appear in ${TH}_1(\ZZ,Q)$ as $T_1$ and $T_3$ are tethered to the same boundary component.}
    \label{fig:TetheredHandleComplex}
\end{figure}

\begin{LEM}\label{lem:thetheredHandleConnected}
    The tethered handle complex $\mathcal{TH}(\ZZ,Q)$ is $k$-connected, provided that $Q$ is not empty and $\text{rk}(\pi_1(\ZZ))\geq 4k+4$.
\end{LEM}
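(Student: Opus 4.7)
The plan is to prove this by induction on $k$, following the Hatcher--Vogtmann--Wahl style of surgery arguments for complexes of disjoint subobjects, closely mimicking the strategies used in \cite{aramayona2023surface} and \cite{aramayona2024asymptotic}. The base case $k=0$ asserts that $\mathcal{TH}(\ZZ, Q)$ is nonempty and path-connected. Nonemptiness follows since $\rk(\pi_1(\ZZ)) \geq 4$ guarantees a non-separating sphere bounding an $M_{1,1} \subset \ZZ$, and any arc from its boundary to a point of $Q$ yields a tethered handle. For path-connectedness, given two tethered handles $T_0, T_1$, the complement $\overline{\ZZ \setminus (N(T_0) \cup N(T_1))}$ still has rank at least $2$, which is enough room to produce a third tethered handle $T_2$ disjoint from both, giving a path $T_0 - T_2 - T_1$.

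For the inductive step, I would take a PL map $f \colon S^k \to \mathcal{TH}(\ZZ, Q)$, made simplicial with respect to a triangulation of $S^k$, and extend it over $D^{k+1}$. The central technical ingredient is the identification of links: for any $d$-simplex $\sigma = \{T_0, \ldots, T_d\}$ of $\mathcal{TH}(\ZZ, Q)$,
\[
\mathrm{Lk}_{\mathcal{TH}(\ZZ,Q)}(\sigma) \;\cong\; \mathcal{TH}(\ZZ_\sigma, Q_\sigma),
\]
where $\ZZ_\sigma = \overline{\ZZ \setminus \bigcup_i N(T_i)}$ and $Q_\sigma$ is the naturally induced collection of boundary spheres. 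Since each tethered handle contributes a rank-one free factor to $\pi_1(\ZZ)$, we have
\[
\rk(\pi_1(\ZZ_\sigma)) \geq 4k + 4 - (d+1) \geq 4(k - d - 1) + 4,
\]
and $Q_\sigma$ remains nonempty. By the inductive hypothesis the link is $(k-d-1)$-connected, and the standard Hatcher--Wahl surgery procedure then exploits this link connectivity to extend $f$ over $D^{k+1}$ simplex-by-simplex in increasing dimension.

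The main obstacle will be the careful identification of the link of a simplex and the accurate tracking of how the boundary sphere collection $Q_\sigma$ evolves as tethered handles are cut out. When multiple tethers terminate on the same boundary component of $\ZZ$, cutting out their regular neighborhoods can split, merge, or reshape boundary spheres in subtle ways, and one must verify that enough spheres survive in $Q_\sigma$ to serve as targets for new tethers. The rank bound $4k+4$ is tuned precisely to absorb this: even in the worst configurations, enough rank is retained to inductively invoke the lemma on each link. A secondary technical point, standard in this framework, is verifying that intersections between tethers, and between distinct tethered handles, can be simplified by surgery within $\mathcal{TH}(\ZZ, Q)$ to achieve the disjointness configurations required during the simplicial-map reduction; this rests on general position arguments in the three-dimensional doubled handlebody together with disc-swapping techniques.
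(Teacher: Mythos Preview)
The paper's own proof is simply a citation: it invokes \cite[Lemma 8.11]{aramayona2024asymptotic} and the proof of \cite[Corollary 8.13]{aramayona2024asymptotic}, which establish exactly this connectivity for tethered handle complexes in doubled handlebodies. So any direct argument you give is automatically different in spirit from what the paper does.

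That said, your proposed direct argument has a genuine gap at its core. You compute that the link of a $d$-simplex $\sigma$ is isomorphic to a smaller tethered handle complex satisfying the inductive hypothesis, and then assert that ``the standard Hatcher--Wahl surgery procedure then exploits this link connectivity to extend $f$ over $D^{k+1}$.'' But high connectivity of all links does \emph{not} by itself imply high connectivity of the ambient complex, and there is no general ``standard procedure'' that does this. (A disjoint union of two large simplices has contractible links everywhere but is not even connected.) The actual Hatcher--Wahl machinery, e.g.\ \cite[\S3]{hatcher2010stabilization}, requires either a bad-simplex argument relative to a known highly connected ambient complex, or a complete join over a complex whose connectivity is already established, or a concrete surgery towards a fixed vertex that strictly decreases some complexity. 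Your sketch does not identify which of these mechanisms you are using, and the link computation alone cannot substitute for it.

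This also infects your base case: for path-connectedness you assume two tethered handles $T_0,T_1$ have a well-defined complement of rank at least $2$, but if they intersect there is no such complement; untangling them is itself a surgery argument that must be carried out, and you relegate it to a ``secondary technical point'' when it is in fact the main content. The route taken in \cite{aramayona2024asymptotic} avoids all of this by deducing connectivity of the tethered handle complex from known connectivity of simpler complexes (handle and sphere complexes) via complete joins and the labeled/tethered object formalism, rather than by a self-contained induction on $\mathcal{TH}(\ZZ,Q)$.
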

\begin{proof}
    This follows directly from \cite[Lemma 8.11]{aramayona2024asymptotic} and the proof of  \cite[Corollary 8.13]{aramayona2024asymptotic}.
\end{proof}

\begin{DEF}
    The \textit{injective tethered handle complex} $\mathcal{TH}_1(\ZZ,Q)$ is the subcomplex of $\mathcal{TH}(\ZZ,Q)$ consisting of simplices whose involved handles are tethered to pairwise distinct boundary components in $Q$.
\end{DEF}
Note that a neighborhood of a tethered handle and the boundary component of $\ZZ$ the tether connects to is a piece of $\ZZ$. From this, one obtains a simplicial map
$$\pi: \mathcal{TH}_1(\ZZ,Q)\to \mathcal{P}(\ZZ,Q)$$
given by sending a simplex of $\mathcal{TH}_1(\ZZ,Q)$ to collections of pieces obtained by taking neighborhoods of the vertices of the simplex along with the boundary component the vertex connects to. This map is simplicial as by the definition of $\mathcal{TH}_1(\ZZ,Q)$, each pair of vertices of a simplex connect to two distinct elements of $Q$, so the neighborhoods of the disjoint tethered handles are disjoint.
\par 
Then we have the following for $\pi$.
\begin{LEM}\label{lem:tetheredHandletoPiece}
    The map $\pi: \mathcal{TH}_1(\ZZ,Q) \to \mathcal{P}(\ZZ,Q)$ is a complete join.
\end{LEM}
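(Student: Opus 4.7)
The plan is to verify the two conditions defining a complete join for the map $\pi: \mathcal{TH}_1(\ZZ,Q) \to \mathcal{P}(\ZZ,Q)$: (i) $\pi$ is surjective and injective on each simplex, and (ii) for every simplex $\sigma = \langle \YY_0,\ldots,\YY_d \rangle$ of $\mathcal{P}(\ZZ,Q)$, the preimage satisfies $\pi^{-1}(\sigma) = \pi^{-1}(\YY_0) * \cdots * \pi^{-1}(\YY_d)$. The key underlying geometric fact is that any piece $\YY \cong M_{1,2}$ with its distinguished boundary sphere $S \in Q$ deformation retracts onto a handle inside $\YY$ together with an arc connecting $\partial T$ to $S$, i.e., onto a tethered handle whose piece-neighborhood is $\YY$.

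First I would verify injectivity on each simplex. A $d$-simplex in $\mathcal{TH}_1(\ZZ,Q)$ consists of $d+1$ pairwise disjoint tethered handles whose tethers land on pairwise distinct boundary spheres of $Q$. Applying $\pi$ produces pieces whose $Q$-boundary spheres are exactly these distinct tethers' endpoints, hence the images are pairwise distinct pieces. To verify surjectivity, given a simplex $\langle \YY_0,\ldots,\YY_d \rangle$ of $\mathcal{P}(\ZZ,Q)$, pairwise disjointness of the pieces forces their $Q$-boundary spheres $S_0,\ldots,S_d$ to be distinct. Inside each $\YY_i$, choose any tethered handle $T_i$ with tether to $S_i$ whose regular neighborhood together with $S_i$ recovers $\YY_i$; since the $\YY_i$ are pairwise disjoint, so are the chosen $T_i$'s, and they tether to distinct elements of $Q$. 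Thus $\langle T_0,\ldots,T_d \rangle$ is a simplex of $\mathcal{TH}_1(\ZZ,Q)$ mapping onto the given simplex.

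For the join condition, the inclusion $\pi^{-1}(\YY_0) * \cdots * \pi^{-1}(\YY_d) \subseteq \pi^{-1}(\sigma)$ is where the essence lies. A simplex on the left is represented by a choice of one tethered handle $T_i \in \pi^{-1}(\YY_i)$ for each $i$ in some subset of indices. Since $T_i$ can be isotoped into $\YY_i$ and the pieces $\YY_0,\ldots,\YY_d$ are pairwise disjoint, the handles $T_i$ are pairwise disjoint; their tethers land on the distinct spheres $S_i$. Hence they span a simplex of $\mathcal{TH}_1(\ZZ,Q)$ which maps into $\sigma$. Conversely, any simplex in $\pi^{-1}(\sigma)$ consists of tethered handles $T_0,\ldots,T_d$ with $\pi(T_i) = \YY_i$, so each $T_i$ lies in $\pi^{-1}(\YY_i)$, and their pairwise disjointness exhibits them as a simplex in the join.

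No step here presents a serious obstacle; the statement is essentially a bookkeeping verification built on the geometric observation that pieces retract onto tethered handles. The one point requiring care is confirming that distinctness of tethers is automatic from disjointness of pieces, which is what ensures the lifts land in $\mathcal{TH}_1$ (and not merely $\mathcal{TH}$). This will be the template for the analogous statement for the injective piece complex used to prove \Cref{thm:piececomplexkconnected}.
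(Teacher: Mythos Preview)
Your direct verification of the complete join axioms is correct and is precisely what the paper's proof amounts to: the paper simply cites the analogous \cite[Lemma~7.8]{aramayona2024asymptotic} and notes that the argument there goes through once the balls in that setting are replaced by the boundary components in $Q$. The only spot worth tightening is the forward join inclusion---rather than saying each $T_i$ ``can be isotoped into $\YY_i$'' (which could be misread as requiring a single ambient isotopy), just choose for each $i$ a representative of the isotopy class $[T_i]$ lying inside $\YY_i$; since the $\YY_i$ are pairwise disjoint these representatives are automatically pairwise disjoint, giving the required simplex of $\mathcal{TH}_1(\ZZ,Q)$.
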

\begin{proof}
    This follows as in \cite[Lemma 7.8]{aramayona2024asymptotic}. The proof there deals with the so-called ``handle-tether-ball" complex as opposed to the tethered handle complex that we have defined here. To modify the proof of \cite[Lemma 7.8]{aramayona2024asymptotic}, one can drop all references to the balls, and replace them with the boundary components that the tethers connect to.
\end{proof}
\par 
We now come to the key result to obtain \Cref{thm:piececomplexkconnected}.

\begin{THM}\label{thm:injTetheredHandleConnectivity}
    The injective tethered handle complex $\mathcal{TH}_1(\ZZ,Q)$ is $k$-connected, provided $\text{rk}(\pi_1(\ZZ))\geq 4k+4$ and $|Q|\geq k+2$.
\end{THM}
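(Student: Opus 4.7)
The plan is to mirror the argument in \cite[Theorem 5.7]{aramayona2023surface}, transferring the known connectivity of the full tethered handle complex to its injective subcomplex via a Hatcher--Vogtmann-style bad-simplex argument.

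I will proceed by induction on $k$. By \Cref{lem:thetheredHandleConnected}, $\mathcal{TH}(\ZZ,Q)$ is $k$-connected under the hypotheses, and $\mathcal{TH}_1(\ZZ,Q) \subset \mathcal{TH}(\ZZ,Q)$ is the subcomplex cut out by the injectivity condition on targets. Call a simplex \emph{bad} if at least two of its vertices are tethered to a common component of $Q$; $\mathcal{TH}_1(\ZZ,Q)$ is then precisely the subcomplex of good simplices, and every bad simplex has dimension at least $1$. For the base cases $k = -1$ and $k = 0$, nonemptiness and path-connectivity follow directly from $|Q| \geq 2$ and $\mathrm{rk}(\pi_1(\ZZ)) \geq 4$, since any two tethered handles with distinct $Q$-targets may be isotoped into disjoint position.

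For the inductive step, given $f : S^k \to \mathcal{TH}_1(\ZZ,Q)$, I would use the $k$-connectedness of $\mathcal{TH}(\ZZ,Q)$ to extend $f$ to a simplicial map $F : D^{k+1} \to \mathcal{TH}(\ZZ,Q)$ with respect to some triangulation of $D^{k+1}$ refining that on $S^k$. The objective is to homotope $F$ rel boundary into $\mathcal{TH}_1(\ZZ,Q)$. For each bad simplex $\sigma$ of dimension $p \geq 1$ appearing in the image of $F$, I would analyze its link. Reducing to the case that all $p+1$ vertices of $\sigma$ are tethered to a single common component $q \in Q$, set $\ZZ_\sigma = \overline{\ZZ \setminus N(\sigma)}$ for a closed regular neighborhood $N(\sigma)$ of the union of the tethered handles of $\sigma$, and $Q_\sigma = Q \setminus \{q\}$. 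A tethered handle disjoint from $\sigma$ with target in $Q_\sigma$ then determines an element of $\mathrm{Lk}(\sigma)$, and this identification realizes the ``good'' part $\mathrm{Lk}(\sigma) \cap \mathcal{TH}_1(\ZZ,Q)$ as $\mathcal{TH}_1(\ZZ_\sigma, Q_\sigma)$.

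The key numerical check is then $\mathrm{rk}(\pi_1(\ZZ_\sigma)) \geq \mathrm{rk}(\pi_1(\ZZ)) - (p+1) \geq 4k + 4 - (p+1) \geq 4(k-p) + 4$ (using $p \geq 1$), and $|Q_\sigma| = |Q| - 1 \geq k+1 \geq (k-p)+2$. Therefore the inductive hypothesis applies to $\mathcal{TH}_1(\ZZ_\sigma, Q_\sigma)$, yielding $(k-p-1)$-connectedness of the good link. This is exactly the connectivity required to perform the standard link-based modification of $F$ pushing its image off of $\sigma$ into the good subcomplex, as in the bad-simplex argument carried out in \cite[Theorem 5.7]{aramayona2023surface}. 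Iterating the modification over bad simplices (in order of decreasing dimension) yields a homotopy of $F$ rel $S^k$ landing in $\mathcal{TH}_1(\ZZ,Q)$, completing the inductive step.

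The main obstacle will be verifying the link identification itself: namely, that cutting along $N(\sigma)$ and declaring $Q_\sigma = Q \setminus \{q\}$ correctly models all tethered handles disjoint from $\sigma$ whose targets differ from $q$, that the resulting bijection respects disjointness, and that removing $p+1$ disjoint handles drops the fundamental-group rank by exactly $p+1$. This is a cut-and-paste verification inside a doubled handlebody. Once that link decomposition is in hand, the remainder of the argument is the numerical check above and a direct invocation of the standard bad-simplex modification machinery.
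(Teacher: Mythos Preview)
Your approach matches the paper's: both identify $\mathcal{TH}_1(\ZZ,Q)$ as the good subcomplex of $\mathcal{TH}(\ZZ,Q)$ and run the bad-simplex argument of \cite[Theorem~5.7]{aramayona2023surface}, using \Cref{lem:thetheredHandleConnected} for the ambient connectivity and induction on $k$ for the good links.

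There is, however, one gap in your write-up. The ``reduction to the case that all $p+1$ vertices of $\sigma$ are tethered to a single common component $q$'' is not justified. In the bad-simplex framework one must treat every simplex $\sigma$ in which each vertex shares its target with at least one other vertex of $\sigma$, and such a $\sigma$ may hit $m\geq 1$ distinct components of $Q$. In that case $Q_\sigma$ loses $m$ components, not one, so your inequality $|Q_\sigma|\geq (k-p)+2$ becomes $|Q|-m\geq (k-p)+2$, i.e.\ $m\leq p$. This still holds, since each of the $m$ targets is hit at least twice, giving $m\leq \lfloor (p+1)/2\rfloor\leq p$ for $p\geq 1$; so the induction goes through---but the single-target case you analyze is the \emph{easiest} case for the $|Q|$-bound, not a reduction of the general one. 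Once you replace the single $q$ by the full set of targets of $\sigma$ and adjust the arithmetic as above, the remainder of your outline is correct.
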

\begin{proof}
    This follows as in the proof of \cite[Theorem 5.7]{aramayona2023surface}. Namely, the bound on the rank of $\pi_1(\ZZ)$ comes from \Cref{lem:thetheredHandleConnected}, and the bound on the number of boundary components is a result of the inductive argument given in  \cite[Theorem 5.7]{aramayona2023surface}. Indeed, following their argument we can identify $\mathcal{TH}_1(\ZZ,Q)$ as the \emph{good} subcomplex of $\mathcal{TH}(\ZZ,Q)$, and the \emph{good link} of a \emph{bad simplex} of $\mathcal{TH}(\ZZ,Q)$ coincides with some injective tethered handle complex of a doubled handlebody of reduced rank and reduced number of boundary components, where we can use the inductive argument. As the inductive argument is similar to the one appearing in \Cref{cor:pieceCpxCohenMacaulay}, we do not repeat it here. 
\end{proof}

\begin{proof}[Proof of \Cref{thm:piececomplexkconnected}]
    \Cref{lem:tetheredHandletoPiece} and \Cref{thm:injTetheredHandleConnectivity} along with \Cref{prop:completeJoinRetract} imply the result. 
\end{proof}

In particular, we have the following:
\begin{COR}[{\cite[Corollary 5.2]{aramayona2023surface}}]\label{cor:pieceCpxCohenMacaulay}
   The piece complex $\mathcal{P}(\ZZ,Q)$ is weakly Cohen--Macaulay of dimension $k+1$, provided that $g(\ZZ)\geq 4k+4$ and $|Q|\geq k+2$.
\end{COR}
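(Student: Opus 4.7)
The plan is to verify both conditions in the definition of weakly Cohen--Macaulay directly from \Cref{thm:piececomplexkconnected}. The $k$-connectivity of $\mathcal{P}(\ZZ,Q)$ is immediate from that theorem, since the hypotheses $g(\ZZ) \geq 4k+4$ and $|Q| \geq k+2$ are exactly what it requires.

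For the link condition, I would fix a $d$-simplex $\sigma = \{P_0, \ldots, P_d\}$ of $\mathcal{P}(\ZZ, Q)$ and identify the link $\mathrm{lk}(\sigma)$ with a piece complex $\mathcal{P}(\ZZ', Q')$ of a simpler doubled handlebody, where $\ZZ' := \ZZ \setminus \bigsqcup_{i=0}^d \mathrm{int}(P_i)$ is the complementary handlebody and $Q' \subset Q$ consists of those boundary spheres not used as the outer ($Q$-incident) boundary of any $P_i$. This identification rests on the fact that, by standard isotopy arguments for spheres in compact $3$-manifolds, any piece of $\ZZ$ disjoint from every $P_i$ can be isotoped into $\ZZ'$, and conversely any piece of $\ZZ'$ with outer boundary in $Q'$ is automatically a piece of $\ZZ$ disjoint from $\sigma$.

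Since each $P_i$ is homeomorphic to $M_{1,2}$ and the pieces are pairwise disjoint, cutting gives $g(\ZZ') = g(\ZZ) - (d+1)$ and $|Q'| = |Q| - (d+1)$. Setting $k' := k - d - 1$ and using the hypotheses, one checks
\[
g(\ZZ') \geq (4k+4) - (d+1) = 4k' + (3d+4) \geq 4k' + 4,
\qquad
|Q'| \geq (k+2) - (d+1) = k' + 2.
\]
Applying \Cref{thm:piececomplexkconnected} with $k$ replaced by $k'$ then gives that $\mathrm{lk}(\sigma) \cong \mathcal{P}(\ZZ', Q')$ is $k'$-connected, which is precisely the $(k - \dim(\sigma) - 1)$-connectivity demanded by the definition.

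The main obstacle is the identification $\mathrm{lk}(\sigma) \cong \mathcal{P}(\ZZ', Q')$ itself. One needs to verify that pairwise disjointness of a link-vertex piece $P$ from the $P_i$ can always be realized geometrically in the complement of $\sigma$, so that $P$ genuinely sits inside $\ZZ'$, and that no ``spurious'' piece appears in $\ZZ'$ that does not come from a piece of $\ZZ$. Both points should follow from the standard fact that disjoint essential embedded spheres in a compact $3$-manifold admit isotopy representatives that remain simultaneously disjoint from a fixed finite collection of disjoint spheres.
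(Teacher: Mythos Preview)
Your proposal is correct and follows essentially the same approach as the paper's proof: identify the link of a $d$-simplex with $\mathcal{P}(\ZZ',Q')$ for the complementary doubled handlebody, compute $g(\ZZ') = g(\ZZ) - (d+1)$ and $|Q'| = |Q| - (d+1)$, verify the resulting bounds, and apply \Cref{thm:piececomplexkconnected}. The paper simply asserts the link identification without comment, whereas you flag the isotopy argument needed to justify it; otherwise the two proofs are the same.
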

\begin{proof}
    The link of a $d$-simplex $\sigma$ in $\mathcal{P}(\ZZ,Q)$ is isomorphic to $\mathcal{P}(\ZZ',Q')$, where $\ZZ'$ is obtained from $\ZZ$ by cutting off the pieces corresponding to the vertices of $\sigma$, and $Q'$ is the subset of $Q$ that lies in $\ZZ'$. We have the following.
   $$g(\ZZ')= g(\ZZ)-(\mathrm{dim}(\sigma)+1)\geq 4k+4-\mathrm{dim}(\sigma)-1\geq 4(k-\mathrm{dim}(\sigma)-1)+4$$
   and 
   $$|Q'|=|Q|-(\text{dim}(\sigma)+1)\geq (k-\text{dim}(\sigma)-1)+2.$$
   Hence the link of $\sigma$ is $(k-\text{dim}(\sigma)-1)$-connected by \Cref{thm:piececomplexkconnected}. Thus, by definition, $\mathcal{P}(\ZZ,Q)$ is weakly Cohen--Macaulay.
\end{proof}

\begin{COR}\label{cor:descendingLinkswCM}
    The descending link $\lkd(x)$ of a vertex $x$ with $h(x)\geq 4r-4$ is weakly Cohen-Macaulay of dimension $r-1$.
\end{COR}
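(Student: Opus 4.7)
The plan is to deduce this directly by combining the complete-join map from descending links to the piece complex with the weak Cohen--Macaulay property already established for the piece complex. Specifically, by equivariance of $\BB_r$ on $\mathcal{X}$, I may assume the vertex has the form $x = [\ZZ, \id]$ for some suited submanifold $\ZZ$ of $M_{\G_r}$. Since $\Gamma_r$ has exactly $r$ ends (and the rigid structure requires $\ZZ$ to have $r$ boundary spheres along which pieces are attached), the collection $Q$ of boundary components of $\ZZ$ available for attaching pieces satisfies $|Q| = r$. Moreover, the complexity $h(x)$ equals the rank of $\pi_1(\ZZ)$.

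Next, I would invoke \Cref{prop:descendingPieceComplex} to obtain the complete join $\Pi \colon \lkd(x) \to \mathcal{P}(\ZZ, Q)$. The strategy is then to apply \Cref{prop:CohenMacaulayLift}, which transports the weakly Cohen--Macaulay property along complete joins, so it suffices to show that $\mathcal{P}(\ZZ, Q)$ is weakly Cohen--Macaulay of dimension $r-1$. For this I appeal to \Cref{cor:pieceCpxCohenMacaulay} with the parameter choice $k = r - 2$: the required hypotheses become
\[
    \operatorname{rk}(\pi_1(\ZZ)) \geq 4(r-2) + 4 = 4r - 4 \quad \text{and} \quad |Q| \geq (r-2) + 2 = r,
\]
both of which hold, the first by the assumption $h(x) \geq 4r-4$ and the second by the end count for $\Gamma_r$. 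Hence $\mathcal{P}(\ZZ, Q)$ is weakly Cohen--Macaulay of dimension $(r-2)+1 = r-1$, and \Cref{prop:CohenMacaulayLift} upgrades this to $\lkd(x)$, completing the proof.

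The main step doing the work here is really \Cref{cor:pieceCpxCohenMacaulay}; the numerical bookkeeping above is straightforward, and the only place I need to exercise a bit of care is confirming that the set $Q$ relevant to $\lkd(x)$ is exactly the set of $r$ non-peripheral boundary spheres of $\ZZ$ corresponding to the ends of $\Gamma_r$ --- which is built into the definition of a suited submanifold and the complete-join description of the descending link. No new geometric input is needed beyond the machinery already developed.
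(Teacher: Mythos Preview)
Your proof is correct and follows essentially the same approach as the paper: invoke the complete join $\Pi:\lkd(x)\to\mathcal{P}(\ZZ)$ from \Cref{prop:descendingPieceComplex}, apply \Cref{cor:pieceCpxCohenMacaulay} with $k=r-2$ (using $|Q|=r$ and $\operatorname{rk}(\pi_1(\ZZ))=h(x)\geq 4r-4$), and lift the weakly Cohen--Macaulay property via \Cref{prop:CohenMacaulayLift}. One small terminological slip: the set $Q$ consists of the $r$ boundary spheres of $\ZZ$ themselves (which are peripheral in $\ZZ$), not ``non-peripheral'' ones---the non-peripheral spheres are the other boundary components of the pieces---but this does not affect the argument.
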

\begin{proof}
    Combining \Cref{prop:descendingPieceComplex} and \Cref{cor:pieceCpxCohenMacaulay} with \Cref{prop:CohenMacaulayLift} completes the proof, as we can take $Q$ to be the entire set of boundary components, and in this case $|Q|=r$.
\end{proof}

\begin{RMK}
    An application of Bestvina--Brady Morse theory (as in \cite{bestvina1997morse}) leads to the computation that $H^{r}(B_r;\Z)$ and $H^{r}(PB_r;\Z)$ are infinite dimensional, a (possibly) stronger statement than being not of type $FP_r$. To see this, note that (starting from a sufficient height), increasing under the Morse function involves filling in not only infinitely many $r$-spheres, but infinitely many $(P)B_r$-families of $r$-spheres. Specifically, there is at least one new $r$-sphere at each height (above some finite height).
\end{RMK}

\section{Proof of \Cref{thm:finitenessGraphHoughton}}\label{sec:proofOfMainThm}

In order to prove \Cref{thm:finitenessGraphHoughton} in full generality, we first adapt the techniques of \cite{aramayona2024isomorphisms} to prove the following result in the context of graphs and doubled-handlebodies:

\begin{LEM}[{cf. \cite[Theorem 1.1 (1)]{aramayona2024isomorphisms}}] 
    Let $g \ge 0$, $h \ge 1$ and $r \ge 2$.  Then $B(g, h, r)$ is commensurable to $B(g',h',r')$ if $r = r'$. The same statement holds for $\BB(g,h,r)$ and $\BB(g',h',r')$. 
    \label{thm:commensurability}
\end{LEM}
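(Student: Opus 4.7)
The plan is to construct, for each pair, a common finite-index subgroup via the flux short exact sequence of \Cref{prop:PBfluxSES}. Since $PB(g,h,r)$ has finite index $r!$ in $B(g,h,r)$, and similarly in the doubled handlebody setting, it suffices to prove commensurability at the level of pure subgroups.

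Let $N := \mathrm{lcm}(h, h')$. By \Cref{rmk:conjugateHoughtonGroups}, I may choose the rigid structures freely up to conjugation in $\Map(\G_r)$, so arrange the $(g,h)$- and $(g',h')$-rigid structures so that they admit a common coarsening: choose a connected suited subgraph $K \subset \G_r$ containing both cores, whose complement decomposes into rank-$N$ pieces, each being simultaneously a union of $N/h$ consecutive $h$-pieces and $N/h'$ consecutive $h'$-pieces, with markings chosen so that the $N$-piece markings restrict to the prescribed $h$- and $h'$-piece markings. Setting $g_K := \rk(K)$, define $H := PB(g_K, N, r)$ with respect to this coarsened structure. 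Any $(g_K, N)$-defining graph for $f \in H$ is simultaneously $(g,h)$- and $(g',h')$-suited by construction, and the marking compatibility forces the rigid action outside the defining graph to respect both the $h$- and $h'$-markings; hence $H \leq PB(g,h,r) \cap PB(g',h',r)$.

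Applying \Cref{prop:PBfluxSES} to each of $PB(g,h,r)$, $PB(g',h',r)$, and $H$ yields three flux short exact sequences with the same kernel $\Map_c(\G_r)$ and quotients $(h\Z)^{r-1}$, $(h'\Z)^{r-1}$, and $(N\Z)^{r-1}$ respectively. Since $H$ contains the full kernel $\Map_c(\G_r)$, the indices reduce to lattice indices:
\[
[PB(g,h,r) : H] = (N/h)^{r-1}, \qquad [PB(g',h',r) : H] = (N/h')^{r-1},
\]
both finite. For the doubled handlebody version, the analogous argument applies to $H_{\BB} := P\BB(g_K, N, r)$: by \Cref{lem:psirestriction} each $P\BB(g,h,r)$ fits into a short exact sequence $1 \to T \to P\BB(g,h,r) \to PB(g,h,r) \to 1$ where $T$ is the sphere-twist kernel, independent of the rigid structure. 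Since $H_{\BB}$ contains $T$ (each sphere twist is compactly supported and thus rigid in any structure) and $\Psi_B(H_{\BB}) = H$, the same index computation shows $H_{\BB}$ has finite index in both $P\BB(g,h,r)$ and $P\BB(g',h',r)$.

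The main obstacle I anticipate is the combinatorial arrangement of compatible markings in the common coarsening: producing a single choice of $N$-piece markings that restricts to the prescribed $h$- and $h'$-markings on the sub-pieces. This should follow from the conjugation freedom of \Cref{rmk:conjugateHoughtonGroups} together with a piece-by-piece marking construction, but is the step that requires explicit verification.
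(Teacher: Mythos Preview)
Your approach is correct and genuinely different from the paper's. The paper shows every $B(g,h,r)$ is commensurable to $B_r = B(0,1,r)$ by exhibiting (a conjugate of) $PB(g,h,r)$ as a finite-index subgroup of $PB_r$ via the \emph{engulfing} construction; finite index is then established by producing an explicit action of $PB_r$ on the finite set $\sqcup_{n=1}^{r}\{1^n,\dots,h^n\}$ of intra-block positions, with kernel exactly $PB(g,h,r)$. Commensurability of $B(g,h,r)$ with $B(g',h',r)$ then follows by transitivity through $B_r$.

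You instead go ``down'' rather than ``up'': you build a common finite-index subgroup $H = PB(g_K,N,r)$ with $N=\mathrm{lcm}(h,h')$ and read off the indices directly from \Cref{prop:PBfluxSES}, using that all three pure groups share the kernel $\Map_c(\G_r)$ so that the indices are lattice indices $(N/h)^{r-1}$ and $(N/h')^{r-1}$. The containment $H \le PB(g,h,r)\cap PB(g',h',r)$ is still essentially the engulfing argument, and the marking-compatibility obstacle you flag is handled in the paper by the explicit clause in the engulfing definition (building $\phi'_k|_{Y_i} = \iota_i\circ \phi_i$); your use of \Cref{rmk:conjugateHoughtonGroups} to align the two structures before coarsening is the right move. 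The flux computation is cleaner than the paper's action-on-labels argument and makes the dependence on $h,h'$ transparent; the paper's approach has the virtue of giving a concrete finite permutation representation. Your doubled-handlebody reduction via \Cref{lem:psirestriction} is also sound: the sphere-twist kernel $T$ consists of compactly supported classes, hence lies in every $P\BB(\cdot,\cdot,r)$, so the index passes unchanged to the graph quotient.
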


We note that the converse of \Cref{thm:commensurability} also holds. It is not required for the proof, but instead follows from \Cref{thm:finitenessGraphHoughton}, as explained after its proof.   For completeness, we outline the proof of \Cref{thm:commensurability} and describe the necessary adaptations to our setting. While the statements are written only in the language of graphs for clarity, the same statements and arguments apply to doubled-handlebodies and their Houghton groups with the appropriate substitutions.  

\begin{DEF}
    A $(0,1)$-rigid structure $\Gamma_r = C \cup \bigcup_{j \in J} Y_j$ with markings $\phi_i \colon Y_i \to Y^1$, is said to \emph{engulf} a $(g,h)$-rigid structure $\Gamma_r = C' \cup \bigcup_{\ell \in L} Y_\ell'$ with $\phi_k': Y_k' \to Y^h$ provided that: 
    \begin{itemize}
        \item The core $C'$ is formed by adding $g$-many pieces $Y_i$ of the $(0,1)$-rigid structure to its core $C$, so that $C'$ is connected. 
        \item Each piece $Y_k'$ of the $(g,h)$-rigid structure is formed by joining $h$-many pairwise adjacent pieces, say $Y_1, \dots, Y_h$, of the $(0,1)$-rigid structure (see \Cref{fig:Engulfing}).
        \item The markings $\phi_k' : Y_k' \to Y^h$ are constructed from
          markings $\phi_i:Y_i \to Y^1$ on the pieces it contains.  
          By wedging together the copies
          of $Y^1$ associated to adjacent pieces $Y_i$, we obtain a
          corresponding decomposition of $Y^h$ into $h$-many copies of $Y^1$.
          Let $\iota_i \colon Y^1 \to Y^h$ be the inclusion of $Y^1$ to the
          $i$-th piece of $Y^h$.  Then we construct the marking on the
          $(g,h)$-rigid structure $\phi_k'$ so that $\phi_k'|_{Y_i} = \iota_i
          \circ \phi_i$ for all $i$. 
    \end{itemize}
\end{DEF}

 \begin{figure}[ht!]
    \centering
    \vspace{.5cm}
    \begin{overpic}[width=.8\textwidth]{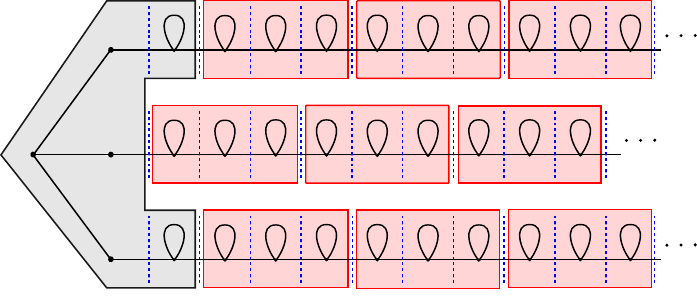}  
        \put(31.5,33.5){\color{blue}\footnotesize{$1^1$}}
        \put(38.5,33.5){\color{blue}\footnotesize{$2^1$}}
        \put(45.5,33.5){\color{blue}\footnotesize{$3^1$}}
        \put(53,33.5){\color{blue}\footnotesize{$1^1$}}
        \put(60,33.5){\color{blue}\footnotesize{$2^1$}}
        \put(67,33.5){\color{blue}\footnotesize{$3^1$}}
        \put(74.5,33.5){\color{blue}\footnotesize{$1^1$}}
        \put(82,33.5){\color{blue}\footnotesize{$2^1$}}
        \put(89,33.5){\color{blue}\footnotesize{$3^1$}}

        \put(24,18.5){\color{blue}\footnotesize{$1^2$}}
        \put(31.5,18.5){\color{blue}\footnotesize{$2^2$}}
        \put(38.5,18.5){\color{blue}\footnotesize{$3^2$}}
        \put(46,18.5){\color{blue}\footnotesize{$1^2$}}
        \put(53,18.5){\color{blue}\footnotesize{$2^2$}}
        \put(60,18.5){\color{blue}\footnotesize{$3^2$}}
        \put(67.5,18.5){\color{blue}\footnotesize{$1^2$}}
        \put(75,18.5){\color{blue}\footnotesize{$2^2$}}
        \put(82,18.5){\color{blue}\footnotesize{$3^2$}}

        \put(31.5,3.5){\color{blue}\footnotesize{$1^3$}}
        \put(38.5,3.5){\color{blue}\footnotesize{$2^3$}}
        \put(45.5,3.5){\color{blue}\footnotesize{$3^3$}}
        \put(53,3.5){\color{blue}\footnotesize{$1^3$}}
        \put(60,3.5){\color{blue}\footnotesize{$2^3$}}
        \put(67,3.5){\color{blue}\footnotesize{$3^3$}}
        \put(74.5,3.5){\color{blue}\footnotesize{$1^3$}}
        \put(82,3.5){\color{blue}\footnotesize{$2^3$}}
        \put(89,3.5){\color{blue}\footnotesize{$3^3$}}

        \put(2,30){$C'$}

        \put(38.5,45){\footnotesize\color{red}$Y'_1$}
        \put(60,45){\footnotesize\color{red}$Y'_4$}
        \put(82,45){\footnotesize\color{red}$Y'_7$}

        \put(31.5,29.5){\footnotesize\color{red}$Y'_2$}
        \put(53,29.5){\footnotesize\color{red}$Y'_5$}
        \put(75,29.5){\footnotesize\color{red}$Y'_8$}

        \put(38.5,-0.5){\footnotesize\color{red}$Y'_3$}
        \put(60,-0.5){\footnotesize\color{red}$Y'_6$}
        \put(82,-0.5){\footnotesize\color{red}$Y'_9$}
        
    \end{overpic}
    \caption{A $(0,1)$-rigid structure on $\Gamma_3$ engulfing a $(2,3)$-rigid structure. Note that loop shifts induce cyclic permutations of pieces of the $(0,1)$-rigid structure; that is, a shift into end $n$ induces the permutation $1^n \mapsto 2^n \mapsto 3^n \mapsto 1^n$.}
    \label{fig:Engulfing}
\end{figure}

    \begin{proof}[Proof of \Cref{thm:commensurability}]
        The proof of \cite[Theorem 1.1 (1)]{aramayona2024isomorphisms} can be adapted from the setting of surface Houghton groups as follows.     

        If a $(g,h)$-rigid structure is engulfed by a $(0,1)$-rigid
        structure, then $B(g, h, r)$ is a finite index subgroup of $B_r$ (see
        \cite[Lemma 3.1]{aramayona2024isomorphisms}).  If the $(g,h)$-rigid
        structure is not engulfed by the $(0,1)$-rigid structure, then $B(g, h,
        r)$ is conjugate to such a finite index subgroup of $B_r$ (see
        \cite[Remark 2]{aramayona2024isomorphisms}).    Because of compatibility
        between the pieces of the $(g,h)$-rigid structure and the $(0,1)$-rigid
        structure, any suited subgraph for $B(g,h,r)$, is suited for $B_r$.
        Furthermore, since the marking maps $\phi_k’: Y_k’ \to Y^h$ are
        constructed from markings of the engulfing (0,1)-rigid structure,
        $B(g,h,r) \le B_r$.  Since there are only finitely many ends, the pure
        graph Houghton groups have finite index in their respective full graph
        Houghton groups, so it suffices to prove $[PB_r: PB(g,h,r)]< \infty$.
        Fix an end $e_n$ and consider the pieces, $Y_k’ \cong Y^h$, in a
        neighborhood of that end.  By engulfing, each piece decomposes as $h$
        many pieces $Y_i \cong Y^1$ which can be labeled $1^n,...,h^n$ (see \Cref{fig:Engulfing}).  In a
        neighborhood of an end, an $f \in PB_r$ acts as a shift, and can at most cyclically permute the order of the labels in the piece $Y_k’$.  Doing this in neighborhoods of all $r$ ends, gives an action of $PB_r$ on a finite set, namely $\sqcup_{n = 1}^r \{1^n,..., h^n\}$ with kernel precisely $PB(g,h,r)$. 
\end{proof}

Equipped with \Cref{thm:commensurability} and the results of \Cref{sec:BrownsCrit}, we can now prove: 

\begin{namedthm*}{\Cref{thm:finitenessGraphHoughton}}
    The groups $B(0, 1, r)$ and $\BB(0, 1, r)$, and thus $B(g,h,r)$ and $\BB(g,h,r)$, are of type $F_{r-1}$ but not of type $FP_r$.
\end{namedthm*}
     
\begin{proof}
    
    By \Cref{thm:commensurability}, $B(0,1,r)$ and $B(g,h,r)$ share the same finiteness properties, (and similarly for $\mathcal{B}(0,1,r)$ and $\mathcal{B}(g,h,r)$).  So, it suffices to prove the statement for $B(0,1,r)$ and $\mathcal{B}(0,1,r)$.  
    
   From the results of \Cref{sec:BrownsCrit}, we see the conditions of Brown's criterion (\Cref{thm:BrownsCriterion}) are satisfied, thereby establishing the desired result.  \Cref{thm:CubeComplexContractible} shows that $X$ and $\mathcal{X}$ are contractible.
    \begin{itemize}
        \item[(a)] For $k\geq 1$, both the quotient of $\mathcal{X}^{\leq k}$
          and that of $X^{\leq k}$ are compact by \Cref{thm:CubeComplexContractible}.
        \item[(b)] Cell stabilizers are $F_{\infty}$ by \Cref{lem:DHB_Finfinity} and \Cref{thm:CubeComplexContractible}. 
        \item[(c)] The descending links of vertices in both $X$ and $\mathcal{X}$ are $r-1$ dimensional, and by \Cref{cor:descendingLinkswCM} as long as the height of a vertex $x\in \mathcal{X}$ is at least $4r-4$, then $\lk^{\downarrow}(x)$ is $(r-2)$-connected. By \Cref{lem:descendingLinksCompleteJoin} and \Cref{prop:completeJoinRetract}, the same is true of the descending links of vertices of $X$.
        \item[(d)] By \Cref{lem:infiniteFibers} and \cite[Observation 3.5]{aramayona2023surface}, descending links of vertices in $\mathcal{X}$ are not contractible, and thus by  \Cref{prop:completeJoinRetract} the same is true of the descending links of vertices in $X$. 
    \end{itemize}
    In particular, it follows that $B(0,1,r)$ and $\BB(0,1,r)$ are of type $F_{r-1}$ but not of type $FP_{r}$.
\end{proof}

Since finiteness properties are preserved under commensurability, the converse of \Cref{thm:commensurability} also holds; that is, $B(g,h,r)$ and $B(g',h',r')$ are commensurable \emph{if and only if} $r = r'$, with a similar statement for doubled handlebody Houghton groups.

\section{Cantor graphs}
\label{sec:CantorGraphs}

We now turn our attention to asymptotically rigid mapping class groups of graphs $\G$ with a Cantor set of ends, all accumulated by loops.  The asymptotically rigid mapping class groups of doubled handlebodies associated to such graphs, $M_\G$, are studied in \cite[Section 8]{aramayona2023surface}. 
Specifically, decompose $M_\G = C \cup \bigcup_{j \in J} Y_j$ where the core $C$ is a 3-sphere with $r$ open balls removed, and the pieces $Y_j$ are each diffeomorphic to a copy of $S^2 \times S^1$ with $d+ 1$ many open balls removed. 
 Fixing a marking on each piece to a preferred model for the pieces, one can define a rigid structure on $M_\G$, and as before, a suited submanifold is a connected submanifold of $M_\G$ that can be obtained from the union of the core and finitely many pieces.  
 
 Let $\mathcal{B}_{d,r}$ denote the group of isotopy classes of self-diffeomorphisms of $M_\G$ that preserve the rigid structure outside of some suited submanifold with suited image (see \cite[Definition 3.8]{aramayona2024asymptotic}). Again, as with the doubled handlebody Houghton groups, we consider maps up to isotopy instead of proper isotopy, differing from \cite{aramayona2024asymptotic}.

 When $d = 1$, the doubled handlebody has exactly $r < \infty$ many ends, and this group coincides with the doubled handlebody Houghton group defined in \Cref{sec:doubledHandleHoughton}; that is $\mathcal{B}_{1,r}= \mathcal{B}(0,1,r)$. The finiteness properties when $d > 1$ are already well understood, in particular Aramayona--Bux--Flechsig--Petrosyan--Wu show: 

 \begin{THM}[{\cite[Theorem 1.8]{aramayona2024asymptotic}}]\label{thm:CantorMflFinProp} For all $d > 1$ and $r \ge 1$, $\mathcal{B}_{d,r}$ is of type $F_\infty$.  
 \end{THM}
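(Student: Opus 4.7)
The plan is to mimic the strategy used in the finite-ended case: construct a contractible piecewise-Euclidean Stein--Farley cube complex $\XX_{d,r}$ on which $\BB_{d,r}$ acts, and apply Brown's criterion (\Cref{thm:BrownsCriterion}). The structural difference that buys us type $F_\infty$ rather than merely $F_{r-1}$ is that when $d\ge 2$, each piece of the rigid structure has $d+1\ge 3$ boundary spheres, so adjoining a piece to a suited submanifold increases the number of free boundary spheres by $d-1\ge 1$ (and the rank of the fundamental group by $1$). Hence both the rank and the boundary count of suited submanifolds are unbounded, which is precisely the input needed to force the descending links to be arbitrarily highly connected.

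First, I would define $\XX_{d,r}$ in direct analogy with the complex $\XX$ of \Cref{sec:BrownsCrit}: vertices are equivalence classes $[\ZZ,f]$ with $\ZZ\subset M_\G$ a suited submanifold and $f\in\BB_{d,r}$, and cubes arise from the poset of suited submanifolds ordered by adjoining pieces. Contractibility, compactness of sublevel quotients (condition (a)), and type $F_\infty$ of cube stabilizers (condition (b)) follow by verbatim adaptations of the arguments in \Cref{thm:CubeComplexContractible} and \Cref{lem:DHB_Finfinity}: the cube stabilizer of a cube with bottom vertex $[\ZZ,\id]$ is virtually $\Map(\ZZ)$ for a compact doubled handlebody, and combinatorial types of suited submanifolds of bounded rank fall into finitely many $\BB_{d,r}$-orbits.

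The main work is in conditions (c) and (d). As before, the descending link $\lkd(x)$ at $x=[\ZZ,\id]$ admits a complete-join surjection onto the piece complex $\mathcal{P}(\ZZ)$, where now the pieces under consideration are copies of $S^2\times S^1$ minus $d+1$ balls. By \Cref{prop:CohenMacaulayLift,prop:completeJoinRetract} and the infinite-fiber argument of \Cref{lem:infiniteFibers}, it suffices to show that $\mathcal{P}(\ZZ)$ is weakly Cohen--Macaulay of arbitrarily high dimension and noncontractible as the height of $x$ grows. Since each adjunction of a piece increases both $\rk \pi_1(\ZZ)$ and $|\partial\ZZ|$ (by $1$ and $d-1$ respectively), both quantities can be made as large as desired, so a Cantor-setting analogue of \Cref{thm:piececomplexkconnected} would give connectivity in every degree. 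Brown's criterion then yields type $F_k$ for every $k$, hence type $F_\infty$.

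The main obstacle is establishing that analogue of \Cref{thm:piececomplexkconnected} when the pieces have $d+1\ge 3$ boundary spheres rather than two. The blueprint should be the same, routing through a tethered handle complex and an injective tethered handle complex $\mathcal{TH}_1(\ZZ,Q)$ as in \Cref{lem:thetheredHandleConnected} and \Cref{thm:injTetheredHandleConnectivity}; the complete-join map from $\mathcal{TH}_1$ to $\mathcal{P}$ goes through essentially unchanged because thickenings of a tethered handle together with its tether-endpoint sphere still recover a piece. The delicate point is the Hatcher--Wahl disjointness and bad-simplex induction: with branching pieces, the links of bad simplices must be re-identified as injective tethered handle complexes on smaller-rank doubled handlebodies with a controlled reduction in $|Q|$. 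Provided the connectivity bound from that induction scales linearly in both $\rk \pi_1(\ZZ)$ and $|Q|$, the growth estimates above yield arbitrary connectivity and the theorem follows.
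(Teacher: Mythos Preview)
This theorem is not proved in the paper: it is quoted from \cite[Theorem~1.8]{aramayona2024asymptotic} as background before the paper establishes the graph analogue (\Cref{thm:CantorFinProp}). There is therefore no proof here to compare your proposal against.

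That said, your outline is essentially the strategy of the cited paper, and it matches what this paper does for the graph version. Two small corrections are worth noting. First, you invoke \Cref{thm:BrownsCriterion}, but that formulation is designed to produce ``type $F_{d}$ but not $FP_{d+1}$'' and requires noncontractible descending links at every height; for an $F_\infty$ conclusion one instead uses the variant stated in this paper as \Cref{thm:BrownFl}, which drops the noncontractibility hypothesis and only asks that descending links be $(\ell-1)$-connected above some height, for every $\ell$. Your invocation of \Cref{lem:infiniteFibers} and condition~(d) is thus irrelevant for the $F_\infty$ statement. Second, the ``main obstacle'' you flag---connectivity of the piece complex when pieces have $d+1\ge 3$ boundary spheres---is precisely what is handled in \cite{aramayona2024asymptotic}; indeed, this paper's proof of \Cref{thm:CantorFinProp} cites Proposition~6.6 and Theorem~6.8 of that reference directly for the descending-link connectivity of $\XX$, so there is nothing new to establish.
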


As in previous sections, the definitions and results for doubled handlebodies extend naturally to the setting of graphs. We now define the corresponding structures for graphs and give an analog of \Cref{thm:CantorMflFinProp} for graphs by directly appealing to the arguments in \cite{aramayona2024asymptotic}.

\subsection{Asymptotically rigid mapping groups of Cantor graphs}

Let $\G$ be an infinite graph with a Cantor set of ends all accumulated by loops, i.e., a blooming Cantor tree. Consider the decomposition $\Gamma= \Gamma_{d, r}(g,h) := C \cup \bigcup_{j \in J} Y_j$, where the core graph $C$ is a bipartite graph $K_{1,r}$ wedged with $g$-many loops at the $r$-valent vertex, and where $Y_j$ is a subgraph of $\Gamma$ that is isomorphic to the graph $Y^{h,d}$, the wedge of the bipartite graph $K_{1,d+1}$ with $h$-many loops at the $(d+1)$-valent vertex. We again call the $Y_j$'s the \textit{pieces} of the decomposition.
Then a $(g,h,r,d)$-rigid structure on $\Gamma$ is a decomposition for which all subgraphs in the decomposition have disjoint interiors, and pieces only meet the core once at its frontier (see \Cref{fig:CantorTree}).
 \begin{figure}[ht!]
    \centering
    \begin{overpic}[width=.7\textwidth]{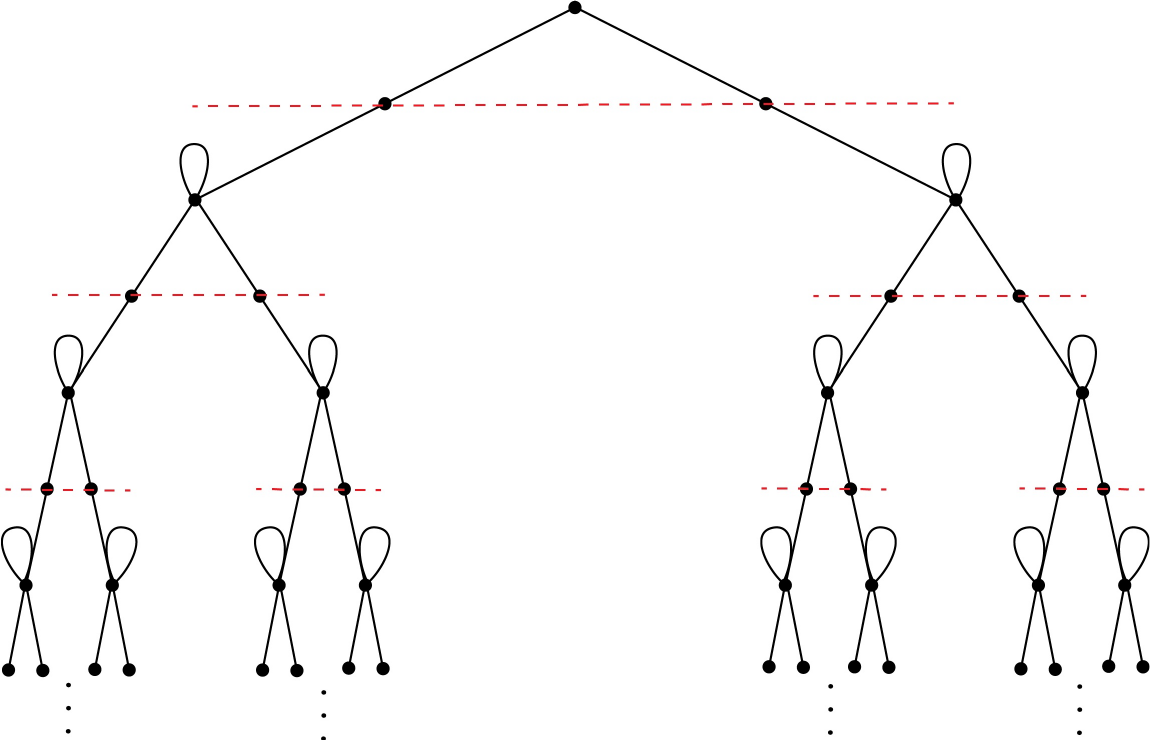}     
    \end{overpic}
    \caption{A $(0,1,2,2)$-rigid structure on the blooming Cantor tree graph.}
    \label{fig:CantorTree}
\end{figure}

The asymptotic mapping class group $B_{d,r}(g,h)$ is defined as before, as the group of maps which eventually respect certain marking maps of the pieces with a fixed copy of $Y^{h, d}$. When $d = 1$, this coincides with the graph Houghton group $B_{1,r}(g,h) = B(g,h,r)$ whose underlying graph has finitely many ends all accumulated by loops. 
\par 

Denote $B_{d,r} := B_{d,r}(0,1)$.  We have a direct analog of \Cref{thm:CantorMflFinProp}:  

\begin{THM}
    \label{thm:CantorFinProp}
    For all $d > 1$ and $r \ge 1$, the group $B_{d,r}$ is of type $F_\infty$.  
\end{THM}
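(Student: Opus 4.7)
The plan is to apply Brown's criterion (\Cref{thm:BrownsCriterion}) to the action of $B_{d,r}$ on a Stein--Farley cube complex $X_{d,r}$ built in exact analogy with the complex $X$ of \Cref{sec:BrownsCrit}: vertices are equivalence classes $[Z,f]$ with $Z \subset \Gamma_{d,r}(0,1)$ a suited subgraph and $f \in B_{d,r}$, the partial order $\preceq$ is defined by attaching pieces, and the height function is $h([Z,f]) := \rk(\pi_1(Z))$. Contractibility of $X_{d,r}$ and finiteness of the quotient $X_{d,r}^{\leq k}/B_{d,r}$ should follow from the arguments of \Cref{thm:CubeComplexContractible}. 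A cube stabilizer is commensurable with the mapping class group of its bottom suited subgraph, which is a finite-rank graph with finitely many rays attached, and is therefore of type $F_\infty$ by \Cref{lem:DHB_Finfinity} combined with \Cref{thm:TwoMCGfiniteKernel}. This verifies conditions (a) and (b) of Brown's criterion, and note that condition (d) is not needed since we only want $F_\infty$ and not a non-finiteness conclusion.

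The crucial new input is condition (c), and this is where the hypothesis $d > 1$ enters essentially. Each piece of the $(0,1,r,d)$-rigid structure meets the rest of $\Gamma_{d,r}(0,1)$ at a single frontier point and contributes $d$ new frontier points, so a suited subgraph obtained by attaching $n$ pieces to the core has rank $n$ and exactly $r + n(d-1)$ frontier components. With $d \geq 2$, both the rank and the number of frontier components grow linearly in $n$, in contrast to the finite-ends setting where the frontier count remained bounded by $r$ (and which in turn capped the finiteness properties at $F_{r-1}$). Following \Cref{prop:descendingPieceComplex}, the descending link of $[Z, \id]$ admits a complete join map onto the piece complex $\mathcal{P}(Z, \partial Z)$. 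Combining the weak Cohen--Macaulay property of \Cref{cor:pieceCpxCohenMacaulay} with \Cref{prop:completeJoinRetract}, for every $k$ one can find a critical value $s_k$ such that $h(x) \geq s_k$ implies $\lkd(x)$ is $k$-connected. Since $k$ is arbitrary, this yields $B_{d,r}$ of type $F_k$ for every $k$, hence of type $F_\infty$.

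The main obstacle is justifying that the piece complex connectivity and its underlying tethered-handle arguments, which in \Cref{sec:BrownsCrit} are stated for doubled handlebodies, extend to suited subgraphs. I expect this to require only cosmetic changes, since the arguments depend only on rank, frontier count, and disjointness of submanifolds, all of which translate faithfully between a graph and its doubled handlebody via the retraction $\rho$. Alternatively, and perhaps more cleanly, one can build a parallel doubled handlebody cube complex $\mathcal{X}_{d,r}$ on which $\mathcal{B}_{d,r}$ acts, establish an equivariant surjection $\mathcal{X}_{d,r} \to X_{d,r}$ that restricts to a complete join on descending links (analogous to \Cref{lem:CCSurj,lem:descendingLinksCompleteJoin}), and then transfer the descending link connectivity from the doubled handlebody side --- which is implicit in the proof of \Cref{thm:CantorMflFinProp} in \cite{aramayona2024asymptotic} --- back to the graph complex via \Cref{prop:completeJoinRetract}.
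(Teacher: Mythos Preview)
Your proposal is correct and essentially matches the paper's proof. The paper follows exactly your second alternative: it builds the parallel doubled handlebody complex $\mathcal{X}$, invokes the analogs of \Cref{lem:CCSurj,lem:descendingLinksCompleteJoin} to get a complete join on descending links, and then pulls the connectivity from \cite[Proposition 6.6 and Theorem 6.8]{aramayona2024asymptotic} back to $X$ via \Cref{prop:completeJoinRetract}. One cosmetic difference: the paper invokes the $F_\infty$ version of Brown's criterion (\Cref{thm:BrownFl}) rather than \Cref{thm:BrownsCriterion}, which packages exactly the observation you made that condition (d) is unnecessary and that arbitrary connectivity of descending links at sufficient height suffices.
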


The proof of \Cref{thm:CantorFinProp} does not extend directly to the groups $B_{d,r}(g,h)$, and the finiteness properties in this generality remain open. For comparison, recall that in the Houghton group case, the groups $B(g,h,r)$ inherit their finiteness properties from $B(0,1,r)$, since for all $g \ge 0$ and $h \ge 1$, $B(g,h,r)$ is conjugate to a finite-index subgroup of $B(0,1,r)$ that arises from engulfing the $(g,h)$-rigid structure by the $(0,1)$-rigid structure (see \Cref{thm:commensurability}). On the other hand, embedding a conjugate of $B_{d,r}(g,h)$ into $B_{d', r'}$ is not as easily done, and even when such an embedding occurs it is not as a finite index subgroup. 

The proof of \Cref{thm:CantorFinProp} uses the following version of Brown's criterion: 

\begin{THM} \label{thm:BrownFl} Let $G$ be a group acting by cellwise isometries on a contractible piecewise Euclidean CW-complex $\mathbb{X}$.  Suppose $\mathbb{X}$ is equipped with a discrete $G$-invariant Morse function $h \colon \mathbb{X} \to \mathbb{R}$, and let $\mathbb{X}^{\le s}$ be the largest subcomplex of $\mathbb{X}$ fully contained in $h^{-1}(-\infty, s].$ Suppose that: 
\begin{itemize}
    \item[(a)] $\mathbb{X}$ is contractible. 
    \item[(b)] The quotient of $\mathbb{X}^{\le s}$ by $G$ is finite for all critical values $s$. 
    \item[(c)] Every cell stabilizer is of type $F_\infty$. 
    \item[(d)] For all $l\geq 1$, there is a critical value $s$ such that for every vertex $v \in \mathbb{X}$ with $h(v) \ge s$, we have that $\lkd_{\mathbb{X}}(v)$ is $(\ell -1)$-connected. 
\end{itemize}
    Then $G$ is type $F_\infty$. 
\end{THM}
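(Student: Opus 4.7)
The plan is to combine Bestvina--Brady discrete Morse theory, applied to the filtration $\{\mathbb{X}^{\le s}\}$, with K.\,S.~Brown's classical criterion characterizing the property $F_n$ via group actions on highly-connected cocompact complexes. The overall strategy is to show that $\mathbb{X}^{\le s}$ becomes arbitrarily highly-connected as $s$ grows, and then to apply Brown's theorem to $\mathbb{X}^{\le s}$, viewed as a cocompact $G$-CW complex with stabilizers of type $F_\infty$.

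Fix $\ell \ge 1$. Using (d), I would choose a critical value $s_\ell$ such that every vertex $v$ with $h(v) \ge s_\ell$ has $(\ell-1)$-connected descending link. By the standard Morse-theoretic analysis (originating in \cite{bestvina1997morse}), passing from $\mathbb{X}^{<s}$ to $\mathbb{X}^{\le s}$ at a critical value $s$ is, up to $G$-equivariant homotopy, the operation of attaching the cone on $\lkd(v)$ for each critical vertex $v$ at height $s$. The long exact sequence of the pair (or equivalently the standard fact that coning off an $(\ell-1)$-connected subspace is $\ell$-connected) gives $\pi_k(\mathbb{X}^{\le s}, \mathbb{X}^{<s}) = 0$ for $k \le \ell$. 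Cocompactness from (b) ensures that only finitely many $G$-orbits of critical vertices occur at each critical value, so the filtration is well-behaved and, passing to the direct limit over critical values above $s_\ell$, the inclusion $\mathbb{X}^{\le s_\ell} \hookrightarrow \mathbb{X}$ is $\ell$-connected. Combined with contractibility of $\mathbb{X}$ from (a) and the long exact sequence of the pair, this implies $\mathbb{X}^{\le s_\ell}$ is $(\ell-1)$-connected.

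Next I would invoke Brown's criterion in its general form: if $G$ acts cellularly on an $(\ell-1)$-connected CW-complex $Y$ with cocompact $\ell$-skeleton, and every stabilizer of a cell of dimension $\le \ell$ is of type $F_{\ell-\dim}$ (satisfied here since (c) gives $F_\infty$ for every cell stabilizer), then $G$ is of type $F_\ell$. Taking $Y = \mathbb{X}^{\le s_\ell}$, all hypotheses are met by (b), (c), and the connectivity established above, so $G$ is of type $F_\ell$. Since $\ell$ was arbitrary, $G$ is of type $F_\infty$.

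The main obstacle is the Morse-theoretic step: the precise justification that attaching along the descending link is $G$-equivariantly $\ell$-connected. This requires the usual hypotheses built into the term ``discrete $G$-invariant Morse function''—namely, that $h$ attains a strict maximum on each closed cell (so that descending links are well-defined), and that the set of critical values is discrete so the filtration decomposes as a sequence of equivariant attachments. Once these technicalities are in place, the rest of the argument is a direct combination of well-known results and involves no essentially new input.
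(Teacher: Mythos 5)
Your proof is correct and takes essentially the same route as the paper, which simply refers the reader to \cite[Theorem 3.1]{aramayona2023surface} and its (Morse-theoretic) proof; you have just spelled out the Bestvina--Brady argument and the invocation of Brown's criterion in detail, dropping the ``not $FP_{d+1}$'' portion since only the positive finiteness direction is needed for $F_\infty$.
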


The proof of \Cref{thm:BrownFl} follows directly from \cite[Theorem 3.1]{aramayona2023surface} and its proof.  To prove \Cref{thm:CantorFinProp}, we will show that $B_{d,r}$ satisfies \Cref{thm:BrownFl}. 

\begin{proof}[Proof of \Cref{thm:CantorFinProp}]
   Let $X$ (resp. $\XX$) be the Stein--Farley complex defined analogously to the one appearing in \Cref{sec:BrownsCrit}, but with underlying graph $\G=\G_{d,r}(0,1)$ (resp. with underlying doubled handlebody $M_{\G}$ for $\G=\G_{d,r}(0,1)$). Complexity functions for both $X$ and $\XX$ can also be defined in the same way as in \Cref{sec:BrownsCrit}. By a simple modification of \Cref{thm:CubeComplexContractible}, $X$ is contractible, and for all $s$, the quotient $X^{\le k} / B_{d,r}$ is finite. Thus, conditions (a), (b), (c) of \Cref{thm:BrownFl} are satisfied. More specifically, direct translations of the arguments in \cite[Proposition 5.7, Lemmas 6.2 and 6.3]{aramayona2023surface} yield the analogous results in our setting. 

    Note that \Cref{lem:CCSurj} and \Cref{lem:descendingLinksCompleteJoin} still work here to show that descending links of vertices in $\XX$ are complete joins over descending links of vertices of $X$ (where the two vertices corresponding to the descending links have the same complexity). In particular, to show that $X$ satisfies condition (d), it suffices to show that the descending links of $\XX$ satisfy (d), by \Cref{prop:completeJoinRetract}. But this follows by a direct application of the results of \cite{aramayona2024asymptotic}. More specifically, Proposition 6.6 and Theorem 6.8 of \cite{aramayona2024asymptotic} imply that (d) holds for $\XX$. Thus we can apply \Cref{thm:BrownFl}, and the result follows.
\end{proof}

\subsection{Stable homology}
The asymptotic mapping class groups $B_{d,r}(g,h)$ are closely related to Higman--Thompson groups $V_{d,r}$ as explained below.  In this section, we use the relationship of $B_{2,1}$ with the classical Thompson group $V = V_{2,1}$ to study the homology of $B_{2,1}$.  We follow arguments similar to those in \cite{Funar_2009, Aramayona2017AsymptoticMC, aramayona2024asymptotic, domingozubiaga2025finitenesspropertiesasymptoticallyrigid}. 
\begin{THM}\label{thm:StableHomology}
    For $g \ge 2i + 4$, the homology of $B_{2,1}$ coincides with the stable homology of $\Aut(F_n)$, that is 
    \begin{equation*}
        H_i(B_{2,1}) \cong H_i(\Aut(F_g)). 
    \end{equation*}
\end{THM}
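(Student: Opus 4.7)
The plan is to reduce the calculation of $H_*(B_{2,1})$ to the stable homology of $\Aut(F_n)$ via a short exact sequence relating $B_{2,1}$ to the Higman--Thompson group $V = V_{2,1}$, using the same strategy employed in \cite{aramayona2024asymptotic} for the doubled handlebody analog (and before that, the arguments of Funar and Szymik--Wahl in Thompson-like settings).

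\textbf{Step 1: A short exact sequence.} I would first exhibit
\[
1 \longrightarrow K \longrightarrow B_{2,1} \overset{\pi}{\longrightarrow} V \longrightarrow 1,
\]
where $\pi$ is the map induced by the action of $B_{2,1}$ on the Cantor end space $E(\Gamma_{2,1}(0,1))$. Asymptotic rigidity forces this action to be ``piecewise prefix-replacement'' on ends, which is exactly the defining description of $V = V_{2,1}$; surjectivity is verified by lifting each generator of $V$ (a prefix replacement on finitely many pieces) to a rigid proper homotopy equivalence of the underlying graph that permutes the corresponding pieces via their markings and is the identity elsewhere.

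\textbf{Step 2: Identifying the kernel.} An element of $K$ fixes each end, so by the asymptotic rigidity condition, a defining subgraph $Z$ has the property that the complement is fixed piece-by-piece with the identity marking. Hence every element of $K$ is compactly supported, giving
\[
K \;=\; \Map_c(\Gamma_{2,1}(0,1)) \;\cong\; \varinjlim_{n} \Aut(F_n),
\]
by the same direct-limit identification used in \Cref{lem:compactsuppPresentation}, now with the exhaustion of $\Gamma_{2,1}(0,1)$ by finite suited subgraphs (whose mapping class groups are $\Aut(F_n)$ for growing $n$).

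\textbf{Step 3: Spectral sequence and acyclicity of $V$.} Next, I would feed the short exact sequence into the Lyndon--Hochschild--Serre spectral sequence
\[
E_2^{p,q} \;=\; H_p\bigl(V; H_q(K)\bigr) \;\Longrightarrow\; H_{p+q}(B_{2,1}).
\]
The conjugation action of $B_{2,1}$ on $K$ sends each $\Aut(F_n) \hookrightarrow K$ into some $\Aut(F_{n'}) \hookrightarrow K$ compatibly with the stabilization maps. Hence, after taking the colimit, the induced action on $H_*(K) = \varinjlim H_*(\Aut(F_n))$ factors through the trivial action on the stable homology, so $V$ acts trivially on $H_q(K)$. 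Combined with the acyclicity of $V$ (Szymik--Wahl), $E_2^{p,q} = 0$ for $p > 0$, and the spectral sequence collapses to
\[
H_i(B_{2,1}) \;\cong\; H_i(K) \;=\; \varinjlim_n H_i(\Aut(F_n)).
\]

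\textbf{Step 4: Homological stability.} Finally, Hatcher--Vogtmann homological stability for $\Aut(F_n)$ (with the sharpened range due to Hatcher--Vogtmann--Wahl and Randal-Williams--Wahl) gives that the stabilization $H_i(\Aut(F_g)) \to H_i(\Aut(F_{g+1}))$ is an isomorphism for $g \geq 2i+4$, whence
\[
H_i(\Aut(F_g)) \;\cong\; \varinjlim_n H_i(\Aut(F_n)) \;\cong\; H_i(B_{2,1}),
\]
as claimed.

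The main obstacle is \textbf{Step 3}, specifically verifying that the $V$-action on $H_*(K)$ is trivial. The issue is that the conjugation action is not given by inner automorphisms of $K$ itself, so one cannot invoke the usual fact that inner automorphisms act trivially on group homology; instead one must show that conjugation by any $f \in B_{2,1}$ agrees, up to a ``shift'' in the directed system of compactly supported subgroups, with an inner automorphism of a sufficiently large $\Aut(F_n)$. This is the same Shapiro-style shifting argument that appears in the proofs of the analogous stable homology theorems in \cite{aramayona2024asymptotic,domingozubiaga2025finitenesspropertiesasymptoticallyrigid}, and should transport verbatim once Step 1 and Step 2 are in place.
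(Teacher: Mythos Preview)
Your proposal is correct and follows essentially the same route as the paper: the paper likewise establishes the short exact sequence $1 \to \Map_c(\Gamma_{2,1}(0,1)) \to B_{2,1} \to V \to 1$, identifies the kernel as a direct limit of $\Aut(F_n)$'s, and then invokes acyclicity of $V$ together with Hatcher--Vogtmann stability. The only difference is packaging: where you spell out the Lyndon--Hochschild--Serre spectral sequence and the ``shift to an inner automorphism'' argument for triviality of the $V$-action on $H_*(K)$, the paper simply cites \cite[Theorem~6.4]{domingozubiaga2025finitenesspropertiesasymptoticallyrigid} for that step.
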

        
\begin{proof}
    We have that $\Map_c(\Gamma_{d,r}(g,h)) \cong \varinjlim_{n} \Map(\Delta_{r,n})$, analogously to the proof of \Cref{lem:compactsuppPresentation}. Recall $\Delta_{r,n}$ is the subgraph of $\Gamma_{d,r}(g,h)$ lying in the closed $(n + \frac12)$-ball centered at the middle vertex of the core.  By homological stabilization of $\Aut(F_n)$ due to Hatcher--Vogtmann \cite{hatcher2004homology}, we have 
    \begin{equation*}
        H_i(\Map(\Delta_{r,n})) \cong H_i(\Aut(F_{rn}))
    \end{equation*}
    provided that $rn > 2i + 4$.  As the maps in homology stabilize, by taking the direct limit, we obtain
    \begin{equation*} 
        H_i(\Map_c(\Gamma_{d,r}(g,h)) = H_i(\Aut(F_{g}))
    \end{equation*}
    if $g > 2i + 4$.  Furthermore, it is well known that a finite index subgroup of $\Aut(F_g)$ acts freely, properly discontinuously, and cocompactly on a contractible simplicial complex \cite{culler1986moduli}, which implies $\Aut(F_g)$ is type $F_{\infty}$. In particular, $H_i(\Aut(F_g))$ is finitely generated.

    When $d\ge 2$ and $r \ge 1$, the action of $B_{d,r}(g,h)$ on the end space of $\Gamma_{d,r}(g,h)$ induces a homeomorphism of a Cantor set, in particular an element of the Higman--Thompson group $V_{d, r}$, just as in \cite{aramayona2024asymptotic, domingozubiaga2025finitenesspropertiesasymptoticallyrigid}.  Likewise, we have the following short exact sequence (see \cite[Proposition 4.3]{aramayona2024asymptotic}), 
    \begin{equation*}
        1 \to \Map_c(\Gamma_{d,r}(g,h))\to B_{d,r}(g,h) \to V_{d, r} \to 1. 
    \end{equation*}
    In particular, when $d = 2$ and $r = 1$, $V_{2,1} = V$ is the classical Thompson group, which is simple and acyclic.  Thus, after making the appropriate modifications, the proof of \cite[Theorem 6.4]{domingozubiaga2025finitenesspropertiesasymptoticallyrigid} can be directly adapted to show that $H_i(B_{2,1}) \cong H_i(\Map_c(\Gamma_{d,r}(g,h))$, from which the result follows.  
\end{proof}
\section{Classical geometric group theory properties}\label{sec:GGTprops}

As the graph Houghton group $B_r$ is finitely presented for $r \ge 3$, and the pure subgroup has an explicit presentation given in \Cref{thm:PBrPresentation}, one may ask how this group fits both in the classical study of infinite groups as well as into Gromov's program of studying finitely generated groups according to their large-scale geometry \cite{gromov1992asymptotic}.   In this section, we investigate several properties of $B_r$ from this perspective.  
 
We consider below a variety of classical group theory and geometric group theory properties for $B_r$. We begin by listing several easily obtainable properties of $B_r$.
\begin{itemize}
    \item The group is \emph{not residually finite}, as it contains the classical Houghton group $H_r$, which is not residually finite.
    \item The group has \emph{infinite asymptotic dimension} as it contains free abelian subgroups of arbitrarily large rank.
    \item The group is \emph{not virtually torsion free}, as it contains infinite torsion subgroups.
    \item The group has \emph{exponential word growth} as it contains nonabelian free groups.
\end{itemize}

Let us consider the relationship between $\Map(\Gamma)$ and $B_r$.  On the one hand, the graph Houghton group is dense in $\Map(\Gamma)$. This is because $\Map_c(\Gamma_r) \subset B_r$, and $B_r$ surjects onto the quotient $\PMap(\G)/\Map_c(\G_r) \cong \Z^{r-1}$ in \Cref{def:fluxmap}. Since $\PMap(\G) \cong \Map_c(\G_r) \rtimes \Z^{r-1}$, the density claim follows.

On the other hand, $B_r$ is not quasi-isometrically embedded in $\Map(\G)$. By \cite[Theorem A]{domat2025generating}, $\PMap(\G_r)$ admits a well defined quasi-isometry type arising from \emph{coarsely bounded} generating sets (see the book of Rosendal for more details \cite{rosendal2022coarsegeometry}.) 
The generating set they produce contains all compactly supported elements whose support and image do not contain the central point of $\G_r$. Thus, for any infinite order element $f$ of $B_r$ in this set, $f^n$ has word length 1 for any $n \neq 0$ with respect to the word metric on $\PMap(\G_r)$. In particular, the group $B_r$ \emph{does not} quasi-isometrically embed into $\Map(\Gamma_r)$. This raises the question of the extent to which these properties provide information about $\Map(\Gamma)$.

By \cite{hill2025large} and \cite[Section 8.1]{udall2024the-sphere}, the pure mapping class group of a surface and a doubled handlebody with finitely many ends, $r \ge 3$, all accumulated by genus similarly admits a well-defined quasi-isometry type.  Adaptations of these arguments can be used to prove a similar result for the pure handlebody group.  By the same arguments as above, the surface and (doubled) handlebody Houghton groups are not quasi-isometrically embedded in their respective mapping class groups.

\subsection{Ends of $B_r$} We begin by computing the number of ends of the graph Houghton group. As a coarse geometric invariant, the number of ends reflects how a group can be decomposed over finite subgroups. In particular, being 1-ended implies that a group does not split nontrivially over finite subgroups per Stallings' theorem on ends of groups \cite{stallings1968torsion}. 

\begin{THM}\label{thm:1ended}
    The group $B_r$ is 1-ended for $r \ge 2$.  
\end{THM}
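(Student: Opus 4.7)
The plan is to apply Stallings' theorem on ends, leveraging the rich supply of automorphism groups of free groups sitting inside $B_r$. The first steps are the easy ones: $B_r$ is finitely generated by \Cref{prop:B2finitegen,thm:Bnfinitegen}, it is infinite, and it contains $\mathbb{Z}^2$ subgroups (for example, $\langle h_i, \alpha\rangle$ where $\alpha$ is any infinite-order element of $\Map_c(\Gamma_r)$ whose support is disjoint from the ray along which $h_i$ shifts; these commute and are independent because $h_i$ has nonzero flux while $\alpha$ has zero flux, using \Cref{prop:PBfluxSES}). Thus $B_r$ is neither finite nor virtually $\mathbb{Z}$, and what remains is to show that $B_r$ admits no nontrivial splitting over a finite subgroup.

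Since the number of ends is a commensurability invariant and $[B_r:PB_r] = r!$, I would work with $PB_r$ and argue by contradiction. Assuming $PB_r$ splits nontrivially over a finite subgroup, it acts on a Bass--Serre tree $T$ with finite edge stabilizers and without a global fixed vertex. The central step is to show that the normal subgroup $\Map_c(\Gamma_r) \triangleleft PB_r$ fixes a vertex of $T$. By \Cref{lem:compactsuppPresentation}, $\Map_c(\Gamma_r)$ is a directed union $\bigcup_n \Aut(F_{rn})$ of compactly supported subgroups coming from an exhaustion of $\Gamma_r$ by suited subgraphs. For $rn$ sufficiently large, $\Aut(F_{rn})$ has Serre's property (FA) --- this follows from Kazhdan's property (T) as established by Kaluba--Kielak--Nowak--Ozawa and subsequent work --- so $\Aut(F_{rn})$ has a nonempty fixed subtree in $T$. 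Because $\Aut(F_{rn})$ is infinite and edge stabilizers of $T$ are finite, that subtree cannot contain any edge and is therefore a single vertex $v_n$. The nesting $\Aut(F_{rn}) \subseteq \Aut(F_{r(n+1)})$ forces $\{v_{n+1}\} \subseteq \{v_n\}$, so $v_{n+1} = v_n$, and all the $v_n$ agree with a common vertex $v$ that is fixed pointwise by $\Map_c(\Gamma_r)$.

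The conclusion is then immediate from normality: since the $\Map_c(\Gamma_r)$-fixed set $\{v\}$ is permuted by $PB_r$ and is a singleton, every element of $PB_r$ fixes $v$, contradicting the hypothesis that the action has no global fixed vertex. Therefore $PB_r$, and hence $B_r$, is $1$-ended.

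The main obstacle is invoking property (FA) for $\Aut(F_n)$ at the appropriate rank; once that is cited (or verified directly for large $n$ via property (T)), the remainder of the argument is short and structural, relying only on the elementary fact that an infinite group cannot fix an edge whose stabilizer is finite and on the normality of $\Map_c(\Gamma_r)$ inside $PB_r$.
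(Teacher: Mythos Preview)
Your argument is correct and gives a genuinely different proof from the paper's. The paper does not use Stallings or Bass--Serre theory at all: instead it exhibits a carefully chosen finite generating set $S$ for $PB_2$ (and by extension $PB_r$) in which certain pairs of generators have disjoint compact support, so that the associated \emph{commutativity graph} $K(PB_r,S)$ is connected and contains a $\mathbb{Z}^2$ subgroup, and then invokes the criterion of Anderson--Aramayona--Shackleton to conclude $1$-endedness directly.

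The trade-offs are roughly as follows. The paper's route is self-contained once one accepts the commutativity-graph criterion, and avoids any appeal to deep structural facts about $\Aut(F_n)$; its cost is the ad hoc construction of a suitable generating set. Your route is cleaner and more conceptual: it explains the result as a formal consequence of property (FA) for $\Aut(F_n)$ and the normality of $\Map_c(\Gamma_r)$, and would adapt verbatim to any group built as an extension of a torsion-free abelian quotient by an increasing union of (FA) groups. One small remark: you invoke property (T) for $\Aut(F_n)$, which is heavy machinery; property (FA) for $\Aut(F_n)$, $n\ge 3$, is older and lighter (Bogopolski, Culler--Vogtmann), and suffices here since you only need it for large $n$.
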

\begin{proof}
    In the case when $r = 1$, the group is not even finitely generated by \Cref{prop:B1NotFinGen}. We will prove the statement for $r = 2$; the case of $r \ge 2$ follows by a similar argument. We will also prove the result for $PB_r$, and then the result for $B_r$ follows as the two groups are quasi-isometric.

    From the generating set of $PB_2$ given in \Cref{prop:B2finitegen}, it is not difficult to show that 
    \begin{equation*}
        S =  \{\hat{h}_1, \hat{h}_2, \sigma_1\eta_1, \sigma_2\eta_2, \tau \eta_2, \eta_2\}
    \end{equation*}
    is also a generating set (which we justify below). As before, $\tau$ is the loop flip $a_1^1 \mapsto \overline{a_1^1}$.  Here, $\sigma_i$ are the transposition of loops $a_3^i$ and $a_4^i$ and $\eta_i$ is the map inducing the map $a_1^i \mapsto \ov{a_2^i} a_1^i$, $a_2^i \mapsto \ov{a_2^i}$.  The shift maps $\hat{h}_i$ are defined by shifting between $e_1$ and $e_2$ but passing over $a_j^i$ for $1 \le j \le 4$, so that their support is disjoint from that of $\sigma_i$ and $\eta_i$.
    
    The four compactly supported elements together with their conjugates by $\hat{h}_i$'s generate $\Map_c(\G_2)$.  Both $\hat{h}_i$'s differ from the standard generating shift $h$ by compactly supported elements, so $S$ generates $PB_2$.
    
    By \cite[Theorem 9]{anderson2005simplecriterionnonrelativehyperbolicity}, it follows that $PB_2$ is 1-ended, since the commutativity graph $K(PB_r, S)$ is connected, and $\langle\sigma_1\eta_1, \sigma_2\eta_2 \rangle$ is rank 2 abelian. One can repeat this setup in $PB_r$, defining similar pairs of modified shift maps for each $h_i$, $2\leq i \leq r$, and the proof of the general case follows.  
\end{proof}

On the other hand, for $r\geq 2$, $PB_r$ acts on the real line in many different ways. Indeed, $PB_r$ admits many homomorphisms to $\Z$ via its abelianization. In particular, $PB_r$ splits as an HNN extension.

\subsection{Tits alternative}

A group $G$ satisfies the \emph{Tits alternative} if every subgroup of $G$ is either virtually nilpotent or contains a nonabelian free subgroup. Bestvina--Feighn--Handel show that $\Out(F_n)$ satisfies the Tits alternative in a series of three papers \cite{bestvina2000titsalternative, bestvina2004solvable, bestvina2005kolchin} (in fact, they also show that $\Out(F_n)$ satisfies the strong Tits alternative, see below). 

Here we note that the same fact doesn't hold for $B_r$. Namely, we have the following.

\begin{THM}\label{thm:TitsAlternative}
    The groups $H_r$, $B_r$ and $\mathcal{B}_r$  do not satisfy the Tits alternative for any $r\geq 1$.
\end{THM}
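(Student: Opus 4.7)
The plan is to exhibit, inside each of the three groups $H_r$, $B_r$, and $\mathcal{B}_r$, a common subgroup that witnesses the failure of the Tits alternative. The natural candidate is the group $S_\infty := \bigcup_n S_n$ of finitely supported permutations of $\mathbb{N}$.

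First, I would verify that $S_\infty$ itself fails the Tits alternative. Since $S_\infty$ is locally finite, every finitely generated subgroup is contained in some $S_n$ and is therefore finite; in particular $S_\infty$ contains no nonabelian free subgroup. To show that $S_\infty$ is not virtually nilpotent, I would observe that the finitely supported alternating group $A_\infty = \bigcup_{n \geq 5} A_n$ is simple (a nontrivial normal subgroup meets some $A_n$ nontrivially and, by simplicity of each $A_n$ for $n \geq 5$, must exhaust $A_\infty$). As an infinite simple group, $A_\infty$ has no proper finite-index subgroup, and combined with $[S_\infty : A_\infty] = 2$ this leaves only $A_\infty$ and $S_\infty$ themselves as the finite-index subgroups of $S_\infty$. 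Since $A_\infty$ is infinite, nonabelian, and simple, it has trivial center and in particular is not nilpotent (a nilpotent simple group is cyclic of prime order). Hence $S_\infty$ is not virtually nilpotent.

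Next, I would produce embeddings $S_\infty \hookrightarrow H_r$, $S_\infty \hookrightarrow B_r$, and $S_\infty \hookrightarrow \mathcal{B}_r$ for every $r \geq 1$. For $H_r$, the finitely supported permutations of the points on any fixed ray form a copy of $S_\infty$ (see Section~\ref{ssec:ClassicalHoughton}); in fact for $r=1$ this gives $H_1$ itself. For $B_r$, the adjacent loop swaps $s_1^1, s_2^1, s_3^1, \ldots$ from Section~\ref{sssec:AFV} generate a copy of $S_\infty$ inside $\Map_c(\G_r) \leq B_r$. For $\mathcal{B}_r$, the splitting $B_r \hookrightarrow \mathcal{B}_r$ coming from Theorem~\ref{thm:doubledHandlebodytoGraph} and \Cref{lem:psirestriction} (as noted at the end of Section~\ref{sec:HoughtonVariants}) carries this copy of $S_\infty$ into $\mathcal{B}_r$. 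In each case the image subgroup is neither virtually nilpotent nor contains a nonabelian free subgroup, so the Tits alternative fails.

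There is no real obstacle in this argument: the only mildly delicate point is ruling out virtual nilpotency of $S_\infty$, which reduces to the standard simplicity of $A_\infty$. The embeddings themselves are immediate from constructions already present in the paper.
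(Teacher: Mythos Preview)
Your argument is correct and follows essentially the same route as the paper: both proofs hinge on the infinite alternating group $A_\infty$ being simple (hence not virtually nilpotent) and on the witness subgroup containing no nonabelian free group. The only cosmetic difference is that the paper takes $H_r$ itself as the witness subgroup (using that commutators in $H_r$ are finitely supported, hence torsion) and then invokes the embeddings $H_r \hookrightarrow B_r \hookrightarrow \mathcal{B}_r$, whereas you pass directly to $S_\infty$ and use local finiteness; both reductions lead to the same conclusion via $A_\infty$.
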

\begin{proof}
    It was noted in \Cref{sec:HoughtonVariants} that the classical Houghton group $H_r$ embeds in $B_r$, and consequently also $\mathcal{B}_r$. It thus suffices to show that $H_r$ is not virtually nilpotent and also doesn't contain any free subgroups.

    The lower central series of $H_r$ does not terminate. This is because $H_r$ contains a minimal nontrivial normal subgroup, namely the infinite alternating group. Every finite index subgroup of $H_r$ contains the infinite alternating group (otherwise the infinite alternating group would have a proper finite index normal subgroup). Thus $H_r$ is not virtually nilpotent.

    On the other hand, $H_r$ does not contain a nonabelian free group. Indeed, for any $f_1, f_2\in H_r$, the element $[f_1, f_2]$ is supported on finitely many points (as its image in the abelianization is $0$), and in particular has finite order. 
\end{proof}

\begin{Q}
    Do any variants of the Houghton group discussed in \Cref{sec:HoughtonVariants} satisfy the Tits alternative? 
\end{Q}

On the other hand, none of the Houghton group variants satisfy the \emph{strong Tits Alternative}. A group $G$ satisfies the strong Tits alternative if every subgroup is either virtually abelian or contains a nonabelian free group. We have shown that $H_r$ does not satisfy the (strong) Tits alternative. For the rest of them, one can modify the construction in \cite[Theorem 6.1]{domat2025generating} to produce a wreath product $\Z \wr \Z$ subgroup of any of the variants, which implies that the group does not satisfy the strong Tits alternative. 

\subsection{BNSR-invariants}
To any finitely generated group one can associate certain geometric invariants, called the BNSR-invariants. These have been computed for the classical Houghton groups by Zaremsky in \cite{zaremsky2017invariants, zaremsky2020bnsr}, and essentially the same computation was carried out for the surface Houghton groups by Noah Torgerson and the fourth author \cite{torgerson2024bnsrinvariantssurfacehoughtongroups}. Rather than reproduce any of these details here, we shall simply note that (with occasional modifications), the entire analysis of \cite[Section 3 \& 4]{torgerson2024bnsrinvariantssurfacehoughtongroups} can be carried out unchanged. Thus, we have the following theorems.
\begin{THM} \label{thm:XisCat0}
    The Stein--Farley complexes $X$ and $\XX$ are CAT(0).
\end{THM}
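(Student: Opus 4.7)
The plan is to apply Gromov's classical link criterion: a cube complex is CAT(0) if and only if it is simply connected and the link of every vertex is a flag simplicial complex. Simple connectivity of $X$ and $\XX$ is immediate from contractibility (\Cref{thm:CubeComplexContractible}), so the work reduces to verifying the flag condition on vertex links.

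Fix a vertex $x = [Z, f] \in X$. By the poset description of cubes, each edge of $X$ incident to $x$ is either a \emph{descending} edge $[Z,f] \to [Z \setminus Y, f]$ for an outermost piece $Y \subset Z$ (one whose removal leaves a suited subgraph), or an \emph{ascending} edge $[Z,f] \to [Z \cup Y', f]$ for a piece $Y'$ of $\G_r$ adjacent to $Z$ but lying outside it. Each edge at $x$ is therefore labeled by a single piece of the rigid structure, with at most $r$ descending and at most $r$ ascending labels.

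The key observation is that distinct pieces of the rigid structure have pairwise disjoint interiors and meet at most at a single vertex. Consequently, given any collection of $k$ edges at $x$ labeled by pieces $Y_1, \ldots, Y_k$, the corresponding removals and additions pairwise commute, and every subset of operations yields a valid connected suited subgraph. Writing $Z^- = Z \setminus \bigcup_j (Y_j \cap Z)$ and $Z^+ = Z \cup \bigcup_j (Y_j \setminus Z)$, we obtain $[Z^-, f] \preceq [Z^+, f]$ with $\overline{Z^+ \setminus Z^-} = \bigsqcup_j Y_j$, so the resulting poset interval is a $k$-cube in $X$ containing $x$ whose $k$ edges at $x$ are the original ones. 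Hence the link of $x$ is a full simplex on its vertices, which is trivially flag, and Gromov's criterion is satisfied.

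The argument for $\XX$ proceeds identically upon substituting suited submanifolds and pieces of $M_{\G_r}$ for their graph counterparts; \Cref{lem:CCSurj} ensures the cube structure on $\XX$ is governed by the same combinatorics of pairwise disjoint pieces. No serious obstacle arises here, as the CAT(0) property is essentially encoded in the disjointness condition built into the rigid structure; the only care needed is to confirm that every subset of pairwise compatible piece operations genuinely produces a valid intermediate suited subgraph (or submanifold), which follows immediately from disjointness.
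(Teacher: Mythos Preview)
Your strategy of invoking Gromov's link condition is the standard one, but your description of the link of a vertex is incorrect, and the error is fatal for the argument as written. You assert that descending edges at $x=[Z,f]$ are parametrized by the outermost pieces of a \emph{fixed} representative $Z$, giving at most $r$ of them. This is false: the vertex $x$ has many representatives $(Z,g)$ with $g$ ranging over mapping classes supported in $Z$, and removing an outermost piece from such a representative generally yields a descending neighbour distinct from any of the $r$ ``obvious'' ones. In fact the paper establishes precisely this: \Cref{prop:descendingPieceComplex} shows that $\lkd(x)$ is a complete join over the piece complex $\mathcal{P}(\ZZ)$, and \Cref{lem:infiniteFibers} shows every fibre of that map is infinite. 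So $\lkd(x)$ has infinitely many vertices and is nowhere close to a full simplex; in particular your conclusion that ``the link of $x$ is a full simplex on its vertices, which is trivially flag'' does not hold.

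The flag condition does hold, but proving it requires a genuine argument: given pairwise-adjacent link vertices, one must produce a \emph{single} representative of $x$ in which all of the corresponding piece removals (and additions) are realized by pairwise disjoint pieces, so that the operations can be performed simultaneously to fill in the cube. This is where the real content lies, and it is not automatic from ``pieces of the rigid structure have disjoint interiors'', because the pieces being removed in the descending direction are isotopy classes of sub-pieces inside $Z$ (vertices of $\mathcal{P}(\ZZ)$), not pieces of the ambient rigid structure. The paper itself does not spell this out; it defers the entire verification to the analysis in \cite[Sections~3 and 4]{torgerson2024bnsrinvariantssurfacehoughtongroups}, which in turn follows the treatment of the analogous Stein--Farley complexes in \cite{aramayona2024asymptotic}.
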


\begin{THM}\label{thm:BSNRInvs}
    Let $G_n$ be either the pure graph Houghton group or the pure doubled handlebody Houghton group. Then the BNSR-invariants of $G_n$ are the same as for the ordinary and surface Houghton groups, that is, given a character $\chi$ in ascending standard form $\chi = a_1\chi_1 + \cdots + a_n\chi_n$, with $a_1\leq \cdots \leq a_{m(\chi)}< a_{m(\chi)+1} = \cdots = a_n = 0$, then $[\chi] \in \Sigma^{m(\chi)-1}(G_n) \setminus \Sigma^{m(\chi)}(G_n)$.
\end{THM}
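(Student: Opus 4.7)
The plan is to follow the BNSR-invariant computations of Zaremsky for classical Houghton groups \cite{zaremsky2017invariants,zaremsky2020bnsr} and of Torgerson--West for surface Houghton groups \cite{torgerson2024bnsrinvariantssurfacehoughtongroups}, adapted to our setting via the CAT(0) Stein--Farley cube complex $X$ (for $PB_n$) or $\XX$ (for $P\BB_n$). By \Cref{thm:CubeComplexContractible} these complexes are contractible and have cell stabilizers of type $F_\infty$, and by \Cref{thm:XisCat0} they are CAT(0). Together these satisfy the geometric hypotheses of the standard Morse-theoretic criterion identifying $\Sigma^k$-membership of a character with a connectivity condition on filtered subcomplexes (see \cite[Theorem 2.2]{torgerson2024bnsrinvariantssurfacehoughtongroups}).

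Concretely, I would first lift each character $\chi$ to a $G_n$-equivariant Morse function $h_\chi \colon X \to \R$ via the flux map of \Cref{prop:PBfluxSES}, noting that every character of $G_n$ factors through the abelianization, and hence through the flux quotient $\Z^{n-1}$. Writing $\chi = a_1\chi_1 + \cdots + a_n\chi_n$ in ascending standard form, for the positive direction $[\chi] \in \Sigma^{m(\chi)-1}(G_n)$ I would analyze ascending/descending links with respect to $h_\chi$. For a vertex whose associated suited subgraph reaches sufficiently far into each of the ends $\{1,\ldots,m(\chi)\}$, the relevant link splits as a join of $m(\chi)$ full subcomplexes, one per negatively-weighted end, each modeled on the piece complex. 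By \Cref{thm:piececomplexkconnected} each such factor is highly connected, and the join raises connectivity by the usual amount to yield the required $(m(\chi)-2)$-connectivity threshold.

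For the opposite direction $[\chi] \notin \Sigma^{m(\chi)}(G_n)$, I would construct an essential $(m(\chi)-1)$-sphere in a descending sublevel set that cannot be equivariantly filled at any larger sublevel. Following \cite[Section 4]{torgerson2024bnsrinvariantssurfacehoughtongroups}, this sphere is built as the join of zero-dimensional ``two-point'' descending sets, obtained by adding or removing a single piece in each negatively-weighted end, producing a class in dimension $m(\chi)-1$ whose $\chi$-height can be made arbitrarily negative. Non-triviality of the class is inherited from the non-triviality of each $0$-sphere factor under the join, combined with the fact (\Cref{lem:infiniteFibers}) that the fibers of the relevant projection are infinite, providing ample room to prevent equivariant filling.

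The main obstacle is verifying in detail that each step of the surface argument transfers. The connectivity inputs (\Cref{thm:piececomplexkconnected} and \Cref{cor:descendingLinkswCM}) are phrased precisely so as to substitute for the arc complex connectivity used in the surface setting, so this replacement should be essentially mechanical. A minor subtlety in the doubled handlebody case is that pieces additionally carry sphere twists, but these only appear as complete-join fibers in \Cref{lem:descendingLinksCompleteJoin} and do not affect the homotopy type of the descending links; and the handling of the ``zero'' coordinates $a_i = 0$, where extending across the corresponding ends is level-preserving for $h_\chi$, should follow verbatim from \cite[Section 3]{torgerson2024bnsrinvariantssurfacehoughtongroups}.
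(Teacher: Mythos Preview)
Your proposal is correct and matches the paper's approach: the paper does not give a standalone proof but simply notes that the entire analysis of \cite[Sections 3 \& 4]{torgerson2024bnsrinvariantssurfacehoughtongroups} carries over with occasional modifications, using the CAT(0) Stein--Farley complex of \Cref{thm:XisCat0}. Your outline is in fact more detailed than what the paper provides, and the specific ingredients you identify (flux-map lift of characters, piece-complex connectivity from \Cref{thm:piececomplexkconnected} and \Cref{cor:descendingLinkswCM} in place of arc-complex connectivity, and complete-join handling of sphere twists via \Cref{lem:descendingLinksCompleteJoin}) are exactly the substitutions one needs.
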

\subsection{Solvable word problem and Dehn function} \label{sec:WordProblemDF}

We assume for the rest of this section that $r\geq 3$. By \Cref{thm:PBrPresentation}, for such $r$ the group $PB_r$ is finitely presented, and thus has a Dehn function with respect to this presentation. We record here our attempts to prove an upper bound for the Dehn function $\delta_{PB_r}(x),$ and explain where the gap in the proof is (see the discussion after \Cref{lem:ExpBd}).

In the following, we write $g_1 \equiv g_2$ for $g_1, g_2$ words in a fixed generating set to denote that $g_1$ and $g_2$ are freely equivalent. The equality symbol will be used when relators are applied. 
\begin{LEM} \label{lem:BdRelLength}
    Given a null-homotopic word $w \in PB_r$ with $|w|\leq x$, it can be rewritten as a word $w'$ in $\Map_c(\G_r)$ with length at most $O(x^6)$. Further, the area between $w$ and $w'$ is at most $x^2$.
\end{LEM}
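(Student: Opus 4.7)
The plan is to push all shift letters of $w$ to the right by iteratively applying the conjugation relations in $P_r$, yielding a word of the form $u \cdot v$ with $u$ a word in compact generators and $v$ a pure product of shifts. Since $w$ is null-homotopic, $\Phi(w) = 0 \in \Z^{r-1}$, so after rearrangement the shift portion $v$ cancels freely, leaving a word representing an element of $\Map_c(\G_r)$.

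Concretely, I would process $w$ by performing the following single-relation swaps whenever a non-shift letter precedes a shift: moving $h_i^{\pm 1}$ past $\tau$ via \refrel{rel:r9} (replacing $\tau h_i^{\pm 1}$ with $h_i^{\pm 1} \tau^{\sigma}$); past $\sigma$ using consequences of \refrel{rel:r2} and \refrel{rel:r5}; past $\eta$ via \refrel{rel:r14} (introducing $\eta^{\sigma \sigma^{h_i^{\pm 1}}}$); and past a shift $h_j^{\pm 1}$ of different index via \refrel{rel:r4} (introducing an extra $\sigma$). Since $w$ has at most $x$ shift letters and at most $x$ non-shift letters, a bubble-sort--style pass requires at most $x^2$ such swaps, each applying a single defining relation. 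This gives the area bound of $x^2$.

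After the pushing, the word has the form $u \cdot v$, where $v$ is a pure shift word satisfying $\Phi(v) = 0$. I would then reorganize $v$ into the canonical form $h_2^{a_2}\cdots h_r^{a_r}$ using further applications of \refrel{rel:r4}, at the cost of $O(x^2)$ additional relation applications; this reorganization contributes only $\sigma$ factors, which are absorbed back into the $u$-part. Since each $a_i = 0$, the shift tail cancels freely, yielding the target word $w' \in \Map_c(\G_r)$.

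The delicate point, and the main obstacle in the argument, will be bounding the length of $w'$. Each swap past $\eta$ replaces two letters by the length-$10$ word $h_i \sigma \sigma^{\bar h_i} \eta \sigma^{\bar h_i} \sigma$, which itself contains four new shift letters. Iterated naively, such cascading expansions could multiply shift counts by a constant factor per swap, suggesting exponential blowup. The key to keeping the growth polynomial will be the observation that the new shifts always appear in balanced pairs flanking a copy of $\sigma$, so the expansions decompose into compact subterms of the form $\sigma^{\bar h_i^k}$, each of length only $2k+1$ in our generating set (and corresponding to a single AFV transposition $s^1_{k+1}$). Amortizing level by level and carefully tracking the propagation of shifts through compact expansions should yield the polynomial $O(x^6)$ length bound. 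Carrying out this careful length accounting, while staying within the stated $x^2$ area budget, is the principal technical difficulty.
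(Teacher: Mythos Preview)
Your overall strategy—push the shifts to one side, use null-homotopy to kill the shift tail, bound what remains—matches the paper's. But the mechanism you use for the compact-vs-shift swaps is different, and that difference breaks the area bound.

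In the paper, swapping a compact letter $\beta \in \{\sigma,\tau,\eta\}$ past a shift is done by \emph{free} rewriting: $h_k\beta \equiv (h_k\beta\bar h_k)\,h_k = \beta^{h_k}h_k$, with the three-letter chunk $\beta^{h_k}$ treated as an opaque compact unit thereafter. No relation is applied at this stage. Iterating gives $w \equiv fH$ where $f$ is a product of at most $x$ conjugates of the form $\beta^{(\text{shift word of length}\le x)}$ and $H$ is a pure shift word of length $\le x$. Only \emph{then} is a relation used—namely \refrel{rel:r4}, to reorder $H$ into $h_2^{n_2}\cdots h_r^{n_r}$—and each such swap spawns one $\sigma$-conjugate (absorbed into the compact part by further free rewriting). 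There are at most $|H|^2 \le x^2$ of these, and that is the entire area: \refrel{rel:r4} is the only relation ever applied. The length bound then comes from observing that each of the $\le x + x^2$ resulting conjugates represents a single loop swap, loop flip, or $\eta$-analog in $\Aut(F_{rx})$, and can be rewritten as a word of length $O(x^4)$ via the AFV generators $s^i_j$.

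Your approach instead applies \refrel{rel:r9} and \refrel{rel:r14} at each compact-vs-shift swap. That already spends a relation per swap, so you reach $x^2$ before touching the shift tail; your own accounting then adds ``$O(x^2)$ additional relation applications'' for the reordering, overshooting the stated bound of $x^2$. Worse, the cascade you flag is not merely a length issue: each use of \refrel{rel:r14} emits new shift letters inside $\sigma^{h_i}$, and if those are to be pushed as well, they cost further relation applications, so the swap count is no longer bounded by $x^2$. Your proposed remedy—treat the emitted $\sigma^{\bar h_i^k}$ as atomic compact chunks—is precisely the paper's free-rewriting idea, but once you adopt it you no longer need \refrel{rel:r9} or \refrel{rel:r14} at all. (Minor: you say ``push shift letters to the right'' but then describe replacing $\tau h_i^{\pm 1}$ by $h_i^{\pm 1}\tau^\sigma$, which moves the shift left; harmless, but worth straightening out.)
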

\begin{proof}
    For $\beta$ equal to $\sigma, \tau$, or $\eta$, and any $k$ with $2\leq k \leq r$, we can write
    $$h_k\beta\equiv \beta^{h_k}h_k.$$
    In particular, we have that
    $$w\equiv fH$$
    where $f\in \Aut(F_{rx})$ is a product of no more than $x$ involutions and $H$ is a product of powers of the shift maps $\{h_i\}$. Let $2\leq i < j \leq r$. Using the relation $\sigma = [h_i, h_j]$, we can write
    $$\ov{h}_j \ov{h}_i=\sigma \overline{h}_i\ov{h}_j,
    \qquad 
    \ov{h}_j h_i=\sigma^{h_i}h_i \ov{h}_j$$
    $$h_j \ov{h}_i = \sigma^{h_j} \ov{h}_i h_j, \text{ and } h_j h_i = \sigma^{h_j h_i} h_i h_j$$
    By repeated applying these identities along with the conjugation trick above, we can write
    $$w=ff_1f_2\cdots f_{\ell}h_2^{n_2}\cdots h_r^{n_r}$$
    where each $f_p\in \Aut(F_{rx})$ is a conjugate of $\sigma, \tau$, or $\eta$, and $n_k\in \Z$. As $w$ represents the trivial element, the image of $w$ in $\Z^{r-1}$ under the $(r-1)$-fold flux map is trivial as well. But the image of $w$ under this map is $(n_2, \ldots, n_r)$, so $n_k=0$ for all $k$. In particular, we have rewritten $w$ as $w'\in \Map_c(\G_r)$ where
    $$w'=ff_1f_2\cdots f_{\ell}.$$
    \par 
    We note that $\ell\leq x^2$, as the number of terms that appears at this step is at most the number of times an $h_j$ had to move past an $h_i$. But the length of $H$ was at most $x$, so the number of such moves is at most $x^2$. 
    \par 
     Let $\gamma$ denote any conjugate of $\sigma, \tau$, or $\eta$ by shifts appearing in the expression of $w'$. We think of $\gamma$ as an element of $\Aut(F_{rx})$.  
    Note that conjugating $\sigma$ (resp. $\tau$) by shifts will result in a loop swap (resp. loop flip) in $\Aut(F_{rx})$, i.e., an element  $\sigma_{lk}^{ij}$ (resp. $\tau_j^i$).  Likewise, conjugating $\eta$ will result in a form similar to $\eta$ but with support on a different pair of loops. 

    By the proof of \cite[Lemma 4.4]{lee2012geometry}, if $\gamma$ is a conjugate of $\sigma$ by shifts, then the length of $\gamma$ is at most $O(x^3)$.
    
    As $S:= \{s_j^i\}_{(i, j) \in I^r_x}$ acts transitively on the loops (resp. transitively on pairs of loops), if $\gamma$ is a conjugate of $\tau$ by shifts (resp. a conjugate of $\eta$ by shifts), then $\gamma$ can be written as a conjugate of $\tau$ (resp. $\eta$) by elements of $S$. Further, at most $O(x)$ elements of $S$ are needed in the conjugating element. As each element of $S$ has length at most $O(x^3)$, it follows that $\gamma$ in either case has length at most $O(x^4)$.

    As noted above, $\ell \le x^2$ and each $f_i$ is a conjugate by shifts of either $\eta$, $\tau$ or $\sigma$, and $f$ is a product of at most $x$ such conjugates. By the previous paragraph, each conjugate has at most length $O(x^4),$ hence $w'$ has length at most $O(x^6)$.

    The claim on the area between $w$ and $w'$ follows as in \cite[Lemma 4.4]{lee2012geometry}. Namely, in the steps performed to transform $w$ to $w'$, the relation $\sigma=[h_i, h_j]$ was applied at most $x^2$ times. This is the only relation we applied.
\end{proof}

    As an immediate consequence, we can borrow the solvability of the word problem of $\Aut(F_n)$.
\begin{COR}\label{cor:wordproblem}
    The group $PB_r$ has solvable word problem. 
\end{COR}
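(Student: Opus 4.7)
The plan is to convert the rewriting procedure in the proof of Lemma \ref{lem:BdRelLength} into an explicit decision algorithm that reduces the word problem in $PB_r$ to the word problem in $\Aut(F_N)$, which is classically solvable (via Nielsen reduction, McCool's algorithm, or simply by finite presentability of $\Aut(F_N)$ together with residual finiteness --- though the first options give much better bounds).

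Given an input word $w$ of length $x$ in the generators $\{h_2,\ldots,h_r,\sigma,\tau,\eta\}$, I would first algorithmically push all of the shift letters to the right end of $w$. Each time a shift $h_k^{\pm 1}$ must be moved past a letter $\beta \in \{\sigma,\tau,\eta\}$, I apply the identity $h_k\beta \equiv \beta^{h_k} h_k$; each time two shifts $h_i^{\pm 1}, h_j^{\pm 1}$ need to be commuted past each other, I apply the four identities derived from \refrel{rel:r4} in the proof of Lemma \ref{lem:BdRelLength}. This produces a word of the form
\[
w \;=\; f_0 f_1 \cdots f_\ell \, h_2^{n_2} h_3^{n_3} \cdots h_r^{n_r},
\]
where each $f_i$ is a conjugate of $\sigma$, $\tau$, or $\eta$ by some word in the shifts, and hence lies in $\Aut(F_N)$ for some $N = N(x)$ that the bookkeeping in the proof of Lemma \ref{lem:BdRelLength} allows me to compute explicitly.

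Next, I would apply the $(r-1)$-fold flux map $\Phi$ from \Cref{prop:PBfluxSES}. Since $\Phi$ vanishes on $\Map_c(\G_r)$ and $\Phi(h_k)$ is the $k$-th standard generator of $\Z^{r-1}$, the word $w$ is trivial in $PB_r$ only if $n_2 = \cdots = n_r = 0$. This exponent tuple can be read off directly from the rewriting, so I can immediately reject $w$ if any $n_k \ne 0$. If all $n_k = 0$, then $w$ represents the same element as $w' := f_0 f_1 \cdots f_\ell \in \Aut(F_N)$, and expressing $w'$ in the AFV-generating set of $\Aut(F_N)$ from \Cref{thm:AFVpresentation} is mechanical (each $f_i$ is already given as a conjugate of an AFV-generator by a shift, and the conjugation-by-shift formulas of $Q_r'$ convert these into words in the AFV-generators).

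The only possible obstacle is verifying that each step above is truly effective: the conjugation moves, commutator rewrites, and conversion of $f_i$'s into AFV-generators must all be carried out by a finite procedure. This is straightforward because every substitution uses one of the finitely many relations in the presentation of $PB_r$ established in \Cref{thm:PBrPresentation}, and the bound $N = N(x)$ controls the ambient $\Aut(F_N)$ inside which the final decision is made. Hence the word problem in $PB_r$ is solvable.
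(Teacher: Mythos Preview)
Your proposal is correct and follows essentially the same approach as the paper's own proof: both use the rewriting from Lemma~\ref{lem:BdRelLength} to separate off the shift part, check via the flux map (the paper phrases this as the image in the abelianization) whether $w$ could possibly be trivial, and then reduce the remaining question to the solvable word problem in some $\Aut(F_N)$. Your version is simply more explicit about the algorithmic bookkeeping.
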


\begin{proof}
    Given a word $w$, if its image in the abelianization is nontrivial, then $w\neq 1$. Otherwise, the proof of \Cref{lem:BdRelLength} shows that we can rewrite $w$ as a word $w'\in \Map_c(\G)$, so in particular it lies in $\Aut(F_n)$ for some $n$. Thus, we have reduced the word problem for $w$ in $PB_r$ to the word problem for $w'$ in $\Aut(F_n)$. Thus as $\Aut(F_n)$ has solvable word problem, so does $PB_r$. 
\end{proof}  

\begin{LEM} \label{lem:ExpBd}
    For all relators $r$ of $\mathrm{Aut}(F_{rx})$, $\mathrm{Area}_{PB_r}(r)$ is at most $O(e^x)$. 
\end{LEM}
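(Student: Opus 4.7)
The plan is to bound the area of each AFV relator of $\mathrm{Aut}(F_{rx})$ by tracing through the derivations already given in the proof of Theorem~\ref{thm:PBrPresentation}, which exhibits every such relator as a consequence of the finite set $P_r$. First I would observe that each AFV relator has constant length as a word in the generators $\{\tau, \eta\} \cup \{s^i_j\}_{(i,j) \in I^r_x}$. Substituting $s^i_j = \sigma^{h_i^{j+1}}$ for $i \ge 2$ and $s^1_{j} = \sigma^{\ov{h}_i^{j-1}}$, as recorded in \eqref{eqn:Qr'}, yields a word of length $O(x)$ in the generators of $PB_r$, since the indices $j$ may be as large as $x$.

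Next I would walk through the reductions in the proof of Theorem~\ref{thm:PBrPresentation}, bounding the area contributed by each step. The reductions for $R_{1a}$, $R_{1c}$, $R_{1e}$, the conjugation families $R_a'$ and $R_b'$, and the finite relations $R_{2a}$, $R_{2b}$, $R_{2e}$--$R_{2h}$ each proceed by conjugating a fixed relation of $P_r$ by a shift-word of length at most $O(x)$ and then applying a bounded number of further relations of $P_r$. Each such conjugation has area $O(x^c)$ for a small constant $c$, bounded using the cost of dragging a shift past a compactly supported move as in the proof of Lemma~\ref{lem:BdRelLength}. Since only finitely many such moves are needed per relator, these reductions contribute only a polynomial term, comfortably absorbed into $O(e^x)$.

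The delicate part is controlling the infinite families $R_{1b}$, $R_c'$, $R_{1f}$, $R_{2c}$, and $R_{2d}$, which in the proof of Theorem~\ref{thm:PBrPresentation} are handled by a Johnson-style telescoping argument~\cite{johnson1999embedding} and by the inductive auxiliary relations established in Lemmas~\ref{lem:auxRelations1} and~\ref{lem:auxRelations2}. In each of these inductions, the $(k{+}1)$-st instance of a commutator relation is obtained by combining the $k$-th instance with one of its conjugates by a shift, together with a fixed relation from $P_r$. Letting $A_k$ denote the area of the $k$-th instance, this induction yields a recurrence of the form
\[
A_{k+1} \le 2\, A_k + O(\mathrm{poly}(x)).
\]
Starting from the base case $A_0 = O(1)$ and iterating for $k$ up to $O(x)$ gives $A_k = O(2^x \cdot \mathrm{poly}(x)) = O(e^x)$, as required; each AFV relator is built from $O(1)$ such instances and so inherits the same bound.

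The main obstacle is verifying that the Johnson-style telescoping genuinely produces only a doubling recurrence rather than something worse. Concretely, at each inductive step one must check that the conjugator used to pass from instance $k$ to instance $k{+}1$ has length only polynomial in $x$, so that the overhead from conjugating the accumulated derivation adds only a polynomial term (rather than multiplying the area). This is exactly what Lemmas~\ref{lem:auxRelations1} and~\ref{lem:auxRelations2} deliver: each inductive step is a single conjugation move combined with a base relation of $P_r$ and the prior instance, so the recurrence above holds with explicit constants and the $O(e^x)$ bound follows.
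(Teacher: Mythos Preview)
Your proposal is correct and follows essentially the same approach as the paper: both separate the AFV relators into those that are (conjugates of) relations already in $P_r$ and the infinite commutator families $R_{1b}$, $R_{1f}$, $R_{2c}$, $R_{2d}$, then bound the latter via the inductive/telescoping structure from the proof of Theorem~\ref{thm:PBrPresentation}. The paper is more terse, directly invoking Lee's double-induction argument \cite[Lemma~4.7]{lee2012geometry} with an explicit substitution dictionary ($R_{a,b}$ for $v_k$, $u_b$ for $u_k$), whereas you have unpacked the recurrence $A_{k+1}\le 2A_k + O(\mathrm{poly}(x))$ that underlies it.
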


\begin{proof}
    Relators 1(a), 1(c), 1(d), 1(e), 2(a), 2(b), 2(e),  2(g), and 2(h) listed in \Cref{thm:AFVpresentation} already in appear in the presentation of $PB_r$ as (possibly conjugates of) relators \refrel{rel:r1}, \refrel{rel:r3}, \refrel{rel:r6}, \refrel{rel:r7}, \refrel{rel:r10}, \refrel{rel:r11}, \refrel{rel:r17}, \refrel{rel:r18}, and \refrel{rel:r15} in \Cref{thm:PBrPresentation}, respectively. 

    It remains to consider the commutator relations 1(b), 1(f), 2(c) and 2(d). Consider the relators 
    \begin{equation*}
        R_{\sigma, \sigma} : \left[\sigma, \sigma^{h_i^{k + 1}}\right], \;  R_{\tau, \sigma} : \left[\tau, \sigma^{h_i^{k + 1}}\right], \;  R_{\eta, \tau} : \left[\eta, \tau^{h_i^{k + 1}}\right], \;  R_{\eta, \sigma} : \left[\eta, \sigma^{h_i^{k + 1}}\right].
    \end{equation*}
    One can produce all the expressions in $(R_{1b})$, $(R_{1f})$, $(R_{2c})$, and $(R_{2d})$ from the proof of \Cref{thm:PBrPresentation}, by taking appropriate conjugations of the relations $R_{a,b}$ above.  We also have the following auxiliary relations
    \begin{equation*}
        u_\sigma = \sigma^{h_i^k}\sigma^{h_j^k} \quad \text{and} \quad u_\tau = \tau^{h_i^k}\tau^{h_j^k}
    \end{equation*}
    which likewise follow from the proof of \Cref{thm:PBrPresentation}.  One can then show that for $a \in \{\sigma, \tau, \eta\}$ and $b \in \{\sigma, \tau\}$, the areas of $R_{a,b}$ and $u_b$ are bounded above by $O(e^x)$, by following the argument of Lee in \cite[Lemma 4.7]{lee2012geometry}.  Specifically, the result follows by a double induction on $k$, and with $R_{a,b}$ in place of $v_k$, $u_b$ in place of $u_k$, and $A_a = a a^{h_j^{h_i^2}}$ in place of $A$. 
\end{proof}

To follow the argument of \cite[Theorem D]{lee2012geometry}, we require the following three points to hold.

\begin{enumerate}
    \item For a given word $w \in PB_r$, with $|w|\le x$, we can rewrite $w$ as $w'\in \Aut(F_{rx})$ in such a way that 
    \begin{enumerate}
        \item $w'$ has length at most $O(x^6)$, 
        \item the area between $w$ and $w'$ is at most $x^2$. 
    \end{enumerate}
    \item There is an upper bound $f(x)$ for $\delta_{\Aut(F_{rx})}(x)$.
    \item For all relators $r$ of $\Aut(F_{rx})$ as in the presentation in \Cref{thm:AFVpresentation}, $\mathrm{Area}_{PB_r}(r)$ is at most $O(e^x)$.  
\end{enumerate}
From this strategy, it follows that 
\begin{equation*}
    \mathrm{Area}_{PB_r}(w) \le f(x^6) e^{Ax + A} + x^2
\end{equation*}
for some constant $A \ge 1$.  Point (1) follows from \Cref{lem:BdRelLength}, and (3) is \Cref{lem:ExpBd}.  

We do not have an analog of \cite[Lemma 4.3]{lee2012geometry}, from which (2) follows in Lee's context.   
While for every $m\geq 3$ there is an exponential upper bound for the function $\delta_{\Aut(F_{m})}$ by \cite{hatcher1996isoperimetric}, we have no control over the base of this exponential, and thus we do not have control over $f(x)$. Further, they obtain their bound by proving an exponential bound for the isoperimetric inequality in the spine of Auter space, and then transferring it to $\Aut(F_{m})$ by a quasi-isometry. The quasi-isometry coefficients depend on the choice of generating set, and we have found no way to control these coefficients with our specific generating set.  We leave open the question of how to bound the Dehn function with respect to this generating set.

\bibliography{bib}

\newcommand{\etalchar}[1]{$^{#1}$}
\begin{thebibliography}{ADMQ90}

\bibitem[AAS05]{anderson2005simplecriterionnonrelativehyperbolicity}
James~W. Anderson, Javier Aramayona, and Kenneth~J. Shackleton.
\newblock A simple criterion for non-relative hyperbolicity and one-endedness
  of groups.
\newblock {\em arXiv preprint arXiv:0504271}, 2005.

\bibitem[AB25]{algom-kfir2025groups}
Yael Algom{-}Kfir and Mladen Bestvina.
\newblock Groups of proper homotopy equivalences of graphs and {N}ielsen
  {R}ealization.
\newblock In {\em Topology at infinity of discrete groups}, volume 812 of {\em
  Contemp. Math.}, pages 1--31. Amer. Math. Soc., Providence, RI, 2025.

\bibitem[ABF{\etalchar{+}}24]{aramayona2024asymptotic}
Javier Aramayona, Kai-Uwe Bux, Jonas Flechsig, Nansen Petrosyan, and Xiaolei
  Wu.
\newblock Asymptotic mapping class groups of {C}antor manifolds and their
  finiteness properties.
\newblock {\em Rev. Mat. Iberoam.}, 40(6):2003--2072, 2024.
\newblock With an appendix by Oscar Randal-Williams.

\bibitem[ABKL23]{aramayona2023surface}
Javier Aramayona, Kai-Uwe Bux, Heejoung Kim, and Christopher~J. Leininger.
\newblock Surface {H}oughton groups.
\newblock {\em Mathematische Annalen}, Nov 2023.

\bibitem[ADL24]{aramayona2024isomorphisms}
Javier Aramayona, George Domat, and Christopher~J. Leininger.
\newblock Isomorphisms and commensurability of surface {H}oughton groups.
\newblock {\em J. Group Theory}, 27(5):1129--1141, 2024.

\bibitem[ADMQ90]{ayala1990proper}
R.~Ayala, E.~Dominguez, A.~M\'{a}rquez, and A.~Quintero.
\newblock Proper homotopy classification of graphs.
\newblock {\em Bull. London Math. Soc.}, 22(5):417--421, 1990.

\bibitem[AF17]{Aramayona2017AsymptoticMC}
Javier Aramayona and Louis Funar.
\newblock Asymptotic mapping class groups of closed surfaces punctured along
  {Cantor} sets.
\newblock {\em Moscow Mathematical Journal}, 2017.

\bibitem[AFV08]{armstrong2008a-presentation}
Heather Armstrong, Bradley Forrest, and Karen Vogtmann.
\newblock A presentation for {${\rm Aut}(F_n)$}.
\newblock {\em J. Group Theory}, 11(2):267--276, 2008.

\bibitem[Alo94]{alonso1994finiteness}
Juan~M Alonso.
\newblock Finiteness conditions on groups and quasi-isometries.
\newblock {\em Journal of Pure and Applied Algebra}, 95(2):121--129, 1994.

\bibitem[BB97]{bestvina1997morse}
Mladen Bestvina and Noel Brady.
\newblock Morse theory and finiteness properties of groups.
\newblock {\em Invent. Math.}, 129(3):445--470, 1997.

\bibitem[BFH00]{bestvina2000titsalternative}
Mladen Bestvina, Mark Feighn, and Michael Handel.
\newblock The {T}its alternative for {${\rm Out}(F_n)$}. {I}. {D}ynamics of
  exponentially-growing automorphisms.
\newblock {\em Ann. of Math. (2)}, 151(2):517--623, 2000.

\bibitem[BFH04]{bestvina2004solvable}
Mladen Bestvina, Mark Feighn, and Michael Handel.
\newblock Solvable subgroups of $\textrm{Out}({F}_n)$ are virtually abelian.
\newblock {\em Geometriae Dedicata}, 104(1):71--96, 2004.

\bibitem[BFH05]{bestvina2005kolchin}
Mladen Bestvina, Mark Feighn, and Michael Handel.
\newblock The {T}its alternative for {${\rm Out}(F_n)$}. {II}. {A} {K}olchin
  type theorem.
\newblock {\em Ann. of Math. (2)}, 161(1):1--59, 2005.

\bibitem[BFHZ25]{belk2025boonehigmanembeddingsmathrmautfnmapping}
James Belk, Francesco Fournier{-}Facio, James Hyde, and Matthew C.~B. Zaremsky.
\newblock {Boone--Higman} embeddings of {${\rm Aut}(F_n)$} and mapping class
  groups of punctured surfaces.
\newblock {\em arXiv preprint arXiv:2503.21882}, 2025.

\bibitem[BH74]{boone1974algebraic}
William~W. Boone and Graham Higman.
\newblock An algebraic characterization of groups with soluble word problem.
\newblock {\em J. Austral. Math. Soc.}, 18:41--53, 1974.

\bibitem[Bro87]{BROWN198745}
Kenneth~S. Brown.
\newblock Finiteness properties of groups.
\newblock {\em Journal of Pure and Applied Algebra}, 44(1):45--75, 1987.

\bibitem[CV86]{culler1986moduli}
Marc Culler and Karen Vogtmann.
\newblock Moduli of graphs and automorphisms of free groups.
\newblock {\em Inventiones mathematicae}, 84(1):91--119, 1986.

\bibitem[Deg00]{degenhardt2000endlichkeitseigeinschaften}
F.~Degenhardt.
\newblock Endlichkeitseigeinschaften gewisser gruppen von zöpfen unendlicher
  ordnung.
\newblock {\em University of Frankfurt}, 2000.
\newblock PhD Thesis.

\bibitem[DHK23]{domat2023coarse}
George Domat, Hannah Hoganson, and Sanghoon Kwak.
\newblock Coarse geometry of pure mapping class groups of infinite graphs.
\newblock {\em Adv. Math.}, 413:Paper No. 108836, 57, 2023.

\bibitem[DHK24]{dickmann2024surfacesproperhomotopyequivalent}
Ryan Dickmann, Hannah Hoganson, and Sanghoon Kwak.
\newblock Surfaces proper homotopy equivalent to graphs and their
  {D}ehn-{N}ielsen-{B}aer maps.
\newblock {\em arXiv preprint arXiv:2410.20877}, 2024.

\bibitem[DHK25]{domat2025generating}
George Domat, Hannah Hoganson, and Sanghoon Kwak.
\newblock Generating {Sets} and {Algebraic} {Properties} of {Pure} {Mapping}
  {Class} {Groups} of {Infinite} {Graphs}.
\newblock {\em Annales Henri Lebesgue}, 8:373--416, 2025.

\bibitem[DZ25]{domingozubiaga2025finitenesspropertiesasymptoticallyrigid}
Sergio Domingo-Zubiaga.
\newblock Finiteness properties of asymptotically rigid handlebody groups.
\newblock {\em arXiv preprint arXiv:2504.05787}, 2025.

\bibitem[Far05]{farley2005homological}
Daniel Farley.
\newblock Homological and finiteness properties of picture groups.
\newblock {\em Transactions of the American Mathematical Society},
  357(9):3567--3584, 2005.

\bibitem[FK09]{Funar_2009}
Louis Funar and Christophe Kapoudjian.
\newblock An infinite genus mapping class group and stable cohomology.
\newblock {\em Communications in Mathematical Physics}, 287(3):787–804,
  February 2009.

\bibitem[Fun07]{funar2007braided}
L.~Funar.
\newblock Braided {H}oughton groups as mapping class groups.
\newblock {\em An. Ştiinţ. Univ. Al. I. Cuza Iaşi. Mat. (N.S.)},
  52(2):229--240, 2007.

\bibitem[GLU22]{Genevois_2022}
Anthony Genevois, Anne Lonjou, and Christian Urech.
\newblock Asymptotically rigid mapping class groups, {I}: Finiteness properties
  of braided {T}hompson’s and {H}oughton’s groups.
\newblock {\em Geometry $\&$ Topology}, 26(3):1385–1434, August 2022.

\bibitem[Gro93]{gromov1992asymptotic}
M.~Gromov.
\newblock Asymptotic invariants of infinite groups.
\newblock In {\em Geometric group theory, {V}ol.\ 2 ({S}ussex, 1991)}, volume
  182 of {\em London Math. Soc. Lecture Note Ser.}, pages 1--295. Cambridge
  Univ. Press, Cambridge, 1993.

\bibitem[Hig61]{higman1961subgroups}
G.~Higman.
\newblock Subgroups of finitely presented groups.
\newblock {\em Proc. Roy. Soc. London Ser. A}, 262:455--475, 1961.

\bibitem[Hil25]{hill2025large}
Thomas Hill.
\newblock Large-scale geometry of pure mapping class groups of infinite-type
  surfaces.
\newblock {\em Proceedings of the American Mathematical Society},
  153(06):2667--2680, 2025.

\bibitem[HKR{\etalchar{+}}24]{hill2024automorphismsspherecomplexinfinite}
Thomas Hill, Michael~C. Kopreski, Rebecca Rechkin, George Shaji, and Brian
  Udall.
\newblock Automorphisms of the sphere complex of an infinite graph.
\newblock {\em arXiv preprint arXiv:2410.06531}, 2024.

\bibitem[Hou78]{houghton1978first}
CH~Houghton.
\newblock The first cohomology of a group with permutation module coefficients.
\newblock {\em Archiv der Mathematik}, 31:254--258, 1978.

\bibitem[HV96]{hatcher1996isoperimetric}
Allen Hatcher and Karen Vogtmann.
\newblock Isoperimetric inequalities for automorphism groups of free groups.
\newblock {\em Pacific Journal of Mathematics}, 173(2):425--441, 1996.

\bibitem[HV04]{hatcher2004homology}
Allen Hatcher and Karen Vogtmann.
\newblock Homology stability for outer automorphism groups of free groups.
\newblock {\em Algebr. Geom. Topol.}, 4:1253--1272, 2004.

\bibitem[HW10]{hatcher2010stabilization}
Allen Hatcher and Nathalie Wahl.
\newblock Stabilization for mapping class groups of 3-manifolds.
\newblock {\em Duke Mathematical Journal}, 155(2), November 2010.

\bibitem[HW24]{he2024relative}
Yan~Mary He and Chenxi Wu.
\newblock Relative train tracks and endperiodic graph maps.
\newblock {\em arXiv preprint arXiv:2408.13401}, 2024.

\bibitem[Joh99]{johnson1999embedding}
D.~L. Johnson.
\newblock Embedding some recursively presented groups.
\newblock In {\em Groups {S}t. {A}ndrews 1997 in {B}ath, {II}}, volume 261 of
  {\em London Math. Soc. Lecture Note Ser.}, pages 410--416. Cambridge Univ.
  Press, Cambridge, 1999.

\bibitem[Lau74]{laudenbach1974topologie}
Fran{\c{c}}ois Laudenbach.
\newblock {\em Topologie de la dimension trois: homotopie et isotopie},
  volume~12.
\newblock Soci{\'e}t{\'e} math{\'e}matique de France, 1974.

\bibitem[Lee12]{lee2012geometry}
Sang~Rae Lee.
\newblock {\em Geometry of {H}oughton's groups}.
\newblock ProQuest LLC, Ann Arbor, MI, 2012.
\newblock Thesis (Ph.D.)--The University of Oklahoma.

\bibitem[Mea24]{meadow2024end}
Ruth Meadow{-}MacLeod.
\newblock {\em End-{P}eriodic {T}rain {T}rack {M}aps and {D}ynamics on
  {F}ree-by-{C}yclic {G}roups}.
\newblock ProQuest LLC, Ann Arbor, MI, 2024.
\newblock Thesis (Ph.D.)--Temple University.

\bibitem[PV18]{Patel_2018}
Priyam Patel and Nicholas Vlamis.
\newblock Algebraic and topological properties of big mapping class groups.
\newblock {\em Algebraic \& Geometric Topology}, 18(7):4109–4142, December
  2018.

\bibitem[Ros22]{rosendal2022coarsegeometry}
Christian Rosendal.
\newblock {\em Coarse geometry of topological groups}, volume 223 of {\em
  Cambridge Tracts in Mathematics}.
\newblock Cambridge University Press, Cambridge, 2022.

\bibitem[She25]{shen2025}
Jiayi Shen.
\newblock {\em Normal Closure of Finite Subgroups of $\mathrm{Aut}(F_n)$ and
  $\mathrm{Out}(F_n)$}.
\newblock PhD thesis, Notre Dame, 2025.

\bibitem[Sta68]{stallings1968torsion}
John~R Stallings.
\newblock On torsion-free groups with infinitely many ends.
\newblock {\em Annals of Mathematics}, 88(2):312--334, 1968.

\bibitem[Ste92]{stein1992groups}
Melanie Stein.
\newblock Groups of piecewise linear homeomorphisms.
\newblock {\em Transactions of the American Mathematical Society},
  332(2):477--514, 1992.

\bibitem[TW24]{torgerson2024bnsrinvariantssurfacehoughtongroups}
Noah Torgerson and Jeremy West.
\newblock {BNSR}-invariants of surface {H}oughton groups.
\newblock {\em arXiv preprint arXiv:2403.04941}, 03 2024.

\bibitem[Uda24]{udall2024the-sphere}
Brian Udall.
\newblock The sphere complex of a locally finite graph.
\newblock {\em arXiv preprint arXiv:2407.07976}, 07 2024.

\bibitem[Zar17]{zaremsky2017invariants}
Matthew~CB Zaremsky.
\newblock On the ${\Sigma}$-invariants of generalized {T}hompson groups and
  {H}oughton groups.
\newblock {\em International Mathematics Research Notices},
  2017(19):5861--5896, 2017.

\bibitem[Zar20]{zaremsky2020bnsr}
Matthew~CB Zaremsky.
\newblock The {BNSR}-invariants of the {H}oughton groups, concluded.
\newblock {\em Proceedings of the Edinburgh Mathematical Society}, 63(1):1--11,
  2020.

\end{thebibliography}
\bibliographystyle{alpha}

\end{document}